\newtheorem{theorem}{Theorem}[section]
\newtheorem{lemma}[theorem]{Lemma}
\theoremstyle{definition}
\newtheorem{definition}[theorem]{Definition}
\newtheorem{remark}{Remark}
\title[	The algebraic classification of nilpotent commutative  algebras] 
      {	The algebraic classification of nilpotent commutative  algebras}
\author[Doston Jumaniyozov,
Ivan   Kaygorodov and  Abror Khudoyberdiyev]{}
\subjclass{Primary: 17A01; Secondary: 17D99, 17A99.}
 \keywords{Commutative algebras,  Jordan algebras,  
	nilpotent algebras, algebraic classification, central extension.}
 \email{jumaniyozovdoston50@gmail.com}
 \email{kaygorodov.ivan@gmail.com}
 \email{abror.khudoyberdiyev@mathinst.uz}
\begin{document}

\maketitle

\centerline{\scshape Doston Jumaniyozov}
\medskip
{\footnotesize

} 

\medskip

\centerline{\scshape 
Ivan   Kaygorodov }
\medskip
{\footnotesize

   }
   
   \medskip
   
\centerline{\scshape 
  Abror Khudoyberdiyev}
\medskip
{\footnotesize

}

\bigskip

\begin{abstract}
This paper is devoted to the complete algebraic classification of complex $5$-dimensional nilpotent commutative algebras.
Our method of classification is based on the standard method of classification of central extensions of smaller nilpotent commutative algebras and the    recently obtained classification of complex $5$-dimensional nilpotent commutative $\mathfrak{CD}$-algebras. 
\end{abstract}

\section*{Introduction}

The algebraic classification (up to isomorphism) of algebras of dimension $n$ from a certain variety
defined by a certain family of polynomial identities is a classic problem in the theory of non-associative algebras.
There are many results related to the algebraic classification of small-dimensional algebras in many varieties of
non-associative algebras \cite{      ck13,  degr3, usefi1,   degr2,   ha16,  jkk19, kkl20, kkl21, kv16}.
So, algebraic classifications of 
$2$-dimensional algebras \cite{kv16,petersson},
$3$-dimensional evolution algebras \cite{ccsmv},
$3$-dimensional anticommutative algebras \cite{japan},
$4$-dimensional   division algebras \cite{erik,Ernst},
$4$-dimensional nilpotent algebras \cite{kkl21}
and 
$6$-dimensional anticommutative nilpotent algebras \cite{kkl20} have been given.
In the present paper, we give the algebraic classification of
$5$-dimensional nilpotent commutative  algebras.
The variety of commutative algebras is defined by the following identity:
$xy = yx.$
It contains commutative $\mathfrak{CD}$-algebras, Jordan algebras, mock-Lie algebras and commutative associative algebras as subvarieties.
On the other hand, it is a principal part in the varieties of weakly associative algebras and flexible algebras.

The algebraic study of central extensions of associative  and non-associative algebras has been an important topic for years 
(see, for example, \cite{hac16,ss78} and references therein).
Our method for classifying nilpotent commutative algebras is based on the calculation of central extensions of nilpotent algebras of smaller dimensions from the same variety (first, this method has been developed by Skjelbred and Sund for Lie algebra case in   \cite{ss78}) and 
the classifications of all 
complex $5$-dimensional nilpotent commutative (non-Jordan) $\frak{CD}$-algebras \cite{jkk19};
nilpotent Jordan (non-associative) algebras \cite{ha16};
and nilpotent associative commutative algebras \cite{maz80}.
 
	\newpage
\section{The algebraic classification of nilpotent commutative algebras}
\subsection{Method of classification of nilpotent algebras}

Throughout this paper, we use the notations and methods well written in \cite{hac16},
which we have adapted for the commutative case with some modifications.
Further in this section we give some important definitions.

Let $({\bf A}, \cdot)$ be a complex  commutative algebra
and $\mathbb V$ be a complex  vector space. The $\mathbb C$-linear space ${\rm Z^{2}}\left(
\bf A,\mathbb V \right) $ is defined as the set of all bilinear maps $\theta  \colon {\bf A} \times {\bf A} \longrightarrow {\mathbb V}$ such that $ \theta(x,y)=\theta(y,x) .$
These elements will be called {\it cocycles}. For a
linear map $f$ from $\bf A$ to  $\mathbb V$, if we define $\delta f\colon {\bf A} \times
{\bf A} \longrightarrow {\mathbb V}$ by $\delta f  (x,y ) =f(xy )$, then $\delta f\in {\rm Z^{2}}\left( {\bf A},{\mathbb V} \right) $. We define ${\rm B^{2}}\left({\bf A},{\mathbb V}\right) =\left\{ \theta =\delta f\ : f\in {\rm Hom}\left( {\bf A},{\mathbb V}\right) \right\} $.
We define the {\it second cohomology space} ${\rm H^{2}}\left( {\bf A},{\mathbb V}\right) $ as the quotient space ${\rm Z^{2}}
\left( {\bf A},{\mathbb V}\right) \big/{\rm B^{2}}\left( {\bf A},{\mathbb V}\right) $.

Let $\operatorname{Aut}({\bf A}) $ be the automorphism group of  ${\bf A} $ and let $\phi \in \operatorname{Aut}({\bf A})$. For $\theta \in
{\rm Z^{2}}\left( {\bf A},{\mathbb V}\right) $ define  the action of the group $\operatorname{Aut}({\bf A}) $ on ${\rm Z^{2}}\left( {\bf A},{\mathbb V}\right) $ by 
\begin{center}$\phi \theta (x,y)
=\theta \left( \phi \left( x\right) ,\phi \left( y\right) \right) $. 
\end{center} 
It is easy to verify that
${\rm B^{2}}\left( {\bf A},{\mathbb V}\right) $ is invariant under the action of $\operatorname{Aut}({\bf A}).$
So, we have an induced action of  $\operatorname{Aut}({\bf A})$  on ${\rm H^{2}}\left( {\bf A},{\mathbb V}\right)$.

Let $\bf A$ be a commutative algebra of dimension $m$ over  $\mathbb C$ and ${\mathbb V}$ be a $\mathbb C$-vector
space of dimension $k$. For the bilinear map $\theta$, define on the linear space ${\bf A}_{\theta } = {\bf A}\oplus {\mathbb V}$ the
bilinear product `` $\left[ -,-\right] _{{\bf A}_{\theta }}$'' by $\left[ x+x^{\prime },y+y^{\prime }\right] _{{\bf A}_{\theta }}=
xy +\theta(x,y) $ for all $x,y\in {\bf A},x^{\prime },y^{\prime }\in {\mathbb V}$.
The algebra ${\bf A}_{\theta }$ is called a $k$-{\it dimensional central extension} of ${\bf A}$ by ${\mathbb V}$. One can easily check that ${\bf A_{\theta}}$ is a commutative algebra if and only if $\theta \in {\rm Z^2}({\bf A}, {\mathbb V})$.

Call the
set $\operatorname{Ann}(\theta)=\left\{ x\in {\bf A}:\theta \left( x, {\bf A} \right)=0\right\} $
the {\it annihilator} of $\theta $. We recall that the {\it annihilator} of an  algebra ${\bf A}$ is defined as
the ideal $\operatorname{Ann}(  {\bf A} ) =\left\{ x\in {\bf A}:  x{\bf A}=0\right\}$. Observe
that
$\operatorname{Ann}\left( {\bf A}_{\theta }\right) =(\operatorname{Ann}(\theta) \cap\operatorname{Ann}({\bf A}))
\oplus {\mathbb V}$.

\

The following result shows that every algebra with a non-zero annihilator is a central extension of a smaller-dimensional algebra.

\begin{lemma}
Let ${\bf A}$ be an $n$-dimensional commutative algebra such that  
\begin{center}$\dim (\operatorname{Ann}({\bf A}))=m\neq0$. 
\end{center}
Then there exists, up to isomorphism, a unique $(n-m)$-dimensional commutative algebra ${\bf A}'$ and a bilinear map $\theta \in {\rm Z^2}({\bf A'}, {\mathbb V})$ with $\operatorname{Ann}({\bf A'})\cap\operatorname{Ann}(\theta)=0$, where $\mathbb V$ is a vector space of dimension m, such that ${\bf A} \cong {{\bf A}'}_{\theta}$ and
${\bf A}/\operatorname{Ann}({\bf A})\cong {\bf A}'$.
\end{lemma}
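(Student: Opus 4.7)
The plan is to realize ${\bf A}$ explicitly as a central extension of its quotient by the annihilator. Set ${\bf A}' := {\bf A}/\operatorname{Ann}({\bf A})$ and ${\mathbb V} := \operatorname{Ann}({\bf A})$. Then ${\bf A}'$ is a commutative algebra of dimension $n-m$, which already secures the second claim ${\bf A}/\operatorname{Ann}({\bf A}) \cong {\bf A}'$. To produce the cocycle, I would pick a linear section $s\colon {\bf A}' \to {\bf A}$ of the canonical projection $\pi\colon{\bf A}\to {\bf A}'$ and define
\[
\theta(\bar x,\bar y) := s(\bar x)\,s(\bar y) - s(\bar x\,\bar y).
\]
Since $\pi$ is a homomorphism, $\theta$ takes values in $\ker\pi = \operatorname{Ann}({\bf A}) = {\mathbb V}$, and commutativity of ${\bf A}$ (and of ${\bf A}'$) makes $\theta$ symmetric, so $\theta \in {\rm Z}^2({\bf A}',{\mathbb V})$.

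Next I would define $\Phi\colon {\bf A}'_\theta = {\bf A}' \oplus {\mathbb V} \to {\bf A}$ by $\Phi(\bar x + v) = s(\bar x) + v$. This map is a linear isomorphism (the decomposition ${\bf A} = s({\bf A}') \oplus \operatorname{Ann}({\bf A})$ is one of linear spaces), and a direct computation using $v\cdot{\bf A} = w\cdot{\bf A} = 0$ shows
\[
\Phi\bigl((\bar x+v)(\bar y+w)\bigr) = s(\bar x\,\bar y) + \theta(\bar x,\bar y) = s(\bar x)\,s(\bar y) = \Phi(\bar x+v)\,\Phi(\bar y+w),
\]
so ${\bf A} \cong {\bf A}'_\theta$.

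The step I expect to require the most care is the non-degeneracy $\operatorname{Ann}({\bf A}')\cap\operatorname{Ann}(\theta) = 0$. Suppose $\bar x$ lies in the intersection; then for every $\bar y \in {\bf A}'$ we have $\bar x\,\bar y = 0$ in ${\bf A}'$ and $\theta(\bar x,\bar y)=0$, so $s(\bar x)\,s(\bar y) = s(\bar x\,\bar y) + \theta(\bar x,\bar y) = 0$. Since $s(\bar x)$ also annihilates $\operatorname{Ann}({\bf A})$, and ${\bf A}$ is linearly spanned by $s({\bf A}')$ together with $\operatorname{Ann}({\bf A})$, this forces $s(\bar x) \in \operatorname{Ann}({\bf A})$, whence $\bar x = \pi(s(\bar x)) = 0$.

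For uniqueness, suppose $({\bf A}'',\theta')$ is another pair satisfying the conclusions, with ${\bf A} \cong {\bf A}''_{\theta'}$ and $\operatorname{Ann}({\bf A}'')\cap \operatorname{Ann}(\theta') = 0$. By the formula $\operatorname{Ann}({\bf A}''_{\theta'}) = (\operatorname{Ann}(\theta')\cap \operatorname{Ann}({\bf A}''))\oplus {\mathbb V}''$ recorded just before the lemma, the hypothesis forces $\operatorname{Ann}({\bf A}''_{\theta'}) = {\mathbb V}''$, and therefore
\[
{\bf A}'' \cong {\bf A}''_{\theta'}\big/\operatorname{Ann}({\bf A}''_{\theta'}) \cong {\bf A}/\operatorname{Ann}({\bf A}) \cong {\bf A}',
\]
which yields the required uniqueness up to isomorphism and completes the proof.
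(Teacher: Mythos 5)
Your proof is correct and follows essentially the same route as the paper: the paper works with a linear complement ${\bf A}'$ of $\operatorname{Ann}({\bf A})$ inside ${\bf A}$ together with the projection $P$, which is just the mirror image of your quotient-plus-section formulation, and the cocycle $P$-defect $xy-[x,y]_{{\bf A}'}$ coincides with your $s(\bar x)s(\bar y)-s(\bar x\,\bar y)$ under the identification. If anything, you supply slightly more detail than the paper on the non-degeneracy condition $\operatorname{Ann}({\bf A}')\cap\operatorname{Ann}(\theta)=0$ and on uniqueness, both of which the paper asserts rather tersely.
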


\begin{proof}
	Let ${\bf A}'$ be a linear complement of $\operatorname{Ann}({\bf A})$ in ${\bf A}$. Define a linear map $P \colon {\bf A} \longrightarrow {\bf A}'$ by $P(x+v)=x$ for $x\in {\bf A}'$ and $v\in\operatorname{Ann}({\bf A})$, and define a multiplication on ${\bf A}'$ by $[x, y]_{{\bf A}'}=P(x y)$ for $x, y \in {\bf A}'$.
	For $x, y \in {\bf A}$, we have
	\[P(xy)=P((x-P(x)+P(x))(y- P(y)+P(y)))=P(P(x) P(y))=[P(x), P(y)]_{{\bf A}'}. \]
	
	Since $P$ is a homomorphism, $P({\bf A})={\bf A}'$  and ${\bf A}'$ is a commutative algebra and also
	${\bf A}/\operatorname{Ann}({\bf A})\cong {\bf A}'$, which gives us the uniqueness. Now, define the map $\theta \colon {\bf A}' \times {\bf A}' \longrightarrow\operatorname{Ann}({\bf A})$ by $\theta(x,y)=xy- [x,y]_{{\bf A}'}$.
	Thus, ${\bf A}'_{\theta}$ is ${\bf A}$ and therefore $\theta \in {\rm Z^2}({\bf A'}, {\mathbb V})$ and $\operatorname{Ann}({\bf A'})\cap\operatorname{Ann}(\theta)=0$.
\end{proof}

\

\begin{definition}
Let ${\bf A}$ be an algebra and $I$ be a subspace of $\operatorname{Ann}({\bf A})$. If ${\bf A}={\bf A}_0 \oplus I$
then $I$ is called an {\it annihilator component} of ${\bf A}$.
A central extension of an algebra $\bf A$ without annihilator component is called a {\it non-split central extension}.
\end{definition}

Our task is to find all central extensions of an algebra $\bf A$ by
a space ${\mathbb V}$.  In order to solve the isomorphism problem we need to study the
action of $\operatorname{Aut}({\bf A})$ on ${\rm H^{2}}\left( {\bf A},{\mathbb V}
\right) $. To do that, let us fix a basis $e_{1},\ldots ,e_{s}$ of ${\mathbb V}$, and $
\theta \in {\rm Z^{2}}\left( {\bf A},{\mathbb V}\right) $. Then $\theta $ can be uniquely
written as $\theta \left( x,y\right) =
\displaystyle \sum_{i=1}^{s} \theta _{i}\left( x,y\right) e_{i}$, where $\theta _{i}\in
{\rm Z^{2}}\left( {\bf A},\mathbb C\right) $. Moreover, $\operatorname{Ann}(\theta)=\operatorname{Ann}(\theta _{1})\cap\operatorname{Ann}(\theta _{2})\cap\ldots \cap\operatorname{Ann}(\theta _{s})$. Furthermore, $\theta \in
{\rm B^{2}}\left( {\bf A},{\mathbb V}\right) $\ if and only if all $\theta _{i}\in {\rm B^{2}}\left( {\bf A},
\mathbb C\right) $.
It is not difficult to prove (see \cite[Lemma 13]{hac16}) that given a commutative algebra ${\bf A}_{\theta}$, if we write as
above $\theta \left( x,y\right) = \displaystyle \sum_{i=1}^{s} \theta_{i}\left( x,y\right) e_{i}\in {\rm Z^{2}}\left( {\bf A},{\mathbb V}\right) $ and
$\operatorname{Ann}(\theta)\cap \operatorname{Ann}\left( {\bf A}\right) =0$, then ${\bf A}_{\theta }$ has an
annihilator component if and only if $\left[ \theta _{1}\right] ,\left[
\theta _{2}\right] ,\ldots ,\left[ \theta _{s}\right] $ are linearly
dependent in ${\rm H^{2}}\left( {\bf A},\mathbb C\right) $.

Let ${\mathbb V}$ be a finite-dimensional vector space over $\mathbb C$. The {\it Grassmannian} $G_{k}\left( {\mathbb V}\right) $ is the set of all $k$-dimensional
linear subspaces of $ {\mathbb V}$. Let $G_{s}\left( {\rm H^{2}}\left( {\bf A},\mathbb C\right) \right) $ be the Grassmannian of subspaces of dimension $s$ in
${\rm H^{2}}\left( {\bf A},\mathbb C\right) $. There is a natural action of $\operatorname{Aut}({\bf A})$ on $G_{s}\left( {\rm H^{2}}\left( {\bf A},\mathbb C\right) \right) $.
Let $\phi \in \operatorname{Aut}({\bf A})$. For $W=\left\langle
\left[ \theta _{1}\right] ,\left[ \theta _{2}\right] ,\dots,\left[ \theta _{s}
\right] \right\rangle \in G_{s}\left( {\rm H^{2}}\left( {\bf A},\mathbb C
\right) \right) $ define $\phi W=\left\langle \left[ \phi \theta _{1}\right]
,\left[ \phi \theta _{2}\right] ,\dots,\left[ \phi \theta _{s}\right]
\right\rangle $. We denote the orbit of $W\in G_{s}\left(
{\rm H^{2}}\left( {\bf A},\mathbb C\right) \right) $ under the action of $\operatorname{Aut}({\bf A})$ by $\operatorname{Orb}(W)$. Given
\[
W_{1}=\left\langle \left[ \theta _{1}\right] ,\left[ \theta _{2}\right] ,\dots,
\left[ \theta _{s}\right] \right\rangle ,W_{2}=\left\langle \left[ \vartheta
_{1}\right] ,\left[ \vartheta _{2}\right] ,\dots,\left[ \vartheta _{s}\right]
\right\rangle \in G_{s}\left( {\rm H^{2}}\left( {\bf A},\mathbb C\right)
\right),
\]
we easily have that if $W_{1}=W_{2}$, then $ \bigcap\limits_{i=1}^{s}\operatorname{Ann}(\theta _{i})\cap \operatorname{Ann}\left( {\bf A}\right) = \bigcap\limits_{i=1}^{s}
\operatorname{Ann}(\vartheta _{i})\cap\operatorname{Ann}( {\bf A}) $, and therefore we can introduce
the set
\[
{\bf T}_{s}({\bf A}) =\left\{ W=\left\langle \left[ \theta _{1}\right] ,
\dots,\left[ \theta _{s}\right] \right\rangle \in
G_{s}\left( {\rm H^{2}}\left( {\bf A},\mathbb C\right) \right) : \bigcap\limits_{i=1}^{s}\operatorname{Ann}(\theta _{i})\cap\operatorname{Ann}({\bf A}) =0\right\},
\]
which is stable under the action of $\operatorname{Aut}({\bf A})$.

\

Now, let ${\mathbb V}$ be an $s$-dimensional linear space and let us denote by
${\bf E}\left( {\bf A},{\mathbb V}\right) $ the set of all {\it non-split $s$-dimensional central extensions} of ${\bf A}$ by
${\mathbb V}$. By above, we can write
\[
{\bf E}\left( {\bf A},{\mathbb V}\right) =\left\{ {\bf A}_{\theta }:\theta \left( x,y\right) = \sum_{i=1}^{s}\theta _{i}\left( x,y\right) e_{i} \ \ \text{and} \ \ \left\langle \left[ \theta _{1}\right] ,\left[ \theta _{2}\right] ,\dots,
\left[ \theta _{s}\right] \right\rangle \in {\bf T}_{s}({\bf A}) \right\} .
\]
We also have the following result, which can be proved as in \cite[Lemma 17]{hac16}.

\begin{lemma}

Let ${\bf A}_{\theta },{\bf A}_{\vartheta }\in {\bf E}\left( {\bf A},{\mathbb V}\right) $. Suppose that $\theta \left( x,y\right) =  \displaystyle \sum_{i=1}^{s}
\theta _{i}\left( x,y\right) e_{i}$ and $\vartheta \left( x,y\right) =
\displaystyle \sum_{i=1}^{s} \vartheta _{i}\left( x,y\right) e_{i}$.
Then the commutative algebras ${\bf A}_{\theta }$ and ${\bf A}_{\vartheta } $ are isomorphic
if and only if
$$\operatorname{Orb}\left\langle \left[ \theta _{1}\right] ,
\left[ \theta _{2}\right] ,\dots,\left[ \theta _{s}\right] \right\rangle =
\operatorname{Orb}\left\langle \left[ \vartheta _{1}\right] ,\left[ \vartheta
_{2}\right] ,\dots,\left[ \vartheta _{s}\right] \right\rangle.$$
\end{lemma}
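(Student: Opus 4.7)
The plan is to follow the standard Skjelbred--Sund pattern, which reduces the isomorphism problem for central extensions to an orbit problem on the Grassmannian.

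\emph{Forward direction.} Suppose $\Phi\colon {\bf A}_\theta \to {\bf A}_\vartheta$ is an isomorphism of commutative algebras. Since both extensions lie in ${\bf E}({\bf A},{\mathbb V})$, we have $\operatorname{Ann}(\theta)\cap\operatorname{Ann}({\bf A})=0$ and likewise for $\vartheta$, so the identity $\operatorname{Ann}({\bf A}_\theta)=(\operatorname{Ann}(\theta)\cap\operatorname{Ann}({\bf A}))\oplus{\mathbb V}$ established earlier forces $\operatorname{Ann}({\bf A}_\theta)={\mathbb V}=\operatorname{Ann}({\bf A}_\vartheta)$. Any algebra isomorphism preserves the annihilator, so $\Phi({\mathbb V})={\mathbb V}$. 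Decompose $\Phi$ as $\Phi(x+v)=\phi(x)+f(x)+\psi(v)$ for $x\in{\bf A}$, $v\in{\mathbb V}$, where $\phi\colon{\bf A}\to{\bf A}$ and $f\colon{\bf A}\to{\mathbb V}$ are the components of $\Phi|_{\bf A}$ and $\psi\in\operatorname{GL}({\mathbb V})$.

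Comparing the ${\bf A}$- and ${\mathbb V}$-components of $\Phi(xy+\theta(x,y))=\Phi(x)\Phi(y)$, and using that ${\mathbb V}\subseteq\operatorname{Ann}({\bf A}_\vartheta)$, we obtain $\phi\in\operatorname{Aut}({\bf A})$ and
\[
\psi(\theta(x,y))=\vartheta(\phi(x),\phi(y))-\delta f(x,y).
\]
Write $\psi(e_j)=\sum_i q_{ij}e_i$ with $(q_{ij})$ invertible and $f=\sum_i f_ie_i$. Reading the previous display componentwise yields $\sum_j q_{ij}\theta_j=\phi\vartheta_i-\delta f_i$ in ${\rm Z^2}({\bf A},\mathbb C)$. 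Passing to ${\rm H^2}({\bf A},\mathbb C)$ and using invertibility of $(q_{ij})$, the spans of the classes $[\phi\vartheta_i]$ and $[\theta_i]$ coincide, that is, $\phi\langle[\vartheta_1],\ldots,[\vartheta_s]\rangle=\langle[\theta_1],\ldots,[\theta_s]\rangle$. Hence the two subspaces lie in the same $\operatorname{Aut}({\bf A})$-orbit.

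\emph{Backward direction.} Suppose the orbits agree. Then there exist $\phi\in\operatorname{Aut}({\bf A})$, an invertible matrix $(q_{ij})$, and linear forms $f_i\colon{\bf A}\to\mathbb C$ such that $\phi\vartheta_i=\sum_j q_{ij}\theta_j+\delta f_i$ for every $i$. Set $\psi(e_j):=\sum_i q_{ij}e_i$ and $f:=\sum_i f_ie_i$, and define $\Phi\colon{\bf A}_\theta\to{\bf A}_\vartheta$ by $\Phi(x+v):=\phi(x)+f(x)+\psi(v)$. A direct computation, using $\phi\in\operatorname{Aut}({\bf A})$, the cocycle identity above, and ${\mathbb V}\subseteq\operatorname{Ann}({\bf A}_\vartheta)$, shows $\Phi([x,y]_{{\bf A}_\theta})=[\Phi(x),\Phi(y)]_{{\bf A}_\vartheta}$; invertibility of $\phi$ and $\psi$ then makes $\Phi$ an isomorphism.

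\emph{The main obstacle.} The delicate step is the forward direction: one has to extract the automorphism $\phi$ of ${\bf A}$, the rescaling $\psi$ of the centre, and the correction term $f$ from a single isomorphism $\Phi$, and then verify that the cohomological equality of spans holds \emph{simultaneously} in all $s$ components. This is precisely where the assumption $\operatorname{Ann}(\theta)\cap\operatorname{Ann}({\bf A})=0$ (built into ${\bf T}_s({\bf A})$) is essential, since it pins down $\operatorname{Ann}({\bf A}_\theta)={\mathbb V}$ and thereby forces $\Phi({\mathbb V})={\mathbb V}$; without this identification the decomposition of $\Phi$ used above would not be available.
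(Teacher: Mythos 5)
Your argument is correct and is precisely the standard Skjelbred--Sund decomposition argument that the paper itself invokes by reference (it gives no proof, citing Lemma 17 of \cite{hac16}, which proceeds exactly as you do: identify $\operatorname{Ann}({\bf A}_\theta)={\mathbb V}$ from the condition $\operatorname{Ann}(\theta)\cap\operatorname{Ann}({\bf A})=0$, split the isomorphism into $\phi$, $f$, $\psi$, and translate the component equations into equality of spans in ${\rm H^2}({\bf A},\mathbb C)$ up to $\operatorname{Aut}({\bf A})$). The only point worth making explicit is that the invertibility of the matrix $(q_{ij})$ in both directions rests on the classes $[\theta_1],\dots,[\theta_s]$ and $[\vartheta_1],\dots,[\vartheta_s]$ each being linearly independent, which is guaranteed because the spanned subspaces lie in $G_s\left({\rm H^2}({\bf A},\mathbb C)\right)$ by the definition of ${\bf E}({\bf A},{\mathbb V})$.
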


This shows that there exists a one-to-one correspondence between the set of $\operatorname{Aut}({\bf A})$-orbits on ${\bf T}_{s}\left( {\bf A}\right) $ and the set of
isomorphism classes of ${\bf E}\left( {\bf A},{\mathbb V}\right) $. Consequently we have a
procedure that allows us, given a commutative algebra ${\bf A}'$ of
dimension $n-s$, to construct all non-split central extensions of ${\bf A}'$. This procedure is:

\begin{enumerate}
	\item For a given commutative algebra ${\bf A}'$ of dimension $n-s $, determine ${\rm H^{2}}( {\bf A}',\mathbb {C}) $, $\operatorname{Ann}({\bf A}')$ and $\operatorname{Aut}({\bf A}')$.
	
	\item Determine the set of $\operatorname{Aut}({\bf A}')$-orbits on ${\bf T}_{s}({\bf A}') $.
	
	\item For each orbit, construct the commutative algebra associated with a
	representative of it.
\end{enumerate}

\subsection{Reduction to non-$\frak{CCD}$-algebras}
The idea of the definition of a $\mathfrak{CD}$-algebra comes from the following property of Jordan and Lie algebras: {\it the commutator of any pair of multiplication operators is a derivation}.
 This gives three identities of degree four,  which reduce to only one identity of degree four in the commutative or anticommutative case.
 Namely, a commutative algebra is a commutative $\frak{CD}$-algebra ($\frak{CCD}$-algebra) if it satisfies
 the following identity:
\[((xy)a)b  + ((xb)a)y + x((yb)a)  = ((xy)b)a +  ((xa)b)y+ x((ya)b).\]

The above described method gives all commutative ($\mathfrak{CCD}$- and non-$\mathfrak{CCD}$-)  algebras. But we are interested in developing this method in such a way that it only gives non-$ \mathfrak{CCD} $ commutative algebras, because the classification of all $ \mathfrak{CCD} $-algebras is done in \cite{jkk19}. Clearly, any central extension of a commutative non-$ \mathfrak{CCD} $-algebra is a non-$ \mathfrak{CCD} $-algebra. But a $ \mathfrak{CCD} $-algebra may have extensions which are not $ \mathfrak{CCD} $-algebras. More precisely, let $\mathfrak{D}$ be a $ \mathfrak{CCD} $-algebra and $\theta \in {\rm Z_\mathfrak{C}^2}(\mathfrak{D}, {\mathbb C}).$ Then ${\mathfrak{D}}_{\theta }$ is a $ \mathfrak{CCD} $-algebra if and only if
\[\theta(x,y)=\theta(y,x),\]
\[\theta((xy)a,b)+\theta((xb)a,y)+\theta(x,(yb)a)= \theta((xy)b,a)+\theta((xa)b,y)+\theta(x,(ya)b).\]
for all $x,y,a,b\in {\mathfrak{D}}.$ Define the subspace ${\rm Z_\mathfrak{D}^2}({\mathfrak{D}},{\mathbb C})$ of ${\rm Z_\mathfrak{C}^2}({\bf  \mathfrak{D}},{\mathbb C})$ by
\begin{equation*}
{\rm Z_\mathfrak{D}^2}({\mathfrak{D}},{\mathbb C}) =\left\{\begin{array}{c} \theta \in {\rm Z_\mathfrak{C}^2}({\mathfrak{D}},{\mathbb C}) : \theta(x,y)=\theta(y,x),\\ \theta((xy)a,b)+\theta((xb)a,y)+\theta(x,(yb)a)=\\ 
\multicolumn{1}{r}{\theta((xy)b,a)+\theta((xa)b,y)+\theta(x,(ya)b)}\\
\text{ for all } x, y,a,b\in {\mathfrak{D}}\end{array}\right\}.
\end{equation*}

Observe that ${\rm B^2}({ \mathfrak{D}},{\mathbb C})\subseteq{\rm Z_\mathfrak{D}^2}({\mathfrak{D}},{\mathbb C}).$
Let ${\rm H_\mathfrak{D}^2}({\mathfrak{D}},{\mathbb C}) =%
{\rm Z_\mathfrak{D}^2}({\mathfrak{D}},{\mathbb C}) \big/{\rm B^2}({\mathfrak{D}},{\mathbb C}).$ Then ${\rm H_\mathfrak{D}^2}({\mathfrak{D}},{\mathbb C})$ is a subspace of $%
{\rm H_\mathfrak{C}^2}({\mathfrak{D}},{\mathbb C}).$ Define
\[{\bf R}_{s}({\mathfrak{D}}) =\left\{ {\bf W}\in {\bf T}_{s}({\mathfrak{D}}) :{\bf W}\in G_{s}({\rm H_\mathfrak{D}^2}({\mathfrak{D}},{\mathbb C}) ) \right\}, \]
\[	{\bf U}_{s}({\mathfrak{D}}) =\left\{ {\bf W}\in {\bf T}_{s}({\mathfrak{D}}) :{\bf W}\notin G_{s}({\rm H_\mathfrak{D}^2}({\mathfrak{D}},{\mathbb C}) ) \right\}.\]

Then ${\bf T}_{s}({\mathfrak{D}}) ={\bf R}_{s}(
{\mathfrak{D}})$ $\mathbin{\mathaccent\cdot\cup}$ ${\bf U}_{s}(
{\mathfrak{D}}).$ The sets ${\bf R}_{s}({\mathfrak{D}}) $
and ${\bf U}_{s}({\mathfrak{D}})$ are stable under the action
of $\operatorname{Aut}({\mathfrak{D}}).$ Thus, the commutative algebras
corresponding to the representatives of $\operatorname{Aut}({\mathfrak{D}}) $%
-orbits on ${\bf R}_{s}({\mathfrak{D}})$ are $ \mathfrak{CCD} $-algebras,
while those corresponding to the representatives of $\operatorname{Aut}({\mathfrak{D}}%
) $-orbits on ${\bf U}_{s}({\mathfrak{D}})$ are not
$ \mathfrak{CCD} $-algebras. Hence, we may construct all non-split commutative non-$ \mathfrak{CCD}$-algebras $%
\bf{A}$ of dimension $n$ with $s$-dimensional annihilator
from a given commutative algebra $\bf{A}%
^{\prime }$ of dimension $n-s$ in the following way:

\begin{enumerate}
	\item If $\bf{A}^{\prime }$ is non-$ \mathfrak{CCD} $, then apply the procedure.
	
	\item Otherwise, do the following:
	
	\begin{enumerate}
		\item Determine ${\bf U}_{s}(\bf{A}^{\prime })$ and $%
		\operatorname{Aut}(\bf{A}^{\prime }).$
		
		\item Determine the set of $\operatorname{Aut}(\bf{A}^{\prime })$-orbits on ${\bf U%
		}_{s}(\bf{A}^{\prime }).$
		
		\item For each orbit, construct the commutative algebra corresponding to one of its
		representatives.
	\end{enumerate}
\end{enumerate}

\subsection{Notations}
Let us introduce the following notations. Let ${\bf A}$ be a nilpotent algebra with
a basis $e_{1},e_{2}, \ldots, e_{n}.$ Then by $\Delta_{ij}$\ we will denote the
bilinear form
$\Delta_{ij}:{\bf A}\times {\bf A}\longrightarrow \mathbb C$
with 
$\Delta_{ij}(e_{l},e_{m})  = \delta_{il}\delta_{jm},$
if $i\leq j$ and $l\leq m.$
The set $\left\{ \Delta_{ij}:1\leq i\leq j\leq n\right\}$ is a basis for the linear space of
bilinear forms on ${\bf A},$ so every $\theta \in
{\rm Z^2}({\bf A},\bf \mathbb V )$ can be uniquely written as $
\theta = \displaystyle \sum_{1\leq i\leq j\leq n} c_{ij}\Delta _{{i}{j}}$, where $
c_{ij}\in \mathbb C$.
Let us fix 
complex number  
$\eta_k$ \, ($\eta_k^k=-1,$  $\eta_k^l\neq1$ for $0<l<k$).
For denote our algebras, we will use the following notations:
$$\begin{array}{lll}
\mathbf{N}^{\Xi}_{j}& \mbox{---}& j\mbox{th }5\mbox{-dimensional   family of}\\
&& \mbox{commutative non-$ \mathfrak{CCD} $-algebras with parametrs $\Xi$.} \\
\mathbf{N}^{i}_{j}& \mbox{---}& j\mbox{th }i\mbox{-dimensional   non-$ \mathfrak{CCD} $-algebra.} \\
\mathbf{N}^{i*}_{j}& \mbox{---}& j\mbox{th }i\mbox{-dimensional  $\mathfrak{CCD}$-algebra.}
\end{array}$$

\begin{remark}
All families of algebras from our final list do not have intersections,
but inside some families of algebras there are isomorphic algebras.
All isomorphisms between algebras from a certain family of algebras constucted from the representative $\nabla(\Sigma)$ are given in the list of distinct orbit representations.
The notation $\langle \nabla(\Xi) \rangle^{O(\Xi_1)=O(\Xi_2)}$ represents
that  
the elements $\langle \nabla(\Xi_1) \rangle$ and $\langle \nabla(\Xi_2) \rangle$ have the same orbit.

\end{remark}

\subsection{$ 3 $- and $ 4 $-dimensional commutative algebras.} 
Thanks to \cite{fkkv} we have the complete classification of complex $4$-dimensional nilpotent commutative algebras. 
It will be re-written by some different way for separating $\mathfrak{CCD}$- and non-$\mathfrak{CCD}$-algebras.

\begin{longtable}{ll lllll|l}
$\mathbf{N}_{01}^{3*}, \mathbf{N}_{01}^{4*}$&$:$& $e_{1}e_{1}=e_{2}$& & &&& $\mathrm{H}^2_{\mathfrak{C}} =\mathrm{H}^2_{\mathfrak{D}}$  \\
\hline
$\mathbf{N}_{02}^{3*},\mathbf{N}_{02}^{4*}$&$:$& $e_{1}e_{1}=e_{2}$ & $e_{1}e_{2}=e_{3}$ &&&&
$\mathrm{H}^2_{\mathfrak{C}}  \neq \mathrm{H}^2_{\mathfrak{D}}$ \\
\hline
$\mathbf{N}_{03}^{3*},\mathbf{N}_{03}^{4*}$&$:$& $e_{1}e_{2}=e_{3}$ & &&&&
$\mathrm{H}^2_{\mathfrak{C}}=\mathrm{H}^2_{\mathfrak{D}}$\\
\hline
$\mathbf{N}_{04}^{3*}, \mathbf{N}_{04}^{4*}$&$:$& $e_{1}e_{1}=e_{2}$ & $e_{2}e_{2}=e_{3}$& &&&
$\mathrm{H}^2_{\mathfrak{C}}  \neq \mathrm{H}^2_{\mathfrak{D}}$ \\
\hline
 
$\mathbf{N}_{05}^{4*}$&$:$& $e_1e_1=e_2$ & $e_1e_3=e_4$&&&&
$\mathrm{H}^2_{\mathfrak{C}}=\mathrm{H}^2_{\mathfrak{D}}$\\
\hline
$\mathbf{N}_{06}^{4*}$&$:$& $e_1e_1=e_2$ & $e_3e_3=e_4$&&&&
$\mathrm{H}^2_{\mathfrak{C}}=\mathrm{H}^2_{\mathfrak{D}}$\\
\hline
$\mathbf{N}_{07}^{4*}$&$:$& $e_1e_1=e_4$ & $e_2e_3=e_4$&&&&
$\mathrm{H}^2_{\mathfrak{C}}=\mathrm{H}^2_{\mathfrak{D}}$\\
\hline
$\mathbf{N}_{08}^{4*}$&$:$& $e_1e_1=e_2$ & $e_1e_2=e_3$ & $e_2e_2=e_4$&&&
$\mathrm{H}^2_{\mathfrak{C}}\neq \mathrm{H}^2_{\mathfrak{D}}$\\
\hline
${\mathbf N}^{4*}_{09}$ &$:$&  $e_1 e_1 = e_2$ &  $e_2 e_3=e_4$&&&&
$\mathrm{H}^2_{\mathfrak{C}}\neq \mathrm{H}^2_{\mathfrak{D}}$ \\
\hline
${\mathbf N}^{4*}_{10}$ &$:$&  $e_1 e_1=e_2$ & $e_1 e_2=e_4$ & $e_3e_3=e_4$ &&&
$\mathrm{H}^2_{\mathfrak{C}}\neq \mathrm{H}^2_{\mathfrak{D}}$ \\
\hline
${\mathbf N}^{4*}_{11}$ &$:$& $e_1 e_1=e_2$  & $e_1 e_3=e_4$  & $e_2 e_2=e_4$&&&
$\mathrm{H}^2_{\mathfrak{C}}\neq \mathrm{H}^2_{\mathfrak{D}}$ \\
\hline
${\mathbf N}^{4*}_{12}$ &$:$&  $e_1 e_1 = e_2$  & $e_2 e_2=e_4$ & $e_3 e_3= e_4$&&&
$\mathrm{H}^2_{\mathfrak{C}}\neq \mathrm{H}^2_{\mathfrak{D}}$\\
\hline
${\mathbf N}^{4*}_{13}(\lambda)$ &$:$&  $e_1 e_1 = e_2$ & $e_1e_2=e_3$ & $e_1e_3=e_4$ & $e_2 e_2=\lambda e_4$&&
$\mathrm{H}^2_{\mathfrak{C}}\neq \mathrm{H}^2_{\mathfrak{D}}$ \\
\hline
${\mathbf N}^{4*}_{14}$ &$:$&  $e_1 e_2 = e_3$ &  $e_1 e_3=e_4$ &&&&
$\mathrm{H}^2_{\mathfrak{C}}\neq \mathrm{H}^2_{\mathfrak{D}}$ \\
\hline
${\mathbf N}^{4*}_{15}$ &$:$&  $e_1 e_2=e_3$ & $e_1 e_3=e_4$ & $e_2e_2=e_4$  &&&
$\mathrm{H}^2_{\mathfrak{C}}\neq \mathrm{H}^2_{\mathfrak{D}}$\\
\hline
${\mathbf N}^{4*}_{16}$ &$:$& $e_1 e_2=e_3$  & $e_1 e_3=e_4$  & $e_2 e_3=e_4$&&&
$\mathrm{H}^2_{\mathfrak{C}}\neq \mathrm{H}^2_{\mathfrak{D}}$ \\
\hline
${\mathbf N}^{4*}_{17}$ &$:$&  $e_1 e_2 = e_3$ & $e_3 e_3= e_4$&&&&
$\mathrm{H}^2_{\mathfrak{C}}\neq \mathrm{H}^2_{\mathfrak{D}}$\\
\hline
${\mathbf N}^{4*}_{18}$ &$:$&  $e_1 e_1 = e_4$ & $e_1e_2=e_3$ & $e_3 e_3= e_4$&&&
$\mathrm{H}^2_{\mathfrak{C}}\neq \mathrm{H}^2_{\mathfrak{D}}$\\
\hline
${\mathbf N}^{4*}_{19}$ &$:$&  $e_1 e_1= e_4$ & $e_1 e_2 = e_3$ & $e_2 e_2= e_4$ & $e_3 e_3= e_4$ && 
$\mathrm{H}^2_{\mathfrak{C}}\neq \mathrm{H}^2_{\mathfrak{D}}$\\
\hline
${\mathbf N}^{4}_{01}$ &$:$&  $e_1 e_1=e_2$ & $e_1 e_2=e_3$ & $e_2e_3=e_4$ && \\
\hline
${\mathbf N}^{4}_{02}$ &$:$& $e_1 e_1=e_2$  & $e_1 e_2=e_3$  & $e_1e_3=e_4$ & $e_2 e_3=e_4$&& \\
\hline
${\mathbf N}^{4}_{03}$ &$:$&  $e_1 e_1 = e_2$  & $e_1 e_2=e_3$ & $e_3 e_3= e_4$&&\\
\hline
${\mathbf N}^{4}_{04}$ &$:$&  $e_1 e_1 = e_2$  & $e_1 e_2=e_3$ & $e_2e_2=e_4$ & $e_3 e_3= e_4$&&\\
\hline
${\mathbf N}^{4}_{05}$ &$:$& $e_1 e_1=e_2$ & $e_1 e_3 = e_4$ &  $e_2 e_2=e_3$&& \\
\hline
${\mathbf N}^{4}_{06}$ &$:$& $e_1 e_1=e_2$ & $e_1e_2=e_4$ & $e_1 e_3 = e_4$ &  $e_2 e_2=e_3$&&  \\
\hline
${\mathbf N}^{4}_{07}$ &$:$&  $e_1 e_1=e_2$ & $e_2 e_2=e_3$ & $e_2 e_3=e_4$ &&  \\
\hline
${\mathbf N}^{4}_{08}$ &$:$&  $e_1 e_1=e_2$ & $e_1e_3=e_4$ & $e_2 e_2=e_3$ & $e_2 e_3=e_4$  && \\
\hline
${\mathbf N}^{4}_{09}$ &$:$& $e_1 e_1=e_2$  & $e_2 e_2=e_3$  & $e_3 e_3=e_4$&& \\
\hline
${\mathbf N}^{4}_{10}$ &$:$&  $e_1 e_1 = e_2$ & $e_2e_2=e_3$ & $e_1e_2=e_4$ &$ e_3 e_3= e_4$&&\\
\hline
${\mathbf N}^{4}_{11}(\lambda)$ &$:$&  
$e_1 e_1 = e_2$ & $e_1e_2=\lambda e_4$ & $e_2 e_2=e_3$ &&&\\
&&$e_2e_3= e_4$ & $e_3 e_3= e_4$&&&&\\
\end{longtable} 

\section{Central extensions of $ 3 $-dimensional nilpotent commutative algebras}

\subsection{\textbf{$ 2 $-dimensional central extension of $ \mathbf{N}_{02}^{3*} $.}} Here we will collect all information about $ \mathbf{N}_{02}^{3*}: $

$$
\begin{array}{|l|l|l|l|}
\hline
\text{ }  & \text{ } & \text{Cohomology} & \text{Automorphisms} \\
\hline
{\mathbf{N}}^{3*}_{02} & 
\begin{array}{l}e_1e_1=e_2 \\ e_1e_2=e_3
\end{array}
&
\begin{array}{lcl}
\mathrm{H}^2_{\mathfrak{D}}(\mathbf{N}^{3*}_{02})&=&\langle [\Delta_{13}], [\Delta_{22}] \rangle,\\
\mathrm{H}^2_{\mathfrak{C}}(\mathbf{N}^{3*}_{02})&=&\mathrm{H}^2_{\mathfrak{D}}(\mathbf{N}^{3*}_{02})\oplus\\
&&\langle [\Delta_{23}], [\Delta_{33}] \rangle \\
\end{array}
& \phi=\begin{pmatrix}
x&0&0\\
y&x^2&0\\
z&2xy&x^3
\end{pmatrix}\\
\hline
\end{array}$$

Let us use the following notations:
\begin{align*} \nabla_1=[\Delta_{13}], \quad \nabla_2=[\Delta_{22}], \quad \nabla_3=[\Delta_{23}], \quad  \nabla_4=[\Delta_{33}].
\end{align*}
Take $ \theta=\sum\limits_{i=1}^{4}\alpha_i\nabla_i\in\mathrm{H}^2_{\mathfrak{C}}(\mathbf{N}^{3*}_{02}) .$ Since
	
$$\phi^T\begin{pmatrix}
0&0&\alpha_1\\
0&\alpha_2&\alpha_3\\
\alpha_1&\alpha_3&\alpha_4
\end{pmatrix}\phi=
\begin{pmatrix}
\alpha^*&\alpha^{**}&\alpha^*_1\\
\alpha^{**}&\alpha^*_2&\alpha^*_3\\
\alpha^*_1&\alpha^*_3&\alpha^*_4
\end{pmatrix},$$	
we have
\begin{longtable}{ll}
$\alpha_1^*=(\alpha_1x+\alpha_3y+\alpha_4z)x^3$, &
$\alpha_2^*=(\alpha_2x^2+4\alpha_3xy+4\alpha_4y^2)x^2$,\\
$\alpha_3^*=(\alpha_3x+2\alpha_4y)x^4$,&
$\alpha_4^*=\alpha_4x^6.$
\end{longtable}

We are interested  only in $ (\alpha_3,\alpha_4)\neq(0,0) $
  and consider the vector space generated by the following two cocycles:
$$ \theta_1=\alpha_1\nabla_1+\alpha_2\nabla_2+\alpha_3\nabla_3+\alpha_4\nabla_4 \ \ \text{and} \ \  \theta_2=\beta_1\nabla_1+\beta_2\nabla_2+\beta_3\nabla_3.$$
Thus, we have
\begin{longtable}{ll}
$\alpha_1^*=(\alpha_1x+\alpha_3y+\alpha_4z)x^3,$ & $\beta_1^*=(\beta_1x+\beta_3y)x^3,$\\
$\alpha_2^*=(\alpha_2x^2+4\alpha_3xy+4\alpha_4y^2)x^2,$ & $\beta_2^*=(\beta_2x+4\beta_3y)x^3,$\\
$\alpha_3^*=(\alpha_3x+2\alpha_4y)x^4,$ & $\beta_3^*=\beta_3x^5.$\\
$\alpha_4^*=\alpha_4x^6.$
\end{longtable}	

Consider the following cases.

\begin{enumerate}
	\item$ \alpha_4\neq0, $ then:
\begin{enumerate}

\item $ \beta_3=0, \beta_2\neq 0, \beta_1=0 ,$ then by choosing 
$x=2 \alpha_4^2, $
$y=-\alpha_3 \alpha_4,$
$z=\alpha_3^2-2 \alpha_1 \alpha_4,$
we have the representatives $\left\langle \nabla_4, \nabla_2 \right\rangle;$

\item\label{caso2} $ \beta_3=0, \beta_2\neq 0, \beta_1\neq 0 ,$ then by choosing 
\begin{center}
$x=2 \alpha_4^2 \beta_2, $
$y=-\alpha_3 \alpha_4 \beta_2,$
$z=\alpha_3^2 (-2 \beta_1+\beta_2)+2 \alpha_4 (\alpha_2 \beta_1-\alpha_1 \beta_2),$
\end{center}
we have the representatives 
$\left\langle \nabla_4, \nabla_1 +\alpha \nabla_2 \right\rangle_{\alpha\neq 0};$

\item $ \beta_3=0, \beta_2= 0, \beta_1\neq 0 ,$ then by choosing 
$y=-\frac{x \alpha_3}{2 \alpha_4},$
we have two representatives 
$\left\langle \nabla_4, \nabla_1 \right\rangle$ and 
$\left\langle \nabla_2+\nabla_4, \nabla_1 \right\rangle,$ depending on 
$\alpha_3^2=\alpha_2 \alpha_4$ or not. 
The first representative will be joint with the family from the case (\ref{caso2});

	\item $ \beta_3\neq0, 4\alpha_2\beta_3^2=4\beta_2\alpha_3\beta_3-\beta_2^2\alpha_4, \beta_2=4\beta_1,$ then by choosing 
	\begin{center}$ x=4\beta_3\alpha_4, y=-\beta_2\alpha_4, z=\beta_2\alpha_3-4\alpha_1\beta_3,$ \end{center} 
	we have the   representative $\left\langle \nabla_4, \nabla_3 \right\rangle; $
	
	\item $ \beta_3\neq0, 4\alpha_2\beta_3^2=4\beta_2\alpha_3\beta_3-\beta_2^2\alpha_4, \beta_2\neq4\beta_1,$ then by choosing 
	\begin{center}$ x=\frac{4\beta_1-\beta_2}{4\beta_3}, y=\frac{\beta_2^2-4\beta_1\beta_2}{16\beta_3^2}, z=\frac{(4\beta_1-\beta_2)(8\beta_1\alpha_3\beta_3-4\beta_1\beta_2\alpha_4-8\alpha_1\beta_3^3+\beta_2^2\alpha_4)}{32\beta_3^3\alpha_4},$ \end{center}
	we have the    representative 
	$\left\langle \nabla_4, \nabla_1+\nabla_3 \right\rangle;$
	
	\item $ \beta_3\neq0, 4\alpha_2\beta_3^2\neq4\beta_2\alpha_3\beta_3-\beta_2^2\alpha_4,$ then by choosing 
	\begin{center}
	$ x=\sqrt{\frac{4\alpha_2\beta_3^2-4\beta_2\alpha_3\beta_3+\beta_2^2\alpha_4}{4\beta_3^2\alpha_4}},$
	$ y=-\frac{ \beta_2\sqrt{\alpha_4 \beta_2^2-4 \alpha_3 \beta_2 \beta_3+4 \alpha_2 \beta_3^2}}{8\beta_3^2\sqrt{\alpha_4}}, $  $z=\frac{(8\beta_1\alpha_3\beta_3-4\beta_1\beta_2\alpha_4-8\alpha_1\beta_3^3+\beta_2^2\alpha_4)\sqrt{4\alpha_2\beta_3^2-4\beta_2\alpha_3\beta_3+\beta_2^2\alpha_4}}{16\beta_3^3\alpha_4\sqrt{\alpha_4}},$
	\end{center} we have the family of representatives $\left\langle \nabla_2+\nabla_4, \alpha\nabla_1+\nabla_3 \right\rangle. $
	
\end{enumerate}	

\item $ \alpha_4=0, \alpha_3\neq0,$  then we  may suppose that $ \beta_3=0$ and

\begin{enumerate}
	\item\label{case2.a} if $ \beta_1\neq0, \beta_2=4\beta_1, \alpha_2=4\alpha_1, $ then by choosing $ x=\alpha_3, y=-\alpha_1, z=0, $ we have the   representative $\left\langle \nabla_3, \nabla_1+4\nabla_2 \right\rangle;$
	
	\item if $\beta_1\neq0, \beta_2=4\beta_1, \alpha_2\neq4\alpha_1, $ then by choosing $ x=\frac{\alpha_2-4\alpha_1}{\alpha_3}, y=\frac{4\alpha^2_1-\alpha_1\alpha_2}{\alpha_3^2}, z=0, $ we have the  representative $\left\langle -24(\nabla_2+\nabla_3), \nabla_1+4\nabla_2 \right\rangle;$
	
	\item if $ \beta_1\neq0, \beta_2\neq4\beta_1 ,$ then by choosing $ x=\alpha_3(\beta_2-4\beta_1), y=\beta_1\alpha_2-\alpha_1\beta_2, z=0,$ we have the family of representatives $\left\langle \nabla_3, \nabla_1+\alpha\nabla_2 \right\rangle_{\alpha\neq4},$ which will be jointed with the case (\ref{case2.a});	

          \item if $\beta_1=0,$ then we have the   representative  $\left\langle -3\nabla_3, \nabla_2 \right\rangle$.
\end{enumerate}
\end{enumerate}	

Summarizing, we have the following distinct orbits:

\begin{center}
$
\left\langle  \nabla_1, \nabla_2+\nabla_4 \right\rangle, \, 
\left\langle  \nabla_1+4\nabla_2, -24(\nabla_2+\nabla_3) \right\rangle, \,  
\left\langle \nabla_1+\lambda\nabla_2, \nabla_3 \right\rangle, \,  
\left\langle  \nabla_1 +\lambda \nabla_2,  \nabla_4\right\rangle, 
$
\\ 
$
\left\langle \alpha\nabla_1+\nabla_3, \nabla_2+\nabla_4 \right\rangle, \, 
\left\langle  \nabla_1+\nabla_3, \nabla_4 \right\rangle, \, 
\left\langle \nabla_2, -3 \nabla_3 \right\rangle, \, 
 \left\langle \nabla_2, \nabla_4  \right\rangle, \,  
\left\langle  \nabla_3, \nabla_4 \right\rangle. 
$
\end{center}

Note that the algebras constructed from the orbits $\left\langle  \nabla_1+4\nabla_2, -24(\nabla_2+\nabla_3) \right\rangle,$ $\left\langle \nabla_1 + \lambda \nabla_2, \nabla_3 \right\rangle,$ $\left\langle  \nabla_1 +\alpha \nabla_2,  \nabla_4\right\rangle, 
$
$\left\langle \nabla_2, -3 \nabla_3 \right\rangle$ and 
$ \left\langle \nabla_2, \nabla_4  \right\rangle$
are parts of some families of algebras which found below.
Hence, we have the following new algebras:

\begin{longtable}{llllllll}

${\mathbf{N}}_{12}$& $:$& 
$e_1e_1=e_2$ &  $e_1e_2=e_3$ & $e_1e_3=e_4$ & $e_2e_2=e_5$&  $e_3e_3=e_5$
\\
 
${\mathbf{N}}_{168}^4$ & $:$& 
$e_1e_1=e_2$ &  $e_1e_2=e_3$ &$e_1e_3=e_4$ & $e_2e_2=4e_4-24e_5$ & $e_2e_3=-24e_5$ 
\\
${\mathbf{N}}_{170}^{\lambda,  0}$ & $:$& 
$e_1e_1=e_2$ &  $e_1e_2=e_3$ & $e_1e_3=e_4$ & $e_2e_2=\lambda e_4$ & $e_2e_3=e_5$
\\
 
${\mathbf{N}}_{184}^{\lambda,0}$  & $:$& 
$e_1e_1=e_2$ &  $e_1e_2=e_3$ & $e_1e_3= e_4$ & $e_2e_2=\lambda e_4$ & $e_3e_3=e_5$\\
${\mathbf{N}}_{13}^{\alpha}$ & $:$& 
$e_1e_1=e_2$ &  $e_1e_2=e_3$ & $e_1e_3=\alpha e_4$ \\
& &$e_2e_2=e_5$ &$e_2e_3=e_4$ &
$e_3e_3=e_5$
\\
${\mathbf{N}}_{14}$ & $:$& 
$e_1e_1=e_2$ &  $e_1e_2=e_3$ & $e_1e_3=e_4$ & $e_2e_3=e_4$ &$e_3e_3=e_5$
\\
${\mathbf{N}}_{76}^{-1}$ & $:$& 
$e_1e_1=e_2$ &  $e_1e_2=e_3$ & $e_2e_2=e_4$ & $e_2e_3=-3 e_5$ 
\\
 ${\mathbf{N}}_{80}^{0}$  & $:$& 
$e_1e_1=e_2$ &  $e_1e_2=e_3$ & $e_2e_2=e_4$ & $e_3e_3=e_5$ 
\\
${\mathbf{N}}_{15}$ & $:$& 
$e_1e_1=e_2$ &  $e_1e_2=e_3$   & $e_2e_3=e_4$ &$e_3e_3=e_5$
\\
\end{longtable}

\subsection{$ 2 $-dimensional central extension of $ \mathbf{N}_{04}^{3*} $.}Here we will collect all information about $ \mathbf{N}_{04}^{3*}: $
	$$
	\begin{array}{|l|l|l|l|}
	\hline
	{\mathbf{N}}^{3*}_{04} &
	\begin{array}{l}e_1e_1=e_2 \\ e_2e_2=e_3
	\end{array}
	&
	\begin{array}{lcl}
	\mathrm{H}^2_{\mathfrak{D}}(\mathbf{N}^{3*}_{04})&=&\langle [\Delta_{12}] \rangle,\\
	\mathrm{H}^2_{\mathfrak{C}}(\mathbf{N}^{3*}_{04})&=&\mathrm{H}^2_{\mathfrak{D}}(\mathbf{N}^{3*}_{04})\oplus \\&&
	\langle [\Delta_{13}], [\Delta_{23}] ,[\Delta_{33}]\rangle
	\end{array}&  
\phi=\begin{pmatrix}
x&0&0\\
0&x^2&0\\
z&0&x^4
\end{pmatrix}\\
\hline

\end{array}$$

Let us use the following notations:
\begin{align*}\nabla_1=[\Delta_{12}], \quad \nabla_2=[\Delta_{13}], \quad \nabla_3=[\Delta_{23}], \quad  \nabla_4=[\Delta_{33}]. \end{align*}

Take $ \theta=\sum\limits_{i=1}^{4}\alpha_i\nabla_i\in\mathrm{H}^2_{\mathfrak{C}}(\mathbf{N}^{3*}_{04}) .$  Since
$$\phi^T\begin{pmatrix}
0&\alpha_1&\alpha_2\\
\alpha_1&0&\alpha_3\\
\alpha_2&\alpha_3&\alpha_4
\end{pmatrix}\phi=
\begin{pmatrix}
\alpha^*&\alpha^{*}_1&\alpha^*_2\\
\alpha^{*}_1&\alpha^{**}&\alpha^*_3\\
\alpha^*_2&\alpha^*_3&\alpha^*_4
\end{pmatrix},$$	
we have

\begin{longtable}{ll}
$\alpha_1^*=(\alpha_1x+\alpha_3z)x^2$,&
$\alpha_2^*=(\alpha_2x+\alpha_4z)x^4$,\\
$\alpha_3^*=\alpha_3x^5$,&
$\alpha_4^*=\alpha_4x^8.$
\end{longtable}

Consider the following cases:

\begin{enumerate}
	\item $ \alpha_4\neq0, $ then consider the vector space generated by the following two cocycles:
	$$ \theta_1=\alpha_1\nabla_1+\alpha_2\nabla_2+\alpha_3\nabla_3+\alpha_4\nabla_4 \ \ \text{and} \ \  \theta_2=\beta_1\nabla_1+\beta_2\nabla_2+\beta_3\nabla_3.$$
	Thus, we have
	\begin{longtable}{ll}
$\alpha_1^*=(\alpha_1x+\alpha_3z)x^2,$ & $\beta_1^*=(\beta_1x+\beta_3z)x^2,$\\
$\alpha_2^*=(\alpha_2x+\alpha_4z)x^4,$ & $\beta_2^*=\beta_2x^5,$\\
$\alpha_3^*=\alpha_3x^6,$ & $\beta_3^*=\beta_3x^6.$\\
$\alpha_4^*=\alpha_4x^8.$
	\end{longtable}	
	
	Then we consider the following subcases:

\begin{enumerate}
	\item $\beta_3=0, \alpha_3=0,$ then we have:
	
	\begin{enumerate}
		\item if $ \beta_1=0, \alpha_1=0, $ then we have the   representative $ \left\langle \nabla_4,\nabla_2 \right\rangle; $
		
		\item if $ \beta_1=0, \alpha_1\neq0, $ then by choosing $ x=\sqrt[5]{ \alpha_1{\alpha_4}^{-1}},$ we have the  representative $ \left\langle \nabla_1+\nabla_4,\nabla_2 \right\rangle;$
		
		\item if $ \beta_1\neq0, \beta_2=0, $ then by choosing 
		$x=1$ and $z = - \alpha_2 \alpha_4^{-1},$ we have the   representative $ \left\langle \nabla_4,\nabla_1 \right\rangle; $
		
		\item\label{1-1.a.iv} if $ \beta_1\neq0, \beta_2\neq0, $ then by choosing $ x=\sqrt{{\beta_1}{\beta_2}^{-1}}$ and $ z=\frac{\alpha_1\beta_2-\beta_1\alpha_2}{\alpha_4 \sqrt{\beta_1\beta_2}},$ we have the  representative $ \left\langle \nabla_4,\nabla_1+\nabla_2 \right\rangle. $		
				
	\end{enumerate}

	\item $\beta_3=0, \alpha_3\neq0,$ then we have:
	
	\begin{enumerate}
		\item if $ \beta_2=0, $ then by choosing $ x=\sqrt{{\alpha_3}{\alpha_4}^{-1}}$ and $ 
		z=-{\alpha_2}\sqrt{{\alpha_3} \alpha_4^{-3}},$ we have the  representative $ \left\langle \nabla_3+\nabla_4,\nabla_1 \right\rangle; $
		
		\item if $ \beta_2\neq0, \beta_1=0,$ then by choosing 
		\begin{center}$ x=\sqrt{{\alpha_3}{\alpha_4}^{-1}}$ and $
		z=-{\alpha_1}\sqrt{{\alpha_3}^{-1} \alpha_4^{-1}},$
		\end{center} 
		we have the representative 
		$ \left\langle \nabla_3+\nabla_4,\nabla_2 \right\rangle; $
		
		\item if $ \beta_2\neq0, \beta_1\neq0, \beta_2\alpha_3=\beta_1\alpha_4,$ then by choosing $ x=\sqrt{{\alpha_3}{\alpha_4}^{-1}}$ and $z=0, $ we have the family of representatives $ \left\langle \alpha\nabla_1+\nabla_3+\nabla_4,\nabla_1+\nabla_2 \right\rangle;$
		
		\item if $ \beta_2\neq0, \beta_1\neq0, \beta_2\alpha_3\neq\beta_1\alpha_4,$ then by choosing 
		\begin{center}$ x=\sqrt{\frac{\beta_1}{\beta_2}}$ and $ z=\frac{(\alpha_1\beta_2-\beta_1\alpha_2)\sqrt{\beta_1}}{(\beta_1\alpha_4-\beta_2\alpha_3)\sqrt{\beta_2}},$ \end{center} we have the family of representatives $ \left\langle \alpha\nabla_3+\nabla_4,\nabla_1+\nabla_2 \right\rangle_{\alpha\neq0,1}, $
		which will be jointed with the case (\ref{1-1.a.iv}).
		
	\end{enumerate}

\item $\beta_3\neq0, \alpha_3=0,$ then we have:
 
	\begin{enumerate}
 	\item if $\beta_2=0, \alpha_1 \neq0,$ 
		then by choosing $ x=\sqrt[5]{ \alpha_1  \alpha_4^{-1}}$ and $z=-\alpha_2 \sqrt[5]{\alpha_1 \alpha_4^{-6}},$  we have the family of representatives $ \left\langle \nabla_1+\nabla_4,\alpha\nabla_1+\nabla_3 \right\rangle; $
		
 		\item if $\beta_2=0, \alpha_1=0,$ then by choosing $ z=-\frac{\alpha_2x}{\alpha_4},$ we have two representatives 
$\left\langle \nabla_4,\nabla_3 \right\rangle$ or $ \left\langle \nabla_4,\nabla_1+\nabla_3 \right\rangle  $ depending on whether $ \beta_1\alpha_4=\alpha_2 \beta_3$  or not;
		
		\item if $\beta_2\neq0,$ then by choosing $ x={\frac{\beta_2}{\beta_3}}$ and $ z=-\frac{\alpha_2\beta_2}{\beta_3\alpha_4},$ we have the family of representatives $ \left\langle \alpha\nabla_1+\nabla_4,\beta\nabla_1+\nabla_2+\nabla_3 \right\rangle. $		
				
	\end{enumerate}		
 		
\end{enumerate}

\item $ \alpha_4=0, \alpha_3\neq0 $,  then we may suppose that $ \beta_3=0. $ Thus, we have
\begin{longtable}{ll}
$\alpha_1^*=(\alpha_1x+\alpha_3z)x^2,$ & $\beta_1^* =\beta_1x^3,$\\
$\alpha_2^*=\alpha_2x^5,$ & $\beta_2^*=\beta_2x^5,$\\
$\alpha_3^*=\alpha_3x^6,$
\end{longtable} 	
and consider the following subcases:
\begin{enumerate}
	\item $ \beta_2=0, $ then  we have two  representatives $\left\langle \nabla_3,\nabla_1 \right\rangle $ or $\left\langle \nabla_2+\nabla_3,\nabla_1 \right\rangle,$ depending on whether $ \alpha_2=0 $ or not;
	
	\item $ \beta_2\neq0, \alpha_2=0,$ then by choosing $ z=-\frac{\alpha_1x}{\alpha_3},$ we have two  representatives $\left\langle \nabla_3,\nabla_2 \right\rangle $ or $\left\langle \nabla_3,\nabla_1+\nabla_2 \right\rangle,$ depending on whether $ \beta_1=0 $ or not.
	
\end{enumerate}	

\item $ \alpha_4=0, \alpha_3=0, \beta_3=0, \beta_2=0, \alpha_2\neq0, $ then we have the representative  $\left\langle \nabla_2,\nabla_1 \right\rangle.$ 	
\end{enumerate}

Summarizing, we have the following distinct orbits:
\begin{center}
$
\left\langle \nabla_1, \nabla_2 \right\rangle, $ $
\left\langle \nabla_1, \nabla_2+\nabla_3 \right\rangle,$ $
\left\langle \nabla_1, \nabla_3 \right\rangle,$ $
\left\langle \nabla_1, \nabla_3+\nabla_4  \right\rangle, $ $ \left\langle \nabla_1, \nabla_4 \right\rangle,$ 
$\left\langle \nabla_1+\nabla_2, \alpha\nabla_1+\nabla_3+\nabla_4 \right\rangle^{O(\alpha)=O(-\alpha)}, $ $
\left\langle \nabla_1+\nabla_2,\nabla_3  \right\rangle, $ $
\left\langle  \nabla_1+\nabla_2, \alpha \nabla_3 + \nabla_4 \right\rangle_{\alpha\neq 1},$ $
\left\langle \beta\nabla_1+\nabla_2+\nabla_3, \alpha \nabla_1 + \nabla_4 \right\rangle, $ $ 
\left\langle \alpha\nabla_1+\nabla_3, \nabla_1+\nabla_4 \right\rangle^{O(\alpha)=O(-\eta_3 \alpha)=O(\eta_3^2 \alpha)}, $ $
\left\langle  \nabla_1+\nabla_3, \nabla_4 \right\rangle,$
$\left\langle \nabla_1+\nabla_4, \nabla_2 \right\rangle,$ $
\left\langle \nabla_2, \nabla_3 \right\rangle, $ $
\left\langle \nabla_2, \nabla_3+\nabla_4  \right\rangle, $ $ \left\langle  \nabla_2, \nabla_4 \right\rangle,$ $ 
\left\langle \nabla_3, \nabla_4 \right\rangle.$
\end{center}	
Note that, the orbit $\left\langle \nabla_1, \nabla_2 \right\rangle$ after a change of the basis of the constructed algebra gives a part of the family 
${\mathbf{N}}_{79}^{\alpha},$ which will be found below.
Hence, we have the following new algebras:

\begin{longtable}{lllllllllllll}
${\mathbf{N}}_{76}^{0}$ & $:$ & 
$e_1e_1=e_2$ & $e_1e_2=e_3$& $e_1e_4=e_5$& $e_2e_2=e_4$ 
\\

${\mathbf{N}}_{16}$ & $:$ & 
$e_1e_1=e_2$ & $e_1e_2=e_4$& $e_1e_3=e_5$& $e_2e_2=e_3$ &$e_2e_3=e_5$
\\

${\mathbf{N}}_{17}$ & $:$ & 
$e_1e_1=e_2$ & $e_1e_2=e_4$& $e_2e_2=e_3$ &$e_2e_3=e_5$
\\

${\mathbf{N}}_{18}$ & $:$ & 
$e_1e_1=e_2$ &$e_1e_2=e_4$ & $e_2e_2=e_3$ & $e_2e_3=e_5$& $e_3e_3=e_5$
\\

${\mathbf{N}}_{19}$ & $:$ & 
$e_1e_1=e_2$ & $e_1e_2=e_4$& $e_2e_2=e_3$ & $e_3e_3=e_5$
\\

${\mathbf{N}}_{20}^{\alpha}$ & $:$ & 
$e_1e_1=e_2$ & $e_1e_2=e_4+\alpha e_5$& $e_1e_3=e_4$\\
&& $e_2e_2=e_3$ &$e_2e_3=e_5$ & $e_3e_3=e_5$
\\

${\mathbf{N}}_{21}$ & $:$ & 
$e_1e_1=e_2$ & $e_1e_2=e_4$& $e_1e_3=e_4$& $e_2e_2=e_3$ & $e_2e_3=e_5$
\\

${\mathbf{N}}_{22}^{\alpha\neq 1} $ & $:$ & 
$e_1e_1=e_2$ & $e_1e_2=e_4$& $e_1e_3=e_4$\\
&& $e_2e_2=e_3$ & $e_2e_3=\alpha e_5$ &$e_3e_3=e_5$
\\

${\mathbf{N}}_{23}^{\alpha, \beta}$ & $:$ & 
$e_1e_1=e_2$ & $e_1e_2=\beta e_4 +\alpha e_5$& $e_1e_3=e_4$\\
&& $e_2e_2=e_3$ &$e_2e_3=e_4$& $e_3e_3=e_5$

\\

${\mathbf{N}}_{24}^{\alpha}$ & $:$ & 
$e_1e_1=e_2$ & $e_1e_2=\alpha e_4+e_5$& $e_2e_2=e_3$ & $e_2e_3=e_4$ & $e_3e_3=e_5$ 
\\

${\mathbf{N}}_{25}$ & $:$ & 
$e_1e_1=e_2$ & $e_1e_3=e_4$& $e_2e_2=e_3$ & $e_2e_3=e_4$ & $e_3e_3=e_5$
\\

${\mathbf{N}}_{26}$ & $:$ & 
$e_1e_1=e_2$ & $e_1e_2=e_4$ & $e_1e_3=e_5$& $e_2e_2=e_3$ & $e_3e_3=e_4$
\\

${\mathbf{N}}_{27}$ & $:$ & 
$e_1e_1=e_2$ & $e_1e_3=e_4$& $e_2e_2=e_3$ & $e_2e_3=e_5$
\\

${\mathbf{N}}_{28}$ & $:$ & 
$e_1e_1=e_2$ & $e_1e_3=e_4$& $e_2e_2=e_3$ & $e_2e_3=e_5$& $e_3e_3=e_5$
\\

${\mathbf{N}}_{29}$ & $:$ & 
$e_1e_1=e_2$ & $e_1e_3=e_4$& $e_2e_2=e_3$ & $e_3e_3=e_5$
\\

${\mathbf{N}}_{30}$ & $:$ & 
$e_1e_1=e_2$ & $e_2e_2=e_3$ & $e_2e_3=e_4$ & $e_3e_3=e_5$
 	\end{longtable}


\section{Central extensions of nilpotent  $\mathfrak{CCD}$-algebras.}

\subsection{$ 1 $-dimensional central extensions of $ {\mathbf N}_{02}^{4*} $.} Here we will collect all information about $ {\mathbf N}_{02}^{4*}: $

	$$
\begin{array}{|l|l|l|l|}
\hline
{\mathbf{N}}^{4*}_{02} & 
\begin{array}{l} e_1e_1=e_2\\
e_1e_2=e_3
\end{array}
&
\begin{array}{lcl}
\mathrm{H}^2_{\mathfrak{D}}(\mathbf{N}^{4*}_{02})&=&\\
\multicolumn{3}{r}{\langle  
[\Delta_{13}],[\Delta_{22}], [\Delta_{14}],[\Delta_{24}],[\Delta_{44}]
 \rangle}\\
\mathrm{H}^2_{\mathfrak{C}}(\mathbf{N}^{4*}_{02})&=&
\mathrm{H}^2_{\mathfrak{D}}(\mathbf{N}^{4*}_{02})\oplus\\ 
&& \langle [\Delta_{23}], [\Delta_{33}], [\Delta_{34}] \rangle 
\end{array}
 &
 \phi=\begin{pmatrix}
x&0&0&0\\
q&x^2&0&0\\
w&2xq&x^3&r\\
e&0&0&t
\end{pmatrix}\\
\hline

\end{array}$$

Let us use the following notations:
\begin{longtable}{llll}
$\nabla_1=[\Delta_{13}],$&$  \nabla_2=[\Delta_{14}], $&$ \nabla_3=[\Delta_{22}], $&$  \nabla_4=[\Delta_{23}],$ \\ 
$\nabla_5=[\Delta_{24}],  $&$ \nabla_6=[\Delta_{33}],  $&$  \nabla_7=[\Delta_{34}], $&$ \nabla_8=[\Delta_{44}].$
\end{longtable}

Take $ \theta=\sum\limits_{i=1}^{8}\alpha_i\nabla_i\in\mathrm{H}^2_{\mathfrak{C}}(\mathbf{N}^{4*}_{02}) .$  Since
$$\phi^T\begin{pmatrix}
0&0&\alpha_1&\alpha_2\\
0&\alpha_3&\alpha_4&\alpha_5\\
\alpha_1&\alpha_4&\alpha_6&\alpha_7\\
\alpha_2&\alpha_5&\alpha_7&\alpha_8

\end{pmatrix}\phi=
\begin{pmatrix}
\alpha^*&\alpha^{**}&\alpha^{*}_1&\alpha^*_2\\
\alpha^{**}&\alpha^{*}_3&\alpha^*_4&\alpha^*_5\\
\alpha^{*}_1&\alpha^*_4&\alpha^*_6&\alpha^*_7\\
\alpha^*_2&\alpha^*_5&\alpha^*_7&\alpha^*_8
\end{pmatrix},$$
we have
\begin{longtable}{lcl}
$\alpha_1^*$ & $=$ & $(\alpha_1x+\alpha_4q+\alpha_6w+\alpha_7e)x^3,$ \\
$\alpha_2^*$ & $=$ & $(\alpha_1x+\alpha_4q+\alpha_6w+\alpha_7e)r+(\alpha_2x+\alpha_5q+\alpha_7w+\alpha_8e)t,$\\

$\alpha_3^*$ & $=$ & $(\alpha_3x^2+4\alpha_4xq+4\alpha_6q^2)x^2,$\\
$\alpha_4^*$ & $=$ & $(\alpha_4x+2\alpha_6q)x^4,$\\

$\alpha_5^*$ & $=$ & $(\alpha_4r+\alpha_5t)x^2+2(\alpha_6r+\alpha_7t)xq,$\\
$\alpha_6^*$ & $=$ & $\alpha_6x^6,$\\

$\alpha_7^*$ & $=$ & $(\alpha_6r+\alpha_7t)x^{3},$\\
$\alpha_8^*$ & $=$ & $\alpha_6r^2+2\alpha_7rt+\alpha_8t^2.$
\end{longtable}

We interested in $ (\alpha_2,\alpha_5,\alpha_7,\alpha_8)\neq(0,0,0,0) $ and $ (\alpha_4,\alpha_6,\alpha_7)\neq(0,0,0) .$ Let us consider the following cases:
\begin{enumerate}
	\item $ \alpha_6=0, \alpha_7=0,$ then $ \alpha_4\neq0 $ and we have the following subcases:
	\begin{enumerate}
		\item $ \alpha_8=0, \alpha_2\alpha_4-\alpha_1\alpha_5=0, $ then we have a split extension;
		
		\item $ \alpha_8=0, \alpha_2\alpha_4-\alpha_1\alpha_5\neq0, \alpha_3=4\alpha_1, $ then by choosing 
		\begin{center}$ x=\sqrt[4]{\alpha_2\alpha_4-\alpha_1\alpha_5}, t=\alpha_4^2, r=-\alpha_4\alpha_5, q=-\frac{\alpha_1\sqrt[4]{\alpha_2\alpha_4-\alpha_1\alpha_5}}{\alpha_4}, $ \end{center}
		we have the representative $ \left\langle \nabla_2+\nabla_4 \right\rangle; $
		
		\item $ \alpha_8=0, \alpha_2\alpha_4-\alpha_1\alpha_5\neq0, \alpha_3\neq4\alpha_1, $ then by choosing 
		\begin{center}$ x=\frac{\alpha_3-4\alpha_1}{\alpha_4}, t=\frac{(\alpha_3-4\alpha_1)^4}{\alpha_4^2(\alpha_2\alpha_4-\alpha_1\alpha_5)}, r=\frac{\alpha_5(\alpha_3-4\alpha_1)^4}{\alpha_4^3(\alpha_1\alpha_5-\alpha_2\alpha_4)}, q=\frac{4\alpha^2_1-\alpha_1\alpha_3}{\alpha^2_4},$
		\end{center}we have the representative $ \left\langle \nabla_2+\nabla_3+\nabla_4 \right\rangle; $
		
		\item $ \alpha_8\neq0, \alpha_3=4\alpha_1, $ then by choosing 
		\begin{center}$ x=\alpha_4\alpha_8, t=\alpha^3_4\alpha^2_8, q=-\alpha_1\alpha_8, r=-\alpha_4^2\alpha_5\alpha_8^2, e=\alpha_1\alpha_5-\alpha_2\alpha_4, $\end{center}
		we have the representative  $ \left\langle \nabla_4+\nabla_8 \right\rangle; $
		
		\item $ \alpha_8\neq0, \alpha_3\neq4\alpha_1, $ then by choosing  \begin{center}$ x=\frac{\alpha_3-4\alpha_1}{\alpha_4},$
		$t=\frac{\sqrt{(\alpha_3-4\alpha_1)^5}}{\alpha_4^2\sqrt{\alpha_8}},$ $q=\frac{4\alpha^2_1-\alpha_1\alpha_3}{\alpha_4^2},$ $r=-\frac{\alpha_5\sqrt{(\alpha_3-4\alpha_1)^5}}{\alpha_4^3\sqrt{\alpha_8}},$ $e=\frac{(4 \alpha_1- \alpha_3) (\alpha_2 \alpha_4-\alpha_1 \alpha_5)}{\alpha_4^2\alpha_8}, $\end{center}
		we have the representative  $ \left\langle \nabla_3+\nabla_4+\nabla_8 \right\rangle. $
				
	\end{enumerate}

     \item $ \alpha_6=0, \alpha_7\neq0, $ then we have the following subcases:

    \begin{enumerate}
   	   \item $ \alpha_4=0, \alpha_3=0, $ then by choosing
   	   \begin{center}$ x=2\alpha_7^2,$ 
   	   $q=-\alpha_5\alpha_7,$ 
   	   $e=-2\alpha_1\alpha_7,$ $w=\alpha_5^2+2\alpha_1\alpha_8-2\alpha_2\alpha_7,$ $ t=-2\alpha_7,$ $ r=\alpha_8, $ 
   	   \end{center}
   	   we have the representative  $ \left\langle \nabla_7 \right\rangle; $
   	
   	   \item $ \alpha_4=0, \alpha_3\neq0, $ then by choosing
   	  \begin{center}$ 
   	   x=1,$ $q=-\frac{\alpha_5}{2\alpha_7},$ $ e=- \frac{\alpha_1}{\alpha_7},$  $w=\frac{\alpha_5^2+2\alpha_1\alpha_8-2\alpha_2\alpha_7}{2 \alpha_7^2},$ $t=\frac{\alpha_3}{\alpha_7},$ $ r=-\frac{\alpha_3\alpha_8}{2\alpha_7^2}, $ \end{center}
   	   we have the representative  $ \left\langle \nabla_3+\nabla_7 \right\rangle; $
   	
   	   \item $ \alpha_4\neq0, \alpha_3\alpha_7^2-2\alpha_4\alpha_5\alpha_7+\alpha_4^2\alpha_8=0, $ then by choosing 
   	    \begin{center}
   	    $ x=\sqrt{\alpha_7}, t=\alpha_4,$ 
   	       $e=\frac{\alpha_3-4 \alpha_1}{4\sqrt{\alpha_7} },$ $
   r=-\frac{\alpha_4\alpha_8}{2\alpha_7},$ $
q=-\frac{\alpha_3\sqrt{\alpha_7}}{4\alpha_4},$ $
w=\frac{4 \alpha_1 \alpha_4 \alpha_8-4 \alpha_2 \alpha_4 \alpha_7+\alpha_3 (\alpha_5 \alpha_7-\alpha_4 \alpha_8)}{4\alpha_4\sqrt{\alpha_7^3}},$   	       
   	   \end{center} we have the representative  $ \left\langle \nabla_4+\nabla_7 \right\rangle; $
   	
   	   \item $ \alpha_4\neq0, \alpha_3\alpha_7^2-2\alpha_4\alpha_5\alpha_7+\alpha_4^2\alpha_8\neq0, $ then by choosing 
\begin{center}
$x=-\frac{\alpha_3}{2 \alpha_4}+\frac{\alpha_5}{\alpha_7}-\frac{\alpha_4 \alpha_8}{2 \alpha_7^2},$
$q=\frac{\alpha_3 (\alpha_3 \alpha_7^2-2 \alpha_4 \alpha_5 \alpha_7+\alpha_4^2 \alpha_8)}{8 \alpha_4^2 \alpha_7^2},$
$w=\frac{(\alpha_3 \alpha_7^2-2 \alpha_4 \alpha_5 \alpha_7+\alpha_4^2 \alpha_8) (4 \alpha_2 \alpha_4 \alpha_7-4 \alpha_1 \alpha_4 \alpha_8+\alpha_3 (-\alpha_5 \alpha_7+\alpha_4 \alpha_8))}{8 \alpha_4^2 \alpha_7^4},$
$e=\frac{(4 \alpha_1-\alpha_3) (\alpha_3 \alpha_7^2-2 \alpha_4 \alpha_5 \alpha_7+\alpha_4^2 \alpha_8)}{8 \alpha_4 \alpha_7^3},$
$t=\frac{(\alpha_3 \alpha_7^2-2 \alpha_4 \alpha_5 \alpha_7+\alpha_4^2 \alpha_8)^2}{4 \alpha_4 \alpha_7^5},$
$r=-\frac{\alpha_8 (\alpha_3 \alpha_7^2-2 \alpha_4 \alpha_5 \alpha_7+\alpha_4^2 \alpha_8)^2}{8 \alpha_4 \alpha_7^6}, $
\end{center}   	   
   	   
   	   we have the representative  $ \left\langle \nabla_4+ \nabla_5+\nabla_7 \right\rangle. $   	
   	      	
    \end{enumerate}

   \item $ \alpha_6\neq0, $ then we have the following subcases:

   \begin{enumerate}
   	\item $ \alpha_6\alpha_8-\alpha_7^2=0, \alpha_5\alpha_6-\alpha_4\alpha_7=0, \alpha_2\alpha_6-\alpha_1\alpha_7=0, $ then we have a split extension;
   	
   	\item $ \alpha_6\alpha_8-\alpha_7^2=0, \alpha_5\alpha_6-\alpha_4\alpha_7=0, \alpha_2\alpha_6-\alpha_1\alpha_7\neq0, \alpha_3\alpha_6-\alpha^2_4=0,$ then by choosing 
   	\begin{center}$x=1, t=\frac{\alpha_6^2}{\alpha_2\alpha_6-\alpha_1\alpha_7}, q=-\frac{\alpha_4}{2\alpha_6}, r=\frac{\alpha_6\alpha_7}{\alpha_1\alpha_7-\alpha_2\alpha_6}, e=0, w=\frac{\alpha^2_4-2\alpha_1\alpha_6}{\alpha_6},$
   	\end{center}
   	we have the representative  $ \left\langle \nabla_2+\nabla_6 \right\rangle; $
   	
   	\item $ \alpha_6\alpha_8-\alpha_7^2=0, \alpha_5\alpha_6-\alpha_4\alpha_7=0, \alpha_2\alpha_6-\alpha_1\alpha_7\neq0, \alpha_3\alpha_6-\alpha^2_4\neq0,$ then by choosing 
  	\begin{center}$ x=\sqrt{\frac{\alpha_3\alpha_6-\alpha_4^2}{\alpha_6^2}}, $
$ t=\frac{\sqrt{(\alpha_3 \alpha_6-\alpha_4^2)^5}}{\alpha_6^3 (\alpha_2 \alpha_6-\alpha_1 \alpha_7)}, $
$ q=-\frac{\alpha_4 \sqrt{\alpha_3 \alpha_6-\alpha_4^2}}{2 \alpha_6^2}, $
$ r=-\frac{\sqrt{(\alpha_3 \alpha_6-\alpha_4^2)^5}  \alpha_7}{\alpha_6^4 (\alpha_2 \alpha_6-\alpha_1 \alpha_7)}, e=0,$   	\end{center} 
and $w=\frac{(\alpha_4^2-2 \alpha_1 \alpha_6) \sqrt{\alpha_3 \alpha_6-\alpha_4^2}}{2 \alpha_6^3},$
we have the representative  $ \left\langle \nabla_2+\nabla_3+\nabla_6 \right\rangle;$
   	
   	\item $ \alpha_6\alpha_8-\alpha_7^2=0, \alpha_5\alpha_6-\alpha_4\alpha_7\neq0, 2\alpha_6(\alpha_2\alpha_6-\alpha_1\alpha_7)=\alpha_4(\alpha_5\alpha_6-\alpha_4\alpha_7),$ then by  choosing 
   	\begin{center}
   	$ t=\frac{\alpha_6^2}{\alpha_5\alpha_6-\alpha_4\alpha_7}x^4,$ $q=-\frac{\alpha_4}{2\alpha_6}x,$ $r=\frac{\alpha_6\alpha_7}{\alpha_4\alpha_7-\alpha_5\alpha_6}x^4,$ $e=0,$ $w=\frac{\alpha^2_4-2\alpha_1\alpha_6}{\alpha_6}x,$ 
   	\end{center} 
   	we have the representatives $ \left\langle \nabla_5+\nabla_6 \right\rangle $ and $ \left\langle \nabla_3+\nabla_5+\nabla_6 \right\rangle  $ depending on whether $ \alpha_3\alpha_6=\alpha_4^2$ or not;
   	
   	\item $ \alpha_6\alpha_8-\alpha_7^2=0, \alpha_5\alpha_6-\alpha_4\alpha_7\neq0, 2\alpha_6(\alpha_2\alpha_6-\alpha_1\alpha_7)\neq\alpha_4(\alpha_5\alpha_6-\alpha_4\alpha_7),$ then by  choosing $ x=\frac{2\alpha_6(\alpha_2\alpha_6-\alpha_1\alpha_7)-\alpha_4(\alpha_5\alpha_6-\alpha_4\alpha_7)}{2\alpha_6^2(\alpha_5\alpha_6-\alpha_4\alpha_7)},$ 
   	$t=\frac{\alpha_6^2}{\alpha_5\alpha_6-\alpha_4\alpha_7}x^4,$ $q=-\frac{\alpha_4}{2\alpha_6}x,$ $r=\frac{\alpha_6\alpha_7}{\alpha_4\alpha_7-\alpha_5\alpha_6}x^4,$ $e=0,$ $w=\frac{\alpha^2_4-2\alpha_1\alpha_6}{\alpha_6}x,$ we have the representative $ \left\langle \nabla_2+\alpha\nabla_3+\nabla_5+\nabla_6 \right\rangle;$
   	
   	\item $ \alpha_6\alpha_8-\alpha_7^2\neq0, \alpha_5\alpha_6-\alpha_4\alpha_7=0,$ then by  choosing 
   	\begin{center}
   	$ t=\frac{\alpha_6x^3}{\sqrt{\alpha_6\alpha_8-\alpha^2_7}}, $
   	$q=-\frac{\alpha_4x}{2\alpha_6},$ 
   	$r=-\frac{\alpha_7x^3}{\sqrt{\alpha_6\alpha_8-\alpha^2_7}}, $
   	$e=\frac{(\alpha_1\alpha_7-\alpha_2\alpha_6)x}{\alpha_6\alpha_8-\alpha_7^2}, $
   	$w=\Big(\frac{ \alpha_4^2}{2 \alpha_6^2}+ \frac{\alpha_1 \alpha_8-\alpha_2 \alpha_7}{\alpha_7^2-\alpha_6 \alpha_8} \Big)x,$ 
   	\end{center} 
   	we have the representatives $ \left\langle \nabla_6+\nabla_8 \right\rangle  $ and $ \left\langle \nabla_3+\nabla_6+\nabla_8 \right\rangle  $ depending on whether $ \alpha_3\alpha_6=\alpha_4^2$ or not.
   	
   	\item $ \alpha_6\alpha_8-\alpha_7^2\neq0, \alpha_5\alpha_6-\alpha_4\alpha_7\neq0,$ then by  choosing 
   	
   \begin{center}
   $ x=\frac{\alpha_5 \alpha_6-\alpha_4 \alpha_7}{\sqrt{\alpha_6^2 (\alpha_6 \alpha_8-\alpha_7^2)}},$ 
   $t=\frac{(\alpha_5 \alpha_6-\alpha_4 \alpha_7)^3}{\alpha_6^2 (\alpha_7^2-\alpha_6 \alpha_8)^2},$ 
   $q=\frac{\alpha_4 (\alpha_4 \alpha_7-\alpha_5 \alpha_6)}{2 \alpha_6 \sqrt{\alpha_6^2 (\alpha_6 \alpha_8-\alpha_7^2)}},$ 
   $r=\frac{\alpha_7 (\alpha_4 \alpha_7-\alpha_5 \alpha_6)^3}{\alpha_6^3 (\alpha_7^2-\alpha_6 \alpha_8)^2},$ 
   $e=\frac{\alpha_6 (\alpha_5 \alpha_6-\alpha_4 \alpha_7) (\alpha_4 \alpha_5 \alpha_6-\alpha_4^2 \alpha_7+2 \alpha_6 (-\alpha_2 \alpha_6+\alpha_1 \alpha_7))}{2 \alpha_6^3 \sqrt{(\alpha_6 \alpha_8-\alpha_7^2)^3}},$ 
   $w=\frac{\alpha_6 (\alpha_5 \alpha_6-\alpha_4 \alpha_7) (\alpha_4^2 \alpha_8-\alpha_4 \alpha_5 \alpha_7+2 \alpha_6 (\alpha_2 \alpha_7-\alpha_1 \alpha_8))}{2 \alpha_6^3 \sqrt{(\alpha_6 \alpha_8-\alpha_7^2)^3}},$
   \end{center}
   we have the representative $ \left\langle \alpha\nabla_3+\nabla_5+\nabla_6+\nabla_8 \right\rangle.$
   	  	
   \end{enumerate}
\end{enumerate}
Summarizing, we have the following distinct orbits
\begin{center}
$
\left\langle \nabla_2+ \nabla_3+ \nabla_4\right\rangle,$
$\left\langle \nabla_2+ \alpha \nabla_3+ \nabla_5+\nabla_6\right\rangle,$ 
$\left\langle \nabla_2+\nabla_3+\nabla_6\right\rangle,$
$\left\langle \nabla_2+ \nabla_4 \right\rangle,$
$\left\langle \nabla_2+ \nabla_6\right\rangle,$
$\left\langle \nabla_3+ \nabla_4+ \nabla_8\right\rangle,$
$\left\langle \nabla_3+ \nabla_5+\nabla_6\right\rangle,$
$\left\langle \alpha\nabla_3+\nabla_5+\nabla_6+\nabla_8 \right\rangle,$
$\left\langle \nabla_3+\nabla_6+\nabla_8\right\rangle,$
$\left\langle \nabla_3+ \nabla_7 \right\rangle,$ 
$\left\langle \nabla_4+ \nabla_5+ \nabla_7\right\rangle,$ 
$\left\langle \nabla_4+ \nabla_7\right\rangle, $
$\left\langle \nabla_4+ \nabla_8\right\rangle,$
$\left\langle \nabla_5+ \nabla_6 \right\rangle,$ 
$\left\langle \nabla_6+ \nabla_8\right\rangle,$ 
$\left\langle \nabla_7\right\rangle,$
 \end{center}
which gives the following new algebras:

\begin{longtable}{llllllllllllllllll}

${\mathbf{N}}_{31}$ & $:$ & 
$e_1e_1=e_2$ & $e_1e_2=e_3$ & $e_1e_4=e_5$ & $e_2e_2=e_5$& $e_2e_3=e_5$
\\

${\mathbf{N}}_{32}^{\alpha}$ & $:$ & 
$e_1e_1=e_2$ & $e_1e_2=e_3$ & $e_1e_4=e_5$ &$e_2e_2=\alpha e_5$ & $e_2e_4=e_5$ & $e_3e_3=e_5$
\\

${\mathbf{N}}_{33}$ & $:$ & 
$e_1e_1=e_2$ & $e_1e_2=e_3$ & $e_1e_4=e_5$ & $e_2e_2=e_5$ & $e_3e_3=e_5$
\\

${\mathbf{N}}_{34}$ & $:$ & 
$e_1e_1=e_2$ & $e_1e_2=e_3$ & $e_1e_4=e_5$ & $e_2e_3=e_5$
\\

${\mathbf{N}}_{35}$ & $:$ & 
$e_1e_1=e_2$ & $e_1e_2=e_3$ & $e_1e_4=e_5$ & $e_3e_3=e_5$
\\

${\mathbf{N}}_{36}$ & $:$ & 
$e_1e_1=e_2$ & $e_1e_2=e_3$ & $e_2e_2=e_5$ & $e_2e_3=e_5$ & $e_4e_4=e_5$
\\

${\mathbf{N}}_{37}$ & $:$ & 
$e_1e_1=e_2$ & $e_1e_2=e_3$ & $e_2e_2=e_5$ & $e_2e_4=e_5$ & $e_3e_3=e_5$
\\

${\mathbf{N}}_{38}^{\alpha}$ & $:$ & 
$e_1e_1=e_2$ & $e_1e_2=e_3$ & $e_2e_2=\alpha e_5$ & $e_2e_4=e_5$ & $e_3e_3=e_5$ & $e_4e_4=e_5$
\\

${\mathbf{N}}_{39}$ & $:$ & 
$e_1e_1=e_2$ & $e_1e_2=e_3$ & $e_2e_2=e_5$ & $e_3e_3=e_5$ & $e_4e_4=e_5$
\\

${\mathbf{N}}_{40}$ & $:$ & 
$e_1e_1=e_2$ & $e_1e_2=e_3$ & $e_2e_2=e_5$ & $e_3e_4=e_5$
\\

${\mathbf{N}}_{41}$ & $:$ & 
$e_1e_1=e_2$ & $e_1e_2=e_3$ &  $e_2e_3=e_5$ & $e_2e_4=e_5$ & $e_3e_4=e_5$
\\

${\mathbf{N}}_{42}$ & $:$ & 
$e_1e_1=e_2$ & $e_1e_2=e_3$ & $e_2e_3=e_5$ & $e_3e_4=e_5$
\\

${\mathbf{N}}_{43}$ & $:$ & 
$e_1e_1=e_2$ & $e_1e_2=e_3$ & $e_2e_3=e_5$ & $e_4e_4=e_5$
\\

${\mathbf{N}}_{44}$ & $:$ & 
$e_1e_1=e_2$ & $e_1e_2=e_3$ & $e_2e_4=e_5$ & $e_3e_3=e_5$
\\

${\mathbf{N}}_{45}$ & $:$ & 
$e_1e_1=e_2$ & $e_1e_2=e_3$ & $e_3e_3=e_5$ & $e_4e_4=e_5$
\\

${\mathbf{N}}_{46}$ & $:$ & 
$e_1e_1=e_2$ & $e_1e_2=e_3$ & $e_3e_4=e_5$

\end{longtable}

\subsection{$ 1 $-dimensional central extensions of $ {\mathbf N}_{04}^{4*} $.} Here we will collect all information about $ {\mathbf N}_{04}^{4*}: $
$$
\begin{array}{|l|l|l|l|}
\hline
{\mathbf{N}}^{4*}_{04} & 
\begin{array}{l}
e_1e_1=e_2 \\ 
e_2e_2=e_3
\end{array}
&
\begin{array}{lcl}\mathrm{H}^2_{\mathfrak{D}}(\mathbf{N}^{4*}_{04})&=&\\
\multicolumn{3}{r}{\langle [\Delta_{12}],[\Delta_{14}], [\Delta_{24}],[\Delta_{44}] \rangle},\\
\mathrm{H}^2_{\mathfrak{C}}(\mathbf{N}^{4*}_{04})&=&\mathrm{H}^2_{\mathfrak{D}}(\mathbf{N}^{4*}_{04})\oplus\\
\multicolumn{3}{r}{\langle [\Delta_{13}], [\Delta_{23}], [\Delta_{33}], [\Delta_{34}] \rangle}
\end{array} 
&
  \phi=\begin{pmatrix}
x&0&0&0\\
0&x^2&0&0\\
y&0&x^4&r\\
z&0&0&t
\end{pmatrix}\\
\hline

\end{array}$$

Let us use the following notations:
\begin{longtable}{llll}
$\nabla_1=[\Delta_{12}],$&$ \nabla_2=[\Delta_{13}], $&$ \nabla_3=[\Delta_{14}],$&$  \nabla_4=[\Delta_{23}],$\\
$\nabla_5=[\Delta_{24}], $&$ \nabla_6=[\Delta_{33}], $&$  \nabla_7=[\Delta_{34}],$&$ \nabla_8=[\Delta_{44}]. $
\end{longtable}

Take $ \theta=\sum\limits_{i=1}^{8}\alpha_i\nabla_i\in\mathrm{H}^2_{\mathfrak{C}}(\mathbf{N}^{4*}_{04}) .$  Since
$$\phi^T\begin{pmatrix}
0&\alpha_1&\alpha_2&\alpha_3\\
\alpha_1&0&\alpha_4&\alpha_5\\
\alpha_2&\alpha_4&\alpha_6&\alpha_7\\
\alpha_3&\alpha_5&\alpha_7&\alpha_8

\end{pmatrix}\phi=
\begin{pmatrix}
\alpha^*&\alpha^{*}_1&\alpha^{*}_2&\alpha^*_3\\
\alpha^{*}_1&\alpha^{**}&\alpha^*_4&\alpha^*_5\\
\alpha^{*}_2&\alpha^*_4&\alpha^*_6&\alpha^*_7\\
\alpha^*_3&\alpha^*_5&\alpha^*_7&\alpha^*_8
\end{pmatrix},$$

we have
\begin{longtable}{ll}
$\alpha_1^*=(\alpha_1x+\alpha_4y+\alpha_5z)x^2,$&
$\alpha_2^*=(\alpha_2x+\alpha_6y+\alpha_7z)x^4,$\\
$\alpha_3^*=(\alpha_2x+\alpha_6y+\alpha_7z)r+(\alpha_3x+\alpha_7y+\alpha_8z)t,$&
$\alpha_4^*=\alpha_4x^6,$\\
$\alpha_5^*=(\alpha_4r+\alpha_5t)x^2,$&
$\alpha_6^*=\alpha_6x^8,$\\
$\alpha_7^*=(\alpha_6r+\alpha_7t)x^{4},$&
$\alpha_8^*=\alpha_6r^2+2\alpha_7rt+\alpha_8t^2.$
\end{longtable}

We interested in $ (\alpha_3,\alpha_5,\alpha_7,\alpha_8)\neq(0,0,0,0) $ and $ (\alpha_2,\alpha_4,\alpha_6,\alpha_7)\neq(0,0,0,0) .$ Let us consider the following cases:

\begin{enumerate}
	\item $ \alpha_6=0, \alpha_7=0, \alpha_4=0, $ then $ \alpha_2\neq0 $ and we have the following cases:
	\begin{enumerate}
		\item if $ \alpha_8=0, \alpha_5=0, $ then by choosing $t=1$ and $ r=-\frac{\alpha_3}{\alpha_2},$ we have a split extension;
		
		\item if $ \alpha_8=0, \alpha_5\neq0, $ then by choosing 
		
		\begin{center}$ x=\alpha_2\alpha_5,$ $t=\alpha_2^4\alpha_5^2,$ $z=-\alpha_1\alpha_2$, $r=-\alpha_2^3\alpha_3\alpha_5^2,$ $y=0,$
		\end{center} we have the representative $ \left\langle \nabla_2+\nabla_5 \right\rangle; $
		
		\item if $ \alpha_8\neq0, \alpha_5=0, \alpha_1=0,$ then by choosing 
		
		\begin{center}
		$x=\alpha_3,$ $t={\sqrt{\alpha_2}}\alpha_8^2,$ $z=-{\alpha_3},$ $r=0,$ $y=0,$
		\end{center} we have the representative 
		$ \left\langle \nabla_2+\nabla_8 \right\rangle; $
		
		\item if $ \alpha_8\neq0, \alpha_5=0, \alpha_1\neq0,$ then by choosing 
		\begin{center}$ x=\sqrt{{\alpha_1}{\alpha_2}^{-1}},$ $t=\sqrt[4]{\alpha_1^5\alpha_2^{-3}}\sqrt{\alpha_8^{-1}},$ $z=-\sqrt{\alpha_1\alpha_2^{-1}} \alpha_3\alpha_8^{-1},$ $r=0,$ $y=0,$
		\end{center}
		we have the representative $ \left\langle \nabla_1+\nabla_2+\nabla_8 \right\rangle;$
		
		\item if $ \alpha_8\neq0, \alpha_5\neq0,$ then by choosing 
		\begin{center}$ x={\frac{\alpha^2_5}{\alpha_2\alpha_8}}, t={\frac{\alpha_5^5}{\alpha_2^2\alpha^3_8}}, z=-{\frac{\alpha_1\alpha_5}{\alpha_2\alpha_8}}, r=\frac{\alpha_5^4(\alpha_1\alpha_8-\alpha_3\alpha_5)}{\alpha_2^3\alpha_8^3}, y=0,$
		\end{center}
		we have the representative $ \left\langle \nabla_2+\nabla_5+\nabla_8 \right\rangle. $
				
	\end{enumerate}

	\item $ \alpha_6=0, \alpha_7=0, \alpha_4\neq0, $ then by choosing $ r=-\frac{\alpha_5}{\alpha_4}t, y=-\frac{\alpha_1x+\alpha_5z}{\alpha_4}, $ we have $ \alpha^*_1=\alpha^*_5=0 .$ Now we can suppose that $ \alpha_1=0, \alpha_5=0,$ and we have the following subcases:
	
	\begin{enumerate}
		\item if $ \alpha_8=0, \alpha_3=0, $ then  we have a split extension;
		
		\item if $ \alpha_8=0, \alpha_3\neq0, \alpha_2=0, $ then by choosing 
	$ x=\alpha_3,$ $y=0,$ $z=0,$ $r=0,$ $t=\alpha_3^4,$
		we have the representative $ \langle \nabla_3+\nabla_4 \rangle;$
		
		\item if $ \alpha_8=0, \alpha_3\neq0, \alpha_2\neq0, $ then by choosing 
		 $ x=\frac{\alpha_2}{\alpha_4},$ $y=0,$ $z=0,$ $r=0, t=\frac{\alpha_2^5}{\alpha_3\alpha_4^4}, $ we have the representative $ \langle \nabla_2+\nabla_3+\nabla_4 \rangle;$
		
		\item if $ \alpha_8\neq0, \alpha_2=0, $ then by choosing $ x=1, y=0, z=-\frac{\alpha_3}{\alpha_8}, r=0, t=\sqrt{\frac{\alpha_4}{\alpha_8}},$ we have the representative $ \langle \nabla_4+\nabla_8 \rangle;$
		
		\item if $ \alpha_8\neq0, \alpha_2\neq0, $ then by choosing $ x=\frac{\alpha_3}{\alpha_4}, y=0, z=-\frac{\alpha_2\alpha_3}{\alpha_4\alpha_8}, r=0, t=\frac{\alpha_2^3}{\sqrt{\alpha_4^5\alpha_8}},$ we have the representative $ \langle \nabla_2+\nabla_4+\nabla_8 \rangle.$
						
	\end{enumerate}

	\item  $ \alpha_6=0, \alpha_7\neq0, $ then by choosing $ r=-\frac{\alpha_8t}{2\alpha_7}, 
	y=-\frac{(\alpha_2\alpha_8-\alpha_3\alpha_7)x}{\alpha_7^2}, z=-\frac{\alpha_2x}{\alpha_7},$ 
	we have $ \alpha^*_2=\alpha_3^*=\alpha^*_8=0.$ Now we can suppose that 
	$ \alpha_2=0,$ $\alpha_3=0,$ $\alpha_8=0, $ and consider the following cases:
		\begin{enumerate}
		\item if $ \alpha_4=0, \alpha_5=0, \alpha_1=0, $ then we have the representative $ \langle \nabla_7  \rangle;$
		
		\item if $ \alpha_4=0, \alpha_5=0, \alpha_1\neq0, $ then by choosing $ x=\frac{1}{\alpha_7}, t=\alpha_1, y=0, z=0, r=0,$ we have the representative $ \langle \nabla_1+\nabla_7  \rangle;$
		
		\item if $ \alpha_4=0, \alpha_5\neq0, \alpha_1=0, $ then by choosing $ x=\sqrt{\frac{\alpha_5}{\alpha_7}}, t=1, y=0, z=0, r=0, $ we have the representative $ \langle \nabla_5+\nabla_7  \rangle;$
		
		\item if $ \alpha_4=0, \alpha_5\neq0, \alpha_1\neq0, $ then by choosing $ x=\sqrt{\frac{\alpha_5}{\alpha_7}}, t=\sqrt{\frac{\alpha_1^2}{\alpha_5\alpha_7}}, y=0, z=0, r=0, $ we have the representative $ \langle \nabla_1+\nabla_5+\nabla_7  \rangle;$
		
		\item if $ \alpha_4\neq0, \alpha_5=0, \alpha_1=0,$ then by choosing $ x=\sqrt{\alpha_7}, t=\alpha_4, y=0, z=0, r=0, $ we have the representative $ \langle \nabla_4+\nabla_7  \rangle;$
		
		\item if $ \alpha_4\neq0, \alpha_5=0, \alpha_1\neq0, $ then by choosing $ x=\sqrt[3]{\frac{\alpha_1}{\alpha_4}}, t=\sqrt[3]{\frac{\alpha_1\alpha_4^2}{\alpha^3_7}}, y=0, z=0, r=0, $ we have the representative $ \langle \nabla_1+\nabla_4+\nabla_7  \rangle;$
		
		\item if $ \alpha_4\neq0, \alpha_5\neq0, $ then by choosing $ x=\sqrt{\frac{\alpha_5}{\alpha_7}}, t=\sqrt{\frac{\alpha_4\alpha_5}{\alpha^2_7}}, y=0, z=0, r=0, $ we have the representative $ \langle \alpha\nabla_1+\nabla_4+\nabla_5+\nabla_7  \rangle.$						
						
	\end{enumerate}

    \item $\alpha_6\neq0,$ then by choosing  $ r=-\frac{\alpha_7t}{\alpha_6}, y=-\frac{\alpha_2x+\alpha_7z}{\alpha_6}, $ we have $ \alpha^*_2=\alpha_7^*=0 .$ Now we can suppose that $ \alpha_2=0, \alpha_7=0,$ and we have: 

\begin{enumerate}

\item if $\alpha_8=0, \alpha_5=0,$ then $\alpha_3\neq0$ and we have the following subcases:

\begin{enumerate}
\item $\alpha_4=0, \alpha_1=0,$ then by choosing $x=\alpha_3, t=\alpha_3^6\alpha_6, y=0, z=0, r=0,$ we have the representative  $ \langle \nabla_3+\nabla_6  \rangle;$

\item $\alpha_4=0, \alpha_1\neq0,$ then by choosing $x=\sqrt[5]{\frac{\alpha_1}{\alpha_6}}, t=\sqrt[5]{\frac{\alpha_1^7}{\alpha_3^5\alpha^2_6}}, y=0, z=0, r=0,$ we have the representative  $ \langle \nabla_1+\nabla_3+\nabla_6  \rangle;$

\item $\alpha_4\neq0,$ then by choosing $x=\sqrt{\frac{\alpha_4}{\alpha_6}}, t=\sqrt[5]{\frac{\alpha_4^7}{\alpha_3^2\alpha^5_6}}, y=0, z=0, r=0,$ we have the representative  $ \langle \alpha\nabla_1+\nabla_3+\nabla_4+\nabla_6  \rangle.$
\end{enumerate}

\item $\alpha_8=0, \alpha_5\neq0,$ then we have the following subcases:

\begin{enumerate}
\item $\alpha_4=0, \alpha_3=0,$ then by choosing $x=\sqrt[6]{\frac{\alpha_5}{\alpha_6}}, t=1, z=-\frac{\alpha_1}{\sqrt[6]{\alpha_5^5\alpha_6}},y=0,  r=0,$ we have the representative  $ \langle \nabla_5+\nabla_6  \rangle;$

\item $\alpha_4=0, \alpha_3\neq0,$ then by choosing 
$x=\frac{\alpha_3}{\alpha_5}, 
t=\alpha_3^6\alpha_5^{-7}\alpha_6, z=-\frac{\alpha_1\alpha_3}{\alpha_5^2}, y=0,  r=0,$ we have the representative  $ \langle \nabla_3+\nabla_5+\nabla_6  \rangle;$

\item $\alpha_4\neq0,$ then by choosing $x=\sqrt{\frac{\alpha_4}{\alpha_6}}, t=\frac{\alpha_4^3}{\alpha_5\alpha^2_6}, z=-\alpha_1\alpha_5^{-1}\sqrt{\alpha_4\alpha_6^{-1}}, y=0,  r=0,$ we have the representative  $ \langle \alpha\nabla_3+\nabla_4+\nabla_5+\nabla_6  \rangle.$
\end{enumerate}

\item  $\alpha_8\neq0,$ then we have the following subcases:

\begin{enumerate}
\item $\alpha_5=0, \alpha_4=0, \alpha_1=0,$ then by choosing $x=1, t=\sqrt{\frac{\alpha_6}{\alpha_8}}, z=-\frac{\alpha_3}{\alpha_8},y=0,  r=0,$ we have the representative  $ \langle \nabla_6+\nabla_8  \rangle;$

\item $\alpha_5=0, \alpha_4=0, \alpha_1\neq0,$ then by choosing 
\begin{center}$x=\sqrt[5]{\alpha_1 \alpha_6^{-1}},$ 
$t=\sqrt[10]{\alpha_1^8 \alpha_6^{-3}\alpha_8^{-5}},$ $z=-\alpha_3\alpha_8^{-1}\sqrt[5]{\alpha_1\alpha_6^{-1}},$ $y=0,$  $r=0,$
\end{center}
we have the representative  $ \langle \nabla_1+\nabla_6+\nabla_8  \rangle;$

\item $\alpha_5=0, \alpha_4\neq0,$ then by choosing 
\begin{center}
$x=\sqrt{\alpha_4 \alpha_6^{-1}},$ 
$t=\alpha_4^2 \sqrt{ \alpha_6^{-3} \alpha_8^{-1}},$ 
$z=-\alpha_3{\alpha_8^{-1}}\sqrt{\alpha_4 \alpha_6^{-1}},$ $y=0,$  $r=0,$ 
\end{center}
we have the representative  $ \langle \alpha\nabla_1+\nabla_4+\nabla_6+\nabla_8  \rangle;$

\item $\alpha_5\neq0,$ then by choosing $x=\sqrt[4]{\frac{\alpha^2_5}{\alpha_6\alpha_8}},$ $t=\frac{\alpha_5^2}{\sqrt{\alpha_6\alpha_8^3}},$ 
$z=-\frac{\alpha_3\sqrt{\alpha_5}}{\sqrt[4]{\alpha_6\alpha_8^5}},$ $y=0,$  $r=0,$ we have the representative  $ \langle \alpha\nabla_1+\beta\nabla_4+\nabla_5+\nabla_6+\nabla_8  \rangle.$

\end{enumerate}
\end{enumerate}
\end{enumerate}

Summarizing, we have the following distinct orbits:
\begin{center}
$
\left\langle \nabla_1+ \nabla_2+\nabla_8\right\rangle,$ 
$\left\langle \nabla_1+ \nabla_3+\nabla_6\right\rangle,$
$\left\langle \alpha\nabla_1+ \nabla_3+\nabla_4+\nabla_6\right\rangle^{O(\alpha)=O(-\alpha)},$ 
$ 
\left\langle  \alpha\nabla_1+ \beta \nabla_4+\nabla_5+\nabla_6+ \nabla_8\right\rangle^{O(\alpha,\beta)=O(-\alpha,\beta)=O(\pm i \alpha, -\beta)},$ 
$\left\langle \alpha\nabla_1+ \nabla_4+\nabla_6+ \nabla_8\right\rangle^{O(\alpha)=O(-\alpha)},$ 
$\left\langle \nabla_1+ \nabla_4+\nabla_7\right\rangle,$ 
$\left\langle \alpha\nabla_1+ \nabla_4+\nabla_5+\nabla_7\right\rangle^{O(\alpha)=O(-\alpha)},$
$\left\langle \nabla_1+ \nabla_5+\nabla_7\right\rangle,$
$\left\langle \nabla_1+ \nabla_6+\nabla_8\right\rangle, $   
$ 
\left\langle \nabla_1+\nabla_7\right\rangle,$
$\left\langle \nabla_2+\nabla_3+ \nabla_4 \right\rangle,$
$\left\langle \nabla_2+ \nabla_4+\nabla_8\right\rangle,$ 
$\left\langle \nabla_2+ \nabla_5 \right\rangle,$
$\left\langle \nabla_2+ \nabla_5+ \nabla_8\right\rangle, $ 

$
\left\langle \nabla_2+ \nabla_8 \right\rangle,$
$\left\langle \nabla_3+\nabla_4\right\rangle,$
$\left\langle \alpha\nabla_3+\nabla_4+\nabla_5+\nabla_6 \right\rangle^{O(\alpha)=O(-\alpha)},$
$\left\langle \nabla_3+ \nabla_5+\nabla_6\right\rangle,$ 
$\left\langle \nabla_3+\nabla_6 \right\rangle,$
$\left\langle \nabla_4+ \nabla_7\right\rangle,$
$\left\langle \nabla_4+ \nabla_8\right\rangle,$
$\left\langle \nabla_5+ \nabla_6\right\rangle,$
$\left\langle \nabla_5+\nabla_7 \right\rangle,$
$\left\langle \nabla_6+ \nabla_8\right\rangle,$
$\left\langle \nabla_7\right\rangle.
$
\end{center}

Hence, we have the following new algebras:

\begin{longtable}{lllllllllllllll}

${\mathbf{N}}_{47}$ & $:$ & 
$e_1e_1=e_2$ & $e_1e_2=e_5$ & $e_1e_3=e_5$ &$ e_2e_2=e_3$ & $e_4e_4=e_5$
\\

${\mathbf{N}}_{48}$ & $:$ & 
$e_1e_1=e_2$ & $e_1e_2=e_5$ & $e_1e_4=e_5$ &$ e_2e_2=e_3$ & $e_3e_3=e_5$
\\

${\mathbf{N}}_{49}^{\alpha}$ & $:$ & 
$e_1e_1=e_2$ & $e_1e_2=\alpha e_5$ & $e_1e_4=e_5$ &$ e_2e_2=e_3$ & $e_2e_3=e_5$ & $e_3e_3=e_5$
\\

${\mathbf{N}}_{50}^{\alpha, \beta}$ & $:$ & 
$e_1e_1=e_2$ & $e_1e_2=\alpha e_5$ &$ e_2e_2=e_3$ & $e_2e_3=\beta e_5$ \\
&& $e_2e_4=e_5$ &$e_3e_3=e_5$ & $e_4e_4=e_5$
\\
${\mathbf{N}}_{51}^{\alpha}$ & $:$ & 
$e_1e_1=e_2$ &$e_1e_2=\alpha e_5$ &$ e_2e_2=e_3$ & $e_2e_3=e_5$ & $e_3e_3=e_5$ & $e_4e_4=e_5$
\\

${\mathbf{N}}_{52}$ & $:$ & 
$e_1e_1=e_2$ & $e_1e_2=e_5$ &$ e_2e_2=e_3$ & $e_2e_3=e_5$ & $e_3e_4=e_5$
\\

${\mathbf{N}}_{53}^{\alpha}$ & $:$ & 
$e_1e_1=e_2$ & $e_1e_2=\alpha e_5$ &$ e_2e_2=e_3$ & $e_2e_3=e_5$ & $e_2e_4=e_5$ & $e_3e_4=e_5$
\\

${\mathbf{N}}_{54}$ & $:$ & 
$e_1e_1=e_2$ & $e_1e_2=e_5$ &$ e_2e_2=e_3$ & $e_2e_4=e_5$ & $e_3e_4=e_5$
\\

${\mathbf{N}}_{55}$ & $:$ & 
$e_1e_1=e_2$ & $e_1e_2=e_5$ &$ e_2e_2=e_3$ & $e_3e_3=e_5$ & $e_4e_4=e_5$
\\

${\mathbf{N}}_{56}$ & $:$ & 
$e_1e_1=e_2$ & $e_1e_2=e_5$ &$ e_2e_2=e_3$ & $e_3e_4=e_5$
\\

${\mathbf{N}}_{57}$ & $:$ & 
$e_1e_1=e_2$ &$e_1e_3=e_5$ & $e_1e_4=e_5$& $ e_2e_2=e_3$ & $e_2e_3=e_5$
\\

${\mathbf{N}}_{58}$ & $:$ & 
$e_1e_1=e_2$ &$e_1e_3=e_5$ &$ e_2e_2=e_3$ & $e_2e_3=e_5$ & $e_4e_4=e_5$
\\

${\mathbf{N}}_{59}$ & $:$ & 
$e_1e_1=e_2$ &$e_1e_3=e_5$ &$ e_2e_2=e_3$ & $e_2e_4=e_5$
\\

${\mathbf{N}}_{60}$ & $:$ & 
$e_1e_1=e_2$ &$e_1e_3=e_5$ &$ e_2e_2=e_3$ & $e_2e_4=e_5$ & $e_4e_4=e_5$
\\

${\mathbf{N}}_{61}$ & $:$ & 
$e_1e_1=e_2$ &$e_1e_3=e_5$ &$ e_2e_2=e_3$ & $e_4e_4=e_5$ \\

${\mathbf{N}}_{62}$ & $:$ & 
$e_1e_1=e_2$ &$e_1e_4=e_5$ &$ e_2e_2=e_3$ & $e_2e_3=e_5$\\

${\mathbf{N}}_{63}^{\alpha}$ & $:$ & 
$e_1e_1=e_2$ &$e_1e_4=\alpha e_5$ &$ e_2e_2=e_3$ & $e_2e_3=e_5$ & $e_2e_4=e_5$ & $e_3e_3=e_5$\\

${\mathbf{N}}_{64}$ & $:$ & 
$e_1e_1=e_2$ &$e_1e_4=e_5$ &$ e_2e_2=e_3$ & $e_2e_4=e_5$ & $e_3e_3=e_5$\\

${\mathbf{N}}_{65}$ & $:$ & 
$e_1e_1=e_2$ &$e_1e_4=e_5$ &$ e_2e_2=e_3$ & $e_3e_3=e_5$\\

${\mathbf{N}}_{66}$ & $:$ & 
$e_1e_1=e_2$  &$ e_2e_2=e_3$ &$e_2e_3=e_5$ & $e_3e_4=e_5$\\

${\mathbf{N}}_{67}$ & $:$ & 
$e_1e_1=e_2$  &$ e_2e_2=e_3$ &$e_2e_3=e_5$ & $e_4e_4=e_5$\\

${\mathbf{N}}_{68}$ & $:$ & 
$e_1e_1=e_2$  &$ e_2e_2=e_3$ &$e_2e_4=e_5$ & $e_3e_3=e_5$\\

${\mathbf{N}}_{69}$ & $:$ & 
$e_1e_1=e_2$  &$ e_2e_2=e_3$ &$e_2e_4=e_5$ & $e_3e_4=e_5$\\

${\mathbf{N}}_{70}$ & $:$ & 
$e_1e_1=e_2$  &$ e_2e_2=e_3$ &$e_3e_3=e_5$ & $e_4e_4=e_5$\\

${\mathbf{N}}_{71}$ & $:$ & 
$e_1e_1=e_2$  &$ e_2e_2=e_3$ &$e_3e_4=e_5$ &
\\

\end{longtable}



\subsection{$ 1 $-dimensional central extensions of $ {\mathbf N}_{08}^{4*} $.} Here we will collect all information about $ {\mathbf N}_{08}^{4*}: $
$$
\begin{array}{|l|l|l|l|}
\hline
{\mathbf{N}}^{4*}_{08} &  
\begin{array}{l}
e_1e_1=e_2 \\
e_1e_2=e_3 \\  
e_2e_2=e_4
\end{array}
&
\begin{array}{lcl}
\mathrm{H}^2_{\mathfrak{D}}(\mathbf{N}^{4*}_{08})&=&\\ 
\multicolumn{3}{r}{\langle [\Delta_{13}],[\Delta_{14}]+3[\Delta_{23}]\rangle}\\
\mathrm{H}^2_{\mathfrak{C}}(\mathbf{N}^{4*}_{08})&=&\mathrm{H}^2_{\mathfrak{D}}(\mathbf{N}^{4*}_{08})\oplus \\
\multicolumn{3}{r}{\langle [\Delta_{14}], [\Delta_{24}], [\Delta_{33}], [\Delta_{34}], [\Delta_{44}] \rangle}
\end{array} & 
  \phi=\begin{pmatrix}
x&0&0&0\\
y&x^2&0&0\\
z&2xy&x^3&0\\
t&y^2&x^2y&x^4
\end{pmatrix}\\
\hline

\end{array}$$

Let us use the following notations:
\begin{longtable}{llll}
$\nabla_1=[\Delta_{13}],$ & $\nabla_2=[\Delta_{14}]+3[\Delta_{23}],$& $\nabla_3=[\Delta_{14}],$&  $\nabla_4=[\Delta_{24}],$ \\
$\nabla_5=[\Delta_{33}],$&  $\nabla_6=[\Delta_{34}],$ &  $\nabla_7=[\Delta_{44}].$
\end{longtable}

Take $ \theta=\sum\limits_{i=1}^{7}\alpha_i\nabla_i\in\mathrm{H}^2_{\mathfrak{C}}(\mathbf{N}^{4*}_{08}) .$  Since
$$\phi^T\begin{pmatrix}
0&0&\alpha_1&\alpha_2+\alpha_3\\
0&0&3\alpha_2&\alpha_4\\
\alpha_1&3\alpha_2&\alpha_5&\alpha_6\\
\alpha_2+\alpha_3&\alpha_4&\alpha_6&\alpha_7

\end{pmatrix}\phi=
\begin{pmatrix}
\alpha^*&\alpha^{**}&\alpha^{*}_1&\alpha_2^*+\alpha^*_3\\
\alpha^{**}&\alpha^{***}&3\alpha^*_2&\alpha^*_4\\
\alpha^{*}_1&3\alpha^*_2&\alpha^*_5&\alpha^*_6\\
\alpha^*_2+\alpha_3^*&\alpha^*_4&\alpha^*_6&\alpha^*_7
\end{pmatrix},$$

we have

\begin{longtable}{lcl}
$\alpha_1^*$ & $=$ & $(\alpha_1x+3\alpha_2y+\alpha_5z+\alpha_6t)x^3+((\alpha_2+\alpha_3)x+\alpha_4y+\alpha_6z+\alpha_7t)x^2y,$\\

$\alpha_2^*$& $=$ & $\frac{1}{3}(3\alpha_2x^3+(\alpha_4+2\alpha_5)x^2y+3\alpha_6xy^2+\alpha_7y^3)x^2,$\\

$\alpha_3^*$& $=$& $((\alpha_2+\alpha_3)x+\alpha_4y+\alpha_6z+\alpha_7t)x^4-$\\
&& \multicolumn{1}{r}{$\frac{1}{3}(3\alpha_2x^3+(\alpha_4+2\alpha_5)x^2y+3\alpha_6xy^2+\alpha_7y^3)x^2,$}\\

$\alpha_4^*$& $=$& $(\alpha_4x^2+2\alpha_6xy+\alpha_7y^2)x^4,$\\
$\alpha_5^*$&$=$&$(\alpha_5x^2+2\alpha_6xy+\alpha_7y^2)x^4,$\\
$\alpha_6^*$&$=$&$(\alpha_6x+\alpha_7y)x^{6},$\\
$\alpha_7^*$&$=$&$\alpha_7x^8.$
\end{longtable}

We are interested in $ (\alpha_3,\alpha_4, \alpha_5, \alpha_6,\alpha_7)\neq(0,0,0,0,0), $ 
$ (\alpha_2+\alpha_3,\alpha_4, \alpha_6, \alpha_7)\neq(0,0,0,0) $ and  
$ (\alpha_1,\alpha_2, \alpha_5, \alpha_6)\neq(0,0,0,0).$ Let us consider the following cases:

\begin{enumerate}
	\item $ \alpha_7=0, \alpha_6=0, \alpha_5=0, \alpha_4=0, $ then $ \alpha_3\neq0, $ $ \alpha_2+\alpha_3 \neq 0$ and 
 $ (\alpha_1,\alpha_2)\neq(0,0).$
 	
	\begin{enumerate}
		\item\label{blabla1.a} if $  \alpha_2\neq-\frac{\alpha_3}{4}, $ then by choosing $ x=4\alpha_2+\alpha_3, y=-\alpha_1, $ we have the representative $ \left\langle \alpha\nabla_2+\nabla_3 \right\rangle_{\alpha\neq0,-\frac{1}{4},-1};  $
		
		\item if $  \alpha_2=-\frac{\alpha_3}{4}, $ then we have the representatives 
		\begin{center}$ \left\langle -\frac{1}{4}\nabla_2+\nabla_3 \right\rangle $ and $ \left\langle \nabla_1-\frac{1}{4}\nabla_2+\nabla_3 \right\rangle $\end{center} depending on $ \alpha_1=0 $ or not.

	\end{enumerate}

	\item $ \alpha_7=0, \alpha_6=0, \alpha_5=0, \alpha_4\neq0, $ then by choosing $y= -\frac {3\alpha_2}{\alpha_4}x, $ we have $\alpha_2^*=0.$
This we can suppose $\alpha_2=0,$ which implies $\alpha_1\neq 0$ and choosing $x  = \sqrt {\alpha_1 \alpha_4^{-1}}, $  
we have the representative $ \left\langle \nabla_1+\alpha\nabla_3+\nabla_4 \right\rangle.  $

	\item $ \alpha_7=0, \alpha_6=0, \alpha_5\neq0. $
	
	\begin{enumerate}
				
		\item\label{etotcase2} if $ \alpha_4=0,$ then $\alpha_2\neq-\alpha_3 $ and choosing 
\begin{center}		$ x=\frac{\alpha_2+\alpha_3}{\alpha_5},$ $y=\frac{3\alpha_2\alpha_3+3\alpha_3^2}{2\alpha_5^2},$ $z=-\frac{(\alpha_2+\alpha_3)(2\alpha_1\alpha_5+12\alpha_2\alpha_3+3\alpha_3^2)}{4\alpha_5^3},$
\end{center}
we have the representative $ \left\langle \nabla_2+\nabla_5 \right\rangle;$
		
		\item if $ \alpha_4\neq0, \alpha_4\neq\alpha_5, 2(\alpha_2\alpha_5-\alpha_2\alpha_4+\alpha_3\alpha_5)+\alpha_3\alpha_4=0,$ then by choosing 
	\begin{center}$ x=2(\alpha_4-\alpha_5), y=3\alpha_3, z=0, t=0, $
	\end{center}
	we have the representative $ \left\langle \alpha\nabla_4+\nabla_5 \right\rangle_{\alpha\neq0,1};$
		
		\item\label{etotcase1} if $ \alpha_4\neq0, \alpha_4\neq\alpha_5, 2(\alpha_2\alpha_5-\alpha_2\alpha_4+\alpha_3\alpha_5)+\alpha_3\alpha_4\neq0,$ then by choosing 
		\begin{center}$ x=\frac{2(\alpha_2\alpha_5-\alpha_2\alpha_4+\alpha_3\alpha_5)+\alpha_3\alpha_4}{2(\alpha_5^2-\alpha_4\alpha_5)},$ 
$y=\frac{3\alpha_3(2(\alpha_2\alpha_5-\alpha_2\alpha_4+\alpha_3\alpha_5)+\alpha_3\alpha_4)}{2\alpha_5(\alpha_5-\alpha_4)^2},$
$z=-\frac{(2 \alpha_2 (\alpha_4-\alpha_5)-\alpha_3 (\alpha_4+2 \alpha_5)) (4 \alpha_1 (\alpha_4-\alpha_5)^2-24 \alpha_2 \alpha_3 (\alpha_4-\alpha_5)+3 \alpha_3^2 (\alpha_4+2 \alpha_5))}{8 (\alpha_4-\alpha_5)^3 \alpha_5^2}, t=0, $ 
		\end{center}
		we have the family of representatives $ \left\langle \nabla_2+\alpha\nabla_4+\nabla_5 \right\rangle_{\alpha\neq0,1}$;
		
\item if $ \alpha_4\neq0, \alpha_4=\alpha_5, $ then by choosing $ y=-\frac{\alpha_2x}{\alpha_5}$ and  $z=\frac{(\alpha_3\alpha_5-\alpha_1\alpha_5+3\alpha_2^2)x}{\alpha_5^2}, $ we have the representatives $ \left\langle \nabla_4+\nabla_5 \right\rangle$ and $ \left\langle \nabla_3+\nabla_4+\nabla_5 \right\rangle$ depending on whether $ \alpha_3=0 $ or not.	
Note that $ \left\langle \nabla_4+\nabla_5 \right\rangle =\left\langle \nabla_2+\nabla_4+\nabla_5 \right\rangle$ and it    will be jointed with the family from the case (\ref{etotcase1}).
		
	\end{enumerate}

	\item if $ \alpha_7=0, \alpha_6\neq0, $ then by choosing  
$x=1,$
$y = \frac{
  \sqrt{(  \alpha_4 + 2   \alpha_5)^2 - 36   \alpha_2 \alpha_6}-  \alpha_4 - 2   \alpha_5   }{6 \alpha_6},$ 
\begin{center} $z = y^2 - \frac{ \alpha_3}{\alpha_6} + \frac{2 y ( \alpha_5-\alpha_4)}{3 \alpha_6}$ and  
 $t = -\frac{x^2 \alpha_1 + x y (4 \alpha_2 + \alpha_3) + x z \alpha_5 + 
   y (y \alpha_4 + z \alpha_6)}{ \alpha_6)}, $
\end{center}
we have $ \alpha_1^*=\alpha_2^*=\alpha_3^*=0. $ Now we can suppose that $ \alpha_1=0, \alpha_2=0, \alpha_3=0, $ and we have the following cases:
	
	\begin{enumerate}
		\item if $ \alpha_4=0, \alpha_5=0, $ then by choosing $ x=1, y=0, z=0, t=0, $ we have the representative $ \left\langle \nabla_6 \right\rangle;$
		
		\item\label{caseqq} if $ \alpha_4=0, \alpha_5\neq0, $ then by choosing 
$ x=-\frac{4 \alpha_5}{3 \alpha_6}, y=\frac{8 \alpha_5^2}{9 \alpha_6^2}, z=0, t=0, $ 
we have the representative $ \left\langle \nabla_4+\frac{1}{4}\nabla_5+\nabla_6 \right\rangle;$
		
		\item if $ \alpha_4\neq0, $ then by choosing $ x=\frac{\alpha_4}{\alpha_6}, y=0, z=0, t=0, $ we have the family of representatives $ \left\langle \nabla_4+\alpha\nabla_5+\nabla_6 \right\rangle,$
		which will be jointed with the representative from the case (\ref{caseqq}).
					
	\end{enumerate}
	
	\item if $ \alpha_7\neq0, $ then by choosing $x=1,$ $ y=-\frac{\alpha_6}{\alpha_7},$ $t=\frac{  2 \alpha_6^3+2 (\alpha_4-\alpha_5) \alpha_6 \alpha_7-3 \alpha_3 \alpha_7^2 }{3\alpha_7^3}$ and $z=0,$ we have $ \alpha_3^*=0, \alpha_6^*=0. $ Now we can suppose that $ \alpha_3=0, \alpha_6=0, $ and we have the following cases:	
	
	\begin{enumerate}
	\item if $ \alpha_5\neq0, $ then by choosing $ x=\sqrt{\alpha_5\alpha_7^{-1}}, y=0, 
	z=\sqrt{\alpha_1^2 \alpha_5^{-1}\alpha_7^{-1}}, t=0, $ we have the family  of representatives $ \left\langle \alpha\nabla_2+\beta\nabla_4+\nabla_5+\nabla_7 \right\rangle;$

		\item if $ \alpha_5=0, \alpha_2=0, $ then $\alpha_1\neq 0$  and we have the family of representatives $ \left\langle \nabla_1+\alpha\nabla_4+\nabla_7 \right\rangle;$
		
		\item if $ \alpha_5=0,  \alpha_2\neq0, $ then by choosing $ x=\sqrt[3]{\alpha_2\alpha_7^{-1}}, y=0, z=0, t=0, $ we have the family of representatives $ \left\langle \alpha\nabla_1+ \nabla_2+\beta\nabla_4+\nabla_7 \right\rangle.  $

	\end{enumerate}
		
\end{enumerate}

Summarizing all cases we have the following distinct orbits
\begin{center}

$\left\langle \nabla_1-\frac{1}{4}\nabla_2+\nabla_3 \right\rangle,$
$\left\langle \alpha\nabla_1+ \nabla_2+\beta\nabla_4+\nabla_7 \right\rangle^{O(\alpha,\beta)=O(-\eta_3 \alpha, \eta_3^2\beta)=O(\eta_3^2\alpha,-\eta_3 \beta)},$
$ 
\left\langle \nabla_1+\alpha\nabla_3+\nabla_4 \right\rangle^{O(\alpha)=O(- \alpha)},$
$\left\langle \nabla_1+\alpha\nabla_4+\nabla_7 \right\rangle^{O(\alpha)=O(- \alpha)},$ 
$\left\langle  \alpha\nabla_2+\nabla_3 \right\rangle_{\alpha\neq0,-1},$ 
$\left\langle \nabla_2+\alpha\nabla_4+\nabla_5 \right\rangle,$
$\left\langle \alpha\nabla_2+\beta\nabla_4+\nabla_5+\nabla_7 \right\rangle^{O(\alpha,\beta)=O(- \alpha,\beta)},$
$\left\langle \nabla_3+\nabla_4+\nabla_5 \right\rangle,$
$
\left\langle \alpha\nabla_4+\nabla_5 \right\rangle_{\alpha\neq0,1},$
$\left\langle \nabla_4+\alpha\nabla_5+\nabla_6 \right\rangle,$
$\left\langle \nabla_6 \right\rangle,
$
\end{center}
which gives the following new algebras:

\begin{longtable}{llllllllllllllllll}
${\mathbf{N}}_{72}$ & $:$ &
$e_1e_1=e_2$ & $e_1e_2=e_3$ & $e_1e_3=e_5$ \\
&& $e_1e_4=\frac{3}{4}e_5$ &$e_2e_2=e_4$ & $e_2e_3=-\frac{3}{4}e_5$
\\

${\mathbf{N}}_{73}^{\alpha, \beta}$ & $:$ &
$e_1e_1=e_2$ & $e_1e_2=e_3$ & $e_1e_3=\alpha e_5$ &$e_1e_4=e_5$\\ &&$e_2e_2=e_4$  & $e_2e_3=3e_5$ &
$e_2e_4=\beta e_5$ & $e_4e_4=e_5$
\\

${\mathbf{N}}_{74}^{\alpha}$ & $:$ &
$e_1e_1=e_2$ & $e_1e_2=e_3$ & $e_1e_3=e_5$ \\
&& $e_1e_4=\alpha e_5$ &$e_2e_2=e_4$ & $e_2e_4=e_5$

\\

${\mathbf{N}}_{75}^{\alpha}$ & $:$ &
$e_1e_1=e_2$ & $e_1e_2=e_3$ & $e_1e_3=e_5$ \\
&&$e_2e_2=e_4$ & $e_2e_4=\alpha e_5$ & $e_4e_4=e_5$
\\

${\mathbf{N}}_{76}^{\alpha\neq 0,-1}$ & $:$ &
$e_1e_1=e_2$ & $e_1e_2=e_3$ & \multicolumn{2}{l}{$e_1e_4=(1+\alpha) e_5$} \\&&$e_2e_2=e_4$ &$e_2e_3=3\alpha e_5$
\\

${\mathbf{N}}_{77}^{\alpha} $ & $:$ &
$e_1e_1=e_2$ & $e_1e_2=e_3$ & $e_1e_4=e_5$ &$e_2e_2=e_4$ \\
&& $e_2e_3=3e_5$  & $e_2e_4=\alpha e_5$ & $e_3e_3=e_5$
\\

${\mathbf{N}}_{78}^{\alpha, \beta}$ & $:$ &
$e_1e_1=e_2$ & $e_1e_2=e_3$ & $e_1e_4=\alpha e_5$ &$e_2e_2=e_4$ \\
&& $e_2e_3=3\alpha e_5$  & $e_2e_4=\beta e_5$ & $e_3e_3=e_5$ & $e_4e_4=e_5$
\\

${\mathbf{N}}_{79}$ & $:$ &
$e_1e_1=e_2$ & $e_1e_2=e_3$ &$e_1e_4=e_5$ \\& &$e_2e_2=e_4$ & $e_2e_4=e_5$ & $e_3e_3=e_5$
\\

${\mathbf{N}}_{80}^{\alpha \neq 0,1}$ & $:$ &
$e_1e_1=e_2$ & $e_1e_2=e_3$ &$e_2e_2=e_4$ & $e_2e_4=\alpha e_5$ & $e_3e_3=e_5$
\\

${\mathbf{N}}_{81}^{\alpha}$ & $:$ &
$e_1e_1=e_2$ & $e_1e_2=e_3$ &$e_2e_2=e_4$ \\
& & $e_2e_4=e_5$ &$e_3e_3=\alpha e_5$ &$e_3e_4=e_5$
\\

${\mathbf{N}}_{82}$ & $:$ &
$e_1e_1=e_2$ & $e_1e_2=e_3$ &$e_2e_2=e_4$ & $e_3e_4=e_5$
\\

\end{longtable}




\subsection{$ 1 $-dimensional central extensions of $ {\mathbf N}_{09}^{4*} $.} Here we will collect all information about $ {\mathbf N}_{09}^{4*}: $
$$
\begin{array}{|l|l|l|l|}
\hline
{\mathbf{N}}^{4*}_{09} & 
\begin{array}{l}
e_1e_1=e_2 \\ 
e_2e_3=e_4
\end{array}
&
\begin{array}{lcl}
\mathrm{H}^2_{\mathfrak{D}}(\mathbf{N}^{4*}_{09})&=&
\\
\multicolumn{3}{r}{\langle [\Delta_{12}],[\Delta_{13}],[\Delta_{22}],[\Delta_{33}]\rangle}\\
\mathrm{H}^2_{\mathfrak{C}}(\mathbf{N}^{4*}_{09})&=&\mathrm{H}^2_{\mathfrak{D}}(\mathbf{N}^{4*}_{09})\oplus\\
\multicolumn{3}{r}{\langle [\Delta_{14}],[\Delta_{24}], [\Delta_{34}], [\Delta_{44}] \rangle}
\end{array}& 
 \phi=\begin{pmatrix}
x&0&0&0\\
0&x^2&0&0\\
0&0&r&0\\
t&0&s&x^2r
\end{pmatrix}\\
\hline

\end{array}$$

Let us use the following notations:
\begin{longtable}{llll}
$\nabla_1=[\Delta_{12}], $&$ \nabla_2=[\Delta_{13}], $&$ \nabla_3=[\Delta_{14}], $&$  \nabla_4=[\Delta_{22}],$\\
$\nabla_5=[\Delta_{24}],  $&$ \nabla_6=[\Delta_{33}],  $&$  \nabla_7=[\Delta_{34}], $&$ \nabla_8=[\Delta_{44}]. $
\end{longtable}

Take $ \theta=\sum\limits_{i=1}^{8}\alpha_i\nabla_i\in\mathrm{H}^2_{\mathfrak{C}}(\mathbf{N}^{4*}_{09}) .$  Since

$$\phi^T\begin{pmatrix}
0&\alpha_1&\alpha_2&\alpha_3\\
\alpha_1&\alpha_4&0&\alpha_5\\
\alpha_2&0&\alpha_6&\alpha_7\\
\alpha_3&\alpha_5&\alpha_7&\alpha_8

\end{pmatrix}\phi=
\begin{pmatrix}
\alpha^*&\alpha_1^{*}&\alpha^{*}_2&\alpha^*_3\\
\alpha_1^{*}&\alpha_4^{*}&\alpha^{**}&\alpha^*_5\\
\alpha^{*}_1&\alpha^{**}&\alpha^*_6&\alpha^*_7\\
\alpha_3^*&\alpha^*_5&\alpha^*_7&\alpha^*_8
\end{pmatrix},$$
we have
\begin{longtable}{ll}
$\alpha_1^*=(\alpha_1x+\alpha_5t)x^2,$&
$\alpha_2^*=(\alpha_2x+\alpha_7t)r+(\alpha_3x+\alpha_8t)s,$\\

$\alpha_3^*=(\alpha_3x+\alpha_8t)x^2r,$&
$\alpha_4^*=\alpha_4x^4,$\\

$\alpha_5^*=\alpha_5x^4r,$&
$\alpha_6^*=(\alpha_6r+\alpha_7s)r+(\alpha_7r+\alpha_8s)s,$\\

$\alpha_7^*=(\alpha_7r+\alpha_8s)x^2r,$&
 $\alpha_8^*=\alpha_8 r^2 x^4.$
\end{longtable}

We are interested in $ (\alpha_3,\alpha_5,\alpha_7,\alpha_8)\neq(0,0,0,0) $ . Let us consider the following cases:

\begin{enumerate}
	\item  $ \alpha_8=0, \alpha_7=0, \alpha_5=0, $ then $ \alpha_3\neq0 $ and we have
	
	\begin{enumerate}
		\item if $ \alpha_1=0, \alpha_4=0, \alpha_6=0, $ then by choosing 
		$ x=1, r=\alpha_3, s=-\alpha_2, t=0, $ 
		we have the representative $ \left\langle \nabla_3 \right\rangle; $
		
		\item if $ \alpha_1=0, \alpha_4=0, \alpha_6\neq0, $ then by choosing 
		$ x=\alpha_6, r=\alpha_3\alpha_6^2, s=-\alpha_2\alpha_6^2, t=0,$ 
		we have the representative $ \left\langle \nabla_3+\nabla_6 \right\rangle; $
		
		\item if $ \alpha_1=0, \alpha_4\neq0, \alpha_6=0, $ then by choosing 
		$x=\alpha_3^2, r=\alpha_3\alpha_4, s=-\alpha_2\alpha_4, t=0,$ 
		we have the representative $ \left\langle \nabla_3+\nabla_4 \right\rangle; $
		
		\item if $ \alpha_1=0, \alpha_4\neq0, \alpha_6\neq0, $ then by choosing 
$x={\alpha_3}^{-1}\sqrt{\alpha_4\alpha_6},$ $r={\alpha_3}^{-2}{\sqrt{\alpha_4^3\alpha_6}},$ $s=-\alpha_2{\alpha_3}^{-3}\sqrt{\alpha_4^3\alpha_6},$ $
t=0,$
we have the representative $ \left\langle \nabla_3+\nabla_4+\nabla_6 \right\rangle;$						
		
		\item if $ \alpha_1\neq0, \alpha_4=0, \alpha_6=0, $ then by choosing
	 $ x=1,$ $ r={\alpha_1}{\alpha_3}^{-1},$ $ s=-{\alpha_1\alpha_2}{\alpha^{-2}_3},$ $ t=0, $
	 we have the representative $ \left\langle \nabla_1+\nabla_3 \right\rangle; $
		
		\item if $ \alpha_1\neq0, \alpha_4=0, \alpha_6\neq0, $ then by choosing 
	$x=\sqrt[3]{\alpha_1\alpha_6 \alpha_3^{-2}},$ 
	$r=\alpha_1\alpha_3^{-1},$ 
	$s=-\alpha_1\alpha_2\alpha^{-2}_3,$ $
	t=0,$
	we have the representative $ \left\langle \nabla_1+\nabla_3+\nabla_6 \right\rangle;$		
		
		\item if $ \alpha_1\neq0, \alpha_4\neq0,  $ then by choosing \begin{center}
$ x= \alpha_1 \alpha_4^{-1},$ 
$r=\alpha_1\alpha_3^{-1},$ 
$s=-\alpha_1\alpha_2 \alpha^{-2}_3,$ $
t=0,$\end{center}
 we have the  family of  representatives $ \left\langle \nabla_1+\nabla_3+\nabla_4+\alpha\nabla_6 \right\rangle. $		
				
	\end{enumerate}

	\item $ \alpha_8=0, \alpha_7=0, \alpha_5\neq0 $ and we have

	\begin{enumerate}
	\item if $ \alpha_3=0, \alpha_2=0, \alpha_4=0, \alpha_6=0,$ then by choosing 
	$ r=1, x=\alpha_5, t=-\alpha_1, s=0,$
	we have the representative $ \left\langle \nabla_5 \right\rangle;$
	
	\item if $ \alpha_3=0, \alpha_2=0, \alpha_4=0, \alpha_6\neq0, $ then by choosing 
	   $ x=\alpha_5\alpha_6,$ 
	   $r=\alpha_5^5\alpha_6^3, s=0, t=-\alpha_1\alpha_6,$
	   we have the representative $ \left\langle \nabla_5+\nabla_6 \right\rangle;$
	
	\item if $ \alpha_3=0, \alpha_2=0, \alpha_4\neq0, \alpha_6=0, $ then by choosing  
	  $ x=1, r=\alpha_4{\alpha_5}^{-1}, t=-{\alpha_1}{\alpha_5}^{-1}, s=0,$
	  we have the representative $ \left\langle \nabla_4+\nabla_5 \right\rangle;$
	
	\item if $ \alpha_3=0, \alpha_2=0, \alpha_4\neq0, \alpha_6\neq0, $ then by choosing 
  \begin{center}  $x={\sqrt[4]{\alpha_4\alpha_6\alpha_5^{-2}}},$ $r={\alpha_4}{\alpha_5}^{-1},$ $t=-{\alpha_1\sqrt[4]{\alpha_4\alpha_6 \alpha_5^{-6}}}, s=0,$\end{center}
	    we have the representative $ \left\langle \nabla_4+\nabla_5+\nabla_6 \right\rangle;$						
	
	\item if $ \alpha_3=0, \alpha_2\neq0, \alpha_4=0, \alpha_6=0, $ then by choosing 
\begin{center}	   $ r=1, x=\sqrt[3]{{\alpha_2}{\alpha_5}^{-1}}, t=-\alpha_1\sqrt[3]{\alpha_2\alpha_5^{-4}}, s=0,$\end{center}
	    we have the representative $ \left\langle \nabla_2+\nabla_5 \right\rangle;$
	
	\item if $ \alpha_3=0, \alpha_2\neq0, \alpha_4=0, \alpha_6\neq0, $ then by choosing 
	\begin{center}$ x=\sqrt[3]{{\alpha_2}{\alpha_5}^{-1}},$ $r=\alpha_6^{-1}\sqrt[3]{\alpha_2^{4}\alpha_5^{-1}},$ $t=-\alpha_1\sqrt[3]{\alpha_2\alpha_5^{-4}}, s=0,$\end{center}
	we have the representative $ \left\langle \nabla_2+\nabla_5+\nabla_6 \right\rangle;$
	
	\item if $ \alpha_3=0, \alpha_2\neq0, \alpha_4\neq0, $ then by choosing 
\begin{center}	    $ x=\sqrt[3]{\alpha_2\alpha_5^{-1}}, 
	    r={\alpha_4}{\alpha_5}^{-1}, t=-\alpha_1\sqrt[3]{\alpha_2 \alpha_5^{-4}}, s=0,$
	    \end{center}
	    	we have the  family of  representatives $ \left\langle \nabla_2+\nabla_4+\nabla_5+\alpha\nabla_6 \right\rangle;$		
	
	\item if $ \alpha_3\neq0, \alpha_4=0, \alpha_6=0, $ then by choosing 
$ x={\alpha_3}{\alpha_5}^{-1}, r=\alpha_3, s=-\alpha_2, t=-{\alpha_1\alpha_3}{\alpha_5^{-2}},$
	we have the representative $ \left\langle \nabla_3+\nabla_5 \right\rangle;$	
	
	\item if $ \alpha_3\neq0, \alpha_4=0, \alpha_6\neq0, $ then by choosing \begin{center}
$ x={\alpha_3}{\alpha_5}^{-1}, 
	r={\alpha_3^4}{\alpha_5^{-3}\alpha_6^{-1}}, s=-{\alpha_2\alpha_3^3}{\alpha_5^{-3}\alpha_6^{-1}},
	t=-{\alpha_1\alpha_3}{\alpha_5^{-2}},$\end{center}
	we have the representative $ \left\langle \nabla_3+\nabla_5+\nabla_6 \right\rangle;$		
	
	\item if $ \alpha_3\neq0, \alpha_4\neq0, $ then by choosing 	    $x={\alpha_3}{\alpha_5}^{-1}, 
	    r={\alpha_4}{\alpha_5}^{-1}, 
	    s=-{\alpha_2\alpha_4}{\alpha_3^{-1}\alpha_5^{-1}}, t=-{\alpha_1\alpha_3}{\alpha_5^{-2}},$
	    we have the  family of  representatives $ \left\langle \nabla_3+\nabla_4+\nabla_5+\alpha\nabla_6 \right\rangle. $

\end{enumerate}

	\item  $ \alpha_8=0, \alpha_7\neq0, $ then by choosing 
$ x=2 \alpha_7^2,$ $ t=\alpha_3 \alpha_6-2 \alpha_2 \alpha_7,$ $s=-\alpha_6,$ $r=2 \alpha_7,$
we have $ \alpha_2^*=\alpha_6^*=0. $ 
Now we can suppose that $ \alpha_2=0$ and $\alpha_6=0,$ then for $s=0$ and $t=0,$ we have:
\begin{enumerate}
	\item if $ \alpha_1=0, \alpha_3=0, \alpha_4=0, \alpha_5=0,$ then by choosing 
	    $ r=1, x=1, $
 we have the representative $ \left\langle \nabla_7 \right\rangle; $
	
	\item if $ \alpha_1=0, \alpha_3=0, \alpha_4=0, \alpha_5\neq0, $ then by choosing 
$ x=\alpha_7, r=\alpha_5\alpha_7,$ 
we have the representative $ \left\langle \nabla_5+\nabla_7 \right\rangle;$
	
	\item if $ \alpha_1=0, \alpha_3=0, \alpha_4\neq0, \alpha_5=0, $ then by choosing 
$ x=\sqrt{\alpha_7}, r=\sqrt{\alpha_4}, $
 we have the representative $ \left\langle \nabla_4+\nabla_7 \right\rangle; $
	
	\item if $ \alpha_1=0, \alpha_3=0, \alpha_4\neq0, \alpha_5\neq0, $ then by choosing $ x=\alpha_4\sqrt{\alpha_7 \alpha_5^{-3}}, r={\alpha_4}{\alpha_5}^{-1},$
we have the representative $ \left\langle \nabla_4+\nabla_5+\nabla_7 \right\rangle;$						
	
	\item if $ \alpha_1=0, \alpha_3\neq0, \alpha_5=0, $ then by choosing $ r=\alpha_3, x=\alpha_5, $
	we have the  family of representatives $ \left\langle \nabla_3+\alpha\nabla_4+\nabla_7 \right\rangle;$
	
	\item if $ \alpha_1=0, \alpha_3\neq0, \alpha_5\neq0, $ then by choosing $ x={\alpha_3}{\alpha_5}^{-1}, r={\alpha_3^2}{\alpha_5^{-1}\alpha_7^{-1}}, $
 we have the  family of representatives $ \left\langle \nabla_3+\alpha\nabla_4+\nabla_5+\nabla_7 \right\rangle;$
	
	\item if $ \alpha_1\neq0, \alpha_3=0, \alpha_4=0, \alpha_5=0,$ then by choosing $ x=\alpha_1\alpha_7, r=\alpha_1,  $
	 we have the representative $ \left\langle \nabla_1+\nabla_7 \right\rangle;$		
	
	\item if $ \alpha_1\neq0, \alpha_3=0, \alpha_4=0, \alpha_5\neq0,$ then by choosing $ x=\sqrt[3]{{\alpha_1\alpha_7}{\alpha^{-2}_5}}, r=\sqrt[3]{{\alpha_1^2}{\alpha_5^{-1}\alpha_7^{-1}}},  $
	we have the representative $ \left\langle \nabla_1+\nabla_5+\nabla_7 \right\rangle;$	
	
	\item if $ \alpha_1\neq0, \alpha_3=0, \alpha_4\neq0, $ then by choosing $ x={\alpha_1}{\alpha_4}^{-1}, r={\alpha_1}{\sqrt{\alpha_4^{-1}\alpha_7^{-1}}},$
 we have the  family of representatives $ \left\langle \nabla_1+\nabla_4+\alpha\nabla_5+\nabla_7 \right\rangle;$		
	
	\item if $ \alpha_1\neq0, \alpha_3\neq0, $ then by choosing 
 $x={\alpha_1\alpha_7}{\alpha_3^{-2}}, 
	r={\alpha_1}{\alpha_3}^{-1}, $ 
	we have the family of representatives $ \left\langle \nabla_1+\nabla_3+\alpha\nabla_4+\beta\nabla_5+\nabla_7 \right\rangle. $

\end{enumerate}
	\item $ \alpha_8\neq0, $ then by choosing 
	    $x=\alpha_8, t=- \alpha_3, s=- \alpha_7, r=\alpha_8,$
	we have $ \alpha_3^*=\alpha_7^*=0. $ Now we can suppose that $ \alpha_3=0$ and  $\alpha_7=0,$ then for $s=0$ and $t=0,$ we have
\begin{enumerate}
	\item if $ \alpha_1=0, \alpha_2=0, \alpha_4=0, \alpha_6=0,$ then we have the representatives $ \left\langle \nabla_8 \right\rangle $ and $ \left\langle \nabla_5+\nabla_8 \right\rangle, $ depending on whether $ \alpha_5=0 $ or not;
	
	\item if $ \alpha_1=0, \alpha_2=0, \alpha_4=0, \alpha_6\neq0, $ then we have the representatives $ \left\langle \nabla_6+\nabla_8 \right\rangle $ and $ \left\langle \nabla_5+\nabla_6+\nabla_8 \right\rangle, $ depending on whether $ \alpha_5=0 $ or not;
	
	\item if $ \alpha_1=0, \alpha_2=0, \alpha_4\neq0, \alpha_6=0, $ then we have the representatives $ \left\langle \nabla_4+\nabla_8 \right\rangle  $ and $ \left\langle \nabla_4+\nabla_5+\nabla_8 \right\rangle, $ depending on whether $\alpha_5=0$ or not;
	
	\item if $ \alpha_1=0, \alpha_2=0, \alpha_4\neq0, \alpha_6\neq0, $ then by choosing $ x=\sqrt[4]{{\alpha_6}{\alpha_8}^{-1}}, 
	r=\sqrt{{\alpha_4}{\alpha_8}^{-1}} ,$ we have the representative $ \left\langle \nabla_4+\alpha\nabla_5+\nabla_6+\nabla_8 \right\rangle;$						
	
	\item if $ \alpha_1=0, \alpha_2\neq0, \alpha_4=0, \alpha_5=0,$ then we have the representatives $ \left\langle \nabla_2+\nabla_8 \right\rangle $ and $ \left\langle \nabla_2+\nabla_6+\nabla_8 \right\rangle, $ depending on whether $ \alpha_6=0 $ or not;
	
	\item if $ \alpha_1=0, \alpha_2\neq0, \alpha_4=0, \alpha_5\neq0, $ then by choosing $ x=\sqrt[3]{{\alpha_2}{\alpha_5}^{-1}}, r={\alpha_5}{\alpha_8}^{-1}, $ we have the representative $ \left\langle \nabla_2+\nabla_5+\alpha\nabla_6+\nabla_8 \right\rangle; $
	
	\item if $ \alpha_1=0, \alpha_2\neq0, \alpha_4\neq0,$ then by choosing $ x=\sqrt[6]{{\alpha_2^2 \alpha_4^{-1}\alpha_8^{-1}}},$ $r=\sqrt{{\alpha_4}{\alpha_8}^{-1}},$ we have the representative $ \left\langle \nabla_2+\nabla_4+\alpha\nabla_5+\beta\nabla_6+\nabla_8 \right\rangle; $		
	
	\item if $ \alpha_1\neq0, \alpha_2=0, \alpha_4=0, \alpha_5=0,$  then we have the representatives $ \left\langle \nabla_1+\nabla_8 \right\rangle $ and $ \left\langle \nabla_1+\nabla_6+\nabla_8 \right\rangle $ depending on whether $ \alpha_6=0 $ or not;	
	
	\item if $ \alpha_1\neq0, \alpha_2=0, \alpha_4=0, \alpha_5\neq0, $ then by choosing $ x={\alpha_1\alpha_8}{\alpha_5^{-2}}, r={\alpha_5}{\alpha_8}^{-1},$ we have the representative $ \left\langle \nabla_1+\nabla_5+\alpha\nabla_6+\nabla_8 \right\rangle;$		
	
	\item if $ \alpha_1\neq0, \alpha_2=0, \alpha_4\neq0, $ then by choosing $ x={\alpha_1}{\alpha_4}^{-1}, r=\sqrt{{\alpha_4}{\alpha_8}^{-1}},$ we have the representative $ \left\langle \nabla_1+\nabla_4+\alpha\nabla_5+\beta\nabla_6+\nabla_8 \right\rangle;$
	
	\item if $ \alpha_1\neq0, \alpha_2\neq0, $ then by choosing $ x=\sqrt[5]{{\alpha_2^2}{\alpha_1^{-1}\alpha_8^{-1}}}, r=\sqrt[5]{{\alpha_1^4}{\alpha_2^{-3}\alpha_8^{-1}}},$ we have the representative $ \left\langle \nabla_1+\nabla_2+\alpha\nabla_4+\beta\nabla_5+\gamma\nabla_6+\nabla_8 \right\rangle.$

\end{enumerate}

\end{enumerate}

Summarizing, we have the following distinct orbits:
\begin{center}

$\left\langle \nabla_1+\nabla_2+\alpha\nabla_4+\beta\nabla_5+\gamma\nabla_6+\nabla_8  \right\rangle
^{
{\tiny\begin{array}{l}
O(\alpha,\beta,\gamma)=O(-\eta_5\alpha,-\eta_5^3\beta,-\eta_5\gamma)=\\
O(\eta^2_5\alpha,-\eta_5\beta,\eta_5^2\gamma)=O(-\eta^3_5\alpha,\eta_5^4\beta,-\eta_5^3\gamma)=\\
O(\eta_5^4\alpha,\eta_5^2\beta,\eta_5^4\gamma)
\end{array}} },$
$\left\langle \nabla_1+\nabla_3\right\rangle,$ 
$\left\langle \nabla_1+\nabla_3+\alpha\nabla_4+\beta \nabla_5+\nabla_7\right\rangle,$ 
$\left\langle \nabla_1+\nabla_3+\nabla_4+\alpha\nabla_6  \right\rangle,$ 
$\left\langle \nabla_1+\nabla_3 +\nabla_6\right\rangle,$
$\left\langle \nabla_1+\nabla_4+\alpha\nabla_5+\beta \nabla_6+\nabla_8\right\rangle
^{O(\alpha,\beta)=O(-\alpha,\beta)},$ 
$\left\langle \nabla_1+\nabla_4+\alpha\nabla_5+\nabla_7  \right\rangle^{O(\alpha,\beta)=O(-\alpha,\beta)},$ 
$\left\langle \nabla_1+\nabla_5 +\alpha \nabla_6+\nabla_8\right\rangle,$ 
$\left\langle \nabla_1+\nabla_5+\nabla_7\right\rangle,$
$\left\langle \nabla_1+\nabla_6+\nabla_8  \right\rangle,$ 
$\left\langle \nabla_1+\nabla_7 \right\rangle,$  
$\left\langle \nabla_1+\nabla_8\right\rangle, $
$\left\langle \nabla_2+\nabla_4+\nabla_5+\alpha\nabla_6\right\rangle^{
O(\alpha)=O(\eta_3\alpha)=O(\eta_3^2\alpha)},$ 
$\left\langle \nabla_2+\nabla_4+\alpha\nabla_5+\beta\nabla_6+\nabla_8  \right\rangle,$ 
$\left\langle \nabla_2+\nabla_5 \right\rangle,$
$\left\langle \nabla_2+\nabla_5+\nabla_6\right\rangle,$ 
$\left\langle \nabla_2+\nabla_5+\alpha\nabla_6+\nabla_8\right\rangle^
{{\tiny \begin{array}{l}
O(\alpha,\beta)=O(-\alpha,\beta)=O(\alpha,\eta_3^2\beta)=
O(-\alpha,\eta_3^2\beta)=\\
O(-\alpha,-\eta_3\beta)=
O(\alpha,-\eta_3\beta)\end{array}}},$

$\left\langle \nabla_2+\nabla_6+\nabla_8  \right\rangle,$ 
$\left\langle \nabla_2+\nabla_8 \right\rangle,$
$\left\langle \nabla_3\right\rangle,$ 
$\left\langle \nabla_3+\nabla_4\right\rangle,$
$\left\langle \nabla_3+\nabla_4+\nabla_5+\alpha\nabla_6\right\rangle,$

$\left\langle \nabla_3+\alpha\nabla_4+\nabla_5+\nabla_7  \right\rangle,$ 
$\left\langle \nabla_3+\nabla_4+\nabla_6 \right\rangle,$
$\left\langle \nabla_3+\nabla_5\right\rangle,$ 
$\left\langle \nabla_3+\nabla_5+\nabla_6\right\rangle,$
$\left\langle \nabla_3+\nabla_6\right\rangle,$

$\left\langle \nabla_4+\nabla_5 \right\rangle,$ 
$\left\langle \nabla_4+\nabla_5+\nabla_6 \right\rangle,$
$\left\langle \nabla_4+\alpha\nabla_5+\nabla_6+\nabla_8\right\rangle
^{O(\alpha)=O(-\alpha)},$ 
$\left\langle \nabla_4+\nabla_5+\nabla_7\right\rangle,  $
$\left\langle \nabla_4+\nabla_5+\nabla_8\right\rangle,$
$\left\langle \nabla_4+\nabla_7 \right\rangle,$
$\left\langle \nabla_4+\nabla_8\right\rangle,$
$\left\langle \nabla_5\right\rangle,$ 
$\left\langle \nabla_5+\nabla_6\right\rangle, $
$\left\langle \nabla_5+\nabla_6+\nabla_8\right\rangle,$
$\left\langle \nabla_5+\nabla_7 \right\rangle,$
$\left\langle \nabla_5+\nabla_8\right\rangle,$ 
$\left\langle \nabla_6+\nabla_8\right\rangle,$
$\left\langle \nabla_7\right\rangle,$ 
$\left\langle \nabla_8\right\rangle,$

\end{center}

which gives the following new algebras:

\begin{longtable}{llllllllllllllllll}
${\mathbf{N}}_{83}^{\alpha, \beta ,\gamma}$ & $:$ &
$e_1e_1=e_2$ & $e_1e_2=e_5$ & $e_1e_3=e_5$  &$e_2e_2=\alpha e_5$ \\ &  & $e_2e_3=e_4$ & $e_2e_4=\beta e_5$ & $e_3e_3=\gamma e_5$ & $e_4e_4= e_5$
\\

${\mathbf{N}}_{84}$ & $:$ &
$e_1e_1=e_2$ & $e_1e_2=e_5$ & $e_1e_4=e_5$ & $e_2e_3=e_4$
\\

${\mathbf{N}}_{85}^{\alpha, \beta}$ & $:$ &
$e_1e_1=e_2$ & $e_1e_2=e_5$ & $e_1e_4=e_5$  &$e_2e_2=\alpha e_5$  \\ & & $e_2e_3=e_4$ & $e_2e_4=\beta e_5$  & $e_3e_4=e_5$ 

\\
${\mathbf{N}}_{86}^{\alpha}$ & $:$ &
$e_1e_1=e_2$ & $e_1e_2=e_5$ & $e_1e_4=e_5$ \\
&&$e_2e_2=e_5$ & $e_2e_3=e_4$ & $e_3e_3=\alpha e_5$ \\

${\mathbf{N}}_{87}$ & $:$ &
$e_1e_1=e_2$ & $e_1e_2=e_5$ & $e_1e_4=e_5$  & $e_2e_3=e_4$ & $e_3e_3= e_5$ 
\\

${\mathbf{N}}_{88}^{\alpha, \beta}$ & $:$ &
$e_1e_1=e_2$ & $e_1e_2=e_5$  &$e_2e_2= e_5$ & $e_2e_3=e_4$  \\ & & $e_2e_4=\alpha e_5$ & $e_3e_3=\beta e_5$  & $e_4e_4=e_5$ 
\\

${\mathbf{N}}_{89}^{\alpha}$ & $:$ &
$e_1e_1=e_2$ & $e_1e_2=e_5$  &$e_2e_2= e_5$ \\
&& $e_2e_3=e_4$ & $e_2e_4=\alpha e_5$ & $e_3e_4= e_5$
\\

${\mathbf{N}}_{90}^{\alpha}$ & $:$ &
$e_1e_1=e_2$ & $e_1e_2=e_5$  & $e_2e_3=e_4$ \\
&& $e_2e_4=e_5$ & $e_3e_3=\alpha e_5$ & $e_4e_4= e_5$
\\

${\mathbf{N}}_{91}$ & $:$ &
$e_1e_1=e_2$ & $e_1e_2=e_5$  & $e_2e_3=e_4$ & $e_2e_4= e_5$ & $e_3e_4= e_5$
\\

${\mathbf{N}}_{92}$ & $:$ &
$e_1e_1=e_2$ & $e_1e_2=e_5$  & $e_2e_3=e_4$ & $e_3e_3= e_5$ & $e_4e_4= e_5$
\\

${\mathbf{N}}_{93}$ & $:$ &
$e_1e_1=e_2$ & $e_1e_2=e_5$  & $e_2e_3=e_4$ & $e_3e_4= e_5$ 
\\

${\mathbf{N}}_{94}$ & $:$ &
$e_1e_1=e_2$ & $e_1e_2=e_5$  & $e_2e_3=e_4$ & $e_4e_4= e_5$ 
\\

${\mathbf{N}}_{95}^{\alpha}$ & $:$ &
$e_1e_1=e_2$ & $e_1e_3=e_5$ & $e_2e_2=e_5$ \\&& $e_2e_3=e_4$ & $e_2e_4= e_5$ & $e_3e_3= \alpha e_5$ 
\\

${\mathbf{N}}_{96}^{\alpha, \beta}$ & $:$ &
$e_1e_1=e_2$ & $e_1e_3=e_5$ & $e_2e_2=e_5$\\
& & $e_2e_3=e_4$ & $e_2e_4= \alpha e_5$ & $e_3e_3= \beta e_5$ & $e_4e_4=e_5$
\\

${\mathbf{N}}_{97}$ & $:$ &
$e_1e_1=e_2$ & $e_1e_3=e_5$  & $e_2e_3=e_4$ & $e_2e_4=  e_5$ 
\\

${\mathbf{N}}_{98}$ & $:$ &
$e_1e_1=e_2$ & $e_1e_3=e_5$  & $e_2e_3=e_4$ & $e_2e_4=  e_5$ & $e_3e_3=  e_5$ 
\\

${\mathbf{N}}_{99}^{\alpha}$ & $:$ &
$e_1e_1=e_2$ & $e_1e_3=e_5$  & $e_2e_3=e_4$ \\&& $e_2e_4=  e_5$ & $e_3e_3= \alpha e_5$ & $e_4e_4=e_5$
\\

${\mathbf{N}}_{100}$ & $:$ &
$e_1e_1=e_2$ & $e_1e_3=e_5$  & $e_2e_3=e_4$ & $e_3e_3=  e_5$  & $e_4e_4=e_5$
\\

${\mathbf{N}}_{101}$ & $:$ &
$e_1e_1=e_2$ & $e_1e_3=e_5$  & $e_2e_3=e_4$ & $e_4e_4=e_5$
\\

${\mathbf{N}}_{102}$ & $:$ &
$e_1e_1=e_2$ & $e_1e_4=e_5$  & $e_2e_3=e_4$ 
\\

${\mathbf{N}}_{103}$ & $:$ &
$e_1e_1=e_2$ & $e_1e_4=e_5$ & $e_2e_2=e_5$ & $e_2e_3=e_4$ 
\\

${\mathbf{N}}_{104}^{\alpha}$ & $:$ &
$e_1e_1=e_2$ & $e_1e_4=e_5$ & $e_2e_2=e_5$ \\
&& $e_2e_3=e_4$ & $e_2e_4=e_5$ & $e_3e_3= \alpha e_5$ 
\\

${\mathbf{N}}_{105}^{\alpha}$ & $:$ &
$e_1e_1=e_2$ & $e_1e_4=e_5$ & $e_2e_2=\alpha e_5$ \\
&& $e_2e_3=e_4$ & $e_2e_4=e_5$ & $e_3e_4= e_5$ 
\\

${\mathbf{N}}_{106}$ & $:$ &
$e_1e_1=e_2$ & $e_1e_4=e_5$ & $e_2e_2=e_5$ & $e_2e_3=e_4$ & $e_3e_3=e_5$  
\\

${\mathbf{N}}_{107}$ & $:$ &
$e_1e_1=e_2$ & $e_1e_4=e_5$  & $e_2e_3=e_4$ & $e_2e_4=e_5$  
\\

${\mathbf{N}}_{108}$ & $:$ &
$e_1e_1=e_2$ & $e_1e_4=e_5$  & $e_2e_3=e_4$ & $e_2e_4=e_5$
& $e_3e_3=e_5$
\\

${\mathbf{N}}_{109}$ & $:$ &
$e_1e_1=e_2$ & $e_1e_4=e_5$  & $e_2e_3=e_4$ & $e_3e_3=e_5$
\\

${\mathbf{N}}_{110}$ & $:$ &
$e_1e_1=e_2$ & $e_2e_2=e_5$  & $e_2e_3=e_4$ & $e_2e_4=e_5$
\\

${\mathbf{N}}_{111}$ & $:$ &
$e_1e_1=e_2$ & $e_2e_2=e_5$  & $e_2e_3=e_4$ & $e_2e_4=e_5$
& $e_3e_3=e_5$
\\

${\mathbf{N}}_{112}^{\alpha}$ & $:$ &
$e_1e_1=e_2$ & $e_2e_2=e_5$  & $e_2e_3=e_4$ \\
&& $e_2e_4=\alpha e_5$
& $e_3e_3=e_5$ & $e_4e_4=e_5$
\\

${\mathbf{N}}_{113}$ & $:$ &
$e_1e_1=e_2$ & $e_2e_2=e_5$  & $e_2e_3=e_4$ & $e_2e_4= e_5$
& $e_3e_4=e_5$
\\

${\mathbf{N}}_{114}$ & $:$ &
$e_1e_1=e_2$ & $e_2e_2=e_5$  & $e_2e_3=e_4$ & $e_2e_4= e_5$
& $e_4e_4=e_5$
\\

${\mathbf{N}}_{115}$ & $:$ &
$e_1e_1=e_2$ & $e_2e_2=e_5$  & $e_2e_3=e_4$ & $e_3e_4=e_5$
\\

${\mathbf{N}}_{116}$ & $:$ &
$e_1e_1=e_2$ & $e_2e_2=e_5$  & $e_2e_3=e_4$ & $e_4e_4=e_5$
\\

${\mathbf{N}}_{117}$ & $:$ &
$e_1e_1=e_2$ & $e_2e_3=e_4$ & $e_2e_4=e_5$
\\

${\mathbf{N}}_{118}$ & $:$ &
$e_1e_1=e_2$ & $e_2e_3=e_4$ & $e_2e_4=e_5$ & $e_3e_3=e_5$
\\

${\mathbf{N}}_{119}$ & $:$ &
$e_1e_1=e_2$ & $e_2e_3=e_4$ & $e_2e_4=e_5$ & $e_3e_3=e_5$
& $e_4e_4=e_5$
\\

${\mathbf{N}}_{120}$ & $:$ &
$e_1e_1=e_2$ & $e_2e_3=e_4$ & $e_2e_4=e_5$ & $e_3e_4=e_5$
\\

${\mathbf{N}}_{121}$ & $:$ &
$e_1e_1=e_2$ & $e_2e_3=e_4$ & $e_2e_4=e_5$ & $e_4e_4=e_5$
\\

${\mathbf{N}}_{122}$ & $:$ &
$e_1e_1=e_2$ & $e_2e_3=e_4$ & $e_3e_3=e_5$ & $e_4e_4=e_5$
\\

${\mathbf{N}}_{123}$ & $:$ &
$e_1e_1=e_2$ & $e_2e_3=e_4$ & $e_3e_4=e_5$
\\

${\mathbf{N}}_{124}$ & $:$ &
$e_1e_1=e_2$ & $e_2e_3=e_4$ & $e_4e_4=e_5$
\\

\end{longtable}

\subsection{$ 1 $-dimensional central extensions of $ {\mathbf N}_{10}^{4*} $.} Here we will collect all information about $ {\mathbf N}_{10}^{4*}: $
$$
\begin{array}{|l|l|l|l|}
\hline
{\mathbf{N}}^{4*}_{10} &  
\begin{array}{l}
e_1e_1=e_2 \\ 
e_1e_2=e_4 \\  
e_3e_3=e_4
\end{array}
&
\begin{array}{l}
\mathrm{H}^2_{\mathfrak{D}}(\mathbf{N}^{4*}_{10})=\\  

\Big\langle 
\begin{array}{c} \relax
[\Delta_{13}],[\Delta_{14}],[\Delta_{22}], \\ \relax
 [\Delta_{23}],[\Delta_{33}]
\end{array}
\Big\rangle\\

\mathrm{H}^2_{\mathfrak{C}}(\mathbf{N}^{4*}_{10})=\mathrm{H}^2_{\mathfrak{D}}(\mathbf{N}^{4*}_{10})\oplus\\
\multicolumn{1}{r}{\langle [\Delta_{24}], [\Delta_{34}], [\Delta_{44}] \rangle}
\end{array} & 
\begin{array}{l} 
{\tiny \phi=\begin{pmatrix}
x&0&0&0\\
y&x^2&-\frac{zr}{x}&0\\
z&0&r&0\\
t&z^2+2xy&s&x^3
\end{pmatrix}} , \\
  \multicolumn{1}{r}{r^2=x^3}
  \end{array}\\
  
\hline

\end{array}$$

Let us use the following notations:
\begin{longtable}{llll}
$\nabla_1=[\Delta_{13}], $&$ \nabla_2=[\Delta_{14}], $&$ \nabla_3=[\Delta_{22}],$&$  \nabla_4=[\Delta_{23}],$\\
$\nabla_5=[\Delta_{24}], $&$ \nabla_6=[\Delta_{33}],  $&$  \nabla_7=[\Delta_{34}], $&$ \nabla_8=[\Delta_{44}]. $
\end{longtable}

Take $ \theta=\sum\limits_{i=1}^{8}\alpha_i\nabla_i\in\mathrm{H}^2_{\mathfrak{C}}(\mathbf{N}^{4*}_{10}) .$  Since
$$\phi^T\begin{pmatrix}
0&0&\alpha_1&\alpha_2\\
0&\alpha_3&\alpha_4&\alpha_5\\
\alpha_1&\alpha_4&\alpha_6&\alpha_7\\
\alpha_2&\alpha_5&\alpha_7&\alpha_8

\end{pmatrix}\phi=
\begin{pmatrix}
\alpha^*&\alpha^{**}&\alpha^{*}_1&\alpha_2^*\\
\alpha^{**}&\alpha_3^{*}&\alpha^*_4&\alpha^*_5\\
\alpha^{*}_1&\alpha^*_4&\alpha^*_6+\alpha^{**}&\alpha^*_7\\
\alpha^*_2&\alpha^*_5&\alpha^*_7&\alpha^*_8
\end{pmatrix},$$

we have

\begin{longtable}{lcl}
$\alpha_1^*$ & $=$& $-(\alpha_3y+\alpha_4z+\alpha_5t)\frac{zr}{x}+(\alpha_1x+\alpha_4y+\alpha_6z+\alpha_7t)r+$\\
&&$(\alpha_2x+\alpha_5y+\alpha_7z+\alpha_8t)s,$\\

$\alpha_2^*$& $=$& $(\alpha_2x+\alpha_5y+\alpha_7z+\alpha_8t)x^3,$ \\
$\alpha_3^*$& $=$& $\alpha_3x^4+2\alpha_5x^2(z^2+2xy)+\alpha_8(z^2+2xy)^2,$\\
$\alpha_4^*$& $=$& $-(\alpha_3x^2+\alpha_5(z^2+2xy))\frac{zr}{x}+(\alpha_4x^2+\alpha_7(z^2+2xy))r+$\\&&$(\alpha_5x^2+\alpha_8(z^2+2xy))s,$\\

$\alpha_5^*$& $=$& $(\alpha_5x^2+\alpha_8(z^2+2xy))x^3,$\\
$\alpha_6^*$& $=$& $-(\alpha_4r-\alpha_3\frac{zr}{x}+\alpha_5s)\frac{zr}{x}+(\alpha_6r-\alpha_4\frac{zr}{x}+\alpha_7s)r+$\\
&&$(\alpha_7r-\alpha_5\frac{zr}{x}+\alpha_8s)s-(\alpha_3y+\alpha_4z+\alpha_5t)x^2-$\\
&& $(\alpha_2x+\alpha_5y+\alpha_7z+\alpha_8t)(z^2+2xy),$\\
$\alpha_7^*$&$=$&$(\alpha_7r-\alpha_5\frac{zr}{x}+\alpha_8s)x^3,$\\
$\alpha_8^*$&$=$&$\alpha_8x^6.$
\end{longtable}

We are interested in $ (\alpha_5,\alpha_7,\alpha_8)\neq(0,0,0) .$ Let us consider the following cases:

\begin{enumerate}
	
	\item $ \alpha_8=0, \alpha_5=0, $ then $ \alpha_7\neq0.$ Now by choosing 
	\begin{center}
	 $  y=-\frac{ \alpha_2^2+\alpha_2\alpha_3+\alpha_4\alpha_7}{2\alpha_7^2}x,$ $z=-\frac{\alpha_2}{\alpha_7}x,$
	$s=-\frac{3\alpha^2_2\alpha_3+\alpha_2(\alpha_3^2+6\alpha_4 \alpha_7)+\alpha_7(\alpha_3\alpha_4+2\alpha_6\alpha_7)}{4\alpha_7^3}\sqrt{x^3},$ 
	$t=\frac{\alpha_7^2(\alpha_4^2-2\alpha_1\alpha_7)+\alpha_2^3\alpha_3+\alpha_2^2(\alpha_3^2+3\alpha_4\alpha_7)+2\alpha_2\alpha_7 (\alpha_3\alpha_4+\alpha_6\alpha_7)}{2\alpha_7^4}x,$
	\end{center}we have $\alpha^*_1=0, \alpha^*_2=0, \alpha^*_4=0, \alpha^*_6=0.$
	Then we have the representatives $ \left\langle \nabla_7 \right\rangle $ or $ \left\langle \nabla_3+\nabla_7 \right\rangle$ depending on whether $\alpha_3=0$ or not.

	\item  $ \alpha_8=0, \alpha_5\neq0,$ then by choosing 
	\begin{center}$  y=-\frac{\alpha_2\alpha_5+\alpha_7^2}{\alpha_5^2}x,$ $z=\frac{\alpha_7}{\alpha_5}x,$  $s=\frac{\alpha_3\alpha_7-\alpha_4\alpha_5}{\alpha^2_5}\sqrt{x^{3}},$ $t=\frac{\alpha_2\alpha_3\alpha_5+\alpha_5^2\alpha_6+3\alpha_4\alpha_5\alpha_7-2\alpha_3\alpha^2_7}{\alpha^3_5}x, $ \end{center} 
	we have $ \alpha^*_2=\alpha^*_4=\alpha^*_6=0 $ and $ \alpha_7^*=0. $ Therefore, we can suppose that $ \alpha_2=0, \alpha_4=0, \alpha_6=0, \alpha_7=0, $ and have the following cases:

\begin{enumerate}
	\item if $ \alpha_1=0, \alpha_3=0, $ then we have the representative $ \left\langle \nabla_5 \right\rangle; $
	
	\item if $ \alpha_1=0, \alpha_3\neq0, $ then by choosing $ x=\frac{\alpha_3}{\alpha_5}, r^2=x^3, $ we have the representative $ \left\langle \nabla_3+\nabla_5 \right\rangle; $
	
	\item if $ \alpha_1\neq0, $ then by choosing $ x=\sqrt[5]{\frac{\alpha^2_1}{\alpha^2_5}}, r^2=x^3, $ we have the family of representatives $ \left\langle \nabla_1+\alpha\nabla_3+\nabla_5 \right\rangle. $	
	
\end{enumerate}

	\item  $ \alpha_8\neq0,$ then by choosing $ y=-\frac{\alpha_5x^2+\alpha_8z^2}{2\alpha_8x},  s=\frac{\sqrt{x}(\alpha_5z-\alpha_7x)}{\alpha_8}, t=-\frac{\alpha_2x+\alpha_5y+\alpha_7z}{\alpha_8},$ we have $ \alpha^*_2=\alpha^*_5=\alpha^*_7=0 . $ Therefore, we can suppose that $ \alpha_2=0, \alpha_5=0, \alpha_7=0, $ and have the following cases:
	
			\begin{enumerate}
		\item if $ \alpha_3=0, \alpha_4=0, \alpha_6=0,$ then we have the representative $ \left\langle \nabla_8 \right\rangle$ and $ \left\langle \nabla_1+\nabla_8 \right\rangle $ depending on whether $\alpha_1=0$ or not.
		
		\item if $ \alpha_3=0, \alpha_4=0, \alpha_6\neq0, $ then by choosing $ x=\sqrt[3]{\frac{\alpha_6}{\alpha_8}}, r^2=x^3, z=-\frac{\alpha_1}{\sqrt[3]{\alpha^2_6\alpha_8}},$ we have the representative $ \left\langle \nabla_6+\nabla_8 \right\rangle; $	
		
		\item if $ \alpha_3=0, \alpha_4\neq0, $ then by choosing $ x=\sqrt[5]{\frac{\alpha^2_4}{\alpha^2_8}}, r^2=x^3, z=\frac{\alpha_6}{3\sqrt[5]{\alpha_4^3\alpha^2_8}},$ we have the representative $ \left\langle \alpha\nabla_1+\nabla_4+\nabla_8 \right\rangle; $
		
		\item if $ \alpha_3\neq0, $ then by choosing $ x=\sqrt{\frac{\alpha_3}{\alpha_8}}, r^2=x^3,  z=\frac{\alpha_4}{\sqrt{\alpha_3\alpha_8}},  $ we have the representative $ \left\langle \alpha\nabla_1+\nabla_3+\beta\nabla_6+\nabla_8 \right\rangle. $	
		
	\end{enumerate}	
	
\end{enumerate}

Summarizing, we have the following distinct orbits:
\begin{center}

$\left\langle \nabla_1+\alpha \nabla_3+\nabla_5  \right\rangle
^{O(\alpha)=O(\eta^4_5\alpha)=O(-\eta^3_5\alpha)=O(\eta^2_5\alpha)=O(-\eta_5\alpha)}, \,$

$\left\langle \alpha\nabla_1+ \nabla_3+\beta \nabla_6 + \nabla_8  \right\rangle^{{\tiny \begin{array}{l}
O(\alpha,\beta)=O(-\alpha,\beta)= O(\eta_3\alpha,\eta_3^2\beta)=\\
O(-\eta_3\alpha,\eta_3^2\beta)=O(-\eta^2_3\alpha,-\eta_3\beta)=O(\eta_3^2\alpha,-\eta_3\beta)
\end{array}}},$

$\left\langle \alpha\nabla_1+ \nabla_4+\nabla_8\right\rangle^
{{\tiny \begin{array}{l}
O(\alpha)=O(-\alpha)=O(\eta_5^4\alpha)=O(-\eta_5^4\alpha)=O(\eta_5^3\alpha)=\\O(-\eta_5^3\alpha)=O(\eta_5^2\alpha)=O(-\eta_5^2\alpha)=O(\eta_5\alpha)=O(-\eta_5\alpha) 
\end{array}}}, \, $

$\left\langle \nabla_1+ \nabla_8  \right\rangle,$
$\left\langle \nabla_3+ \nabla_5  \right\rangle, $
$\left\langle \nabla_3+\nabla_7  \right\rangle,$ 
$\left\langle \nabla_5 \right\rangle,$ 
$\left\langle \nabla_6+\nabla_8 \right\rangle,$ 
$\left\langle \nabla_7\right\rangle,$ 
$\left\langle \nabla_8\right\rangle,$

\end{center}
which gives the following new algebras:

\begin{longtable}{llllllllllllllllll}

${\mathbf{N}}^{\alpha}_{125}$ & $:$ &$e_1e_1=e_2$ & $e_1e_2=e_4$ & $e_1e_3= e_5$ \\ &  & $e_2e_2=\alpha e_5$ & $e_2e_4= e_5$ & $e_3e_3=e_4$\\

${\mathbf{N}}^{\alpha,\beta}_{126}$ & $:$ &$e_1e_1=e_2$ & $e_1e_2=e_4$ & $e_1e_3= \alpha e_5$ \\ && $e_2e_2= e_5$ &  $e_3e_3=e_4 + \beta e_5$ & $e_4e_4= e_5$ \\

${\mathbf{N}}^{\alpha}_{127}$ & $:$ &$e_1e_1=e_2$ & $e_1e_2=e_4$ & $e_1e_3= \alpha e_5$ \\ && $e_2e_3= e_5$ &  $e_3e_3=e_4$ & $e_4e_4= e_5$ \\

${\mathbf{N}}_{128}$ & $:$ &$e_1e_1=e_2$ & $e_1e_2=e_4$ & $e_1e_3=  e_5$ &  $e_3e_3=e_4$ & $e_4e_4= e_5$ \\

${\mathbf{N}}_{129}$ & $:$ &$e_1e_1=e_2$ & $e_1e_2=e_4$ & $e_2e_2=  e_5$ &  $e_2e_4=e_5$ & $e_3e_3= e_4$ \\

${\mathbf{N}}_{130}$ & $:$ &$e_1e_1=e_2$ & $e_1e_2=e_4$ & $e_2e_2=  e_5$  & $e_3e_3= e_4$ &  $e_3e_4=e_5$\\

${\mathbf{N}}_{131}$ & $:$ &$e_1e_1=e_2$ & $e_1e_2=e_4$ & $e_2e_4=e_5$ & $e_3e_3= e_4$ \\

${\mathbf{N}}_{132}$ & $:$ &$e_1e_1=e_2$ & $e_1e_2=e_4$ & $e_3e_3= e_4+e_5$ & $e_4e_4= e_5$\\

${\mathbf{N}}_{133}$ & $:$ &$e_1e_1=e_2$ & $e_1e_2=e_4$ & $e_3e_3= e_4$ & $e_3e_4= e_5$\\

${\mathbf{N}}_{134}$ & $:$ &$e_1e_1=e_2$ & $e_1e_2=e_4$ & $e_3e_3= e_4$ & $e_4e_4= e_5$\\
\end{longtable}

\subsection{$ 1 $-dimensional central extensions of $ {\mathbf N}_{11}^{4*} $.} Here we will collect all information about $ {\mathbf N}_{11}^{4*}: $
$$
\begin{array}{|l|l|l|l|}
\hline
{\mathbf{N}}^{4*}_{11} &  
\begin{array}{l}
e_1e_1=e_2 \\ 
e_1e_3=e_4 \\  
e_2e_2=e_4
\end{array}
&
\begin{array}{lcl}
\mathrm{H}^2_{\mathfrak{D}}(\mathbf{N}^{4*}_{11}) & =& \\
\multicolumn{3}{r}{\langle [\Delta_{12}],[\Delta_{22}],[\Delta_{23}],[\Delta_{33}]\rangle }\\
\mathrm{H}^2_{\mathfrak{C}}(\mathbf{N}^{4*}_{11})&=&\mathrm{H}^2_{\mathfrak{D}}(\mathbf{N}^{4*}_{11})\oplus\\
\multicolumn{3}{r}{\langle [\Delta_{14}], [\Delta_{24}], [\Delta_{34}], [\Delta_{44}]\rangle}
\end{array}
& \phi=\begin{pmatrix}
x&0&0&0\\
0&x^2&0&0\\
z&0&x^3&0\\
t&2xz&s&x^4
\end{pmatrix}\\
\hline
\end{array}$$

Let us use the following notations:
\begin{longtable}{llll}
$\nabla_1=[\Delta_{12}], $&$\nabla_2=[\Delta_{14}], $&$ \nabla_3=[\Delta_{22}], $&$  \nabla_4=[\Delta_{23}],$\\
$\nabla_5=[\Delta_{24}],  $&$ \nabla_6=[\Delta_{33}],  $&$  \nabla_7=[\Delta_{34}], $&$ \nabla_8=[\Delta_{44}].$
\end{longtable}
Take $ \theta=\sum\limits_{i=1}^{8}\alpha_i\nabla_i\in\mathrm{H}^2_{\mathfrak{C}}(\mathbf{N}^{4*}_{11}) .$  Since
$$\phi^T\begin{pmatrix}
0&\alpha_1&0&\alpha_2\\
\alpha_1&\alpha_3&\alpha_4&\alpha_5\\
0&\alpha_4&\alpha_6&\alpha_7\\
\alpha_2&\alpha_5&\alpha_7&\alpha_8

\end{pmatrix}\phi=
\begin{pmatrix}
\alpha^*&\alpha_1^{*}&\alpha^{**}&\alpha_2^*\\
\alpha_1^{*}&\alpha_3^{*}+\alpha^{**}&\alpha^*_4&\alpha^*_5\\
\alpha^{**}&\alpha^*_4&\alpha^*_6&\alpha^*_7\\
\alpha^*_2&\alpha^*_5&\alpha^*_7&\alpha^*_8
\end{pmatrix},$$
we have
\begin{longtable}{lcl}
$\alpha_1^*$&$=$&$(\alpha_1x+\alpha_4z+\alpha_5t)x^2+2(\alpha_2x+\alpha_7z+\alpha_8t)xz,$\\
$\alpha_2^*$&$=$&$(\alpha_2x+\alpha_7z+\alpha_8t)x^4,$\\

$\alpha_3^*$&$=$&$(\alpha_3x^2+4\alpha_5xz+4\alpha_8z^2)x^{2}-(\alpha_6z+\alpha_7t)x^3-(\alpha_2x+\alpha_7z+\alpha_8t)s,$\\

$\alpha_4^*$&$=$&$(\alpha_4x+2\alpha_7z)x^4+(\alpha_5x+2\alpha_8z)xs,$\\
$\alpha_5^*$&$=$&$(\alpha_5x+2\alpha_8z)x^5,$\\
$\alpha_6^*$&$=$&$\alpha_6x^6+2\alpha_7x^3s+\alpha_8s^2,$\\
$\alpha_7^*$&$=$&$(\alpha_7x^3+\alpha_8s)x^4,$\\
$\alpha_8^*$&$=$&$\alpha_8x^8.$
\end{longtable}

We are interested in $ (\alpha_2,\alpha_5,\alpha_7,\alpha_8)\neq(0,0,0,0) .$ Let us consider the following cases:

\begin{enumerate}
	\item  $ \alpha_8=0, \alpha_7=0, \alpha_5=0 ,$ then $ \alpha_2\neq0 $ and we have

\begin{enumerate}
	\item\label{cs1} if $ \alpha_4=0, \alpha_6=0, $ then by choosing $ x=2\alpha_2, z=-\alpha_1, s=8\alpha_2^2\alpha_3, t=0,$ we have the representative $ \left\langle \nabla_2 \right\rangle; $
	
	\item\label{css1} if $ \alpha_4=0, \alpha_6\neq0, $ then by choosing $ x=\frac{\alpha_2}{\alpha_6}, z=-\frac{\alpha_1}{2\alpha_6}, s=\frac{\alpha_1\alpha_2\alpha_6+2\alpha_2^2\alpha_3}{2\alpha_6^3}, t=0,$ we have the representative $ \left\langle \nabla_2+\nabla_6 \right\rangle; $
	
	\item if $ \alpha_4\neq0, \alpha_4=-2\alpha_2, \alpha_1\neq0,$ then by choosing 
	\begin{center}$ x=\sqrt{\frac{\alpha_1}{\alpha_2}}, z=0, s=\frac{\alpha_1\alpha_3\sqrt{\alpha_1}}{\alpha_2^2\sqrt{\alpha_2}}, t=0,$
	\end{center} we have the family of representatives $ \left\langle \nabla_1+\nabla_2-2\nabla_4+\alpha\nabla_6 \right\rangle; $	
	
	\item\label{css2} if $ \alpha_4\neq0, \alpha_4=-2\alpha_2, \alpha_1=0, \alpha_6\neq 0,$ then
	by choosing 
	$x=\alpha_2\alpha_6^{-1},$
	$z=0,$
	$s=\alpha_2^2\alpha_3\alpha_6^{-3},$
	$t=0,$
	we have the representative $ \left\langle \nabla_2-2\nabla_4  +\nabla_6 \right\rangle;$ 
	
		\item\label{cs2} if $ \alpha_4\neq0, \alpha_4=-2\alpha_2, \alpha_1=0,\alpha_6=0,$ then by choosing 
			$x=\alpha_2,$
	$z=0,$
	$s=\alpha_2^2\alpha_3,$
	$t=0,$
		we have the representative $ \left\langle \nabla_2-2\nabla_4 \right\rangle;$

	\item if $ \alpha_4\neq0, \alpha_4\neq-2\alpha_2, \alpha_6=0,$ then by choosing \begin{center}$ x=\alpha_4+2\alpha_2, z=-\alpha_1, s=\frac{\alpha_3(\alpha_4+2\alpha_2)^3}{\alpha_2}, t=0,$ 
	\end{center}
	we have the the family of representatives $ \left\langle \nabla_2+\alpha\nabla_4 \right\rangle_{\alpha\neq0,-2},$
	which will be jointed with the cases (\ref{cs1}) and (\ref{cs2});
	
	\item if $ \alpha_4\neq0, \alpha_4\neq-2\alpha_2, \alpha_6\neq0,$ then by choosing 
	\begin{center}
	   $ x=\frac{\alpha_2}{\alpha_6}, z=-\frac{\alpha_1\alpha_2}{\alpha_6(\alpha_4+2\alpha_2)}, s=\frac{\alpha_1\alpha^2_2\alpha_6+2\alpha_2^3\alpha_3+\alpha_2^2\alpha_3\alpha_4}{\alpha_6^3(\alpha_4+2\alpha_2)}, t=0,$ 	   
	\end{center} we have the family of representatives $ \left\langle \nabla_2+\alpha\nabla_4+\nabla_6 \right\rangle_{\alpha\neq0,-2}, $	
				which will be jointed with the cases (\ref{css1}) and (\ref{css2}).

\end{enumerate}

	\item  $ \alpha_8=0, \alpha_7=0, \alpha_5\neq0 ,$ then we have
	
	\begin{enumerate}
		\item\label{11ccs1} if $ \alpha_6=0, \alpha_2=0, $ then by choosing 
		\begin{center}$ x=4\alpha_5, z=-\alpha_3, s=-64\alpha_4\alpha_5^2,  t=\frac{\alpha_3\alpha_4-4\alpha_1\alpha_5}{\alpha_5} $
		\end{center} we have the representative $ \left\langle \nabla_5 \right\rangle; $
		
		\item\label{11cs1} if $ \alpha_6=0, \alpha_2\neq0, $ then by choosing \begin{center}
		   $x=\frac{\alpha_2}{\alpha_5},$ $z=-\frac{\alpha_2^2\alpha_4+\alpha_2\alpha_3\alpha_5}{4\alpha_5^3},$ $s=-\frac{\alpha_2^3\alpha_4}{\alpha_5^4},$ $t=\frac{(\alpha_2\alpha_4+2\alpha^2_2)(\alpha_2\alpha_4+\alpha_3\alpha_5)-4\alpha_1\alpha_2\alpha_5^2}{4\alpha_5^4},$
		  		\end{center}we have the representative $ \left\langle \nabla_2+\nabla_5 \right\rangle; $
		
		\item\label{11ccs2} if $ \alpha_6\neq0, \alpha_6=4\alpha_5, \alpha_3=0, \alpha_2=0, $ then by choosing $ x=\alpha_5, z=0, s=-\alpha_4\alpha_5^2, t=-\alpha_1, $ we have the representative $ \left\langle \nabla_5+4\nabla_6 \right\rangle; $		
		
		\item\label{11cs2} if $ \alpha_6\neq0, \alpha_6=4\alpha_5, \alpha_2\alpha_4+\alpha_3\alpha_5=0, \alpha_2\neq0, $ then by choosing 
		\begin{center}$ x=\frac{\alpha_2}{\alpha_5}, z=0, s=-\frac{\alpha_2^3\alpha_4}{\alpha_5^4}, t=-\frac{\alpha_1\alpha_2}{\alpha_5^2}, $  \end{center} we have the representative $ \left\langle \nabla_2+\nabla_5+4\nabla_6 \right\rangle; $
		
		\item if $ \alpha_6\neq0, \alpha_6=4\alpha_5, \alpha_2\alpha_4+\alpha_3\alpha_5\neq0, $ then by choosing 
		\begin{center}
		    $ x=\frac{\sqrt{\alpha_2\alpha_4+\alpha_3\alpha_5}}{\alpha_5}, z=0,$ $s=-\frac{\alpha_4\sqrt{(\alpha_2\alpha_4+\alpha_3\alpha_5)^3}}{\alpha^4_5},$ $t=-\frac{\alpha_1\sqrt{\alpha_2\alpha_4+\alpha_3\alpha_5}}{\alpha^2_5}, $	    
		\end{center} we have the family of representatives $ \left\langle \alpha\nabla_2+\nabla_3+\nabla_5+4\nabla_6 \right\rangle;  $
		
		\item if $ \alpha_6\neq0, \alpha_6\neq4\alpha_5, \alpha_2=0, $ then by choosing 
		\begin{center}
		   $ x=\alpha_6-4\alpha_5, z=\alpha_3,$ $s=\frac{\alpha_4(4\alpha_5-\alpha_6)^3}{\alpha_5},$ $t=\frac{4\alpha_1\alpha_5-\alpha_1\alpha_6-\alpha_3\alpha_4}{\alpha_5},$	
		\end{center}we have the family of representatives $ \left\langle \nabla_5+\alpha\nabla_6 \right\rangle_{\alpha\neq0,4}, $
		which will be jointed with the cases (\ref{11ccs1}) and (\ref{11ccs2});
		
		\item if $ \alpha_6\neq0, \alpha_6\neq4\alpha_5, \alpha_2\neq0, $ then by choosing 
		\begin{center}
		    $ x=\frac{\alpha_2}{\alpha_5}, z=\frac{\alpha_2 (\alpha_2 \alpha_4+\alpha_3 \alpha_5)}{\alpha_5^2\alpha_6-4\alpha_5^2},$ $s=-\frac{\alpha_4\alpha_2^3}{\alpha^4_5}, $
		   $ t=\frac{\alpha_2 (2 \alpha_2^2 \alpha_4+\alpha_3 \alpha_4 \alpha_5+\alpha_2 (\alpha_4^2+2 \alpha_3 \alpha_5)-\alpha_1 \alpha_5 (4 \alpha_5-\alpha_6))}{\alpha_5^3 (4 \alpha_5-\alpha_6)}, $ 
		\end{center}
		we have the family of representatives $ \left\langle \nabla_2+\nabla_5+\alpha\nabla_6 \right\rangle_{(\alpha\neq0,4)},  $			which will be jointed with the cases (\ref{11cs1}) and (\ref{11cs2}).

	\end{enumerate}

	\item $ \alpha_8=0, \alpha_7\neq0 ,$ then by choosing $ z=-\frac{\alpha_2}{\alpha_7}x,$ 
	$s=-\frac{\alpha_6}{2\alpha_7}x^3, $
	$t=\frac{ \alpha_2 (\alpha_6-4 \alpha_5)+\alpha_3 \alpha_7}{\alpha^2_7}x,$ we have $ \alpha_2^*=\alpha_3^*=\alpha_6^*=0 .$ Now we can suppose that $ \alpha_2=0, \alpha_3=0, \alpha_6=0 $ and have the following subcases:
	
	\begin{enumerate}
		\item if $ \alpha_1=0, \alpha_4=0, \alpha_5=0, $ then we have the representative $ \left\langle \nabla_7 \right\rangle; $
		
		\item if $ \alpha_1=0, \alpha_4=0, \alpha_5\neq0, $ then by choosing $ x=\frac{\alpha_5}{\alpha_7}, z=0, s=0, t=0, $ we have the representative $ \left\langle \nabla_5+\nabla_7 \right\rangle;$

		\item if $ \alpha_1=0, \alpha_4\neq0, $ then by choosing $ x=\sqrt{\frac{\alpha_4}{\alpha_7}}, z=0, s=0, t=0,$ we have the family of representatives $ \left\langle \nabla_4+\alpha\nabla_5+\nabla_7 \right\rangle; $
		
        \item if $ \alpha_1\neq0, $ then by choosing $ x=\sqrt[4]{\frac{\alpha_1}{\alpha_7}}, z=0, s=0, t=0,$ we have the family of representatives $ \left\langle \nabla_1+\alpha\nabla_4+\beta\nabla_5+\nabla_7 \right\rangle. $		
						
	\end{enumerate}

	\item  $ \alpha_8\neq0 ,$ then by choosing $ z=-\frac{\alpha_5}{2\alpha_8}x, s=-\frac{\alpha_7}{\alpha_8}x^3, t=\frac{\alpha_5\alpha_7-2\alpha_2\alpha_8}{2\alpha^2_8}x,$ we have $ \alpha_2^*=\alpha_5^*=\alpha_7^*=0 .$ Now we can suppose that $ \alpha_2=0, \alpha_5=0, \alpha_7=0 $ and have the following subcases:

\begin{enumerate}
	\item if $ \alpha_1=0, \alpha_3=0, \alpha_4=0, \alpha_6=0, $ then we have the representative $ \left\langle \nabla_8 \right\rangle; $
	
	\item if $ \alpha_1=0, \alpha_3=0, \alpha_4=0, \alpha_6\neq0, $ then by choosing $ x=\sqrt{\frac{\alpha_6}{\alpha_8}}, z=0, s=0, t=0,$ we have the representative $ \left\langle \nabla_6+\nabla_8 \right\rangle; $

	\item if $ \alpha_1=0, \alpha_3=0, \alpha_4\neq0, $ then by choosing $ x=\sqrt[3]{\frac{\alpha_4}{\alpha_8}}, z=0, s=0, t=0,$ we have the family of representatives $ \left\langle \nabla_4+\alpha\nabla_6+\nabla_8 \right\rangle;$
	
	\item if $ \alpha_1=0, \alpha_3\neq0, $ then by choosing $ x=\sqrt[4]{\frac{\alpha_3}{\alpha_8}}, z=0, s=0, t=0, $ we have the family of representatives $ \left\langle \nabla_3+\alpha\nabla_4+\beta\nabla_6+\nabla_8 \right\rangle;$
	
	\item if $ \alpha_1\neq0, $ then by choosing $ x=\sqrt[5]{\frac{\alpha_1}{\alpha_8}}, z=0, s=0, t=0, $ we have the family of representatives $ \left\langle \nabla_1+\alpha\nabla_3+\beta\nabla_4+\gamma\nabla_6+\nabla_8 \right\rangle. $			
	
\end{enumerate}

\end{enumerate}

Summarizing, we have the following distinct orbits:
\begin{center}

$\left\langle \nabla_1+ \nabla_2 - 2 \nabla_4 + \alpha\nabla_6\right\rangle^{O(\alpha)=O(-\alpha)},$ 
$\left\langle \nabla_1+ \alpha\nabla_3 +\beta \nabla_4+\gamma \nabla_6 + \nabla_8  \right\rangle^{
{\tiny \begin{array}{l}
O(\alpha,\beta,\gamma)=O(-\eta_5\alpha,\eta_5^2\beta,-\eta_5^3\gamma)=
O(\eta_5^2\alpha,\eta_5^4\beta,-\eta_5\gamma)=\\
O(-\eta_5^3\alpha,-\eta_5\beta,\eta_5^4\gamma)=
O(\eta_5^4\alpha,-\eta_5^3\beta,\eta_5^2\gamma)\end{array}}},$
$\left\langle \nabla_1+\alpha\nabla_4+\beta\nabla_5+\nabla_7 \right\rangle
^{O(\alpha,\beta)=O(\alpha,-\beta)=O(-\alpha,-i\beta)=O(-\alpha,i\beta)},$   
$\left\langle \alpha\nabla_2+ \nabla_3+\nabla_5 + 4\nabla_6 \right\rangle^{O(\alpha)=O(-\alpha)},$
$\left\langle \nabla_2+ \alpha\nabla_4 \right\rangle,$ 
$\left\langle \nabla_2+ \alpha\nabla_4 + \nabla_6 \right\rangle,$ 
$\left\langle \nabla_2+ \nabla_5 + \alpha\nabla_6 \right\rangle,$ $\left\langle \nabla_3+ \alpha\nabla_4+\beta\nabla_6 + \nabla_8 \right\rangle,$
  $\left\langle \nabla_4+ \alpha\nabla_5+\nabla_7 \right\rangle,$
$\left\langle \nabla_4+ \alpha\nabla_6+\nabla_8 \right\rangle,$  $\left\langle \nabla_5+ \alpha\nabla_6 \right\rangle,$
$\left\langle \nabla_5+ \nabla_7 \right\rangle,$ 
$\left\langle \nabla_6+ \nabla_8 \right\rangle,$
$\left\langle \nabla_7 \right\rangle,$
$\left\langle \nabla_8 \right\rangle,$
\end{center}
which gives the following new algebras:

\begin{longtable}{llllllllllllllllll}

${\mathbf{N}}_{135}^{\alpha}$ & $:$ &$e_1e_1=e_2$ & $e_1e_2=e_5$ & $e_1e_3=e_4$ & $e_1e_4=e_5$ \\ & & $e_2e_2=e_4$ & $e_2e_3=-2e_5$
  & $e_3e_3=\alpha e_5$
\\

${\mathbf{N}}_{136}^{\alpha, \beta ,\gamma}$ & $:$ &$e_1e_1=e_2$ & $e_1e_2=e_5$ & $e_1e_3=e_4$ & $e_2e_2=e_4+\alpha e_5$ \\ & & $e_2e_3=\beta e_5$ & $e_3e_3=\gamma e_5$
  & $e_4e_4=e_5$
\\

${\mathbf{N}}_{137}^{\alpha, \beta}$ & $:$ &$e_1e_1=e_2$ & $e_1e_2=e_5$ & $e_1e_3=e_4$ & $e_2e_2=e_4$ 
\\ & & $e_2e_3=\alpha e_5$ & $e_2e_4=\beta e_5$ & $e_3e_4=e_5$
\\

${\mathbf{N}}_{138}^{\alpha}$ & $:$ &$e_1e_1=e_2$ & $e_1e_3=e_4$ & $e_1e_4=\alpha e_5$ \\ && $e_2e_2=e_4+e_5$ & $e_2e_4=e_5$ & $e_3e_3=4e_5$ \\

${\mathbf{N}}_{139}^{\alpha}$ & $:$ &$e_1e_1=e_2$ & $e_1e_3=e_4$ & $e_1e_4=e_5$\\ & & $e_2e_2=e_4$ & $e_2e_3=\alpha e_5$ \\

${\mathbf{N}}_{140}^{\alpha}$ & $:$ &$e_1e_1=e_2$ & $e_1e_3=e_4$ & $e_1e_4=e_5$ \\ && $e_2e_2=e_4$ & $e_2e_3=\alpha e_5$ & $e_3e_3=e_5$ \\

${\mathbf{N}}_{141}^{\alpha}$ & $:$ &$e_1e_1=e_2$ & $e_1e_3=e_4$ & $e_1e_4=e_5$ \\& & $e_2e_2=e_4$ & $e_2e_4=e_5$  & $e_3e_3=\alpha e_5$ \\

${\mathbf{N}}_{142}^{\alpha, \beta}$ & $:$ &$e_1e_1=e_2$ & $e_1e_3=e_4$ & $e_2e_2=e_4+e_5$ \\ & & $e_2e_3=\alpha e_5$ & $e_3e_3=\beta e_5$  & $e_4e_4=e_5$ \\

${\mathbf{N}}_{143}^{\alpha}$ & $:$ &$e_1e_1=e_2$ & $e_1e_3=e_4$ & $e_2e_2=e_4$ \\ & & $e_2e_3=e_5$ & $e_2e_4=\alpha e_5$  & $e_3e_4=e_5$ \\

${\mathbf{N}}_{144}^{\alpha}$ & $:$ &$e_1e_1=e_2$ & $e_1e_3=e_4$ & $e_2e_2=e_4$ \\ & & $e_2e_3=e_5$ & $e_3e_3=\alpha e_5$  & $e_4e_4=e_5$ \\

${\mathbf{N}}_{145}^{\alpha}$ & $:$ &$e_1e_1=e_2$ & $e_1e_3=e_4$ & $e_2e_2=e_4$ \\ & & $e_2e_4=e_5$ & $e_3e_3=\alpha e_5$ \\

${\mathbf{N}}_{146}$ & $:$ &$e_1e_1=e_2$ & $e_1e_3=e_4$ & $e_2e_2=e_4$ \\ & & $e_2e_4=e_5$ & $e_3e_4=e_5$ \\

${\mathbf{N}}_{147}$ & $:$ &$e_1e_1=e_2$ & $e_1e_3=e_4$ & $e_2e_2=e_4$ \\ &  & $e_3e_3=e_5$ & $e_4e_4=e_5$ \\

${\mathbf{N}}_{148}$ & $:$ &$e_1e_1=e_2$ & $e_1e_3=e_4$ & $e_2e_2=e_4$ & $e_3e_4=e_5$ \\

${\mathbf{N}}_{149}$ & $:$ &$e_1e_1=e_2$ & $e_1e_3=e_4$ & $e_2e_2=e_4$ & $e_4e_4=e_5$ \\

\end{longtable}

\subsection{$ 1 $-dimensional central extensions of $ {\mathbf N}_{12}^{4*} $.} Here we will collect all information about $ {\mathbf N}_{12}^{4*}: $
$$
\begin{array}{|l|l|l|l|}
\hline
{\mathbf{N}}^{4*}_{12} & 
\begin{array}{l}
e_1e_1=e_2 \\ 
e_2e_2=e_4 \\  
e_3e_3=e_4
\end{array}
&
\begin{array}{lcl}
\mathrm{H}^2_{\mathfrak{D}}(\mathbf{N}^{4*}_{12})&=&\\
\multicolumn{3}{r}{\langle [\Delta_{12}],[\Delta_{13}],[\Delta_{23}],[\Delta_{33}]\rangle}\\
\mathrm{H}^2_{\mathfrak{C}}(\mathbf{N}^{4*}_{12})&=&\mathrm{H}^2_{\mathfrak{D}}(\mathbf{N}^{4*}_{12})\oplus\\
\multicolumn{3}{r}{\langle [\Delta_{14}], [\Delta_{24}], [\Delta_{34}], [\Delta_{44}] \rangle }
\end{array} & \phi_{\pm}=\begin{pmatrix}
x&0&0&0\\
0&x^2&0&0\\
0&0& \pm x^2&0\\
t&0&s&x^4
\end{pmatrix}\\
\hline
\end{array}$$

Let us use the following notations:
\begin{longtable}{llll}
$\nabla_1=[\Delta_{12}], $&$ \nabla_2=[\Delta_{13}], 
$&$ \nabla_3=[\Delta_{14}], $&$  \nabla_4=[\Delta_{23}],$\\
$\nabla_5=[\Delta_{24}], $&$ \nabla_6=[\Delta_{33}],  $&$  
\nabla_7=[\Delta_{34}], $&$ \nabla_8=[\Delta_{44}]. $
\end{longtable}
Take $ \theta=\sum\limits_{i=1}^{8}\alpha_i\nabla_i\in\mathrm{H}^2_{\mathfrak{C}}(\mathbf{N}^{4*}_{12}) .$  Since

$$\phi_{\pm}^T\begin{pmatrix}
0&\alpha_1&\alpha_2&\alpha_3\\
\alpha_1&0&\alpha_4&\alpha_5\\
\alpha_2&\alpha_4&\alpha_6&\alpha_7\\
\alpha_3&\alpha_5&\alpha_7&\alpha_8

\end{pmatrix}\phi_{\pm}=
\begin{pmatrix}
\alpha^*&\alpha_1^{*}&\alpha^{*}_2&\alpha_3^*\\
\alpha_1^{*}&0&\alpha^*_4&\alpha^*_5\\
\alpha^{*}_2&\alpha^*_4&\alpha^*_6&\alpha^*_7\\
\alpha^*_3&\alpha^*_5&\alpha^*_7&\alpha^*_8
\end{pmatrix},$$

we have
\begin{longtable}{ll}
$\alpha_1^*=(\alpha_1x+\alpha_5t)x^2,$&
$\alpha_2^*=(\alpha_3x+\alpha_8t)s\pm(\alpha_2x+\alpha_7t)x^2,$\\
$\alpha_3^*=(\alpha_3x+\alpha_8t)x^4,$&
$\alpha_4^*=(\alpha_5s\pm\alpha_4x^2)x^2,$\\
$\alpha_5^*=\alpha_5x^6,$&
$\alpha_6^*=\alpha_6x^4\pm2\alpha_7sx^2+\alpha_8s^2,$\\
$\alpha_7^*=(\alpha_8s\pm\alpha_7x^2)x^4,$&
$\alpha_8^*=\alpha_8x^8.$
\end{longtable}
We will consider only the action of $\phi_+$ for find representatives and after that we will see that the set of our representatives gives distinct orbits under action of $\phi_+$ and $\phi_-.$ 
We are interested in $ (\alpha_3,\alpha_5,\alpha_7,\alpha_8)\neq(0,0,0,0) .$ Let us consider the following cases:

\begin{enumerate}
	\item  $ \alpha_8=0, \alpha_5=0, \alpha_7=0, $ then $ \alpha_3\neq0 $ and we have the following subcases:
	
	\begin{enumerate}
		\item if $ \alpha_1=0, \alpha_4=0, \alpha_6=0, $ then by choosing $ x=\alpha_3, s=-\alpha_2\alpha_3, t=0, $ we have the representative $ \left\langle \nabla_3 \right\rangle; $
		
		\item if $ \alpha_1=0, \alpha_4=0, \alpha_6\neq0, $ then by choosing $ x=\frac{\alpha_6}{\alpha_3}, s=-\frac{\alpha_2\alpha_6^2}{\alpha_3^3}, t=0, $ we have the representative $ \left\langle \nabla_3+\nabla_6 \right\rangle; $
		
		\item if $ \alpha_1=0, \alpha_4\neq0, $ then by choosing $ x=\frac{\alpha_4}{\alpha_3}, s=-\frac{\alpha_2\alpha^2_4}{\alpha_3^3}, t=0, $ we have the representative $ \left\langle \nabla_3+\nabla_4+\alpha\nabla_6
		 \right\rangle; $				
		
		\item if $ \alpha_1\neq0, $ then by choosing $ x=\sqrt{\frac{\alpha_1}{\alpha_3}}, s=-\frac{\alpha_1\alpha_2}{\alpha_3^2}, t=0, $ we have the representative $ \left\langle \nabla_1+\nabla_3+\alpha\nabla_4+\beta\nabla_6 \right\rangle. $

	\end{enumerate}

	\item  $ \alpha_8=0, \alpha_5=0, \alpha_7\neq0, $ then we have the following subcases:
	
	\begin{enumerate}
		\item if $ \alpha_1=0, \alpha_3=0, \alpha_4=0, $ then by choosing $ x=1, s=-\frac{\alpha_6}{2\alpha_7}, t=\frac{\alpha_3\alpha_6-2\alpha_2\alpha_7}{2\alpha_7^2},$ we have the representative $ \left\langle \nabla_7 \right\rangle; $
		
		\item if $ \alpha_1=0, \alpha_3=0, \alpha_4\neq0, $ then by choosing $ x=\sqrt{\frac{\alpha_4}{\alpha_7}}, s=-\frac{\alpha_4\alpha_6}{2\alpha^2_7}, t=\frac{\sqrt{\alpha_4}(\alpha_3\alpha_6-2\alpha_2\alpha_4)}{2\alpha_7^2\sqrt{\alpha_7}},$ we have the representative $ \left\langle \nabla_4+\nabla_7 \right\rangle;$		
		
		\item if $ \alpha_1=0, \alpha_3\neq0, $ then by choosing $ x=\frac{\alpha_3}{\alpha_7}, s=-\frac{\alpha^2_3\alpha_6}{2\alpha^3_7}, t=\frac{\alpha_3^2\alpha_6-2\alpha_2\alpha_3\alpha_7}{2\alpha_7^3},$ we have the representative $ \left\langle \nabla_3+\alpha\nabla_4+\nabla_7 \right\rangle;$	
		
		\item if $ \alpha_1\neq0, $ then by choosing $ x=\sqrt[3]{\frac{\alpha_1}{\alpha_7}}, s=-\sqrt[3]{\frac{\alpha_1^2\alpha^3_6}{8\alpha^5_7}}, t=\frac{\sqrt[3]{\alpha_1}(\alpha_3\alpha_6-2\alpha_2\alpha_7)}{2\alpha_7^2\sqrt[3]{\alpha_7}},$ we have the representative $ \left\langle \nabla_1+\alpha\nabla_3+\beta\nabla_4+\nabla_7 \right\rangle.$				
		
	\end{enumerate}
	
	\item  $ \alpha_8=0, \alpha_5\neq0, $ then by choosing $ t=-\frac{\alpha_1}{\alpha_5}x, s=-\frac{\alpha_4}{\alpha_5}x^2,$ we have $ \alpha_1^*=\alpha_4^*=0. $ Now we can suppose that $ \alpha_1=0, \alpha_4=0 $ and have the following subcases:
	
	\begin{enumerate}
		\item if $\alpha_2=0, \alpha_3=0, \alpha_6=0, $ then we have the family of representatives $ \left\langle \nabla_5+\alpha\nabla_7 \right\rangle; $
		
		\item if $\alpha_2=0, \alpha_3=0, \alpha_6\neq0, $ then by choosing $ x=\sqrt{\frac{\alpha_6}{\alpha_5}}, s=0, t=0, $ we have the family of representatives $ \left\langle \nabla_5+\nabla_6+\alpha\nabla_7 \right\rangle; $ 		
		
		\item if $\alpha_2=0, \alpha_3\neq0, $ then by choosing $ x=\frac{\alpha_3}{\alpha_5}, s=0, t=0, $ we have the family of representatives $ \left\langle \nabla_3+\nabla_5+\alpha\nabla_6+\beta\nabla_7 \right\rangle; $	
		
		\item if $\alpha_2\neq0, $ then by choosing $ x=\sqrt[3]{\frac{\alpha_2}{\alpha_5}}, s=0, t=0, $ we have the family of representatives $ \left\langle \nabla_2+\alpha\nabla_3+\nabla_5+\beta\nabla_6+\gamma\nabla_7 \right\rangle. $	
		
	\end{enumerate}

	\item  $ \alpha_8\neq0, $ then by choosing $ t=-\frac{\alpha_3}{\alpha_8}x, s=-\frac{\alpha_7}{\alpha_8}x^2,$ we have $ \alpha_3^*=\alpha_7^*=0. $ Now we can suppose that $ \alpha_3=0, \alpha_7=0 $ and have the following subcases:
	
	\begin{enumerate}
		\item if $ \alpha_1=0, \alpha_2=0, \alpha_4=0, \alpha_5=0, \alpha_6=0, $ then we have the representative $ \left\langle \nabla_8 \right\rangle; $
		
		\item if $ \alpha_1=0, \alpha_2=0, \alpha_4=0, \alpha_5=0, \alpha_6\neq0, $ then by choosing $ x=\sqrt[4]{\frac{\alpha_6}{\alpha_8}}, s=0, t=0, $ we have the representative $ \left\langle \nabla_6+\nabla_8 \right\rangle; $
		
		\item if $ \alpha_1=0, \alpha_2=0, \alpha_4=0, \alpha_5\neq0, $ then by choosing $ x=\sqrt{\frac{\alpha_5}{\alpha_8}}, s=0, t=0, $ we have the family of representatives $ \left\langle \nabla_5+\alpha\nabla_6+\nabla_8 \right\rangle; $
		
		\item if $ \alpha_1=0, \alpha_2=0, \alpha_4\neq0, $ then by choosing $ x=\sqrt[4]{\frac{\alpha_4}{\alpha_8}}, s=0, t=0, $ we have the family of representatives $ \left\langle \nabla_4+\alpha\nabla_5+\beta\nabla_6+\nabla_8 \right\rangle; $	
		
		\item if $ \alpha_1=0, \alpha_2\neq0, $ then by choosing $ x=\sqrt[5]{\frac{\alpha_2}{\alpha_8}}, s=0, t=0, $ we have the family of representatives \begin{center}$ \left\langle \nabla_2+\alpha\nabla_4+\beta\nabla_5+\gamma\nabla_6+\nabla_8 \right\rangle; $
		\end{center}		
		
		\item if $ \alpha_1\neq0, $ then by choosing $ x=\sqrt[5]{\frac{\alpha_1}{\alpha_8}}, s=0, t=0, $ we have the family of representatives 
		\begin{center}
		  $ \left\langle \nabla_1+\alpha\nabla_2+\beta\nabla_4+\gamma\nabla_5+\mu\nabla_6+\nabla_8 \right\rangle. $
		\end{center}

	\end{enumerate}
		
\end{enumerate}

Summarizing all cases we have the following distinct orbits:
\begin{center}

$\left\langle \nabla_1+ \alpha\nabla_2 +\beta\nabla_4+\gamma\nabla_5+\mu\nabla_6+\nabla_8  \right\rangle^{
{\tiny \begin{array}{l}O(\alpha,\beta,\gamma,\mu)= 
O(\pm \alpha,\pm \eta_5^4 \beta,\eta_5^2\gamma,\eta_5^4\mu)= \\
O(\pm \alpha,\mp \eta_5^3 \beta,\eta_5^4\gamma,-\eta_5^3\mu)= 
O(\pm \alpha,\pm \eta_5^2 \beta,-\eta_5\gamma,\eta_5^2\mu)= \\
O(\pm \alpha,\mp \eta_5 \beta,-\eta_5^3\gamma,-\eta_5\mu)
\end{array}}}, $ 
$\left\langle \nabla_1+\nabla_3 +\alpha\nabla_4 + \beta\nabla_6\right\rangle
^{{\tiny \begin{array}{l}
O(\alpha,\beta)=O(-\alpha,\beta)=\\
O(\alpha,-\beta)=O(-\alpha,-\beta)
\end{array}}},$
$\left\langle \nabla_1+ \alpha\nabla_3 +\beta \nabla_4+\nabla_7  \right\rangle
^{{\tiny 
\begin{array}{l}
O(\alpha,\beta)=O(-\eta_3\alpha,\eta_3^2\beta)=\\
O(\eta_3^2\alpha,-\eta_3\beta)
\end{array}}},$
$\left\langle \nabla_2+ \alpha\nabla_3+\nabla_5 + \beta\nabla_6+\gamma\nabla_7 \right\rangle
^{{\tiny 
\begin{array}{l}
O(\alpha,\beta,\gamma)=O(-\alpha,\beta,-\gamma)=
O(-\eta_3\alpha,\eta_3^2\beta,\gamma)=\\
O(\eta_3\alpha,\eta_3^2\beta,-\gamma)=
O(\eta_3^2\alpha,-\eta_3\beta,\gamma)=
O(-\eta_3^2\alpha,-\eta_3\beta,-\gamma)
\end{array}}},$
$\left\langle \nabla_2+ \alpha\nabla_4 +\beta\nabla_5 + \gamma\nabla_6+\nabla_8 \right\rangle
^{
{\tiny \begin{array}{l}
O(\alpha,\beta,\gamma)=O(-\alpha,\beta,\gamma)=
O(\eta_5^4   \alpha,\eta_5^2\beta,\eta_5^4\gamma)=\\
O(-\eta_5^4\alpha,\eta_5^2\beta,\eta_5^4\gamma)=
O(-\eta_5^3\alpha,\eta_5^4\beta,-\eta_5^3\gamma)=\\
O(\eta_5^3\alpha,\eta_5^4\beta,-\eta_5^3\gamma)=
O(\eta_5^2\alpha,-\eta_5\beta,\eta_5^2\gamma)=\\
O(-\eta_5^2\alpha,-\eta_5\beta,\eta_5^2\gamma)=
O(-\eta_5\alpha,-\eta_5^3\beta,-\eta_5\gamma)=\\
O(\eta_5\alpha,-\eta_5^3\beta,-\eta_5\gamma)
\end{array}}},$

$\left\langle \nabla_3 \right\rangle, \,  \left\langle \nabla_3+ \nabla_4 +\alpha\nabla_6 \right\rangle, \, 
\left\langle \nabla_3+ \alpha\nabla_4+\nabla_7 \right\rangle
^{O(\alpha)=O(-\alpha)},$  

$\left\langle \nabla_3+ \nabla_5+\alpha\nabla_6+\beta\nabla_7 \right\rangle^{O(\alpha,\beta)=O(\alpha,-\beta)}, \,  \left\langle \nabla_3+\nabla_6 \right\rangle,$ 

$\left\langle \nabla_4+ \alpha\nabla_5+\beta\nabla_6+\nabla_8 \right\rangle^{{\tiny \begin{array}{l}
O(\alpha,\beta)=O(-i\alpha,-\beta)=\\
O(i\alpha,-\beta)=
O(-\alpha,\beta) \end{array}}},$ 
$\left\langle \nabla_4+\nabla_7 \right\rangle, \, \left\langle \nabla_5+\nabla_6+ \alpha\nabla_7 \right\rangle^{O(\alpha,\beta)=O(\alpha,-\beta)},$

$\left\langle \nabla_5+\alpha\nabla_6+ \nabla_8 \right\rangle, \, \left\langle \nabla_5+ \alpha\nabla_7 \right\rangle^{O(\alpha)=O(-\alpha)}, \, 
\left\langle \nabla_6+ \nabla_8 \right\rangle, \,
\left\langle \nabla_7 \right\rangle, \, \left\langle \nabla_8 \right\rangle,$

\end{center}
which gives the following new algebras:

\begin{longtable}{llllllllllllllllll}

${\mathbf{N}}_{150}^{\alpha, \beta, \gamma, \mu }$ & $:$ &$e_1e_1=e_2$ & $e_1e_2=e_5$ & $e_1e_3=\alpha e_5$ & $e_2e_2=e_4$ \\ && $e_2e_3=\beta e_5$   & $e_2e_4=\gamma e_5$ 
 & $e_3e_3=e_4+\mu e_5$
 & $e_4e_4=e_5$
\\

${\mathbf{N}}_{151}^{\alpha, \beta}$ & $:$ &$e_1e_1=e_2$ & $e_1e_2=e_5$ & $e_1e_4=e_5$ \\ && $e_2e_2=e_4$ & $e_2e_3=\alpha e_5$   & $e_3e_3=e_4+\beta e_5$ 
\\

${\mathbf{N}}_{152}^{\alpha, \beta}$ & $:$ &$e_1e_1=e_2$ & $e_1e_2=e_5$ & $e_1e_4=\alpha e_5$ & $e_2e_2=e_4$ \\ && $e_2e_3=\beta e_5$  & $e_3e_3=e_4$ 
 & $e_3e_4=e_5$
\\

${\mathbf{N}}_{153}^{\alpha, \beta, \gamma}$ & $:$ &$e_1e_1=e_2$ & $e_1e_3=e_5$ & $e_1e_4=\alpha e_5$ & $e_2e_2=e_4$ \\ && $e_2e_4=e_5$   & $e_3e_3=e_4+\beta e_5$ 
& $e_3e_4=\gamma e_5$
\\

${\mathbf{N}}_{154}^{\alpha, \beta, \gamma}$ & $:$ &$e_1e_1=e_2$ & $e_1e_3=e_5$ & $e_2e_2=e_4$ & $e_2e_3=\alpha e_5$ \\ && $e_2e_4=\beta e_5$   & $e_3e_3=e_4+\gamma e_5$ 
 & $e_4e_4=e_5$
\\ 

${\mathbf{N}}_{155}$ & $:$ &$e_1e_1=e_2$ & $e_1e_4=e_5$ & $e_2e_2=e_4$ &  $e_3e_3=e_4$
\\

${\mathbf{N}}_{156}^{\alpha}$ & $:$ &$e_1e_1=e_2$ & $e_1e_4=e_5$ & $e_2e_2=e_4$ \\ && $ e_2e_3=e_5$ &  $e_3e_3=e_4+\alpha e_5$
\\

${\mathbf{N}}_{157}^{\alpha}$ & $:$ &$e_1e_1=e_2$ & $e_1e_4=e_5$ & $e_2e_2=e_4$ \\ && $ e_2e_3=\alpha e_5$ &  $e_3e_3=e_4$   & $e_3e_4=e_5$
\\

${\mathbf{N}}_{158}^{\alpha, \beta}$ & $:$ &$e_1e_1=e_2$ & $e_1e_4=e_5$ & $e_2e_2=e_4$ \\ && $ e_2e_4=e_5$ &  $e_3e_3=e_4+\alpha e_5$   & $e_3e_4=\beta e_5$
\\

${\mathbf{N}}_{159}$ & $:$ &$e_1e_1=e_2$ & $e_1e_4=e_5$ & $e_2e_2=e_4$ &  $e_3e_3=e_4+e_5$
\\

${\mathbf{N}}_{160}^{\alpha, \beta}$ & $:$ &$e_1e_1=e_2$ & $e_2e_2=e_4$ & $e_2e_3=e_5$ \\ && $ e_2e_4=\alpha e_5$ &  $e_3e_3=e_4+\beta e_5$  & $e_4e_4=e_5$
\\

${\mathbf{N}}_{161}$ & $:$ &$e_1e_1=e_2$ & $e_2e_2=e_4$ & $e_2e_3=e_5$ \\&&  $e_3e_3=e_4$ & $e_3e_4=e_5$
\\

${\mathbf{N}}_{162}^{\alpha}$ & $:$ &$e_1e_1=e_2$ & $e_2e_2=e_4$ & $e_2e_4=e_5$ \\& & $e_3e_3=e_4+e_5$ & $e_3e_4=\alpha e_5$
\\

${\mathbf{N}}_{163}^{\alpha}$ & $:$ &$e_1e_1=e_2$ & $e_2e_2=e_4$ & $e_2e_4=e_5$ \\&& $e_3e_3=e_4+\alpha e_5$ & $e_4e_4=e_5$
\\

${\mathbf{N}}_{164}^{\alpha}$ & $:$ &$e_1e_1=e_2$ & $e_2e_2=e_4$ & $e_2e_4=e_5$ \\&& $e_3e_3=e_4$ & $e_3e_4=\alpha e_5$
\\

${\mathbf{N}}_{165}$ & $:$ &$e_1e_1=e_2$ & $e_2e_2=e_4$ &  $e_3e_3=e_4+e_5$ & $e_4e_4=e_5$
\\

${\mathbf{N}}_{166}$ & $:$ &$e_1e_1=e_2$ & $e_2e_2=e_4$ &  $e_3e_3=e_4$ & $e_3e_4=e_5$
\\

${\mathbf{N}}_{167}$ & $:$ &$e_1e_1=e_2$ & $e_2e_2=e_4$ &  $e_3e_3=e_4$ & $e_4e_4=e_5$
\\
\end{longtable}

\subsection{$ 1 $-dimensional central extensions of $ {\mathbf N}_{13}^{4*}(\lambda) $.} Here we will collect all information about $ {\mathbf N}_{13}^{4*}(\lambda): $

\begin{longtable}{|l|llll|}
\hline
${\mathbf{N}}^{4*}_{13}(\lambda)$ &  
$e_1e_1=e_2$ 
&$e_1e_2=e_3$ &$e_1e_3=e_4$ &$e_2e_2=\lambda e_4$ \\
\hline 
&
\multicolumn{4}{l|}{
$\begin{array}{l}
\mathrm{H}^2_{\mathfrak{D}}(\mathbf{N}^{4*}_{13}(2))=\langle [\Delta_{22}],4[\Delta_{23}]+[\Delta_{14}],[\Delta_{24}]\rangle,\\

\mathrm{H}^2_{\mathfrak{C}}(\mathbf{N}^{4*}_{13}(2))=\mathrm{H}^2_{\mathfrak{D}}(\mathbf{N}^{4*}_{13}(2))\oplus \langle [\Delta_{23}], [\Delta_{33}], [\Delta_{34}], [\Delta_{44}]\rangle \\

\mathrm{H}^2_{\mathfrak{D}}(\mathbf{N}^{4*}_{13}(\lambda )_{\lambda \neq2})=\langle [\Delta_{22}],(3\lambda-2)[\Delta_{23}]+[\Delta_{14}]\rangle,\\

\mathrm{H}^2_{\mathfrak{C}}(\mathbf{N}^{4*}_{13}(\lambda)_{\lambda \neq2})=\mathrm{H}^2_{\mathfrak{D}}(\mathbf{N}^{4*}_{13}(\lambda)\oplus \langle [\Delta_{23}], [\Delta_{24}], [\Delta_{33}], [\Delta_{34}], [\Delta_{44}] \rangle
\end{array}$}\\
\hline

& \multicolumn{4}{l|}{
$\phi=\begin{pmatrix}
x&0&0&0\\
y&x^2&0&0\\
z&2xy&x^3&0\\
t&\lambda y^2+2xz&(\lambda+2)x^2y&x^4
\end{pmatrix}$}\\ 
\hline
\end{longtable}

Let us use the following notations:
\begin{longtable}{lllll}
$\nabla_1=[\Delta_{14}]+(3\lambda-2)[\Delta_{23}],$ & $  \nabla_2=[\Delta_{22}],$ & $ \nabla_3=[\Delta_{23}],$ & $ \nabla_4=[\Delta_{24}],$ \\ $ \nabla_5=[\Delta_{33}],$ & $  \nabla_6=[\Delta_{34}],$ & $ \nabla_7=[\Delta_{44}].$ 
\end{longtable}

Take $ \theta=\sum\limits_{i=1}^{7}\alpha_i\nabla_i\in\mathrm{H}^2_{\mathfrak{C}}({\mathbf N}_{13}^{4*}(\lambda)) .$  Since

\begin{center}$\phi^T\begin{pmatrix}
0&0&0&\alpha_1\\
0&\alpha_2&(3\lambda-2)\alpha_1+\alpha_3&\alpha_4\\
0&(3\lambda-2)\alpha_1+\alpha_3&\alpha_5&\alpha_6\\
\alpha_1&\alpha_4&\alpha_6&\alpha_7
\end{pmatrix}\phi=
\begin{pmatrix}
\alpha^{**}&\alpha^{***}&\alpha^{*}&\alpha_1^*\\
\alpha^{***}&\alpha_2^*+\lambda\alpha^{*}&(3\lambda-2)\alpha^*_1+\alpha_3^*&\alpha^*_4\\
\alpha^{*}&(3\lambda-2)\alpha^*_1+\alpha_3^*&\alpha^*_5&\alpha^*_6\\
\alpha^*_1&\alpha^*_4&\alpha^*_6&\alpha^*_7
\end{pmatrix},$
\end{center}

we have
\begin{longtable}{lcl}
$\alpha_1^*$&$=$&$(\alpha_1x+\alpha_4y+\alpha_6z+\alpha_7t)x^4,$\\

$\alpha_2^*$&$=$&$\alpha_2x^4+4\lambda(\alpha_6y+\alpha_7z)xy^2+\lambda^2\alpha_7y^4+4(\alpha_4z+(\alpha_3+(3\lambda-2)\alpha_1)y)x^3$\\
\multicolumn{3}{c}{$+2(4\alpha_6yz+2\alpha_7z^2+(2\alpha_5+\lambda\alpha_4)y^2)x^2$}\\
\multicolumn{3}{r}{$-\lambda((\lambda+2)(\alpha_4y+\alpha_6z+\alpha_7t)y+((\alpha_3+4\lambda\alpha_1)y+\alpha_5z+\alpha_6t)x)x^2,$}\\
$\alpha_3^*$&$=$&$[(\lambda+2)(\alpha_4x^2+2\alpha_6xy+2\alpha_7xz+\lambda\alpha_7y^2)y$\\
\multicolumn{3}{c}{$+((\alpha_3+(3\lambda-2)\alpha_1)x^2+2\alpha_5xy+2\alpha_6xz+\lambda\alpha_6y^2)x]x^2$}\\
\multicolumn{3}{r}{$-(3\lambda-2)(\alpha_1x+\alpha_4y+\alpha_6z+\alpha_7t)x^4,$}\\
$\alpha_4^*$&$=$&$(\alpha_4x^2+2\alpha_6xy+2\alpha_7xz+\lambda\alpha_7y^2)x^4,$\\
$\alpha_5^*$&$=$&$(\alpha_5x^2+2(\lambda+2)\alpha_6xy+(\lambda+2)^2\alpha_7y^2)x^4,$\\
$\alpha_6^*$&$=$&$(\alpha_6x+(\lambda+2)\alpha_7y)x^6,$\\
$\alpha_7^*$&$=$&$\alpha_7x^8.$
\end{longtable}

We are interested in 
\begin{center}$ (\alpha_3,\alpha_4,\alpha_5,\alpha_6,\alpha_7)\neq(0,0,0,0,0) $ and $(\alpha_1,\alpha_4,\alpha_6,\alpha_7)\neq(0,0,0,0).$
\end{center} Let us consider the following cases:

\begin{enumerate}
	\item  $ \alpha_7=0, \alpha_6=0, \alpha_5=0, \alpha_4=0, $ then $ \alpha_1\neq0,$ $\alpha_3\neq0 $ and 
	
	\begin{enumerate}
		\item\label{13cs1} if $\lambda\notin \{ 1,2,4\},$  $(\lambda-4)\alpha_3\neq4(1-\lambda)(\lambda-2)\alpha_1, $ then by choosing $ y=\frac{\alpha_2x}{(\lambda-4)\alpha_3+4(\lambda-1)(\lambda-2)\alpha_1},$ we have the family of representatives \begin{center}$ \left\langle \alpha\nabla_1+\nabla_3 \right\rangle_{\alpha\notin \Big\{ 0,\frac{(\lambda-4)}{4(1-\lambda)(\lambda-2) }\Big\};\, \lambda\neq 1,2,4}; $
		\end{center}

		\item if  $\lambda\notin \{ 1,2,4\},$ $ (\lambda-4)\alpha_3=4(1-\lambda)(\lambda-2)\alpha_1,$ $ \alpha_2=0, $ then we have the family of representatives $ \left\langle \frac{\lambda-4}{4(1-\lambda)(\lambda-2)}\nabla_1+\nabla_3 \right\rangle_{\lambda\neq 1,2,4}, $
		which we will be jointed with the family from the case (\ref{13cs1});

		\item if  $\lambda\notin \{ 1,2,4\},$ $ (\lambda-4)\alpha_3=4(1-\lambda)(\lambda-2)\alpha_1,$ $\alpha_2\neq0, $ then by choosing $ x=\frac{\alpha_2}{\alpha_3}, y=0, z=0, t=0, $ we have the family of representatives $ \left\langle  (\lambda-4)\nabla_1+4(1-\lambda)(\lambda-2) (\nabla_2+\nabla_3) \right\rangle_{\lambda\neq 1,2,4}; $

	    \item if   $\lambda\in \{ 1,2,4\},$ then by choosing some suitable $x$ and $y$ we have the family of representatives 
		$ \left\langle \alpha\nabla_1+\nabla_3 \right\rangle_{\alpha\neq 0, \lambda\in \{1,2,4\}}, $ which will be jointed with the family from the case (\ref{13cs1}).
	\end{enumerate}
	
	\item  $ \alpha_7=0, \alpha_6=0, \alpha_5=0, \alpha_4\neq0, $ then we have
	
	\begin{enumerate}
		\item if $\alpha_3= 2(2-\lambda)\alpha_1 ,$ 
		then by choosing 
	\begin{center}
	    
	$x=4 \alpha_4^2, 
	y=-4 \alpha_1 \alpha_4, 
	z=\alpha_1 \alpha_3 (4-\lambda)-\alpha_2 \alpha_4-\alpha_1^2 (8-12 \lambda+3 \lambda^2), t=0,$
		\end{center}we have the representative $ \left\langle \nabla_4 \right\rangle; $
		
		\item if $ \alpha_3\neq2(2-\lambda)\alpha_1 ,$ then by choosing \begin{center}$
		    x=\frac{\alpha_3+2(\lambda-2)\alpha_1}{\alpha_4},$ $y=-\frac{\alpha_1(\alpha_3+2(\lambda-2)\alpha_1)}{\alpha_4^2},$
		    
		    $z=\frac{(2(2-\lambda)\alpha_1-\alpha_3)(\alpha_2\alpha_4+(\lambda-4)\alpha_1\alpha_3+(3\lambda^2-12\lambda+8)\alpha_1^2)}{4\alpha^3_4},$ $t=0,$ 
		    \end{center} we have the representative $ \left\langle \nabla_3+\nabla_4 \right\rangle . $		
		
	\end{enumerate}	
	
	\item  $ \alpha_7=0, \alpha_6=0, \alpha_5\neq0, $ then	
	
	\begin{enumerate}
		\item if $\alpha_4=0, $ then $\alpha_1\neq0$ and
		
		\begin{enumerate}

			\item\label{13css3} if $ \lambda\neq0, $ then by choosing 
			\begin{center}$ x=\frac{\alpha_1}{\alpha_5}, y=-\frac{\alpha_1\alpha_3}{2\alpha_5^2}, z=\frac{\alpha_1(2\alpha_2\alpha_5+(\lambda-2)\alpha_3^2+4(\lambda^2-3\lambda+2)\alpha_1\alpha_3)}{2\lambda\alpha_5^3}, t=0, $ 
			\end{center} 
			we have the family of representatives $ \left\langle \nabla_1+\nabla_5 \right\rangle_{\lambda\neq0};$
			
			\item if $ \lambda=0, $ then by choosing $x=\frac{\alpha_1}{\alpha_5}, y=-\frac{\alpha_1\alpha_3}{2\alpha_5^2}, z=0, t=0,$ we have the family representative $\left\langle \nabla_1+\alpha\nabla_2+\nabla_5 \right\rangle_{\alpha\neq0,\lambda=0}$
			and
			the representative 
			$\left\langle \nabla_1+\nabla_5 \right\rangle_{\lambda=0},$
			which will be jointed with the family from the case
    		(\ref{13css3}).

		\end{enumerate}
		
		\item if $\alpha_4\neq0$ and $\lambda=0,$ then we have the followings:

		\begin{enumerate}
			\item if $   \alpha_3\alpha_4=2\alpha_1(2\alpha_4+\alpha_5),$ 
			then by choosing
			\begin{center}
			    $x=4 \alpha_4^3,$ $y=-4 \alpha_1 \alpha_4^2,$
			    $z=4 \alpha_1 \alpha_3 \alpha_4-\alpha_2 \alpha_4^2-4 \alpha_1^2 (2 \alpha_4+\alpha_5),$
			    $t=0,$
			\end{center}we have the family of representatives $ \left\langle \alpha\nabla_4+\nabla_5 \right\rangle_{\alpha\neq0,\lambda=0}; $
			
			\item if $  \alpha_3\alpha_4\neq 2\alpha_1(2\alpha_4+\alpha_5), $ then by choosing 
			\begin{center}
		$ x=\frac{\alpha_3\alpha_4-2\alpha_1(2\alpha_4+\alpha_5)}{\alpha_4\alpha_5)}, 
		y=\frac{\alpha_1(2\alpha_1(2\alpha_4+\alpha_5)-\alpha_3\alpha_4)}{\alpha^2_4\alpha_5)},$ $z=\frac{(2\alpha_1(2\alpha_4+\alpha_5)-\alpha_3\alpha_4)(\alpha_2\alpha_4^2-4\alpha_1\alpha_3\alpha_4+4\alpha_1^2(2\alpha_4+\alpha_5))}{4\alpha_4^4\alpha_5}, t=0,$
			\end{center} we have the family of representatives $ \left\langle \nabla_3+\alpha\nabla_4+\nabla_5 \right\rangle_{\alpha\neq0, \lambda=0} ; $	
	\end{enumerate}
	\item if $\alpha_4\neq0$ and $\lambda\neq 0,$ then we have the followings:

		\begin{enumerate}
		
\item\label{13css5} if $4\alpha_4=\lambda\alpha_5,$ $4\lambda(\lambda-4)\alpha_1\alpha_3+\lambda^2\alpha_2\alpha_5+4(3\lambda^3-12\lambda^2+8\lambda+16)\alpha_1^2=0,$ $\lambda\alpha_3+2(\lambda^2-2\lambda-4)\alpha_1=0,$ then by choosing $x=1, y=-\frac{\alpha_1}{\alpha_4}, z=0, t=0,$ we have the family of representatives 	$ \left\langle \frac{\lambda}{4}\nabla_4+\nabla_5 \right\rangle_{\lambda\neq0} ; $

\item \label{13css6} if $4\alpha_4=\lambda\alpha_5,$ $4\lambda(\lambda-4)\alpha_1\alpha_3+\lambda^2\alpha_2\alpha_5+4(3\lambda^3-12\lambda^2+8\lambda+16)\alpha_1^2=0,$ $\lambda\alpha_3+2(\lambda^2-2\lambda-4)\alpha_1\neq0,$ then by choosing \begin{center}$x=\frac{\lambda\alpha_3+2(\lambda^2-2\lambda-4)\alpha_1}{\lambda\alpha_5}, y=-\frac{4\alpha_1(\lambda\alpha_3+2(\lambda^2-2\lambda-4)\alpha_1)}{\lambda^2\alpha^2_5}, z=0, t=0,$
\end{center} we have the family of representatives 	$ \left\langle \nabla_3+\frac{\lambda}{4}\nabla_4+\nabla_5 \right\rangle_{\lambda\neq0} ; $

\item if $4\alpha_4=\lambda\alpha_5,$ $4\lambda(\lambda-4)\alpha_1\alpha_3+\lambda^2\alpha_2\alpha_5+4(3\lambda^3-12\lambda^2+8\lambda+16)\alpha_1^2\neq0,$ then by choosing \begin{center}$x=\frac{\sqrt{4\lambda(\lambda-4)\alpha_1\alpha_3+\lambda^2\alpha_2\alpha_5+4(3\lambda^3-12\lambda^2+8\lambda+16)\alpha_1^2}}{\lambda\alpha_5},$ $y=-\frac{4\alpha_1\sqrt{4\lambda(\lambda-4)\alpha_1\alpha_3+\lambda^2\alpha_2\alpha_5+4(3\lambda^3-12\lambda^2+8\lambda+16)\alpha_1^2}}{\lambda^2\alpha^2_5}, z=0, t=0,$ 
\end{center}
we have the family of representatives 	$ \left\langle \nabla_2+\alpha\nabla_3+\frac{\lambda}{4}\nabla_4+\nabla_5 \right\rangle_{\lambda\neq0} ; $

\item if $\lambda\neq0, 4\alpha_4\neq\lambda\alpha_5 , $ then by choosing 
\begin{center}$ y=-\frac{\alpha_1}{\alpha_4}x, z=-\frac{\alpha_2\alpha_4^2+(\lambda-4)\alpha_1\alpha_3\alpha_4+\alpha_1^2(4\alpha_5+(3\lambda^2-12\lambda+8)\alpha_4)}{\alpha^2_4(4\alpha_4-\lambda\alpha_5)}x, t=0,$ 
\end{center} we have two families of representatives \begin{center}
$ \left\langle \alpha\nabla_4+\nabla_5 \right\rangle_{\alpha\neq\frac{\lambda}{4}} $ and $ \left\langle \nabla_3+\alpha\nabla_4+\nabla_5 \right\rangle_{\alpha\neq\frac{\lambda}{4}}  $
\end{center} depending on $\alpha_3\alpha_4-2\alpha_1\alpha_5+2(\lambda-2)\alpha_1\alpha_4=0$ or not.
These families  will be jointed with representatives from cases
(\ref{13css5}) and (\ref{13css6}).

		\end{enumerate}

	\end{enumerate}
	
	\item  $ \alpha_7=0, \alpha_6\neq0, $ then by choosing $ y=-\frac{\alpha_4}{2\alpha_6}x, z=-\frac{\alpha_4^2-2\alpha_1\alpha_6}{2\alpha^2_6}x, $ we have $ \alpha_1^*=0, \alpha_4^*=0. $ Since we can suppose that $ \alpha_1=0, \alpha_4=0 $ and
	
	\begin{enumerate}
		\item\label{13css7} if $ \lambda\neq0, \alpha_3=0,$ then by choosing $t={\frac{\alpha_2}{\lambda\alpha_6}}x, $ we have the representatives $ \left\langle \nabla_6 \right\rangle _{\lambda\neq0} $ and $ \left\langle \nabla_5+\nabla_6 \right\rangle _{\lambda\neq0} $ depending on $ \alpha_5=0 $ or not;			
		
		\item\label{13css8} if $ \lambda\neq0, \alpha_3\neq0, $ then by choosing $ x=\sqrt{\frac{\alpha_3}{\alpha_6}},  t={\frac{\alpha_2\sqrt{\alpha_3}}{\lambda\alpha_6\sqrt{\alpha_6}}}, $ we have the family of representatives	$ \left\langle \nabla_3+\alpha\nabla_5+\nabla_6 \right\rangle _{\lambda\neq0}; $				
		
		\item if $ \lambda=0, \alpha_2=0, \alpha_3=0, $ then we have the representatives $ \left\langle \nabla_6 \right\rangle _{\lambda=0} $ and $ \left\langle \nabla_5+\nabla_6 \right\rangle _{\lambda=0} $ depending on $ \alpha_5=0 $ or not, which will be jointed with representatives   from the case (\ref{13css7});
		
		\item if $ \lambda=0, \alpha_2=0, \alpha_3\neq0, $ then by choosing $ x=\sqrt{\frac{\alpha_3}{\alpha_6}},  t=0 ,$ we have the family of representatives	$ \left\langle \nabla_3+\alpha\nabla_5+\nabla_6 \right\rangle _{(\lambda=0)},$
		which will be jointed with the family of representatives   from the case (\ref{13css8});
		
		\item if $ \lambda=0, \alpha_2\neq0, $ then by choosing $ x=\sqrt[3]{\frac{\alpha_2}{\alpha_6}},  t=0, $ we have the family of representatives	$ \left\langle \nabla_2+\alpha\nabla_3+\beta\nabla_5+\nabla_6 \right\rangle _{\lambda=0}.$

		\end{enumerate}

		\item  $ \alpha_7\neq0, \lambda\neq-2,$ then by choosing 
		\begin{center}
		    $ y=-\frac{\alpha_6}{\alpha_7(\lambda+2)}x,$ $z=\frac{2(\lambda+2)^2\alpha_4\alpha_7-(\lambda+4)\alpha_6^2}{2(\lambda+2)^2\alpha^2_7}x,$ $t=\frac{(\lambda^2+6\lambda+8)\alpha_4\alpha_6\alpha_7-2(\lambda+2)^2\alpha_1\alpha^2_7-(\lambda+4)\alpha^3_6}{2(\lambda+2)^2\alpha^3_7}x ,$
		\end{center} we have $ \alpha_1^*=0, \alpha_4^*=0, \alpha_6^*=0.$ Now we can suppose that $ \alpha_1=0, \alpha_4=0, \alpha_6=0 $ then we have
	
	\begin{enumerate}
		\item\label{13cs5a} if $\alpha_3=0, \alpha_5=0, \alpha_2=0, $ then we have the representative $ \left\langle \nabla_7 \right\rangle_{\lambda\neq-2} ; $
		
	\item\label{13cs5b} if $\alpha_3=0, \alpha_5=0, \alpha_2\neq0, $ then by choosing $ x=\sqrt[4]{\alpha_2\alpha_7^{-1}}, $ we have the representative $ \left\langle \nabla_2+\nabla_7 \right\rangle_{\lambda\neq-2} ; $
	
			\item\label{13cs5c} if $\alpha_5\neq0, $ 
		then by choosing 
		$ x=\sqrt{\alpha_5\alpha_7^{-1}},$ 
		$y=-\frac{\alpha_3}{2 \sqrt{\alpha_5 \alpha_7}},$
		$z= \frac{\alpha_3^2}{ 4 \sqrt{\alpha_5^3\alpha_7}},$
		$t=0,$
		we have the family of representatives $ \left\langle \alpha\nabla_2+\nabla_5+\nabla_7 \right\rangle_{\lambda\neq-2} . $

		
		\item\label{13cs5d} if $\alpha_3\neq0, \alpha_5=0 $ then by choosing $ x=\sqrt[3]{\alpha_3\alpha_7^{-1}},$ we have the family of representatives $ \left\langle \alpha\nabla_2+\nabla_3+ \nabla_7 \right\rangle_{\lambda\neq-2} . $

	\end{enumerate}		
	
	\item  $ \alpha_7\neq0, \lambda=-2,$ then
	
	\begin{enumerate}
		\item if $ \alpha_6=0, \alpha_5=0, $ then by choosing 
		$ z=\frac{y^2}{x}-\frac{\alpha_4x}{2\alpha_7}, t=-\frac{x \alpha_1+y \alpha_4}{\alpha_7}, $ we have $ \alpha_1^*=0$ and $\alpha_4^*=0.$ Now consider the followings:
		
		\begin{enumerate}
			\item if $\alpha_3=8\alpha_1, \alpha_2\alpha_7-\alpha_4^2=0,$ then we have the representative $ \left\langle \nabla_7 \right\rangle_{\lambda=-2},$ which will be jointed with the representative from the case (\ref{13cs5a});
			
			\item if $\alpha_3=8\alpha_1, \alpha_2\alpha_7-\alpha_4^2\neq0,$ then by choosing $ x=\sqrt[4]{\frac{\alpha_2\alpha_7-\alpha_4^2}{\alpha_7^2}}, y=0, $ we have the representative $ \left\langle \nabla_2+\nabla_7 \right\rangle_{\lambda=-2}, $ which will be jointed with the representative from the case (\ref{13cs5b});
			
			\item if $\alpha_3\neq8\alpha_1,$ then by choosing $ x=\sqrt[3]{\frac{\alpha_3-8\alpha_1}{\alpha_7}}, y=\frac{\alpha_2\alpha_7-\alpha_4^2}{48\alpha_1\alpha_7-6\alpha_3\alpha_7}x, $ we have the representative $ \left\langle \nabla_3+\nabla_7 \right\rangle_{\lambda=-2} . $

		\end{enumerate}
		
		\item if $ \alpha_6=0, \alpha_5\neq0, $ then by choosing    \begin{center}
		    $x=\sqrt{\frac{\alpha_5}{\alpha_7}}, y=\frac{8\alpha_1-\alpha_3}{2\sqrt{\alpha_5\alpha_7}}, z=\frac{\alpha_7(\alpha_3-8\alpha_1)^2-2\alpha_4\alpha_5^2}{4\alpha_5\alpha_7\sqrt{\alpha_5\alpha_7}},  t=\frac{\alpha_3\alpha_4-2\alpha_1(4\alpha_4+\alpha_5)}{2\alpha_7\sqrt{\alpha_5\alpha_7}}, $

		\end{center}we have the family of representatives $ \left\langle \alpha\nabla_2+\nabla_5+\nabla_7 \right\rangle_{\lambda=-2}, $
 which will be jointed with the representative from the case (\ref{13cs5c}).

		\item if $ \alpha_6\neq0, $ then we have  the following cases:
		
		\begin{enumerate}
			\item\label{13cs5c1} if $ \alpha_5\alpha_7=\alpha_6^2, 8\alpha_1\alpha_7+\alpha_4\alpha_6=\alpha_3\alpha_7, $ then by choosing
			\begin{center}
	$ x=\frac{\alpha_6}{\alpha_7}, y=0, z=-\frac{\alpha_4\alpha_6}{2\alpha_7^2}, t=\frac{\alpha_6(\alpha_4\alpha_6-2\alpha_1\alpha_7)}{2\alpha_7^3}, $
			\end{center} we have the family of representatives $ \left\langle \alpha\nabla_2+\nabla_5+\nabla_6+\nabla_7  \right\rangle_{\lambda=-2};$
			
			\item if $ \alpha_5\alpha_7=\alpha_6^2, 8\alpha_1\alpha_7+\alpha_4\alpha_6\neq \alpha_3\alpha_7, $ then by choosing 
			\begin{center}
 $ x=\frac{\alpha_6}{\alpha_7}, 
 y=\frac{\alpha_6 (\alpha_2 \alpha_7-\alpha_4^2 - 
    2 \alpha_1 \alpha_6 )}{
 6 \alpha_7 (\alpha_4 \alpha_6 + 
    8 \alpha_1 \alpha_7 - \alpha_3 \alpha_7))},$
    $z=\frac{y^2}{x}-\frac{\alpha_4x}{2\alpha_7}-\frac{\alpha_6y}{\alpha_7},  t=-\frac{x \alpha_1 + y \alpha_4 + z \alpha_6}{\alpha_7},$ 
			\end{center}
			we have the family of representatives 
			\begin{center}$ \left\langle \alpha\nabla_3+\nabla_5+\nabla_6+\nabla_7  \right\rangle_{\alpha\neq0,\lambda=-2}; $
			\end{center}

			\item if $ \alpha_5\alpha_7-\alpha_6^2\neq0, $ then by choosing \begin{center}
			    $x=\frac{\alpha_6}{\alpha_7}, 
			    y=\frac{\alpha_6 (\alpha_4 \alpha_6+8 \alpha_1 \alpha_7-\alpha_3 \alpha_7)}{2 \alpha_7 (-\alpha_6^2+\alpha_5 \alpha_7)}, z=\frac{y^2}{x}-\frac{\alpha_4}{2\alpha_7}x-\frac{\alpha_6}{\alpha_7}y,$  
			    $t=\frac{(\alpha_4\alpha_6-2\alpha_1\alpha_7)x^2-2\alpha_6\alpha_7y^2+2(\alpha_6^2-\alpha_4\alpha_7)xy}{2\alpha_7^2x},$ 
			    
			\end{center}
			we have the family of representatives \begin{center}$ \left\langle \alpha\nabla_2+\beta\nabla_5+\nabla_6+\nabla_7  \right\rangle_{\beta\neq1,\lambda=-2},$\end{center}
			which will be jointed with the family from the case (\ref{13cs5c1}).

		\end{enumerate}

\end{enumerate}

\end{enumerate}

Summarizing all cases we have the following distinct orbits:
\begin{center}

$ \left\langle (\lambda-4)\nabla_1+4(1-\lambda)(\lambda-2)(\nabla_2+\nabla_3) \right\rangle_{\lambda \notin \{ 1; 2; 4 \}},$ 
$\left\langle \nabla_1+\alpha\nabla_2+\nabla_5 \right\rangle_{\lambda=0, \alpha \neq 0}, $
$ \left\langle \alpha\nabla_1+\nabla_3 \right\rangle_{\alpha\neq0}, \, \left\langle \nabla_1+\nabla_5 \right\rangle,$
$\left\langle \nabla_2+\alpha\nabla_3+\frac{\lambda}{4}\nabla_4+\nabla_5 \right\rangle_{\lambda\neq0}^{O(\alpha)=O(-\alpha)}, $ 
$ \left\langle \nabla_2+\alpha\nabla_3+\beta\nabla_5+\nabla_6 \right\rangle _{\lambda=0}^{O(\alpha,\beta)=O(\eta_3^2\alpha,-\eta\beta)=
O(-\eta_3\alpha,\eta_3^2\beta)}, $ 
$\left\langle \alpha\nabla_2+\nabla_3+ \nabla_7 \right\rangle_{\lambda\neq-2}^{O(\alpha)=O(-\eta_3\alpha)=O(\eta_3^2\alpha)},$
$ \left\langle \alpha\nabla_2+\beta\nabla_5+\nabla_6+\nabla_7  \right\rangle_{\lambda=-2},$ 
$\left\langle \alpha\nabla_2+\nabla_5+\nabla_7 \right\rangle,$   
$\left\langle \nabla_2 + \nabla_7 \right\rangle,$ 
$\left\langle \nabla_3+\nabla_4 \right\rangle,$ 
$\left\langle \nabla_3+\alpha\nabla_4+\nabla_5 \right\rangle_{\alpha\neq0},$
$\left\langle \nabla_3+\alpha\nabla_5+\nabla_6 \right\rangle,$ 
$\left\langle \alpha\nabla_3+\nabla_5+\nabla_6+\nabla_7  \right\rangle_{\alpha\neq0,\lambda=-2},$ 
$\left\langle \nabla_3+\nabla_7 \right\rangle_{\lambda=-2},$
$\left\langle \nabla_4 \right\rangle_{\lambda\neq 2},$
$\left\langle \alpha\nabla_4+\nabla_5 \right\rangle_{\alpha\neq0},$
$\left\langle \nabla_5+\nabla_6 \right\rangle,$
$\left\langle \nabla_6 \right\rangle,$ 
$\left\langle \nabla_7 \right\rangle.$

\end{center}

Now we have the following new algebras

\begin{longtable}{llllllllllllllllll}

${\mathbf{N}}_{168}^{\lambda  \neq 1; 2; 4}$ & $:$ & $e_1e_1=e_2$ & $e_1e_2=e_3$ & $e_1e_3=e_4$ & $e_1e_4=(\lambda-4)e_5$\\ 
 & & \multicolumn{2}{l}{$e_2e_2=\lambda e_4 + 4(1-\lambda)(\lambda-2)e_5$}  & 
\multicolumn{2}{l}{$e_2e_3= -  \lambda(\lambda+2)e_5$}\\

${\mathbf{N}}_{169}^{\alpha\neq0} $ & $:$ & 
$e_1e_1=e_2$ & $e_1e_2=e_3$ & $e_1e_3=e_4$ &  $e_1e_4=e_5$ \\
& &  
$e_2e_2= \alpha e_5$ &  $e_2e_3= -2 e_5$ &  $e_3e_3= e_5$
\\

  ${\mathbf{N}}_{170}^{\lambda, \alpha \neq 0}$ & $:$ & 
  $e_1e_1=e_2$ & $e_1e_2=e_3$ & $e_1e_3=e_4$ \\ 
  & &  $e_1e_4= \alpha e_5$ & $e_2e_2=\lambda e_4$ & \multicolumn{2}{l}{$e_2e_3=(1+\alpha(3\lambda-2)) e_5$}
\\

${\mathbf{N}}_{171}^{\lambda}$ & $:$ & $e_1e_1=e_2$ 
& $e_1e_2=e_3$ & $e_1e_3=e_4$ & $e_1e_4=e_5$ \\ 
&& $e_2e_2=\lambda e_4$ & \multicolumn{2}{l}{$e_2e_3=(3\lambda-2) e_5$} & $e_3e_3= e_5$
\\
 ${\mathbf{N}}_{172}^{\lambda \neq 0,\alpha}$ & $:$ & 
 $e_1e_1=e_2$  & $e_1e_2=e_3$ & $e_1e_3=e_4$ &  $e_2e_2=\lambda e_4 + e_5$ \\
 &&  $e_2e_3=\alpha e_5$ &  $e_2e_4=\frac{\lambda} {4} e_5$ & $e_3e_3= e_5$

\\
 ${\mathbf{N}}_{173}^{\alpha, \beta}$ & $:$ & 
 $e_1e_1=e_2$ & $e_1e_2=e_3$ & $e_1e_3=e_4$ &  $e_2e_2=e_5$ \\
 & &  $e_2e_3=\alpha e_5$ &  $e_3e_3=\beta e_5$ &  $e_3e_4= e_5$
\\

${\mathbf{N}}_{174}^{\lambda\neq-2, \alpha}$ & $:$ & 
$e_1e_1=e_2$  & $e_1e_2=e_3$ & $e_1e_3=e_4$  \\ &&  $e_2e_2=\lambda e_4 + \alpha e_5$  & $e_2e_3=e_5$ &  $e_4e_4= e_5$
\\

 ${\mathbf{N}}_{175}^{\alpha, \beta}$ & $:$ & $e_1e_1=e_2$ 
& $e_1e_2=e_3$ & $e_1e_3=e_4$ &  $e_2e_2=-2 e_4+\alpha e_5$ \\
&& $e_3e_3=\beta e_5$ & $e_3e_4= e_5$ & $e_4e_4= e_5$
\\

 ${\mathbf{N}}_{176}^{\lambda,\alpha}$ & $:$ & $e_1e_1=e_2$ 
& $e_1e_2=e_3$ & $e_1e_3=e_4$\\& &  $e_2e_2=\lambda e_4+\alpha e_5$ & $e_3e_3= e_5$ & $e_4e_4= e_5$
\\
 ${\mathbf{N}}_{177}^{\lambda}$ & $:$ & $e_1e_1=e_2$ 
& $e_1e_2=e_3$ & $e_1e_3=e_4$ \\& &  $e_2e_2=\lambda e_4+ e_5$ & $e_4e_4= e_5$
\\
 ${\mathbf{N}}_{178}^{\lambda}$ & $:$ & $e_1e_1=e_2$ 
& $e_1e_2=e_3$ & $e_1e_3=e_4$ &  $e_2e_2=\lambda e_4$ \\ && $e_2e_3=e_5$ & $e_2e_4=e_5$
\\

 ${\mathbf{N}}_{179}^{\lambda,\alpha\neq0}$ & $:$ & $e_1e_1=e_2$ 
& $e_1e_2=e_3$ & $e_1e_3=e_4$ &  $e_2e_2=\lambda e_4$ \\ 
&& $e_2e_3=e_5$ & $e_2e_4=\alpha e_5$ & $e_3e_3=e_5$

\\
 ${\mathbf{N}}_{180}^{\lambda,\alpha}$ & $:$ & $e_1e_1=e_2$ 
& $e_1e_2=e_3$ & $e_1e_3=e_4$ &  $e_2e_2=\lambda e_4$ \\ 
&& $e_2e_3=e_5$ & $e_3e_3=\alpha e_5$ & $e_3e_4=e_5$
\\

 ${\mathbf{N}}_{181}^{\alpha \neq 0 }$ & $:$ & $e_1e_1=e_2$ 
& $e_1e_2=e_3$ & $e_1e_3=e_4$ &  $e_2e_2=-2 e_4$ \\ 
&& $e_2e_3=\alpha e_5$ & $e_3e_3= e_5$ & $e_3e_4=e_5$ & $e_4e_4=e_5$
\\
 ${\mathbf{N}}_{182}$ & $:$ & $e_1e_1=e_2$ 
& $e_1e_2=e_3$ & $e_1e_3=e_4$ \\&&  $e_2e_2=-2 e_4$ & $e_2e_3=e_5$ & $e_4e_4=e_5$
\\
 ${\mathbf{N}}_{183}^{\lambda\neq 2}$ & $:$ & $e_1e_1=e_2$ 
& $e_1e_2=e_3$ & $e_1e_3=e_4$ \\&&  $e_2e_2=\lambda e_4$ & $e_2e_4=e_5$
\\

 ${\mathbf{N}}_{184}^{\lambda, \alpha\neq 0}$ & $:$ & $e_1e_1=e_2$ 
& $e_1e_2=e_3$ & $e_1e_3=e_4$\\& &  $e_2e_2=\lambda e_4$ & $e_2e_4=\alpha e_5$ & $e_3e_3= e_5$
\\
 ${\mathbf{N}}_{185}^{\lambda}$ & $:$ & $e_1e_1=e_2$ 
& $e_1e_2=e_3$ & $e_1e_3=e_4$ \\&&  $e_2e_2=\lambda e_4$ & $e_3e_3= e_5$ & $e_3e_4= e_5$
\\
 ${\mathbf{N}}_{186}^{\lambda}$ & $:$ & $e_1e_1=e_2$ 
& $e_1e_2=e_3$ & $e_1e_3=e_4$ \\& &  $e_2e_2=\lambda e_4$ &  $e_3e_4= e_5$
\\
${\mathbf{N}}_{187}^{\lambda}$ & $:$ & $e_1e_1=e_2$ 
& $e_1e_2=e_3$ & $e_1e_3=e_4$ \\& &  $e_2e_2=\lambda e_4$ &  $e_4e_4= e_5$

 \end{longtable}

\subsection{$ 1 $-dimensional central extensions of $ {\mathbf N}_{14}^{4*} $.} Here we will collect all information about $ {\mathbf N}_{14}^{4*}: $
$$
\begin{array}{|l|l|l|l|}
\hline
{\mathbf{N}}^{4*}_{14} &  
\begin{array}{l}
e_1e_2=e_3 \\ 
e_1e_3=e_4
\end{array}
&
\begin{array}{lcl}
\mathrm{H}^2_{\mathfrak{D}}(\mathbf{N}^{4*}_{14})&=&\\
\multicolumn{3}{r}{\langle [\Delta_{11}],[\Delta_{22}],[\Delta_{23}],[\Delta_{33}]\rangle}\\
\mathrm{H}^2_{\mathfrak{C}}(\mathbf{N}^{4*}_{14})&=&\mathrm{H}^2_{\mathfrak{D}}(\mathbf{N}^{4*}_{14})\oplus\\
\multicolumn{3}{r}{\langle [\Delta_{14}], [\Delta_{24}], [\Delta_{34}], [\Delta_{44}]\rangle }
\end{array} &  \phi=\begin{pmatrix}
x&0&0&0\\
0&q&0&0\\
0&r&xq&0\\
t&s&xr&x^2q
\end{pmatrix}\\
\hline
\end{array}$$
Let us use the following notations:
\begin{longtable}{llll}
$\nabla_1=[\Delta_{11}],$&
$ \nabla_2=[\Delta_{14}],$&
$ \nabla_3=[\Delta_{22}],$&
$ \nabla_4=[\Delta_{23}],$\\
$\nabla_5=[\Delta_{24}],$&
$\nabla_6=[\Delta_{33}],$&
$\nabla_7=[\Delta_{34}],$&
$\nabla_8=[\Delta_{44}]. $
\end{longtable}
Take $ \theta=\sum\limits_{i=1}^{8}\alpha_i\nabla_i\in\mathrm{H}^2_{\mathfrak{C}}(\mathbf{N}^{4*}_{14}) .$  Since

$$\phi^T\begin{pmatrix}
\alpha_1&0&0&\alpha_2\\
0&\alpha_3&\alpha_4&\alpha_5\\
0&\alpha_4&\alpha_6&\alpha_7\\
\alpha_2&\alpha_5&\alpha_7&\alpha_8

\end{pmatrix}\phi=
\begin{pmatrix}
\alpha_1^*&\alpha^{*}&\alpha^{**}&\alpha_2^*\\
\alpha^{*}&\alpha_3^*&\alpha^*_4&\alpha^*_5\\
\alpha^{**}&\alpha^*_4&\alpha^*_6&\alpha^*_7\\
\alpha^*_2&\alpha^*_5&\alpha^*_7&\alpha^*_8
\end{pmatrix},$$

we have
\begin{longtable}{lcl}
$\alpha_1^*$&$=$&$\alpha_1x^2+2\alpha_2xt+\alpha_8t^2,$\\
$\alpha_2^*$&$=$&$(\alpha_2x+\alpha_8t)x^2q,$\\

$\alpha_3^*$&$=$&$(\alpha_3q+\alpha_4r+\alpha_5s)q+(\alpha_4q+\alpha_6r+\alpha_7s)r+(\alpha_5q+\alpha_7r+\alpha_8s)s,$\\
$\alpha_4^*$&$=$&$(\alpha_4q+\alpha_6r+\alpha_7s)xq+(\alpha_5q+\alpha_7r+\alpha_8s)xr,$\\
$\alpha_5^*$&$=$&$(\alpha_5q+\alpha_7r+\alpha_8s)x^2q,$\\
$\alpha_6^*$&$=$&$(\alpha_6q^2+2\alpha_7qr+\alpha_8r^2)x^2,$\\
$\alpha_7^*$&$=$&$(\alpha_7q+\alpha_8r)x^3q,$\\
$\alpha_8^*$&$=$&$\alpha_8x^4q^2.$
\end{longtable}

We are interested in $ (\alpha_2,\alpha_5,\alpha_7,\alpha_8)\neq(0,0,0,0) .$ Let us consider the following cases:

\begin{enumerate}
	\item  $ \alpha_8=0, \alpha_7=0, \alpha_5=0, $ then $ \alpha_2\neq0 $ and we have
	
	\begin{enumerate}
		\item if $ \alpha_6=0, \alpha_4=0, \alpha_3=0, $ then by choosing $ x=2\alpha_2, q=1, r=0, s=0, t=-\alpha_1,  $ we have the representative $ \left\langle \nabla_2 \right\rangle; $
		
		\item if $ \alpha_6=0, \alpha_4=0, \alpha_3\neq0, $ then by choosing $ x=\alpha_3, q=\alpha_2\alpha_3^2, r=0, s=0, t=-\frac{\alpha_1\alpha_3}{2\alpha_2},  $ we have the representative $ \left\langle \nabla_2+\nabla_3 \right\rangle; $
		
		\item if $ \alpha_6=0, \alpha_4\neq0, $ then by choosing $ x=\alpha_4, q=\alpha_2\alpha_4, r=-\frac{\alpha_2\alpha_3}{2}, s=0, t=-\frac{\alpha_1\alpha_4}{2\alpha_2},  $ we have the representative $ \left\langle \nabla_2+\nabla_4 \right\rangle; $
		
		\item if $ \alpha_6\neq0, \alpha_3\alpha_6-\alpha_4^2=0, $ then by choosing $ x=\alpha_6, q=\alpha_2, r=-\frac{\alpha_2\alpha_4}{\alpha_6}, s=0, t=-\frac{\alpha_1\alpha_6}{2\alpha_2},  $ we have the representative $ \left\langle \nabla_2+\nabla_6 \right\rangle; $

		\item if $ \alpha_6\neq0, \alpha_3\alpha_6-\alpha_4^2\neq0, $ then by choosing \begin{center}$ x=\frac{\sqrt{\alpha_3\alpha_6-\alpha_4^2}}{\alpha_6},$ $q=\frac{\alpha_2\sqrt{\alpha_3\alpha_6-\alpha_4^2}}{\alpha^2_6},$ $r=-\frac{\alpha_2\alpha_4\sqrt{\alpha_3\alpha_6-\alpha_4^2}}{\alpha^3_6},$ $s=0,  t=-\frac{\alpha_1\sqrt{\alpha_3\alpha_6-\alpha_4^2}}{2\alpha_2\alpha_6},  $\end{center}
		we have the representative $ \left\langle \nabla_2+\nabla_3+\nabla_6 \right\rangle. $

	\end{enumerate}

	\item $ \alpha_8=0, \alpha_7=0, \alpha_5\neq0,$ then we have
	
	\begin{enumerate}
		\item\label{14cs2} if $ \alpha_6=0, \alpha_2=0, $ then by choosing
		\begin{center}$x=1,  r=-\frac{\alpha_4}{\alpha_5}q, s=\frac{2\alpha_4^2-\alpha_3\alpha_5}{2\alpha_5^2}q, t=0,  $
		\end{center} we have the representatives $ \left\langle \nabla_5 \right\rangle  $ and $ \left\langle \nabla_1+\nabla_5 \right\rangle  $ depending on whether $ \alpha_1=0 $ or not;
		
		\item\label{14cccs1} if $ \alpha_6=0, \alpha_2\neq0, $ then by choosing 
		\begin{center}$ x=\alpha_5, q=\alpha_2, r=-\frac{\alpha_2\alpha_4}{\alpha_5},  s=\frac{\alpha_2(2\alpha_4^2-\alpha_3\alpha_5)}{2\alpha_5^2}, t=-\frac{\alpha_1\alpha_5}{2\alpha_2},  $
		\end{center}
		we have the representatives $ \left\langle \nabla_2+\nabla_5 \right\rangle; $
		
		\item if $ \alpha_6\neq0, \alpha_5=-\alpha_6, $ then we have the following subcases:
		
		\begin{enumerate}
			\item\label{14cs1} if $ \alpha_2=0, \alpha_4=0, \alpha_1=0, $ then we have the representative $ \left\langle \nabla_5-\nabla_6 \right\rangle; $
			
			\item\label{14ccs1} if $ \alpha_2=0, \alpha_4=0, \alpha_1\neq0, $ then by choosing \begin{center}
			    $ x=1, q=\sqrt{\frac{\alpha_1}{\alpha_5}}, r=0, s=-\frac{\alpha_3\sqrt{\alpha_1}}{2\alpha_5\sqrt{\alpha_5}}, t=0,$
			\end{center} we have the representative $ \left\langle \nabla_1+\nabla_5-\nabla_6 \right\rangle; $
			
			\item if $ \alpha_2=0, \alpha_4\neq0, \alpha_1=0, $ then by choosing \begin{center}
			    $ x={\frac{\alpha_4}{\alpha_5}}, q=1, r=0, s=-\frac{\alpha_3}{2\alpha_5}, t=0, $ 
			\end{center} we have the representative $ \left\langle \nabla_4+\nabla_5-\nabla_6 \right\rangle; $
			
			\item if $ \alpha_2=0, \alpha_4\neq0, \alpha_1\neq0, $ then by choosing \begin{center}
			    $ x=\frac{\alpha_4}{\alpha_5}, q=\sqrt{\frac{\alpha_1}{\alpha_5}}, r=0, s=-\frac{\alpha_3\sqrt{\alpha_1}}{2\alpha_5\sqrt{\alpha_5}}, t=0, $
			\end{center} we have the representative $ \left\langle \nabla_1+\nabla_4+\nabla_5-\nabla_6 \right\rangle; $
			
			\item\label{14cccs2} if $ \alpha_2\neq0, \alpha_4=0, $ then by choosing \begin{center} $ x=\alpha_5, q=\alpha_2, r=0, s=-\frac{\alpha_2\alpha_3}{2\alpha_5}, t=-\frac{\alpha_1\alpha_5}{2\alpha_2},$ \end{center}  we have the representative $ \left\langle \nabla_2+\nabla_5-\nabla_6 \right\rangle; $
			
			\item if $ \alpha_2\neq0, \alpha_4\neq0, $ then by choosing 
			\begin{center}
			    $ x=\frac{\alpha_4}{\alpha_5}, q={\frac{\alpha_2\alpha_4}{\alpha^2_5}}, r=0, s=-\frac{\alpha_2\alpha_3\alpha_4}{2\alpha_5^3}, t=-\frac{\alpha_1\alpha_4}{2\alpha_2\alpha_5}, $ 			 
			\end{center} we have the representative $ \left\langle \nabla_2+\nabla_4+\nabla_5-\nabla_6 \right\rangle. $			
									
		\end{enumerate}		
				
		\item if $ \alpha_6\neq0, \alpha_5\neq-\alpha_6, $ then we have the following subcases:	
		
		\begin{enumerate}
			\item if $ \alpha_2=0, \alpha_1=0, $ then by choosing 
\begin{center}			$x=1,
			q=1, 
			s=\frac{   \alpha_4^2 (2 \alpha_5+\alpha_6)-\alpha_3 (\alpha_5+\alpha_6)^2}{2 \alpha_5 (\alpha_5+\alpha_6)^2},
			r=-\frac{\alpha_4}{\alpha_5+\alpha_6}, t=0,$ \end{center} 
			we have the family of representatives $ \left\langle \nabla_5+\alpha\nabla_6 \right\rangle_{\alpha\neq0,-1}, $ which will be jointed with  the cases (\ref{14cs2}) and (\ref{14cs1});
			
			\item if $ \alpha_2=0, \alpha_1\neq0, $ then by choosing 
	\begin{center}
	    $ x=1, q=\sqrt{\frac{\alpha_1}{\alpha_5}},$ $r=-\frac{\alpha_4\sqrt{\alpha_1}}{(\alpha_5+\alpha_6)\sqrt{\alpha_5}},$ $s=\frac{(\alpha_4^2(2\alpha_5+\alpha_6)-\alpha_3(\alpha_5+\alpha_6)^2)\sqrt{\alpha_1}}{2\alpha_5(\alpha_5+\alpha_6)^2\sqrt{\alpha_5}}, t=0, $
	\end{center}		 we have the family of representatives $ \left\langle \nabla_1+\nabla_5+\alpha\nabla_6 \right\rangle_{\alpha\neq0,-1}, $ which will be jointed with  the cases (\ref{14cs2}) and (\ref{14ccs1});
			
			\item if $ \alpha_2\neq0, $ then by choosing 
			\begin{center}
			    $ x=\alpha_5, q=\alpha_2,$ $r=-\frac{\alpha_2\alpha_4}{\alpha_5+\alpha_6},$ $s=\frac{\alpha_2(\alpha_4^2(2\alpha_5+\alpha_6)-\alpha_3(\alpha_5+\alpha_6)^2)}{2\alpha_5(\alpha_5+\alpha_6)^2}, t=-\frac{\alpha_1\alpha_5}{2\alpha_2},$ 
			\end{center} we have the family of representatives $ \left\langle \nabla_2+\nabla_5+\alpha\nabla_6 \right\rangle_{\alpha\neq0,-1}, $
			which will be jointed with  the cases (\ref{14cccs1}) and (\ref{14cccs2}).
		\end{enumerate}

	\end{enumerate}

	\item $ \alpha_8=0, \alpha_7\neq0 $ then by choosing $ r=-\frac{\alpha_5}{\alpha_7}q, s=\frac{\alpha_5\alpha_6-\alpha_4\alpha_7}{\alpha_7^2}q,$ we have $ \alpha_4^*=\alpha_5^*=0. $ Therefore, we can suppose that $ \alpha_4=0, \alpha_5=0, $ thus we have
	
	\begin{enumerate}
		\item if $ \alpha_2=0, \alpha_1=0, \alpha_3=0, $ then we have the representatives $ \left\langle \nabla_7 \right\rangle  $ and $ \left\langle \nabla_6+\nabla_7\right\rangle  $ depending on whether $ \alpha_6=0 $ or not;
		
		\item if $ \alpha_2=0, \alpha_1=0, \alpha_3\neq0, $ then by choosing $  x=\sqrt[3]{\frac{\alpha_3}{\alpha_7}}, q=1, r=0, s=0, t=0, $ we have the family of representatives $ \left\langle \nabla_3+\alpha\nabla_6+\nabla_7\right\rangle; $
		
		\item if $ \alpha_2=0, \alpha_1\neq0, \alpha_3=0, $ then we have the representatives $ \left\langle \nabla_1+\nabla_7\right\rangle  $ and $ \left\langle \nabla_1+\nabla_6+\nabla_7\right\rangle $ depending on whether $\alpha_6=0$ or not;
		
		\item if $ \alpha_2=0, \alpha_1\neq0, \alpha_3\neq0, $ then by choosing  \begin{center}$x=\sqrt[3]{\frac{\alpha_3}{\alpha_7}}, q=\sqrt[6]{\frac{\alpha^3_1}{\alpha_3\alpha^2_7}}, r=0, s=0, t=0,  $
		\end{center} we have the family of representatives $ \left\langle \nabla_1+\nabla_3+\alpha\nabla_6+\nabla_7\right\rangle; $				
		
		\item if $ \alpha_2\neq0, \alpha_3=0, $ then by choosing $ q=\frac{\alpha_2}{\alpha_7}, r=0, s=0, t=-\frac{\alpha_1}{2\alpha_2}x,$ we have the representatives $ \left\langle \nabla_2+\nabla_7 \right\rangle  $ and $ \left\langle \nabla_2+\nabla_6+\nabla_7\right\rangle $	depending on whether $ \alpha_6=0 $ or not;
		
		\item if $ \alpha_2\neq0, \alpha_3\neq0, $ then by choosing $  x=\sqrt[3]{\frac{\alpha_3}{\alpha_7}}, q=\frac{\alpha_2}{\alpha_7}, r=0, s=0, t=-\frac{\alpha_1\sqrt[3]{\alpha_3}}{2\alpha_2\sqrt[3]{\alpha_7}} $ we have the family of representatives $ \left\langle \nabla_2+\nabla_3+\alpha\nabla_6+\nabla_7\right\rangle. $		
				
	\end{enumerate}

	\item  $ \alpha_8\neq0 $ then by choosing $ r=-\frac{\alpha_7}{\alpha_8}q, s=\frac{\alpha_7^2-\alpha_5\alpha_8}{\alpha_8^2}q, t=-\frac{\alpha_2}{\alpha_8}x, $ we have $ \alpha_2^*=\alpha_5^*=\alpha_7^*=0. $ Therefore, we can suppose that $ \alpha_2=0, \alpha_5=0, \alpha_7=0,$ then we have
	
	\begin{enumerate}
		\item if $\alpha_1=0, \alpha_3=0, \alpha_4=0,$ then we have the representatives $ \left\langle \nabla_8 \right\rangle  $ and $ \left\langle \nabla_6+\nabla_8 \right\rangle  $ depending on whether $ \alpha_6=0 $ or not;
		
		\item if $\alpha_1=0, \alpha_3=0, \alpha_4\neq0,$ then by choosing $ x=\sqrt[3]{\frac{\alpha_4}{\alpha_8}}, q=1, r=0, s=0, t=0, $ we have the family of representatives  $ \left\langle \nabla_4+\alpha\nabla_6+\nabla_8 \right\rangle;$
		
		\item if $\alpha_1=0, \alpha_3\neq0,$ then by choosing $ x=\sqrt[4]{\frac{\alpha_3}{\alpha_8}}, q=1, r=0, s=0, t=0, $ we have the family of representatives $ \left\langle \nabla_3+\alpha\nabla_4+\beta\nabla_6+\nabla_8 \right\rangle; $			
		
		\item if $\alpha_1\neq0, \alpha_3=0, \alpha_4=0,$ then we have the representatives $ \left\langle \nabla_1+\nabla_8 \right\rangle  $ and $ \left\langle \nabla_1+\nabla_6+\nabla_8 \right\rangle  $ depending on whether $ \alpha_6=0 $ or not;
		
		\item if $\alpha_1\neq0, \alpha_3=0, \alpha_4\neq0,$ then by choosing $ x=\sqrt[3]{\frac{\alpha_4}{\alpha_8}}, q=\sqrt[6]{\frac{\alpha_1^3}{\alpha_4^2\alpha_8}}, r=0, s=0, t=0, $ we have the family of representative $ \left\langle \nabla_1+\nabla_4+\alpha\nabla_6+\nabla_8 \right\rangle; $		
		
		\item if $\alpha_1\neq0, \alpha_3\neq0,$ then by choosing $ x=\sqrt[4]{\frac{\alpha_3}{\alpha_8}}, q=\frac{\sqrt{\alpha_1}}{\sqrt[4]{\alpha_3\alpha_8}}, r=0, s=0, t=0, $ we have the family of representative $ \left\langle \nabla_1+\nabla_3+\alpha\nabla_4+\beta\nabla_6+\nabla_8 \right\rangle. $

	\end{enumerate}

\end{enumerate}

Summarizing, we have the following distinct orbits:
\begin{center}

$\left\langle \nabla_1+ \nabla_3+\alpha\nabla_4+\beta\nabla_6+\nabla_8 \right\rangle^{O(\alpha,\beta)=O(i\alpha,-\beta)=O(-i\alpha,-\beta)=O(-\alpha,\beta)},$
$\left\langle \nabla_1+\nabla_3+\alpha\nabla_6 +\nabla_7 \right\rangle^{O(\alpha)=O(-\eta_3\alpha)=O(\eta_3^2\alpha)},$
$\left\langle \nabla_1+ \nabla_4+\nabla_5-\nabla_6 \right\rangle,$
$\left\langle \nabla_1+ \nabla_4+\alpha\nabla_6+\nabla_8 \right\rangle^{O(\alpha)=O(-\eta_3 \alpha)=O(\eta_3^2 \alpha)},$
$\left\langle \nabla_1+\nabla_5+\alpha\nabla_6 \right\rangle,$  
$\left\langle \nabla_1+\nabla_6 +\nabla_7 \right\rangle,$
$\left\langle \nabla_1+\nabla_6+\nabla_8 \right\rangle,$ 
$\left\langle \nabla_1+\nabla_7 \right\rangle,$
$\left\langle \nabla_1+\nabla_8 \right\rangle,$
$\left\langle \nabla_2 \right\rangle,$ 
$\left\langle \nabla_2+\nabla_3 \right\rangle,$
$\left\langle \nabla_2+ \nabla_3+\nabla_6 \right\rangle,$
$\left\langle \nabla_2+\nabla_3+\alpha\nabla_6+\nabla_7 \right\rangle^{O(\alpha)=O(-\eta_3\alpha)=O(\eta_3^2\alpha)},$
$\left\langle \nabla_2+\nabla_4 \right\rangle,$
$\left\langle \nabla_2+\nabla_4+\nabla_5-\nabla_6 \right\rangle,$
$\left\langle \nabla_2+\nabla_5+\alpha\nabla_6 \right\rangle,$
$\left\langle \nabla_2+\nabla_6 \right\rangle,$
$\left\langle \nabla_2+ \nabla_6+\nabla_7 \right\rangle,$
$\left\langle \nabla_2+\nabla_7 \right\rangle,$
$\left\langle \nabla_3+\alpha\nabla_4+\beta\nabla_6+\nabla_8 \right\rangle^{O(\alpha,\beta)=O(i\alpha,-\beta)=O(-i\alpha,-\beta)=O(-\alpha,\beta)},$ 
$\left\langle \nabla_3+\alpha\nabla_6+\nabla_7 \right\rangle^{O(\alpha)=O(-\eta_3 \alpha)=O(\eta_3^2 \alpha)},$
$
\left\langle \nabla_4+\nabla_5-\nabla_6 \right\rangle,$
$\left\langle \nabla_4+\alpha\nabla_6+\nabla_8 \right\rangle^{O(\alpha)=O(-\eta_3 \alpha)=O(\eta_3^2 \alpha)}, $
$ \left\langle \nabla_5+\alpha\nabla_6 \right\rangle,$ 
$\left\langle \nabla_6+\nabla_7 \right\rangle,$ 
$\left\langle \nabla_6+\nabla_8 \right\rangle,$ 
$\left\langle \nabla_7 \right\rangle,$ 
$\left\langle \nabla_8 \right\rangle, $

\end{center}
which gives the following new algebras:

\begin{longtable}{llllllllllllllllll}

${\mathbf{N}}_{188}^{\alpha, \beta}$ & $:$ &$e_1e_1=e_5$ & $e_1e_2=e_3$ & $e_1e_3=e_4$ & $e_2e_2=e_5$ \\ && $e_2e_3=\alpha e_5$ & $e_3e_3=\beta e_5$ 
  & $e_4e_4=e_5$
\\

${\mathbf{N}}_{189}^{\alpha}$ & $:$ &$e_1e_1=e_5$ & $e_1e_2=e_3$ & $e_1e_3=e_4$ \\&& $e_2e_2=e_5$ & $e_3e_3=\alpha e_5$ & $e_3e_4=e_5$
\\

${\mathbf{N}}_{190}$ & $:$ &$e_1e_1=e_5$ & $e_1e_2=e_3$ & $e_1e_3=e_4$ \\&& $e_2e_3=e_5$ & $e_2e_4=e_5$ & $e_3e_3=-e_5$ 
\\

${\mathbf{N}}_{191}^{\alpha}$ & $:$ &$e_1e_1=e_5$ & $e_1e_2=e_3$ & $e_1e_3=e_4$ \\&& 
$e_2e_3=e_5$ & $e_3e_3=\alpha e_5$ & $e_4e_4=e_5$
\\

${\mathbf{N}}_{191}^{\alpha}$ & $:$ &$e_1e_1=e_5$ & $e_1e_2=e_3$ & $e_1e_3=e_4$ & $e_2e_4=e_5$ & $e_3e_3=\alpha e_5$ \\

${\mathbf{N}}_{192}$ & $:$ &$e_1e_1=e_5$ & $e_1e_2=e_3$ & $e_1e_3=e_4$ & $e_3e_3=e_5$ & $e_3e_4=e_5$
\\

${\mathbf{N}}_{193}$ & $:$ &$e_1e_1=e_5$ & $e_1e_2=e_3$ & $e_1e_3=e_4$ & $e_3e_3=e_5$ & $e_4e_4=e_5$
\\

${\mathbf{N}}_{194}$ & $:$ &$e_1e_1=e_5$ & $e_1e_2=e_3$ & $e_1e_3=e_4$ & $e_3e_4=e_5$
\\

${\mathbf{N}}_{195}$ & $:$ &$e_1e_1=e_5$ & $e_1e_2=e_3$ & $e_1e_3=e_4$ & $e_4e_4=e_5$
\\

${\mathbf{N}}_{196}$ & $:$ & $e_1e_2=e_3$ & $e_1e_3=e_4$ & $e_1e_4=e_5$
\\

${\mathbf{N}}_{197}$ & $:$ &$e_1e_2=e_3$ & $e_1e_3=e_4$ & $e_1e_4=e_5$ & $e_2e_2=e_5$
\\

${\mathbf{N}}_{198}$ & $:$ &$e_1e_2=e_3$ & $e_1e_3=e_4$ & $e_1e_4=e_5$ & $e_2e_2=e_5$ & $e_3e_3=e_5$
\\

${\mathbf{N}}_{199}^{\alpha}$ & $:$ &$e_1e_2=e_3$ & $e_1e_3=e_4$ & $e_1e_4=e_5$ \\&& $e_2e_2=e_5$ & $e_3e_3=\alpha e_5$ & $e_3e_4=e_5$\\

${\mathbf{N}}_{200}$ & $:$ &$e_1e_2=e_3$ & $e_1e_3=e_4$ & $e_1e_4=e_5$ & $e_2e_3=e_5$\\

${\mathbf{N}}_{201}$ & $:$ &$e_1e_2=e_3$ & $e_1e_3=e_4$ & $e_1e_4=e_5$\\&& $e_2e_3=e_5$ & $e_2e_4=e_5$ & $e_3e_3=-e_5$\\

${\mathbf{N}}_{202}^{\alpha}$ & $:$ &$e_1e_2=e_3$ & $e_1e_3=e_4$ & $e_1e_4=e_5$ & $e_2e_4=e_5$ & $e_3e_3=\alpha e_5$ \\

${\mathbf{N}}_{203}$ & $:$ &$e_1e_2=e_3$ & $e_1e_3=e_4$ & $e_1e_4=e_5$  & $e_3e_3=e_5$\\

${\mathbf{N}}_{204}$ & $:$ &$e_1e_2=e_3$ & $e_1e_3=e_4$ & $e_1e_4=e_5$  & $e_3e_3=e_5$ & $ e_3e_4=e_5$ \\

${\mathbf{N}}_{205}$ & $:$ &$e_1e_2=e_3$ & $e_1e_3=e_4$ & $e_1e_4=e_5$  & $e_3e_4=e_5$\\

${\mathbf{N}}_{206}^{\alpha, \beta}$ & $:$ &$e_1e_2=e_3$ & $e_1e_3=e_4$ & $e_2e_2=e_5$ \\&& $e_2e_3=\alpha e_5$ & $e_3e_3=\beta e_5$ & $e_4e_4=e_5$
\\

${\mathbf{N}}_{207}^{\alpha}$ & $:$ &$e_1e_2=e_3$ & $e_1e_3=e_4$ & $e_2e_2=e_5$ & $e_3e_3=\alpha e_5$ & $e_3e_4=e_5$
\\

${\mathbf{N}}_{208}$ & $:$ &$e_1e_2=e_3$ & $e_1e_3=e_4$ & $e_2e_3=e_5$ & $e_2e_4=e_5$ & $e_3e_3=-e_5$
\\

${\mathbf{N}}_{209}^{\alpha}$ & $:$ &$e_1e_2=e_3$ & $e_1e_3=e_4$ & $e_2e_3=e_5$ & $e_3e_3=\alpha e_5$ & $e_4e_4=e_5$
\\

${\mathbf{N}}_{210}^{\alpha}$ & $:$ &$e_1e_2=e_3$ & $e_1e_3=e_4$ & $e_2e_4=e_5$ & $e_3e_3=\alpha e_5$
\\

${\mathbf{N}}_{211}$ & $:$ &$e_1e_2=e_3$ & $e_1e_3=e_4$ & $e_3e_3=e_5$ & $e_3e_4=e_5$
\\

${\mathbf{N}}_{212}$ & $:$ &$e_1e_2=e_3$ & $e_1e_3=e_4$ & $e_3e_3=e_5$ & $e_4e_4=e_5$
\\

${\mathbf{N}}_{213}$ & $:$ &$e_1e_2=e_3$ & $e_1e_3=e_4$ & $e_3e_4=e_5$
\\

${\mathbf{N}}_{214}$ & $:$ &$e_1e_2=e_3$ & $e_1e_3=e_4$ & $e_4e_4=e_5$
\\

\end{longtable}

\subsection{$ 1 $-dimensional central extensions of $ {\mathbf N}_{15}^{4*} $.} Here we will collect all information about $ {\mathbf N}_{15}^{4*}: $
$$
\begin{array}{|l|l|l|l|}
\hline
{\mathbf{N}}^{4*}_{15} &  
\begin{array}{l}
e_1e_2=e_3 \\ 
e_1e_3=e_4 \\ 
e_2e_2=e_4
\end{array}
&
\begin{array}{lcl}
\mathrm{H}^2_{\mathfrak{D}}(\mathbf{N}^{4*}_{15})&=&\\
\multicolumn{3}{r}{\langle [\Delta_{11}],[\Delta_{22}],[\Delta_{23}],[\Delta_{33}]\rangle}\\
\mathrm{H}^2_{\mathfrak{C}}(\mathbf{N}^{4*}_{15})=\mathrm{H}^2_{\mathfrak{D}}(\mathbf{N}^{4*}_{15})\oplus\\
\multicolumn{3}{r}{\langle [\Delta_{14}], [\Delta_{24}], [\Delta_{34}], [\Delta_{44}] \rangle }
\end{array}  
& 
\phi=\begin{pmatrix}
x&0&0&0\\
0&x^2&0&0\\
0&r&x^3&0\\
t&s&xr&x^4
\end{pmatrix}\\
\hline
\end{array}$$

Let us use the following notations:
\begin{longtable}{llll}
$\nabla_1=[\Delta_{11}],$&$ \nabla_2=[\Delta_{14}], $&$ \nabla_3=[\Delta_{22}], $&$  \nabla_4=[\Delta_{23}],$\\
$\nabla_5=[\Delta_{24}],  $&$ \nabla_6=[\Delta_{33}],  $&$  \nabla_7=[\Delta_{34}], $&$ \nabla_8=[\Delta_{44}]. $
\end{longtable}
Take $ \theta=\sum\limits_{i=1}^{8}\alpha_i\nabla_i\in\mathrm{H}^2_{\mathfrak{C}}(\mathbf{N}^{4*}_{15}) .$  Since

$$\phi^T\begin{pmatrix}
\alpha_1&0&0&\alpha_2\\
0&\alpha_3&\alpha_4&\alpha_5\\
0&\alpha_4&\alpha_6&\alpha_7\\
\alpha_2&\alpha_5&\alpha_7&\alpha_8

\end{pmatrix}\phi=
\begin{pmatrix}
\alpha_1^*&\alpha^{*}&\alpha^{**}&\alpha_2^*\\
\alpha^{*}&\alpha_3^*+\alpha^{**}&\alpha^*_4&\alpha^*_5\\
\alpha^{**}&\alpha^*_4&\alpha^*_6&\alpha^*_7\\
\alpha^*_2&\alpha^*_5&\alpha^*_7&\alpha^*_8
\end{pmatrix},$$
we have
\begin{longtable}{lcl}
$\alpha_1^*$&$=$&$\alpha_1x^2+2\alpha_2xt+\alpha_8t^2,$\\
$\alpha_2^*$&$=$&$(\alpha_2x+\alpha_8t)x^4,$\\
$\alpha_3^*$&$=$&$x^4 \alpha_3+2 r x^2 \alpha_4+2 s x^2 \alpha_5+r^2 \alpha_6+2 r s \alpha_7+$\\
&&\multicolumn{1}{r}{$s^2 \alpha_8-x (r x \alpha_2+t x^2 \alpha_7+r t \alpha_8),$}\\
$\alpha_4^*$&$=$&$(\alpha_4x^2+\alpha_6r+\alpha_7s)x^3+(\alpha_5x^2+\alpha_7r+\alpha_8s)xr,$\\
$\alpha_5^*$&$=$&$(\alpha_5x^2+\alpha_7r+\alpha_8s)x^4,$\\
$\alpha_6^*$&$=$&$(\alpha_6x^4+2\alpha_7x^2r+\alpha_8r^2)x^2,$\\
$\alpha_7^*$&$=$&$(\alpha_7x^2+\alpha_8r)x^5,$\\
$\alpha_8^*$&$=$&$\alpha_8x^8.$
\end{longtable}

We are interested in $ (\alpha_2,\alpha_5,\alpha_7,\alpha_8)\neq(0,0,0,0) .$ Let us consider the following cases:

\begin{enumerate}
	\item  $ \alpha_8=0, \alpha_7=0, \alpha_5=0, $ then $ \alpha_2\neq0 $ and we have
	
	\begin{enumerate}
		\item\label{15cs1a} if $ \alpha_6=0, \alpha_4=0, $ then by choosing $ x=2\alpha_2, r=4\alpha_2\alpha_3, s=0,  t=-\alpha_1, $ we have the representative $ \left\langle \nabla_2 \right\rangle; $
		
		\item\label{15cs1b} if $ \alpha_6=0, \alpha_4\neq0, \alpha_2=2\alpha_4,$ then  we have the representatives $ \left\langle 2\nabla_2+\nabla_4 \right\rangle $ and  $ \left\langle 2\nabla_2+\nabla_3+\nabla_4 \right\rangle $ depending on whether $ \alpha_3=0 $ or not;

		\item if $ \alpha_6=0, \alpha_4\neq0, \alpha_2\neq2\alpha_4,$ then by choosing $ x=\alpha_2-2\alpha_4, r=\alpha_3(\alpha_2-2\alpha_4), s=0, t=\frac{\alpha_1(2\alpha_4-\alpha_2)}{2\alpha_2}, $ we have the family of representatives $ \left\langle \nabla_2+\alpha\nabla_4 \right\rangle_{\alpha\neq0,\frac{1}{2}}, $ which will be jointed with representatives from the cases 	(\ref{15cs1a}) and (\ref{15cs1b});

		\item  if $ \alpha_6\neq0,$ then by choosing $ x=\frac{\alpha_2}{\alpha_6}, r=-\frac{\alpha_2^2\alpha_4}{\alpha_6^3}, s=0, t=-\frac{\alpha_1}{2\alpha_6}, $ we have the representative $ \left\langle \nabla_2+\alpha\nabla_3+\nabla_6 \right\rangle. $

		\end{enumerate}	
	
	\item  $ \alpha_8=0, \alpha_7=0, \alpha_5\neq0 $ then we have	
	
	\begin{enumerate}

		\item if $ \alpha_5\neq-\alpha_6, $ then we have the following subcases:

\begin{enumerate}
	\item\label{152ai} if $ \alpha_2=0, \alpha_1=0, $ then 
	by choosing
	\begin{center}
	    $x=2 \alpha_5 (\alpha_5+\alpha_6),$
	    $s=2 \alpha_5 (\alpha_4^2 (2 \alpha_5+\alpha_6)-\alpha_3 (\alpha_5+\alpha_6)^2),$
	    $r=-4 \alpha_4 \alpha_5^2 (\alpha_5+\alpha_6),$
	\end{center}
	we have the family of representatives $ \left\langle \nabla_5+\alpha\nabla_6 \right\rangle_{\alpha\neq-1} ;$
	
	\item\label{152aii} if $ \alpha_2=0, \alpha_1\neq0, $ then by choosing 
	\begin{center}$ x=\sqrt[4]{\frac{\alpha_1}{\alpha_5}},$ $r=-\frac{\alpha_4\sqrt{\alpha_1}}{(\alpha_5+\alpha_6)\sqrt{\alpha_5}},$ $s=\frac{((\alpha_5+\alpha_6)(2\alpha_4^2-\alpha_2\alpha_4)-\alpha_3(\alpha_5+\alpha_6)^2-\alpha_4^2\alpha_6)\sqrt{\alpha_1}}{2\alpha_5(\alpha_5+\alpha_6)^2\sqrt{\alpha_5}}, t=0, $
	\end{center} 
	we have the family of representatives $ \left\langle \nabla_1+\nabla_5+\alpha\nabla_6 \right\rangle_{\alpha\neq-1} ; $ 			
	
	\item\label{152aiii} if $ \alpha_2\neq0, $ then by choosing 
	\begin{center}
	    $ x=\frac{\alpha_2}{\alpha_5},$ $r=-\frac{\alpha_2^2\alpha_4}{\alpha_5^2(\alpha_5+\alpha_6)},$ $s=\frac{\alpha_2^2((\alpha_5+\alpha_6)(2\alpha_4^2-\alpha_2\alpha_4)-\alpha_3(\alpha_5+\alpha_6)^2-\alpha_4^2\alpha_6)}{2\alpha^3_5(\alpha_5+\alpha_6)^2}, t=-\frac{\alpha_1}{2\alpha_5}, $
	\end{center} we have the family of representatives $ \left\langle \nabla_2+\nabla_5+\alpha\nabla_6 \right\rangle_{\alpha\neq-1} . $			
\end{enumerate}			
		\item if $  \alpha_6=-\alpha_5, $ then we have the following subcases:
		
		\begin{enumerate}
			\item if $ \alpha_4=0, \alpha_2=0, \alpha_1=0, $ then we have the representative $ \left\langle \nabla_5-\nabla_6 \right\rangle,$
			which will be jointed with the family from the case (\ref{152ai});
			
			\item if $ \alpha_4=0, \alpha_2=0, \alpha_1\neq0, $ then by choosing $ x=\sqrt[4]{\frac{\alpha_1}{\alpha_5}}, r=0, s=-\frac{\alpha_3\sqrt{\alpha_1}}{2\alpha_5\sqrt{\alpha_4}}, t=0,$ we have the representative  $ \left\langle \nabla_1+\nabla_5-\nabla_6 \right\rangle,$
			which will be jointed with the family from the case (\ref{152aii});
			
			\item if $ \alpha_4=0, \alpha_2\neq0, $ then by choosing $ x=\frac{\alpha_2}{\alpha_5}, r=0, s=-\frac{\alpha^2_2\alpha_3}{2\alpha^3_5}, t=-\frac{\alpha_1}{2\alpha_5},$ we have the representative $ \left\langle \nabla_2+\nabla_5-\nabla_6 \right\rangle,$
			which will be jointed with the family from the case (\ref{152aiii});
			
		    \item if $ \alpha_4\neq0, $ then by choosing $ x=\frac{\alpha_4}{\alpha_5}, $
		    $s=-\frac{\alpha_3 \alpha_4^2}{ 2 \alpha_5^3},$
		    $r=0,$ we have the families of representatives 
		    \begin{center}$ \left\langle \alpha\nabla_1+\nabla_4+\nabla_5-\nabla_6 \right\rangle $	and $ \left\langle \alpha\nabla_2+\nabla_4+\nabla_5-\nabla_6 \right\rangle_{\alpha\neq0} $
		    \end{center} depending on $\alpha_2=0$ or not.		
		\end{enumerate}		
				
	\end{enumerate}
	
	\item  $ \alpha_8=0, \alpha_7\neq0, $ then by choosing 
	\begin{center}
	    $ r=-\frac{\alpha_5}{\alpha_7}x^2, s=\frac{\alpha_5\alpha_6-\alpha_4\alpha_7}{\alpha_7^2}x^2, t=\frac{\alpha_3\alpha_7^2-2\alpha_4\alpha_5\alpha_7+\alpha_5^2\alpha_6+\alpha_2\alpha_5\alpha_7}{\alpha_7^3}x,$ 
	\end{center} we have $ \alpha_3^*=\alpha_4^*=\alpha_5^*=0. $ Therefore, we can suppose that $ \alpha_3=0, \alpha_4=0, \alpha_5=0, $ and we have
	
	\begin{enumerate}
		\item if $ \alpha_1=0, \alpha_2=0, $ then we have the representatives $ \left\langle \nabla_7 \right\rangle  $ and $ \left\langle \nabla_6+\nabla_7 \right\rangle  $ depending on whether $ \alpha_6=0 $ or not;
		
		\item if $ \alpha_1=0, \alpha_2\neq0, $ then by choosing $ x=\sqrt{\alpha_2\alpha_7^{-1}}, r=0, s=0, t=0, $ we have the family of representative $ \left\langle \nabla_2+\alpha\nabla_6+\nabla_7 \right\rangle; $		
		
		\item if $ \alpha_1\neq0, $ then by choosing $ x=\sqrt[5]{\frac{\alpha_1}{\alpha_7}}, r=0, s=0, t=0, $ we have the family of  representative $ \left\langle \nabla_1+\alpha\nabla_2+\beta\nabla_6+\nabla_7 \right\rangle. $

	\end{enumerate}	

	\item $ \alpha_8\neq0,$ then by choosing $ r=-\frac{\alpha_7}{\alpha_8}x^2, t=-\frac{\alpha_2}{\alpha_8}x, s=-\frac{\alpha_5x^2+\alpha_7r}{\alpha_8}$ we have $ \alpha_2^*=\alpha_5^*=\alpha_7^*=0. $ Therefore, we can suppose that $ \alpha_2=0, \alpha_5=0, \alpha_7=0,$ then we have
	
	\begin{enumerate}
		\item if $ \alpha_1=0, \alpha_3=0, \alpha_4=0, $ then we have the representatives $ \left\langle \nabla_8 \right\rangle  $ and $ \left\langle \nabla_6+\nabla_8 \right\rangle  $ depending on whether $ \alpha_6=0 $ or not;
		
		\item if $ \alpha_1=0, \alpha_3=0, \alpha_4\neq0, $ then by choosing $ x=\sqrt[3]{{\alpha_4}{\alpha_8^{-1}}}, r=0, s=0, t=0, $ we have the family of representative $ \left\langle \nabla_4+\alpha\nabla_6+\nabla_8 \right\rangle; $
		
		\item if $ \alpha_1=0, \alpha_3\neq0, $ then by choosing $ x=\sqrt[4]{{\alpha_3}{\alpha_8^{-1}}}, r=0, s=0, t=0, $ we have the family of representative $ \left\langle \nabla_3+\alpha\nabla_4+\beta\nabla_6+\nabla_8 \right\rangle; $				
		
		\item if $ \alpha_1\neq0, $ then by choosing $ x=\sqrt[6]{{\alpha_1}{\alpha_8^{-1}}}, r=0, s=0, t=0, $ we have the family of representative $ \left\langle \nabla_1+\alpha\nabla_3+\beta\nabla_4+\gamma\nabla_6+\nabla_8 \right\rangle. $		
		
	\end{enumerate}

\end{enumerate}

Summarizing all cases we have the following distinct orbits:
\begin{center}

$\left\langle \nabla_1+\alpha\nabla_2+\beta\nabla_6+\nabla_7 \right\rangle
^{O(\alpha,\beta)=O(\eta_5^2\alpha,-\eta_5\beta)=O(\eta_5^4\alpha,\eta_5^2\beta)=
O(-\eta_5\alpha,-\eta_5^3\beta)=O(-\eta_5^3\alpha,\eta_5^4\beta)},$
$\left\langle \nabla_1+\alpha\nabla_3+\beta\nabla_4 +\gamma\nabla_6+\nabla_8 \right\rangle^{
{\tiny \begin{array}{l}O(\alpha,\beta,\gamma)=O(-\eta_3\alpha,\beta,\eta_3^2\gamma)=
O(-\eta_3\alpha,-\beta,\eta_3^2\gamma)=\\
O(\eta_3^2\alpha,-\beta,-\eta_3\gamma)=
O(\eta_3^2\alpha,\beta,-\eta_3\gamma)=
O(\alpha,-\beta,\gamma)
\end{array}}},$
$\left\langle \alpha\nabla_1+ \nabla_4+\nabla_5-\nabla_6 \right\rangle,$
$\left\langle \nabla_1+\nabla_5+\alpha\nabla_6 \right\rangle,$  
$\left\langle 2\nabla_2+\nabla_3 +\nabla_4 \right\rangle,$  
$ \left\langle \nabla_2+\alpha\nabla_3+\nabla_6 \right\rangle,$
$\left\langle \nabla_2+\alpha\nabla_4 \right\rangle,$ 
$\left\langle \alpha\nabla_2+\nabla_4+\nabla_5-\nabla_6 \right\rangle_{\alpha\neq0},$
$\left\langle \nabla_2+\nabla_5+\alpha\nabla_6 \right\rangle,$ 
$\left\langle \nabla_2+\alpha\nabla_6+\nabla_7 \right\rangle^{O(\alpha)=O(-\alpha)},$
$\left\langle \nabla_3+\alpha\nabla_4+\beta\nabla_6+\nabla_8 \right\rangle
^{O(\alpha,\beta)=O(i\alpha,-\beta)=
O(-i\alpha,-\beta)=
O(-\alpha,\beta)},$
$ \left\langle \nabla_4+\alpha\nabla_6+\nabla_8 \right\rangle^{O(\alpha)=O(-\eta_3\alpha)=
O(\eta_3^2\alpha)},$
$\left\langle \nabla_5+\alpha\nabla_6 \right\rangle, $
$\left\langle \nabla_6+ \nabla_7 \right\rangle,$ 
$\left\langle \nabla_6+\nabla_8 \right\rangle,$ 
$\left\langle \nabla_7 \right\rangle,$ 
$\left\langle \nabla_8 \right\rangle,  $

\end{center}
which gives the following new algebras:

\begin{longtable}{llllllllllllllllll}

${\mathbf{N}}_{215}^{\alpha, \beta}$ & $:$ &$e_1e_1=e_5$ & $e_1e_2=e_3$ & $e_1e_3=e_4$ & $e_1e_4=\alpha e_5$  \\ && $e_2e_2=e_4$ & $e_3e_3=\beta e_5$ 
  & $e_3e_4=e_5$
\\

${\mathbf{N}}_{216}^{\alpha, \beta ,\gamma}$ & $:$ &$e_1e_1=e_5$ & $e_1e_2=e_3$ & $e_1e_3=e_4$ & $e_2e_2=e_4+\alpha e_5$  \\ && $e_2e_3=\beta e_5$ 
 & $e_3e_3=\gamma e_5$  & $e_4e_4=e_5$
\\

${\mathbf{N}}_{217}^{\alpha}$ & $:$ &$e_1e_1=\alpha e_5$ & $e_1e_2=e_3$ & $e_1e_3=e_4$ & $e_2e_2=e_4$ \\ & & $e_2e_3=e_5$ & $e_2e_4=e_5$ 
  & $e_3e_3=-e_5$
\\

${\mathbf{N}}_{218}^\alpha$ & $:$ &$e_1e_1=e_5$ & $e_1e_2=e_3$ & $e_1e_3=e_4$\\& & $e_2e_2=e_4$ & $e_2e_4=e_5$ & $e_3e_3=\alpha e_5$ 
\\

${\mathbf{N}}_{219}$ & $:$ &$e_1e_2=e_3$ & $e_1e_3=e_4$ & $e_1e_4=2e_5$ & $e_2e_2=e_4+e_5$ & $ e_2e_3=e_5$ 
\\

${\mathbf{N}}_{220}^{\alpha}$ & $:$ &$e_1e_2=e_3$ & $e_1e_3=e_4$ & $e_1e_4=e_5$  & $e_2e_2=e_4+\alpha e_5$ & $e_3e_3=e_5$ \\

${\mathbf{N}}_{221}^{\alpha}$ & $:$ &$e_1e_2=e_3$ & $e_1e_3=e_4$ & $e_1e_4=e_5$ & $e_2e_2=e_4$ & $e_2e_3=\alpha e_5$ \\

${\mathbf{N}}_{222}^{\alpha\neq 0}$ & $:$ &$e_1e_2=e_3$ & $e_1e_3=e_4$ & $e_1e_4=\alpha e_5$ & $e_2e_2=e_4$ \\ & & $e_2e_3=e_5$ & $e_2e_4=e_5 $ 
  & $ e_3e_3=-e_5$ \\

${\mathbf{N}}_{223}^{\alpha}$ & $:$ &$e_1e_2=e_3$ & $e_1e_3=e_4$ & $e_1e_4=e_5$\\&& $e_2e_2=e_4$ & $e_2e_4=e_5$ & $ e_3e_3=\alpha e_5$ \\

${\mathbf{N}}_{224}^{\alpha}$ & $:$ &$e_1e_2=e_3$ & $e_1e_3=e_4$ & $e_1e_4=e_5$ \\&& $e_2e_2=e_4$ & $e_3e_3=\alpha e_5$ & $ e_3e_4=e_5$\\

${\mathbf{N}}_{225}^{\alpha, \beta}$ & $:$ &$e_1e_2=e_3$ & $e_1e_3=e_4$ & \multicolumn{2}{l}{$e_2e_2=e_4+e_5$} \\&& $e_2e_3=\alpha e_5$ & $e_3e_3=\beta e_5$ & $ e_4e_4=e_5$\\

${\mathbf{N}}_{226}^{\alpha}$ & $:$ &$e_1e_2=e_3$ & $e_1e_3=e_4$ & $e_2e_2=e_4$ \\&& $e_2e_3=e_5$ & $e_3e_3=\alpha e_5$ & $ e_4e_4=e_5$\\

${\mathbf{N}}_{227}^{\alpha}$ & $:$ &$e_1e_2=e_3$ & $e_1e_3=e_4$ & $e_2e_2=e_4$ & $e_2e_4=e_5$ & $ e_3e_3=\alpha e_5$\\

${\mathbf{N}}_{228}$ & $:$ &$e_1e_2=e_3$ & $e_1e_3=e_4$  & $e_2e_2=e_4$ & $e_3e_3=e_5$ & $ e_3e_4=e_5$\\

${\mathbf{N}}_{229}$ & $:$ &$e_1e_2=e_3$ & $e_1e_3=e_4$  & $e_2e_2=e_4$ & $e_3e_3=e_5$ & $ e_4e_4=e_5$\\

${\mathbf{N}}_{230}$ & $:$ &$e_1e_2=e_3$ & $e_1e_3=e_4$  & $e_2e_2=e_4$ & $ e_3e_4=e_5$\\

${\mathbf{N}}_{231}$ & $:$ &$e_1e_2=e_3$ & $e_1e_3=e_4$  & $e_2e_2=e_4$ & $ e_4e_4=e_5$\\

\end{longtable}

\subsection{$ 1 $-dimensional central extensions of $ {\mathbf N}_{16}^{4*} $.} Here we will collect all information about $ {\mathbf N}_{16}^{4*}: $
\begin{longtable}{|l|l|l|}
\hline
${\mathbf{N}}^{4*}_{16}$ &  
$\begin{array}{l}
e_1e_2=e_3 \\ 
e_1e_3=e_4 \\ 
e_2e_3=e_4
\end{array}$
&
$\begin{array}{l}
\mathrm{H}^2_{\mathfrak{D}}(\mathbf{N}^{4*}_{16})=\langle [\Delta_{11}],[\Delta_{22}],[\Delta_{23}],[\Delta_{33}]\rangle \\
\mathrm{H}^2_{\mathfrak{C}}(\mathbf{N}^{4*}_{16})=\mathrm{H}^2_{\mathfrak{D}}(\mathbf{N}^{4*}_{16})\oplus
\langle [\Delta_{14}], [\Delta_{24}], [\Delta_{34}], [\Delta_{44}]\rangle 
\end{array}$\\
\hline
\multicolumn{3}{|c|}{
 $ \phi_1=\begin{pmatrix}
x&0&0&0\\
0&x&0&0\\
0&0&x^2&0\\
t&s&0&x^3
\end{pmatrix}, \phi_2=\begin{pmatrix}
0&y&0&0\\
y&0&0&0\\
0&0&y^2&0\\
t&s&0&y^3
\end{pmatrix}$}\\
\hline

\end{longtable}

Let us use the following notations:
\begin{longtable}{llll}
$\nabla_1=[\Delta_{11}], $&$ \nabla_2=[\Delta_{14}], $&$ \nabla_3=[\Delta_{22}], $&$  \nabla_4=[\Delta_{23}],$\\
$\nabla_5=[\Delta_{24}],  $&$ \nabla_6=[\Delta_{33}],  $&$  \nabla_7=[\Delta_{34}], $&$ \nabla_8=[\Delta_{44}]. $
\end{longtable}
Take $ \theta=\sum\limits_{i=1}^{8}\alpha_i\nabla_i\in\mathrm{H}^2_{\mathfrak{C}}(\mathbf{N}^{4*}_{16}) .$  Since

$$\phi^T\begin{pmatrix}
\alpha_1&0&0&\alpha_2\\
0&\alpha_3&\alpha_4&\alpha_5\\
0&\alpha_4&\alpha_6&\alpha_7\\
\alpha_2&\alpha_5&\alpha_7&\alpha_8

\end{pmatrix}\phi=
\begin{pmatrix}
\alpha_1^*&\alpha^{*}&\alpha^{**}&\alpha_2^*\\
\alpha^{*}&\alpha^*_3&\alpha^*_4+\alpha^{**}&\alpha_5^*\\
\alpha^{**}&\alpha^*_4+\alpha^{**}&\alpha_6^*&\alpha^*_7\\
\alpha^*_2&\alpha^*_5&\alpha^*_7&\alpha^*_8
\end{pmatrix},$$

in the case $\phi=\phi_1$, we have
\begin{longtable}{ll}
$\alpha_1^*=\alpha_1x^2+2\alpha_2xt+\alpha_8t^2,$&
$\alpha_2^*=(\alpha_2x+\alpha_8t)x^3,$\\

$\alpha_3^*=\alpha_3x^2+2\alpha_5xs+\alpha_8s^2,$&
$\alpha_4^*=(\alpha_4x+\alpha_7s)x^2-\alpha_7x^2t,$\\

$\alpha_5^*=(\alpha_5x+\alpha_8s)x^3,$&
$\alpha_6^*=\alpha_6x^4,$\\

$\alpha_7^*=\alpha_7x^5,$&
$\alpha_8^*=\alpha_8x^6;$
\end{longtable}
and on the opposite case, for $\phi=\phi_2,$ we have

\begin{longtable}{ll}
$\alpha_1^*=   \alpha_3y^2+2   \alpha_5 t y+  \alpha_8 t^2,$&
$\alpha_2^*=  (\alpha_5 y +  \alpha_8t )y^3,$\\

$\alpha_3^*=  \alpha_1y^2+2  \alpha_2s y +  \alpha_8s^2,$&
$\alpha_4^*= ((s-t)  \alpha_7-y  \alpha_4)y^2,$\\

$\alpha_5^*= (y  \alpha_2+s  \alpha_8)y^3,$&
$\alpha_6^*=  \alpha_6 y^4,$\\

$\alpha_7^*=  \alpha_7y^5,$&
$\alpha_8^*=\alpha_8 y^6.$
\end{longtable}

We are interested in $ (\alpha_2,\alpha_5,\alpha_7,\alpha_8)\neq(0,0,0,0) .$ Let us consider the following cases:

\begin{enumerate}
	\item \label{caso1} $ \alpha_8=0, \alpha_7=0, \alpha_5=0, $ then $ \alpha_2\neq0$ and
	\begin{enumerate}
	    \item if $\alpha_4\neq 0,$ then by choosing
	    $\phi=\phi_1,$ $x= \alpha_4 \alpha_2^{-1},$ $t=-\frac{ \alpha_1  \alpha_4 }{2 \alpha_2^{2}},$
	    we have the family of representatives 
	     $ \left\langle \nabla_2+\alpha\nabla_3+\nabla_4 +\beta \nabla_6 \right\rangle;$
	     
	     \item if $\alpha_4=0,\alpha_3\neq0,$ then by choosing
	    $\phi=\phi_1,$ $x=\sqrt{ \alpha_3  \alpha_2^{-1}},$ $t=-\frac{ \alpha_1 \sqrt{ \alpha_3}}{2  \sqrt{\alpha_2^3}},$
	     we have the family of representatives 
	     $ \left\langle \nabla_2+\nabla_3+\alpha \nabla_6 \right\rangle;$
	   
	      \item if  $\alpha_4=0,\alpha_3=0,$ then by choosing
	    $\phi=\phi_1,$ $x=2  \alpha_2,$ $t=- \alpha_1,$ $s=0,$
	     we have the family of representatives 
	     $ \left\langle \nabla_2+ \alpha \nabla_6 \right\rangle.$
	     
	\end{enumerate}

	\item  $ \alpha_8=0, \alpha_7=0, \alpha_5\neq0 $ and
	
	\begin{enumerate}
	    \item if $\alpha_2\neq 0,\alpha_4\neq 0,$ then by choosing 
\begin{center}
 $\phi=\phi_1,$   $x= \frac{\alpha_4}{\alpha_5},$ 
	    $t=-\frac{\alpha_1  \alpha_4}{ 2  \alpha_2  \alpha_5},$
	    $s=-\frac{ \alpha_3  \alpha_4}{ 2  \alpha_5^2},$	    
\end{center}	    we have the following family of representatives
\begin{center}    $ \left\langle \alpha \nabla_2+\nabla_4+\nabla_5+ \beta \nabla_6 \right\rangle_{\alpha\neq0};$
\end{center}

	    \item if $\alpha_2\neq 0,\alpha_4= 0,$ then by choosing 
\begin{center}
 $\phi=\phi_1,$   $x=2  \alpha_2  \alpha_5,$ 
	    $t=- \alpha_1  \alpha_5,$
	    $s=- \alpha_2  \alpha_3,$	    
\end{center}	    we have the following family of representatives
    $ \left\langle \alpha \nabla_2+ \nabla_5+ \beta \nabla_6 \right\rangle_{\alpha\neq0};$

\item if $\alpha_2= 0,$ then by choosing  $\phi=\phi_2,$ $y=1,$ $t=0,$ $s=0,$ we have the representative with $\alpha_5^*=0$ and $\alpha_2^*\neq0,$ which was considered above.

	\end{enumerate}

	\item $ \alpha_8=0, \alpha_7\neq0,$ then we have
	
	\begin{enumerate}
		\item if $ \alpha_2=0, \alpha_5=0, \alpha_1=0, \alpha_3=0, $ then we have the representatives $ \left\langle \nabla_7 \right\rangle  $ and $ \left\langle \nabla_6+\nabla_7\right\rangle  $ depending on whether $ \alpha_6=0 $ or not;
		
	       \item if $ \alpha_2=0, \alpha_5=0, \alpha_1\neq0, $ then by choosing 
	\begin{center}
	           $\phi=\phi_1,$ $ x=\sqrt[3]{{\alpha_1}{\alpha_7^{-1}}}, s=0, t={\alpha_4\sqrt[3]{\alpha_1} \alpha_7^{-1}}, $
	\end{center}we have the family of representatives	$ \left\langle \nabla_1+\alpha\nabla_3+\beta\nabla_6+\nabla_7\right\rangle;$

        \item if $ \alpha_2\neq0, $ then by choosing
        \begin{center}
        $\phi=\phi_1,$  $ x={\alpha_2}{\alpha_7^{-1}}, s=-({\alpha_1\alpha_7+2\alpha_2\alpha_4}) /(2\alpha_7^2), t=-{\alpha_1}/ ({2\alpha_7}), $ 
        \end{center} we have the family of representatives	$ \left\langle \nabla_2+\alpha\nabla_3+\beta\nabla_5+\gamma\nabla_6+\nabla_7\right\rangle. $

	\end{enumerate}

	\item $ \alpha_8\neq0,$ then by choosing $\phi=\phi_1,$  $ t=-\frac{\alpha_2}{\alpha_8}x, s=-\frac{\alpha_5}{\alpha_8}x,$ we have $ \alpha_2^*=\alpha_5^*=0. $ Now we can suppose that $ \alpha_2=0, \alpha_5=0 $ and we have
	
	\begin{enumerate}
		\item if $ \alpha_1=0, \alpha_3=0, \alpha_4=0, \alpha_6=0, $ then we have the representatives $ \left\langle \nabla_8\right\rangle  $ and $ \left\langle \nabla_7+\nabla_8\right\rangle  $ depending on whether $ \alpha_7=0 $ or not;
		
		\item if $ \alpha_1=0, \alpha_3=0, \alpha_4=0, \alpha_6\neq0, $ then by choosing  $\phi=\phi_1,$  $ x=\sqrt{{\alpha_6}{\alpha_8^{-1}}},$ $ s=0,$ $ t=0, $ we have the family of representative $ \left\langle \nabla_6+\alpha\nabla_7+\nabla_8\right\rangle;  $
		
		\item if $ \alpha_1=0, \alpha_3=0, \alpha_4\neq0, $ then by choosing  $\phi=\phi_1,$  $ x=\sqrt[3]{{\alpha_4}{\alpha_8^{-1}}}, s=0, t=0, $ we have the family of representatives $ \left\langle \nabla_4+\alpha\nabla_6+\beta\nabla_7+\nabla_8\right\rangle;  $

		\item if $ \alpha_1\neq0, $ then by choosing  $\phi=\phi_1,$  $ x=\sqrt[4]{{\alpha_1}{\alpha_8^{-1}}}, s=0, t=0, $ we have the family of representatives $ \left\langle \nabla_1+\alpha\nabla_3+\beta\nabla_4+\gamma\nabla_6+\mu\nabla_7+\nabla_8\right\rangle.  $		
				
	\end{enumerate}

\end{enumerate}

Summarizing, we have the following distinct orbits:
\begin{center}

$ \left\langle \nabla_1+\alpha\nabla_3+\beta\nabla_4+\gamma\nabla_6+\mu\nabla_7+\nabla_8\right\rangle
^{
{\tiny 
\begin{array}{l}
O(\alpha,\beta,\gamma,\mu)=
O(\alpha,i\beta,-\gamma,-i\mu)=\\
O(\alpha,-i\beta,-\gamma,i\mu)=
O(\alpha,-\beta,\gamma,-\mu)=\\
O(\frac{1}{\alpha},-\frac{\beta}{\sqrt[4]{\alpha^{3}}},
\frac{\gamma}{\sqrt{\alpha}},\frac{\mu}{\sqrt[4]{\alpha}})=\\
O(\frac{1}{\alpha},-\frac{i\beta}{ \sqrt[4]{\alpha^{3}}},-\frac{\gamma}{\sqrt{\alpha}},-\frac{i\mu}{\sqrt[4]{\alpha}})=\\
O(\frac{1}{\alpha},\frac{\beta}{\sqrt[4]{\alpha^{3}}},
\frac{\gamma}{\sqrt{\alpha}},-\frac{\mu}{\sqrt[4]{\alpha}})=
O(\frac{1}{\alpha},-\frac{\beta}{\sqrt[4]{\alpha^{3}}},
\frac{\gamma}{\sqrt{\alpha}},\frac{\mu}{\sqrt[4]{\alpha}})\end{array}}},$
$ \left\langle \nabla_1+\alpha\nabla_3+\beta\nabla_6+\nabla_7\right\rangle
^{{\tiny \begin{array}{l}
O(\alpha,\beta)=
O(\alpha,-\eta_3\beta)=
O(\alpha,\eta_3^2\beta)=\\
O(\alpha^{-1},-\eta_3\beta\sqrt[3]{\alpha^{-1}})=
O(\alpha^{-1},\eta_3^2\beta\sqrt[3]{\alpha^{-1}})=
O(\alpha^{-1},\beta\sqrt[3]{\alpha^{-1}})\end{array}}},$
$ \left\langle \nabla_2+\alpha\nabla_3+\nabla_4 +\beta \nabla_6 \right\rangle,$
$ \left\langle \nabla_2+\alpha\nabla_3+\beta\nabla_5+\gamma\nabla_6+\nabla_7\right\rangle^{O(\alpha,\beta,\gamma)=
O(-\frac{\alpha}{\beta^4},\frac{1}{\beta},\frac{\gamma}{\beta})},$
$ \left\langle \nabla_2+\nabla_3+\alpha \nabla_6 \right\rangle,$
$ \left\langle \alpha \nabla_2+\nabla_4+\nabla_5+ \beta \nabla_6 \right\rangle_{\alpha\neq0}^{O(\alpha,\beta)=O(\alpha^{-1},\beta\alpha^{-1})},$
$ \left\langle \alpha \nabla_2+ \nabla_5+ \beta \nabla_6 \right\rangle_{\alpha\neq0}^{O(\alpha,\beta)=O(\alpha^{-1},\beta\alpha^{-1})},$
$ \left\langle \nabla_2+ \alpha \nabla_6 \right\rangle,$
$ \left\langle \nabla_4+\alpha\nabla_6+\beta\nabla_7+\nabla_8\right\rangle
^{{\tiny \begin{array}{l}
O(\alpha,\beta)=O(\eta_3^2\alpha,-\eta_3\beta)=O(-\eta_3\alpha,\eta_3^2\beta)
=\\
O(\eta_3^2\alpha,\eta_3\beta)=O(-\eta_3\alpha,-\eta_3^2\beta)
=O(\alpha,-\beta)\end{array} }},$
$ \left\langle \nabla_6+\nabla_7\right\rangle,$
$ \left\langle \nabla_6+\alpha\nabla_7+\nabla_8\right\rangle^{O(\alpha)=O(-\alpha)},$
$ \left\langle \nabla_7 \right\rangle, $
$ \left\langle \nabla_7+\nabla_8\right\rangle,$
$ \left\langle \nabla_8\right\rangle, 
$

\end{center}
which gives the following new algebras:

\begin{longtable}{llllllllllllllllll}

${\mathbf{N}}_{232}^{\alpha, \beta, \gamma, \mu}$ & $:$ &$e_1e_1=e_5$ & $e_1e_2=e_3$ & $e_1e_3=e_4$ & $e_2e_2= \alpha e_5$ \\ && $e_2e_3=e_4+\beta e_5$   & $e_3e_3=\gamma e_5$ 
  & $e_3e_4=\mu e_5$ & $e_4e_4= e_5$
\\

${\mathbf{N}}_{233}^{\alpha, \beta}$ & $:$ &$e_1e_1=e_5$ & $e_1e_2=e_3$ & $e_1e_3=e_4$ & $e_2e_2= \alpha e_5$ \\ & & $e_2e_3=e_4$ & $e_3e_3=\beta e_5$ & $e_3e_4= e_5$ 
\\

${\mathbf{N}}_{234}^{\alpha, \beta}$ & $:$ & $e_1e_2=e_3$ & $e_1e_3=e_4$ & $e_1e_4=e_5$ \\ && $e_2e_2= \alpha e_5$  & $e_2e_3=e_4+e_5$ & $e_3e_3=\beta e_5$  
\\

${\mathbf{N}}_{235}^{\alpha, \beta, \gamma}$ & $:$ & $e_1e_2=e_3$ & $e_1e_3=e_4$ & $e_1e_4= e_5$  & $e_2e_2=\alpha e_5$ \\ & & $e_2e_3=e_4$ & $e_2e_4= \beta e_5$ & $e_3e_3= \gamma e_5$ & $e_3e_4= e_5$  
\\

${\mathbf{N}}_{236}^{\alpha}$ & $:$ & $e_1e_2=e_3$ & $e_1e_3=e_4$ & $e_1e_4=e_5$ \\ && $e_2e_2= e_5$  & $e_2e_3=e_4$ & $e_3e_3=\alpha e_5$  
\\

${\mathbf{N}}_{237}^{\alpha\neq0, \beta}$ & $:$ & $e_1e_2=e_3$ & $e_1e_3=e_4$ & $e_1e_4=\alpha e_5$  \\ && $e_2e_3=e_4+e_5$ & $e_2e_4=e_5$ & $e_3e_3=\beta e_5$  
\\

${\mathbf{N}}_{238}^{\alpha\neq0, \beta}$ & $:$ & $e_1e_2=e_3$ & $e_1e_3=e_4$ & $e_1e_4=\alpha e_5$ \\ & & $e_2e_3=e_4$  & $e_2e_4=e_5$ & $e_3e_3=\beta e_5$  
\\

${\mathbf{N}}_{239}^{\alpha}$ & $:$ & $e_1e_2=e_3$ & $e_1e_3=e_4$ & $e_1e_4= e_5$  \\ && $e_2e_3=e_4$  & $e_3e_3= \alpha e_5$  
\\

${\mathbf{N}}_{240}^{\alpha, \beta}$ & $:$ & $e_1e_2=e_3$ & $e_1e_3=e_4$ & $e_2e_3=e_4+e_5$  \\ && $e_3e_3= \alpha e_5$  & $e_3e_4= \beta e_5$ & $e_4e_4= e_5$
\\

${\mathbf{N}}_{241}$ & $:$ & $e_1e_2=e_3$ & $e_1e_3=e_4$ & $e_2e_3=e_4$  \\ && $e_3e_3= e_5$  & $e_3e_4= e_5$ 
\\

${\mathbf{N}}_{242}^{\alpha}$ & $:$ & $e_1e_2=e_3$ & $e_1e_3=e_4$ & $e_2e_3=e_4$  \\ && $e_3e_3= e_5$  & $e_3e_4= \alpha e_5$ & $e_4e_4= e_5$
\\

${\mathbf{N}}_{243}$ & $:$ & $e_1e_2=e_3$ & $e_1e_3=e_4$ & $e_2e_3=e_4$  & $e_3e_4= e_5$ 
\\

${\mathbf{N}}_{244}$ & $:$ & $e_1e_2=e_3$ & $e_1e_3=e_4$ & $e_2e_3=e_4$ \\ & & $e_3e_4= e_5$  & $e_4e_4= e_5$ 
\\
${\mathbf{N}}_{245}$ & $:$ & $e_1e_2=e_3$ & $e_1e_3=e_4$ & $e_2e_3=e_4$    & $e_4e_4= e_5$ 
\\

\end{longtable}

\subsection{$ 1 $-dimensional central extensions of $ {\mathbf N}_{17}^{4*} $.} Here we will collect all information about $ {\mathbf N}_{17}^{4*}: $
\begin{longtable}{|l|l|l|}
\hline
${\mathbf{N}}^{4*}_{17}$ & 
$\begin{array}{l}
e_1e_2=e_3 \\ 
e_3e_3=e_4
\end{array}$
&
$\begin{array}{l}
\mathrm{H}^2_{\mathfrak{D}}(\mathbf{N}^{4*}_{17})=\langle [\Delta_{11}],[\Delta_{13}],[\Delta_{22}],[\Delta_{23}]\rangle\\
\mathrm{H}^2_{\mathfrak{C}}(\mathbf{N}^{4*}_{17})=\mathrm{H}^2_{\mathfrak{D}}(\mathbf{N}^{4*}_{17})\oplus
\langle [\Delta_{14}], [\Delta_{24}] , [\Delta_{34}], [\Delta_{44}] \rangle 
\end{array}$\\
\hline
\multicolumn{3}{|c|}{ 
$\phi_1=\begin{pmatrix}
x&0&0&0\\
0&q&0&0\\
0&0&xq&0\\
t&s&0&x^2q^2
\end{pmatrix}, \phi_2=\begin{pmatrix}
0&p&0&0\\
y&0&0&0\\
0&0&yp&0\\
t&s&0&y^2p^2
\end{pmatrix}$}\\
\hline

\end{longtable}

Let us use the following notations:
\begin{longtable}{llll}
$\nabla_1=[\Delta_{11}],$ & $\nabla_2=[\Delta_{13}],$ & $ \nabla_3=[\Delta_{14}],$ & $  \nabla_4=[\Delta_{22}],$ \\
$\nabla_5=[\Delta_{23}],$ & $ \nabla_6=[\Delta_{24}],$ & $  \nabla_7=[\Delta_{34}],$ & $ \nabla_8=[\Delta_{44}].$ 
\end{longtable}

Take $ \theta=\sum\limits_{i=1}^{8}\alpha_i\nabla_i\in\mathrm{H}^2_{\mathfrak{C}}(\mathbf{N}^{4*}_{17}) .$  Since

$$\phi^T\begin{pmatrix}
\alpha_1&0&\alpha_2&\alpha_3\\
0&\alpha_4&\alpha_5&\alpha_6\\
\alpha_2&\alpha_5&0&\alpha_7\\
\alpha_3&\alpha_6&\alpha_7&\alpha_8

\end{pmatrix}\phi=
\begin{pmatrix}
\alpha_1^*&\alpha^{*}&\alpha^{*}_2&\alpha_3^*\\
\alpha^{*}&\alpha^*_4&\alpha^*_5&\alpha_6^*\\
\alpha^{*}_2&\alpha^*_5& 0 &\alpha^*_7\\
\alpha^*_3&\alpha^*_6&\alpha^*_7&\alpha^*_8
\end{pmatrix},$$

then in the case $\phi=\phi_1,$ we have
\begin{longtable}{ll}
$\alpha_1^*=\alpha_1x^2+2\alpha_3xt+\alpha_8t^2,$&
$\alpha_2^*=(\alpha_2x+\alpha_7t)xq,$\\

$\alpha_3^*=(\alpha_3x+\alpha_8t)x^2q^2,$&
$\alpha_4^*=\alpha_4q^2+2\alpha_6qs+\alpha_8s^2,$\\

$\alpha_5^*=(\alpha_5q+\alpha_7s)xq,$&
$\alpha_6^*=(\alpha_6q+\alpha_8s)x^2q^2,$\\

$\alpha_7^*=\alpha_7x^3q^3,$&
$\alpha_8^*=\alpha_8x^4q^4;$
\end{longtable}

and in the opposite case $\phi=\phi_2,$ we have
\begin{longtable}{ll}
$\alpha_1^*= \alpha_4p^2+2   \alpha_6p t+  \alpha_8t^2,$&
$\alpha_2^*= ( \alpha_5p + \alpha_7t)p y,$\\

$\alpha_3^*= ( \alpha_6p+ \alpha_8t)p^2 y^2 ,$&
$\alpha_4^*= \alpha_1y^2+2  \alpha_3s y+ \alpha_8s^2,$\\

$\alpha_5^*=  ( \alpha_2y+ \alpha_7s)p y,$&
$\alpha_6^*=( \alpha_3y+ \alpha_8s)p^2 y^2 ,$\\

$\alpha_7^*=  \alpha_7p^3 y^3,$&
$\alpha_8^*=  \alpha_8p^4 y^4.$
\end{longtable}

We are interested in $ (\alpha_3,\alpha_6,\alpha_7,\alpha_8)\neq(0,0,0,0) .$ Let us consider the following cases:

\begin{enumerate}
	\item $ \alpha_8=0, \alpha_7=0, \alpha_6=0, $ then $ \alpha_3\neq0 $ and choosing $\phi=\phi_1,$ $ t=-\frac{\alpha_1}{2\alpha_3}x,$ we get $ \alpha_1^*=0. $ Now consider the following subcases:
	
	\begin{enumerate}
		\item if $ \alpha_2=0, \alpha_4=0, \alpha_5=0, $ then we have the representative $\left\langle \nabla_3 \right\rangle;$
		
		\item if $ \alpha_2=0, \alpha_4=0, \alpha_5\neq0, $ then by choosing $\phi=\phi_1,$ $ x=\sqrt{\frac{\alpha_5}{\alpha_3}}, q=1, s=0, t=-\frac{\alpha_1\sqrt{\alpha_5}}{2\alpha_3\sqrt{\alpha_3}}, $ we have the representative $\left\langle \nabla_3+\nabla_5 \right\rangle;$
		
		\item if $ \alpha_2=0, \alpha_4\neq0, $ then by choosing $\phi=\phi_1,$ $ x=\sqrt[3]{\frac{\alpha_4}{\alpha_3}}, q=1, s=0, t=-\frac{\alpha_1\sqrt[3]{\alpha_5}}{2\alpha_3\sqrt[3]{\alpha_3}}, 
		 $ we have the representative $\left\langle \nabla_3+\nabla_4+\alpha\nabla_5 \right\rangle;$
			
		\item if $ \alpha_2\neq0, \alpha_4=0, \alpha_5=0, $ then by choosing  $\phi=\phi_1,$ $ x=\alpha_2, q=\frac{1}{\alpha_3}, s=0, t=-\frac{\alpha_1\alpha_2}{2\alpha_3}, $ we have the representative $\left\langle \nabla_2+\nabla_3 \right\rangle;$	
		
		\item if $ \alpha_2\neq0, \alpha_4=0, \alpha_5\neq0, $ then by choosing $\phi=\phi_1,$ $ x=\sqrt{\frac{\alpha_5}{\alpha_3}}, q=\sqrt{\frac{\alpha_2^2}{\alpha_3\alpha_5}}, s=0, t=-\frac{\alpha_1\sqrt{\alpha_5}}{2\alpha_3\sqrt{\alpha_3}}, $ we have the representative $\left\langle \nabla_2+\nabla_3+\nabla_5 \right\rangle;$
		
		\item if $ \alpha_2\neq0, \alpha_4\neq0, $ then by choosing $\phi=\phi_1,$ $ x=\sqrt[3]{{\alpha_4}{\alpha_3^{-3}}}, q=\sqrt[3]{{\alpha_2^3}{\alpha^{-2}_3\alpha_4^{-1}}},$ we have the family of   representative $\left\langle \nabla_2+\nabla_3+\nabla_4+\alpha\nabla_5 \right\rangle.$

	\end{enumerate}

	\item  $ \alpha_8=0, \alpha_7=0, \alpha_6\neq0, $ and $ \alpha_3=0 ,$ then by choosing some suitable automorphism $ \phi_2 $ we have $ \alpha_3^*\neq0 $ which is the case considered above. Now we can suppose that $ \alpha_3\neq0, $ and choosing $ t=-\frac{\alpha_1}{2\alpha_3}x, s=-\frac{\alpha_4}{2\alpha_6}x, $ we have $ \alpha_1^*=0, \alpha_4^*=0. $ Therefore, we can suppose that $\alpha_1=0, \alpha_4=0.$ Consider the following subcases:
	
	\begin{enumerate}
		\item $\alpha_2=0, \alpha_5=0,$ then by choosing $\phi=\phi_1,$ $ x=\alpha_6, q=\alpha_3, s=0, t=0, $ we have the representative $ \left\langle \nabla_3+\nabla_6 \right\rangle; $

		\item $\alpha_2\neq0,$ then by choosing $\phi=\phi_1,$ $  x={\alpha_3^{-1}}{\sqrt{\alpha_2\alpha_6}}, q=\sqrt{\alpha_2\alpha_6^{-1}}, s=0, t=0, $ we have the family of representatives $ \left\langle \nabla_2+\nabla_3+\alpha\nabla_5+\nabla_6 \right\rangle. $

	\end{enumerate}

 	\item  $ \alpha_8=0, \alpha_7\neq0, $ then by choosing $\phi=\phi_1,$ $ t=-{\alpha_2}{\alpha_7^{-1}}x, s=-{\alpha_5}{\alpha_7^{-1}}q, $ we have $ \alpha_2^*=0, \alpha_5^*=0. $ Therefore, we can suppose that $\alpha_2=0, \alpha_5=0.$  Consider the following subcases:
 	
 	\begin{enumerate}
 		\item if $ \alpha_1=0, \alpha_4=0, \alpha_3=0, \alpha_6=0, $ then we have the representative $ \left\langle \nabla_7 \right\rangle ;$
 		
 		\item if $ \alpha_1=0, \alpha_4=0, \alpha_3=0, \alpha_6\neq0, $ then by choosing  $\phi=\phi_1,$ $ x={\alpha_6}{\alpha_7^{-1}}, q=1, s=0, t=0, $ we have the representative $ \left\langle \nabla_6+\nabla_7 \right\rangle ;$
 		
 	    \item if $ \alpha_1=0, \alpha_4=0, \alpha_3\neq0, $ and $ \alpha_6=0, $ then by choosing some suitable automorphism $ \phi_2,$ we have $ \alpha_6^*\neq0 .$ Thus we can consider the case $ \alpha_6\neq0 $ and choosing  $\phi=\phi_1,$ $ x={\alpha_6}{\alpha_7^{-1}}, q={\alpha_3}{\alpha_7^{-1}}, s=0, t=0, $ we have the representative $ \left\langle \nabla_3+\nabla_6+\nabla_7 \right\rangle ;$
 	
 	    \item if $ \alpha_1=0, \alpha_4\neq0, \alpha_3=0, \alpha_6=0,$ then by choosing  $\phi=\phi_1,$ $ x=1, q={\alpha_4}{\alpha_7^{-1}}, s=0, t=0, $ we have the representative $ \left\langle \nabla_4+\nabla_7 \right\rangle;$
 	
  	    \item if $ \alpha_1=0, \alpha_4\neq0, \alpha_3=0, \alpha_6\neq0,$ then by choosing  $\phi=\phi_1,$ $ x={\alpha_6}{\alpha_7}^{-1}, q={\alpha_4\alpha_7^2}{\alpha_6^{-3}},$ we have the representative $ \left\langle \nabla_4+\nabla_6+\nabla_7 \right\rangle;$
  	
  	    \item if $ \alpha_1=0, \alpha_4\neq0, \alpha_3\neq0,$ then by choosing 
  	    \begin{center}$\phi=\phi_1,$ $  x=\sqrt[3]{{\alpha_4}{\alpha_3^{-1}}}, q={\alpha_3}{\alpha_7^{-1}}, s=0, t=0, $ \end{center} 
  	    we have the family of representatives $ \left\langle \nabla_3+\nabla_4+\alpha\nabla_6+\nabla_7 \right\rangle ;$  	    	     	
 	
 	    \item if $ \alpha_1\neq0.$ In case of $ \alpha_4=0,$ choosing some suitable automorphism $ \phi_2,$ we have $ \alpha_4^*\neq0.$ Thus, we can suppose $ \alpha_4\neq0 ,$ and choosing 
 	    \begin{center}
 	        $\phi=\phi_1,$ $   x=\sqrt[8]{{\alpha^3_4}{\alpha_1^{-1}\alpha_7^{-2}}}, q=\sqrt[8]{{\alpha_1^3}{\alpha_4^{-1}\alpha^{-2}_7}}, s=0, t=0, $ 
 	    \end{center} we have the family of representatives $ \left\langle \nabla_1+\alpha\nabla_3+\nabla_4+\beta\nabla_6+\nabla_7 \right\rangle  .$
 	
 	   \end{enumerate}	

 	\item  $ \alpha_8\neq0, $ then by choosing    $\phi=\phi_1,$     $ t=-\frac{\alpha_3}{\alpha_8}x, s=-\frac{\alpha_6}{\alpha_8}q, $ we have $ \alpha_3^*=0, \alpha_6^*=0. $ Consider the following cases:
	
	\begin{enumerate}
		\item if $ \alpha_1=0, \alpha_4=0, \alpha_2=0, \alpha_5=0, $ then we have the representatives $ \left\langle \nabla_8 \right\rangle $ and $ \left\langle \nabla_7+\nabla_8 \right\rangle $ depending on whether $ \alpha_7=0 $ or not;
		
		\item if $ \alpha_1=0, \alpha_4=0, \alpha_2=0, \alpha_5\neq0, $ then we have the representatives $ \left\langle \nabla_5+\nabla_8 \right\rangle $ and $ \left\langle \nabla_5+\nabla_7+\nabla_8 \right\rangle $ depending on whether $ \alpha_7=0 $ or not;
		
 	    \item if $ \alpha_1=0, \alpha_4=0, \alpha_2\neq0.$ In case of $ \alpha_5=0,$ choosing some suitable automorphism $ \phi_2,$ we have $ \alpha_5^*\neq0 .$ Thus, we can suppose $ \alpha_5\neq0 ,$ and choosing
 	    \begin{center}$\phi=\phi_1,$ $  x=\sqrt[5]{{\alpha^3_5}{\alpha_2^{-2}\alpha_8^{-1}}}, q=\sqrt[5]{{\alpha^3_2}{\alpha_5^{-2}\alpha_8^{-1}}}, s=0, t=0,$ \end{center} 
 	    we have the family of representatives $ \left\langle \nabla_2+\nabla_5+\alpha\nabla_7+\nabla_8 \right\rangle ;$		
		
		\item if $ \alpha_1=0, \alpha_4\neq0, \alpha_2=0, \alpha_5=0, $ then we have the representatives $ \left\langle \nabla_4+\nabla_8 \right\rangle $ and $ \left\langle \nabla_4+\nabla_7+\nabla_8 \right\rangle $ depending on whether $ \alpha_7=0 $ or not;		
		
 	    \item if $ \alpha_1=0, \alpha_4\neq0, \alpha_2=0, \alpha_5\neq0 ,$ then by choosing
 	    \begin{center}
 	        $\phi=\phi_1,$ 
 	        $x={\alpha_4}{\alpha_5^{-1}}, q={\alpha^2_5}{\alpha_4^{-1}\sqrt{\alpha_4^{-1}\alpha_8^{-1}}}, s=0, t=0, $ 
 	        \end{center} we have the family of  representatives $ \left\langle \nabla_4+\nabla_5+\alpha\nabla_7+\nabla_8 \right\rangle ;$		
		
 	    \item if $ \alpha_1=0, \alpha_4\neq0, \alpha_2\neq0 ,$ then by choosing
 	    \begin{center}
 	        $\phi=\phi_1,$
 	        $  x=\sqrt[8]{{\alpha_4^3}{\alpha_2^{-2}\alpha_8^{-1}}}, q=\sqrt[4]{{\alpha^2_2}{\alpha_4^{-1}\alpha_8^{-1}}}, s=0, t=0, $
 	        \end{center} we have the family of representatives $ \left\langle \nabla_2+\nabla_4+\alpha\nabla_5+\beta\nabla_7+\nabla_8 \right\rangle ;$
		
 	    \item if $ \alpha_1\neq0$ then by choosing some suitable automorphism $ \phi_2, $ we have $ \alpha_4^*\neq0 $. Thus, we can suppose $\alpha_4\neq0 ,$ and choosing 
 	    \begin{center}
 	        $\phi=\phi_1,$ $x=\sqrt[6]{{\alpha^2_4}{\alpha_1^{-1}\alpha_8^{-1}}}, q=\sqrt[6]{{\alpha_1^2}{\alpha_4^{-1}\alpha_8^{-1}}}, s=0, t=0, $
 	        \end{center} we have the family of representatives 
 	        \begin{center}$ \left\langle \nabla_1+\alpha\nabla_2+\nabla_4+\beta\nabla_5+\gamma\nabla_7+\nabla_8 \right\rangle.$
 	        \end{center}

	\end{enumerate}

\end{enumerate}

Summarizing, we have the following distinct orbits:
\begin{center}

$\left\langle \nabla_1+\alpha\nabla_2+\nabla_4+\beta\nabla_5+\gamma\nabla_7+\nabla_8 \right\rangle^{
{\tiny 
\begin{array}{l}
O(\alpha,\beta,\gamma)=
O(\eta_3^2\alpha, \eta_3^2\beta, \eta_3^2\gamma)=
O(-\eta_3^2\alpha, \eta_3^2\beta, -\eta_3^2\gamma)=\\
O(\eta_3^2\alpha, -\eta_3^2\beta, -\eta_3^2\gamma)=
O(-\eta_3^2\alpha, \eta_3^2\beta, \eta_3^2\gamma)=\\
O(\eta_3\alpha, \eta_3\beta, -\eta_3\gamma)=
O(-\eta_3\alpha, \eta_3\beta, \eta_3\gamma)=\\
O(\eta_3\alpha, -\eta_3\beta, \eta_3\gamma)=
O(-\eta_3\alpha, -\eta_3\beta, -\eta_3\gamma)=\\
O(-\alpha, \beta, - \gamma)=
O(\alpha, -\beta, - \gamma)=\\
O(-\alpha, -\beta, \gamma)=
O(\beta,\alpha,\gamma)=\\
O(\eta_3^2\beta, \eta_3^2\alpha, \eta_3^2\gamma)=
O(-\eta_3^2\beta, \eta_3^2\alpha, -\eta_3^2\gamma)=\\
O(\eta_3^2\beta, -\eta_3^2\alpha, -\eta_3^2\gamma)=
O(-\eta_3^2\beta, \eta_3^2\alpha, \eta_3^2\gamma)=\\
O(\eta_3\beta, \eta_3\alpha, -\eta_3\gamma)=
O(-\eta_3\beta, \eta_3\alpha, \eta_3\gamma)=\\
O(\eta_3\beta, -\eta_3\alpha, \eta_3\gamma)=
O(-\eta_3\beta, -\eta_3\alpha, -\eta_3\gamma)=\\
O(-\beta, \alpha, - \gamma)=
O(\beta, -\alpha, - \gamma)=
O(-\beta, -\alpha, \gamma)
\end{array}}}, $ $
\left\langle \nabla_1+\alpha\nabla_3+\nabla_4+\beta\nabla_6 +\nabla_7 \right\rangle^{
{\tiny\begin{array}{l}
O(\alpha,\beta)=
O(\eta_4\alpha,-\eta_4\beta)=
O(-\eta_4\alpha,\eta_4\beta)=
O(\eta_4^3\alpha,-\eta_4^3\beta)=\\
O(-\eta_4^3\alpha,\eta_4^3\beta)=
O(-i\alpha,-i\beta)=
O(i\alpha,i\beta)=
O(-\alpha,-\beta)=\\
O(\beta,\alpha)=
O(\eta_4\beta,-\eta_4\alpha)=
O(-\eta_4\beta,\eta_4\alpha)=
O(\eta_4^3\beta,-\eta_4^3\alpha)=\\
O(-\eta_4^3\beta,\eta_4^3\alpha)=
O(-i\beta,-i\alpha)=
O(i\beta,i\alpha)=
O(-\beta,-\alpha)
\end{array}}}, $ 
$ \left\langle \nabla_2+\nabla_3 \right\rangle, $ 
$\left\langle \nabla_2+\nabla_3+\nabla_4+\alpha\nabla_5 \right\rangle^{O(\alpha)=O(-\eta_3 \alpha)=O(\eta_3^2 \alpha)}, $ 
$ \left\langle \nabla_2+\nabla_3+\nabla_5 \right\rangle, $ 
$ \left\langle \nabla_2+\nabla_3 +\alpha\nabla_5+\nabla_6 \right\rangle^{O(\alpha)=O(\alpha^{-1})}, $  
$\left\langle \nabla_2+\nabla_4+\alpha\nabla_5+\beta\nabla_7+\nabla_8 \right\rangle
^{{\tiny 
\begin{array}{l}
O(\alpha,\beta)=
O(\eta_4^3\alpha,-\eta_4^3\beta)=
O(-\eta_4^3\alpha,\eta_4^3\beta)=
O(\eta_4\alpha,-\eta_4\beta)=\\
O(-\eta_4\alpha,\eta_4\beta)=
O(i\alpha,i\beta)=
O(-i\alpha,-i\beta)=
O(-\alpha,-\beta)
\end{array}}}, $ 
$ \left\langle \nabla_2+\nabla_5+\alpha\nabla_7+\nabla_8 \right\rangle
^{
{\tiny 
\begin{array}{l}
O(\alpha)=O(\eta_5^2\alpha)=O(\eta_5^4\alpha)=\\
O(-\eta_5\alpha)=O(-\eta_5^3\alpha) \end{array}}}, $ 
$ \left\langle \nabla_3 \right\rangle, $ 
$ \left\langle \nabla_3+\nabla_4+\alpha\nabla_5 \right\rangle
^{O(\alpha)=O(-\eta_3\alpha)=O(\eta_3^2\alpha)}, $  
$\left\langle \nabla_3+\nabla_4+\alpha\nabla_6+\nabla_7 \right\rangle^{O(\alpha)=O(-\eta_3\alpha)=O(\eta_3^2\alpha)}, $ 
$ \left\langle \nabla_3+\nabla_5 \right\rangle, $ 
$ \left\langle \nabla_3+\nabla_6 \right\rangle, $ 
$ \left\langle \nabla_3+\nabla_6+\nabla_7 \right\rangle, $
$\left\langle \nabla_4+\nabla_5+\alpha\nabla_7+\nabla_8 \right\rangle^{O(\alpha)=O(-\alpha)}, $ 
$ \left\langle \nabla_4+\nabla_6+\nabla_7 \right\rangle, $ 
$ \left\langle \nabla_4+\nabla_7 \right\rangle, $ 
$ \left\langle \nabla_4+\nabla_7+\nabla_8 \right\rangle, $ 
$ \left\langle \nabla_4+\nabla_8 \right\rangle, $
$\left\langle \nabla_5+\nabla_7+\nabla_8 \right\rangle, $ 
$ \left\langle \nabla_5+\nabla_8 \right\rangle, $ 
$ \left\langle \nabla_6+\nabla_7 \right\rangle, $ 
$ \left\langle \nabla_7 \right\rangle, $ 
$ \left\langle \nabla_7+\nabla_8 \right\rangle, $ 
$ \left\langle \nabla_8 \right\rangle, $

\end{center}
which gives the following new algebras:

\begin{longtable}{llllllllllllllllll}

${\mathbf{N}}_{246}^{\alpha, \beta ,\gamma}$ & $:$ &$e_1e_1=e_5$ & $e_1e_2=e_3$ & $e_1e_3=\alpha e_5$ & $e_2e_2=e_5$  \\ && $e_2e_3=\beta e_5$ & $e_3e_3=e_4$ 
 & $e_3e_4=\gamma e_5$ & $ e_4e_4=e_5$
\\

${\mathbf{N}}_{247}^{\alpha, \beta}$ & $:$ &$e_1e_1=e_5$ & $e_1e_2=e_3$ & $e_1e_4=\alpha e_5$ & $e_2e_2=e_5$ \\ & & $e_2e_4=\beta e_5$ & $e_3e_3=e_4$ 
  & $e_3e_4=e_5$ \\

${\mathbf{N}}_{248}$ & $:$ &$e_1e_2=e_3$ & $e_1e_3=e_5$ & $e_1e_4=e_5$ & $e_3e_3=e_4$ \\

${\mathbf{N}}_{249}^{\alpha}$ & $:$ &$e_1e_2=e_3$ & $e_1e_3=e_5$ & $e_1e_4=e_5$ \\ && $e_2e_2=e_5$ & $e_2e_3=\alpha e_5$ & $e_3e_3=e_4$ 
 \\

${\mathbf{N}}_{250}$ & $:$ &$e_1e_2=e_3$ & $e_1e_3=e_5$ & $e_1e_4=e_5$ & $e_2e_3=e_5$ & $e_3e_3=e_4$ 
 \\

${\mathbf{N}}_{251}^{\alpha}$ & $:$ &$e_1e_2=e_3$ & $e_1e_3=e_5$ & $e_1e_4=e_5$ \\ && $e_2e_3=\alpha e_5$ & $e_2e_4=e_5$ & $e_3e_3=e_4$ 
 \\

${\mathbf{N}}_{252}^{\alpha, \beta}$ & $:$ &$e_1e_2=e_3$ & $e_1e_3=e_5$ & $e_2e_2=e_5$ & $e_2e_3=\alpha e_5$ \\ & & $e_3e_3=e_4$ & $e_3e_4=\beta e_5$   & $ e_4e_4=e_5$
 \\

${\mathbf{N}}_{253}^{\alpha}$ & $:$ &$e_1e_2=e_3$ & $e_1e_3=e_5$ & $e_2e_3=e_5$ \\ && $e_3e_3=e_4$ & $e_3e_4= \alpha e_5$ & $e_4e_4=e_5$ 
 \\

${\mathbf{N}}_{254}$ & $:$ &$e_1e_2=e_3$ & $e_1e_4=e_5$ & $e_3e_3=e_4$ 
 \\

${\mathbf{N}}_{255}^{\alpha}$ & $:$ &$e_1e_2=e_3$ & $e_1e_4=e_5$ & $e_2e_2=e_5$ & $e_2e_3=\alpha e_5$ & $e_3e_3=e_4$
 \\

${\mathbf{N}}_{256}^{\alpha}$ & $:$ &$e_1e_2=e_3$ & $e_1e_4=e_5$ & $e_2e_2=e_5$ \\ && $e_2e_4=\alpha e_5$ & $e_3e_3=e_4$ & $e_3e_4=e_5$ 
 \\

${\mathbf{N}}_{257}$ & $:$ &$e_1e_2=e_3$ & $e_1e_4=e_5$ & $ e_2e_3=e_5 $ & $e_3e_3=e_4$ \\

${\mathbf{N}}_{258}$ & $:$ &$e_1e_2=e_3$ & $e_1e_4=e_5$ & $ e_2e_4=e_5 $ & $e_3e_3=e_4$ \\

${\mathbf{N}}_{259}$ & $:$ &$e_1e_2=e_3$ & $e_1e_4=e_5$ & $ e_2e_4=e_5 $ & $e_3e_3=e_4$ & $ e_3e_4=e_5$\\

${\mathbf{N}}_{260}^{\alpha}$ & $:$ &$e_1e_2=e_3$ & $e_2e_2=e_5$ & $e_2e_3=e_5$ \\ && $e_3e_3=e_4$ & $e_3e_4=\alpha e_5$ & $e_4e_4=e_5$ 
 \\

${\mathbf{N}}_{261}$ & $:$ &$e_1e_2=e_3$ & $e_2e_2=e_5$ & $ e_2e_4=e_5 $ & $e_3e_3=e_4$ & $ e_3e_4=e_5$\\

${\mathbf{N}}_{262}$ & $:$ &$e_1e_2=e_3$ & $e_2e_2=e_5$ & $e_3e_3=e_4$ & $ e_3e_4=e_5$\\

${\mathbf{N}}_{263}$ & $:$ &$e_1e_2=e_3$ & $e_2e_2=e_5$ & $ e_3e_3=e_4 $ & $e_3e_4=e_5$ & $ e_4e_4=e_5$\\

${\mathbf{N}}_{264}$ & $:$ &$e_1e_2=e_3$ & $e_2e_2=e_5$ & $e_3e_3=e_4$ & $ e_4e_4=e_5$\\

${\mathbf{N}}_{265}$ & $:$ &$e_1e_2=e_3$ & $e_2e_3=e_5$ & $ e_3e_3=e_4 $ & $e_3e_4=e_5$ & $ e_4e_4=e_5$\\

${\mathbf{N}}_{266}$ & $:$ &$e_1e_2=e_3$ & $e_2e_3=e_5$ & $e_3e_3=e_4$ & $ e_4e_4=e_5$\\

${\mathbf{N}}_{267}$ & $:$ &$e_1e_2=e_3$ & $e_2e_4=e_5$ & $e_3e_3=e_4$ & $ e_3e_4=e_5$\\

${\mathbf{N}}_{268}$ & $:$ &$e_1e_2=e_3$ & $e_3e_3=e_4$ & $ e_3e_4=e_5$\\

${\mathbf{N}}_{269}$ & $:$ &$e_1e_2=e_3$ & $ e_3e_3=e_4 $ & $e_3e_4=e_5$ & $ e_4e_4=e_5$\\

${\mathbf{N}}_{270}$ & $:$ &$e_1e_2=e_3$ & $e_3e_3=e_4$ & $ e_4e_4=e_5$\\

\end{longtable}

\subsection{$ 1 $-dimensional central extensions of $ {\mathbf N}_{18}^{4*} $.} Here we will collect all information about $ {\mathbf N}_{18}^{4*}: $
$$
\begin{array}{|l|l|l|l|}
\hline
{\mathbf{N}}^{4*}_{18} & 
 \begin{array}{l}
e_1e_1=e_4 \\ 
e_1e_2=e_3 \\ 
e_3e_3=e_4
\end{array}
& 

\begin{array}{lcl}
\mathrm{H}^2_{\mathfrak{D}}(\mathbf{N}^{4*}_{18})&=&\\ 
\multicolumn{3}{r}{\langle [\Delta_{11}],[\Delta_{13}],[\Delta_{22}],[\Delta_{23}]\rangle}\\
\mathrm{H}^2_{\mathfrak{C}}(\mathbf{N}^{4*}_{18})&=&\mathrm{H}^2_{\mathfrak{D}}(\mathbf{N}^{4*}_{18})\oplus\\
\multicolumn{3}{r}{\langle [\Delta_{14}], [\Delta_{24}], [\Delta_{34}], [\Delta_{44}] \rangle}
\end{array}  
&   
   \phi_{\pm}=\begin{pmatrix}
x&0&0&0\\
0&\pm 1&0&0\\
0&0&\pm x &0\\
t&s&0&x^2
\end{pmatrix}\\
\hline
\end{array}$$

Let us use the following notations:
\begin{longtable}{llll}
$\nabla_1=[\Delta_{11}], $&$ \nabla_2=[\Delta_{13}], $&
$\nabla_3=[\Delta_{14}], $&$ \nabla_4=[\Delta_{22}], $\\
$\nabla_5=[\Delta_{23}], $&$ \nabla_6=[\Delta_{24}], $&
$\nabla_7=[\Delta_{34}], $&$ \nabla_8=[\Delta_{44}]. $
\end{longtable}
Take $ \theta=\sum\limits_{i=1}^{8}\alpha_i\nabla_i\in\mathrm{H}^2_{\mathfrak{C}}(\mathbf{N}^{4*}_{18}) .$  Since

$$\phi_{\pm}^T\begin{pmatrix}
\alpha_1&0&\alpha_2&\alpha_3\\
0&\alpha_4&\alpha_5&\alpha_6\\
\alpha_2&\alpha_5&0&\alpha_7\\
\alpha_3&\alpha_6&\alpha_7&\alpha_8

\end{pmatrix}\phi_{\pm}=
\begin{pmatrix}
\alpha_1^*&\alpha{*}&\alpha^{*}_2&\alpha_3^*\\
\alpha{*}&\alpha^*_4&\alpha^*_5&\alpha_6^*\\
\alpha^{*}_2&\alpha^*_5&0&\alpha^*_7\\
\alpha^*_3&\alpha^*_6&\alpha^*_7&\alpha^*_8
\end{pmatrix},$$
we have
\begin{longtable}{ll}
$\alpha_1^*=\alpha_1x^2+2\alpha_3xt+\alpha_8t^2,$&
$\alpha_2^*=\pm (\alpha_2x+\alpha_7t) x,$\\

$\alpha_3^*=(\alpha_3x+\alpha_8t)x^2,$&
$\alpha_4^*=\alpha_4\pm2\alpha_6s+\alpha_8s^2,$\\

$\alpha_5^*=(\alpha_5\pm\alpha_7s)x,$&
$\alpha_6^*=(\pm\alpha_6+\alpha_8s)x^2,$\\

$\alpha_7^*=\pm\alpha_7x^3,$&
$\alpha_8^*=\alpha_8x^4.$
\end{longtable}

We are interested in $ (\alpha_3,\alpha_6,\alpha_7,\alpha_8)\neq(0,0,0,0) .$ Let us consider $\phi=\phi_+$ and  the following cases:

\begin{enumerate}
	\item $ \alpha_8=0, \alpha_7=0, \alpha_6=0, $ then $ \alpha_3\neq0 $ and choosing $ t=-\frac{\alpha_1}{2\alpha_3}x, $ we get $ \alpha_1^*=0. $ Now consider the following subcases:
	
	\begin{enumerate}
		\item if $ \alpha_2=0, \alpha_4=0, \alpha_5=0, $ then we have the representative $\left\langle \nabla_3 \right\rangle ;$
		
		\item if $ \alpha_2=0, \alpha_4=0, \alpha_5\neq0, $ then by choosing $ x=\sqrt{\frac{\alpha_5}{\alpha_3}}, s=0,  t=-\frac{\alpha_1\sqrt{\alpha_5}}{2\alpha_3\sqrt{\alpha_3}}, $ we have the representative $\left\langle \nabla_3+\nabla_5 \right\rangle;$
		
		\item if $ \alpha_2=0, \alpha_4\neq0, $ then by choosing $ x=\sqrt[3]{\frac{\alpha_4}{\alpha_3}}, s=0,  t=-\frac{\alpha_1\sqrt[3]{\alpha_4}}{2\alpha_3\sqrt[3]{\alpha_3}},
		$ we have the family of representatives $\left\langle \nabla_3+\nabla_4+\alpha\nabla_5 \right\rangle;$
		
		\item if $ \alpha_2\neq0, $ then by choosing $ x={\alpha_2}{\alpha_3^{-1}}, s=0,  t=-\frac{\alpha_1\alpha_2}{2\alpha^2_3},$ we have the family of representatives $\left\langle \nabla_2+\nabla_3+\alpha\nabla_4+\beta\nabla_5 \right\rangle.$

	\end{enumerate}

	\item  $ \alpha_8=0, \alpha_7=0, \alpha_6\neq0, $ then by choosing $ s=-\frac{\alpha_4}{2\alpha_6}x, $ we have $ \alpha_4^*=0. $ Consider the following cases:

\begin{enumerate}
	\item $\alpha_3=0,$ then we have two families of representatives $ \left\langle \alpha\nabla_1+\beta\nabla_2+\nabla_6 \right\rangle $ and $ \left\langle \alpha\nabla_1+\beta\nabla_2+\nabla_5+\nabla_6 \right\rangle $ depending on whether $ \alpha_5=0 $ or not;
	
	\item $\alpha_3\neq0$ then by choosing $ x=\frac{\alpha_6}{\alpha_3}, s=-\frac{\alpha_4}{2\alpha_6}, t=-\frac{\alpha_1\alpha_6}{2\alpha_3^2},$ we have the family of representatives $ \left\langle \alpha\nabla_2+\nabla_3+\beta\nabla_5+\nabla_6 \right\rangle. $
		
\end{enumerate}

 	\item  $ \alpha_8=0, \alpha_7\neq0, $ then by choosing $ t=-{\alpha_2}{\alpha_7^{-1}}x, s=-{\alpha_5}{\alpha_7^{-1}}, $ we have $ \alpha_2^*=0, \alpha_5^*=0. $ Thus, we can suppose that $ \alpha_2=0, \alpha_5=0 $ and now consider the following cases:

\begin{enumerate}
	\item if $ \alpha_1=0, \alpha_4=0, \alpha_6=0, $ then we have the family of representatives $ \left\langle \alpha\nabla_3+\nabla_7 \right\rangle;$
	
	\item if $ \alpha_1=0, \alpha_4=0, \alpha_6\neq0, $ then by choosing $ x={\alpha_6}{\alpha_7^{-1}}, s=0, t=0, $ we have the family of representatives $ \left\langle \alpha\nabla_3+\nabla_6+\nabla_7 \right\rangle;$
	
	\item if $ \alpha_1=0, \alpha_4\neq0, $ then by choosing $ x=\sqrt[3]{{\alpha_4}{\alpha_7^{-1}}}, s=0, t=0, $ we have the family of representatives $ \left\langle \alpha\nabla_3+\nabla_4+\beta\nabla_6+\nabla_7 \right\rangle;$
	
	\item if $ \alpha_1\neq0,$ then by choosing $ x={\alpha_1}{\alpha_7^{-1}}, s=0, t=0, $ we have the family of representatives $ \left\langle \nabla_1+\alpha\nabla_3+\beta\nabla_4+\gamma\nabla_6+\nabla_7 \right\rangle.$

\end{enumerate}

 	\item $ \alpha_8\neq0, $ then by choosing $ t=-\frac{\alpha_3}{\alpha_8}x, s=-\frac{\alpha_6}{\alpha_8}, $ we have $ \alpha_3^*=0, \alpha_6^*=0. $ Thus, we can suppose that $ \alpha_3=0, \alpha_6=0. $ Consider the following cases:

\begin{enumerate}
	\item if $ \alpha_1=0, \alpha_2=0, \alpha_4=0, \alpha_5=0, $ then we have the representatives $ \left\langle \nabla_8 \right\rangle $ and $ \left\langle \nabla_7+\nabla_8 \right\rangle $ depending on whether $ \alpha_7=0 $ or not;
	
	\item if $ \alpha_1=0, \alpha_2=0, \alpha_4=0, \alpha_5\neq0, $ then by choosing  $x=\sqrt[3]{{\alpha_5}{\alpha_8^{-1}}}, s=0, t=0,$ we have the family of representatives $ \left\langle \nabla_5+\alpha\nabla_7+\nabla_8 \right\rangle;$
	
	\item if $ \alpha_1=0, \alpha_2=0, \alpha_4\neq0$ then by choosing $  x=\sqrt[4]{{\alpha_4}{\alpha_8^{-1}}}, s=0, t=0, $ we have the family of representatives $ \left\langle \nabla_4+\alpha\nabla_5+\beta\nabla_7+\nabla_8 \right\rangle;$
					
	\item if $ \alpha_1=0, \alpha_2\neq0, $ then by choosing $  x=\sqrt{{\alpha_2}{\alpha_8^{-1}}}, s=0, t=0,$ we have the family of representatives $ \left\langle \nabla_2+\alpha\nabla_4+\beta\nabla_5+\gamma\nabla_7+\nabla_8 \right\rangle;$

	\item if $ \alpha_1\neq0,$ then by choosing $  x=\sqrt{{\alpha_1}{\alpha_8}^{-1}}, s=0, t=0, $ we have the family of representatives $ \left\langle \nabla_1+\alpha\nabla_2+\beta\nabla_4+\gamma\nabla_5+\mu\nabla_7+\nabla_8 \right\rangle  .$

\end{enumerate}

\end{enumerate}

Summarizing all cases, we have the following distinct orbits:
\begin{center}

$\left\langle \nabla_1+\alpha\nabla_2+\beta\nabla_4+\gamma\nabla_5+\mu\nabla_7+\nabla_8 \right\rangle
^{
{\tiny
\begin{array}{l}O(\alpha,\beta,\gamma,\mu)=
O(-\alpha,-\beta,-\gamma,\mu)=\\
O(-\alpha,\beta,\gamma,-\mu)=
O(\alpha,-\beta,-\gamma,-\mu)
\end{array}}}, $ 
$\left\langle \alpha\nabla_1+\beta\nabla_2+\nabla_5+\nabla_6 \right\rangle^{O(\alpha,\beta)=O(-\alpha,\beta)}, $ 
$\left\langle \alpha\nabla_1+\beta\nabla_2+\nabla_6 \right\rangle^{O(\alpha,\beta)=O(-\alpha,\beta)},$
$\left\langle \nabla_1+\alpha\nabla_3+\beta\nabla_4+\gamma\nabla_6+\nabla_7 \right\rangle^{O(\alpha,\beta,\gamma)=
O(-\alpha,\beta,-\gamma)}, $ 
$ \left\langle \nabla_2+\nabla_3+\alpha\nabla_4+\beta\nabla_5 \right\rangle^{O(\alpha,\beta)=
O(-\alpha,\beta)}, $ $ \left\langle \alpha\nabla_2+\nabla_3 +\beta\nabla_5+\nabla_6 \right\rangle, $
$\left\langle \nabla_2+\alpha\nabla_4+\beta\nabla_5+\gamma\nabla_7+\nabla_8 \right\rangle^{O(\alpha,\beta,\gamma)=
O(\alpha,i\beta, i\gamma)=O(\alpha,-i\beta, -i\gamma)}, $ $ \left\langle \nabla_3 \right\rangle, $ 
$ \left\langle \nabla_3+\nabla_4+\alpha\nabla_5 \right\rangle
^{O(\alpha)=O(-\eta_3\alpha)=O(\eta_3^2\alpha)}, $ 
$ \left\langle \alpha\nabla_3+\nabla_4+\beta\nabla_6+\nabla_7 \right\rangle^{{\tiny 
\begin{array}{l}
O(\alpha,\beta)=O(-\alpha,-\beta)=O(-\alpha,\eta_3\beta)
=\\O(-\alpha,-\eta_3^2\beta)=O(\alpha,-\eta_3\beta)=O(\alpha,\eta_3^2\beta)\end{array}}}, $
$\left\langle \nabla_3+\nabla_5 \right\rangle, $ 
$ \left\langle \alpha\nabla_3+\nabla_6+\nabla_7 \right\rangle
^{O(\alpha,\beta)=O(-\alpha,\beta)}, $ 
$ \left\langle \alpha\nabla_3+\nabla_7 \right\rangle^{O(\alpha)=O(-\alpha)}, $ 
$ \left\langle \nabla_4+\alpha\nabla_5+\beta\nabla_7+\nabla_8 \right\rangle
^{
{\tiny 
\begin{array}{l}O(\alpha,\beta)=O(i\alpha,-i\beta)=
O(-i\alpha,i\beta)=O(-\alpha,-\beta)=\\
O(\alpha,-\beta)=O(i\alpha,i\beta)=O(-i\alpha,-i\beta)=O(-\alpha,\beta)
\end{array}}},  $
$\left\langle \nabla_5+\alpha\nabla_7+\nabla_8 \right\rangle^{
O(\alpha)=O(\eta_3\alpha)=O(-\eta_3^2\alpha)=O(-\alpha)=O(-\eta_3\alpha)=O(\eta_3^2\alpha)}, $ 
$ \left\langle \nabla_7+\nabla_8 \right\rangle, $ $ \left\langle \nabla_8\right\rangle, $

\end{center}
which gives the following new algebras:

\begin{longtable}{llllllllllllllllll}

${\mathbf{N}}_{271}^{\alpha, \beta, \gamma,\mu}$ & $:$ &$e_1e_1=e_4+e_5$ & $e_1e_2=e_3$ & $e_1e_3=\alpha e_5$ & $e_2e_2=\beta e_5$  \\ && $e_2e_3=\gamma e_5$ & $e_3e_3=e_4$ 
  & $e_3e_4=\mu e_5$ & $ e_4e_4=e_5$
\\

${\mathbf{N}}_{272}^{\alpha, \beta}$ & $:$ &$e_1e_1=e_4+\alpha e_5$ & $e_1e_2=e_3$ & $e_1e_3=\beta e_5$\\ & & $e_2e_3=e_5$ & $e_2e_4=e_5$ & $e_3e_3=e_4$
\\

${\mathbf{N}}_{273}^{\alpha, \beta}$ & $:$ &$e_1e_1=e_4+\alpha e_5$ & $e_1e_2=e_3$ & $e_1e_3=\beta e_5$  & $e_2e_4=e_5$ & $e_3e_3=e_4$
\\

${\mathbf{N}}_{274}^{\alpha, \beta ,\gamma}$ & $:$ &$e_1e_1=e_4+e_5$ & $e_1e_2=e_3$ & $e_1e_4=\alpha e_5$ & $e_2e_2=\beta e_5$ \\ & & $e_2e_4=\gamma e_5$ & $e_3e_3=e_4$ 
  & $e_3e_4=e_5$
\\

${\mathbf{N}}_{275}^{\alpha, \beta}$ & $:$ &$e_1e_1=e_4$ & $e_1e_2=e_3$ & $e_1e_3=e_5$  & $e_1e_4=e_5$  \\ && $ e_2e_2=\alpha e_5$ & $ e_2e_3=\beta e_5$  & $e_3e_3=e_4$
\\

${\mathbf{N}}_{276}^{\alpha, \beta}$ & $:$ &$e_1e_1=e_4$ & $e_1e_2=e_3$ & $e_1e_3=\alpha e_5$  & $e_1e_4=e_5$  \\ && $ e_2e_3=\beta e_5$ & $ e_2e_4=e_5$  & $e_3e_3=e_4$
\\

${\mathbf{N}}_{277}^{\alpha, \beta ,\gamma}$ & $:$ &$e_1e_1=e_4$ & $e_1e_2=e_3$ & $e_1e_3=e_5$ & $e_2e_2=\alpha e_5$  \\ && $e_2e_3=\beta e_5$ & $e_3e_3=e_4$ 
  & $e_3e_4=\gamma e_5$ & $ e_4e_4=e_5$
\\

${\mathbf{N}}_{278}$ & $:$ &$e_1e_1=e_4$ & $e_1e_2=e_3$ & $e_1e_4=e_5$ & $e_3e_3=e_4$ \\

${\mathbf{N}}_{279}^{\alpha}$ & $:$ &$e_1e_1=e_4$ & $e_1e_2=e_3$ & $e_1e_4=e_5$ \\ && $e_2e_2=e_5$ & $e_2e_3=\alpha e_5$ & $e_3e_3=e_4$ 
\\

${\mathbf{N}}_{280}^{\alpha, \beta}$ & $:$ &$e_1e_1=e_4$ & $e_1e_2=e_3$ & $e_1e_4=\alpha e_5$ & $e_2e_2=e_5$  \\ && $e_2e_4=\beta e_5$ & $e_3e_3=e_4$ 
  & $e_3e_4=e_5$
\\

${\mathbf{N}}_{281}$ & $:$ &$e_1e_1=e_4$ & $e_1e_2=e_3$ & $e_1e_4=e_5$ & $e_2e_3=e_5$ & $e_3e_3=e_4$ 
\\

${\mathbf{N}}_{282}^{\alpha}$ & $:$ &$e_1e_1=e_4$ & $e_1e_2=e_3$ & $e_1e_4=\alpha e_5$ \\ && $e_2e_4=e_5$ & $e_3e_3=e_4$  & $e_3e_4=e_5$
\\

${\mathbf{N}}_{283}^{\alpha}$ & $:$ &$e_1e_1=e_4$ & $e_1e_2=e_3$ & $e_1e_4=\alpha e_5$ & $e_3e_3=e_4$  & $e_3e_4=e_5$
\\

${\mathbf{N}}_{284}^{\alpha, \beta}$ & $:$ &$e_1e_1=e_4$ & $e_1e_2=e_3$ & $e_2e_2=e_5$ & $e_2e_3=\alpha e_5$ \\ & & $e_3e_3=e_4$ & $e_3e_4=\beta e_5$ 
  & $e_4e_4=e_5$
\\

${\mathbf{N}}_{285}^{\alpha}$ & $:$ &$e_1e_1=e_4$ & $e_1e_2=e_3$ & $e_2e_3=e_5$ \\ && $e_3e_3=e_4$  & $e_3e_4=\alpha e_5$ & $ e_4e_4=e_5$
\\

${\mathbf{N}}_{286}$ & $:$ &$e_1e_1=e_4$ & $e_1e_2=e_3$ & $e_3e_3=e_4$ & $e_3e_4=e_5$ & $e_4e_4=e_5$ 
\\

${\mathbf{N}}_{287}$ & $:$ &$e_1e_1=e_4$ & $e_1e_2=e_3$ & $e_3e_3=e_4$  & $e_4e_4=e_5$ 
\\

\end{longtable}

\subsection{$ 1 $-dimensional central extensions of $ {\mathbf N}_{19}^{4*} $.} Here we will collect all information about $ {\mathbf N}_{19}^{4*}: $
\begin{longtable}{|l|l|l|}
\hline
${\mathbf{N}}^{4*}_{19}$ & 
$\begin{array}{ll}
e_1e_1=e_4 & e_1e_2=e_3 \\  
e_2e_2=e_4 & e_3e_3=e_4
\end{array}$
&
$\begin{array}{l}
\mathrm{H}^2_{\mathfrak{D}}(\mathbf{N}^{4*}_{19})=\langle [\Delta_{11}],[\Delta_{13}],[\Delta_{22}],[\Delta_{23}]\rangle\\
\mathrm{H}^2_{\mathfrak{C}}(\mathbf{N}^{4*}_{19})=\mathrm{H}^2_{\mathfrak{D}}(\mathbf{N}^{4*}_{19})\oplus
\langle [\Delta_{14}], [\Delta_{24}], [\Delta_{34}], [\Delta_{44}] \rangle 
\end{array}$\\
\hline

\multicolumn{3}{|c|}{
$\begin{array}{l}
\begin{array}{l}
 \phi_1=\begin{pmatrix}
x&0&0&0\\
0&q&0&0\\
0&0&xq&0\\
t&s&0&1
\end{pmatrix}, \\ 
\multicolumn{1}{r}{x^2=1,q^2=1}
\end{array}
\begin{array}{l} 
\phi_2=\begin{pmatrix}
0&p&0&0\\
y&0&0&0\\
0&0&yp&0\\
t&s&0&1
\end{pmatrix}, \\
\multicolumn{1}{r}{y^2=1, p^2=1}
\end{array}
\end{array}$}\\
\hline

\end{longtable}

Let us use the following notations:
\begin{longtable}{llll}
$\nabla_1=[\Delta_{11}], $&$ \nabla_2=[\Delta_{13}], $&$ \nabla_3=[\Delta_{14}], $&$  \nabla_4=[\Delta_{22}],$\\
$\nabla_5=[\Delta_{23}],  $&$ \nabla_6=[\Delta_{24}],  $&$  \nabla_7=[\Delta_{34}], $&$ \nabla_8=[\Delta_{44}]. $
\end{longtable}

Take $ \theta=\sum\limits_{i=1}^{8}\alpha_i\nabla_i\in\mathrm{H}^2_{\mathfrak{C}}(\mathbf{N}^{4*}_{19}) .$  Since

$$\phi^T\begin{pmatrix}
\alpha_1&0&\alpha_2&\alpha_3\\
0&\alpha_4&\alpha_5&\alpha_6\\
\alpha_2&\alpha_5&0&\alpha_7\\
\alpha_3&\alpha_6&\alpha_7&\alpha_8

\end{pmatrix}\phi=
\begin{pmatrix}
\alpha_1^*&\alpha^{*}&\alpha^{*}_2&\alpha_3^*\\
\alpha^{*}&\alpha^*_4&\alpha^*_5&\alpha_6^*\\
\alpha^{*}_2&\alpha^*_5&0&\alpha^*_7\\
\alpha^*_3&\alpha^*_6&\alpha^*_7&\alpha^*_8
\end{pmatrix},$$

then, in the case $\phi=\phi_1^{x=1,q=1},$ 
we have
\begin{longtable}{llll}
$\alpha_1^*=\alpha_1+2\alpha_3t+\alpha_8t^2,$&
$\alpha_2^*=\alpha_2+\alpha_7t,$&
$\alpha_3^*=\alpha_3+\alpha_8t,$&
$\alpha_4^*=\alpha_4+2\alpha_6s+\alpha_8s^2,$\\

$\alpha_5^*=\alpha_5+\alpha_7s,$&
$\alpha_6^*=\alpha_6+\alpha_8s,$&
$\alpha_7^*=\alpha_7,$&
$\alpha_8^*=\alpha_8.$
\end{longtable}

For define the main families of representatives, we will use  $\phi=\phi_1^{x=1,q=1}$ and for find equal orbits we will use other automorphisms. 
We are interested in 
\begin{center}$ (\alpha_3,\alpha_6,\alpha_7,\alpha_8)\neq(0,0,0,0) .$
\end{center} Let us consider the following cases:

\begin{enumerate}
	\item if $ \alpha_8=0, \alpha_7=0, \alpha_6=0, $ then $ \alpha_3\neq0 $ and choosing $ t=-\frac{\alpha_1}{2\alpha_3},$ we have the family of representatives $ \left\langle \alpha\nabla_2+\nabla_3+\beta\nabla_4+\gamma\nabla_5 \right\rangle; $
	
	\item if $ \alpha_8=0, \alpha_7=0, \alpha_6\neq0 $ and $ \alpha_3=0,$ then by choosing some suitable automorphism $ \phi_2 $ we have $ \alpha_3^*\neq0 ,$ $ \alpha_6^* = 0,$ which is the case considered above;
	
	\item if $ \alpha_8=0, \alpha_7=0, \alpha_6\neq0, \alpha_3\neq0, $ then by choosing $ t=-\frac{\alpha_1}{2\alpha_3}, s=\frac{\alpha_4}{2\alpha_6},$ we have the family of representatives $ \left\langle \alpha\nabla_2+\beta\nabla_3+\gamma\nabla_5+\nabla_6 \right\rangle_{\beta\neq0} ;$	
	
	\item if $ \alpha_8=0, \alpha_7\neq0, $ then by choosing $ t=-{\alpha_2}{\alpha_7}^{-1}, s=-{\alpha_5}{\alpha_7}^{-1},$ we have the family of representatives $ \left\langle \alpha\nabla_1+\beta\nabla_3+\gamma\nabla_4+\mu\nabla_6+\nabla_7 \right\rangle; $	
	
	\item if $ \alpha_8\neq0, $ then by choosing $ t=-{\alpha_3}{\alpha_8}^{-1}, s=-{\alpha_6}{\alpha_8}^{-1},$ we have the family of representatives $ \left\langle \alpha\nabla_1+\beta\nabla_2+\gamma\nabla_4+\mu\nabla_5+\nu\nabla_7+\nabla_8 \right\rangle. $
		
\end{enumerate}

Summarizing, we have the following distinct orbits:
\begin{center}

$\left\langle \alpha\nabla_1+\beta\nabla_2+\gamma\nabla_4+\mu\nabla_5+\nu\nabla_7+\nabla_8 \right\rangle
^{{\tiny 
\begin{array}{l}
O(\alpha,\beta,\gamma,\mu,\nu)=
O(\alpha,-\beta,\gamma,\mu,-\nu)=\\
O(\alpha,\beta,\gamma,-\mu,-\nu)=
O(\alpha,-\beta,\gamma,-\mu,\nu)=\\
O(\gamma,\mu,\alpha,\beta,\nu)=
O(\gamma,-\mu,\alpha,\beta,-\nu)=\\
O(\gamma,\mu,\alpha,-\beta,-\nu)=
O(\gamma,-\mu,\alpha,-\beta,\nu)
\end{array}}},$ 
$\left\langle \alpha\nabla_1+\beta\nabla_3+\gamma\nabla_4+\mu\nabla_6+\nabla_7 \right\rangle
^{{\tiny 
\begin{array}{l}
O(\alpha,\beta,\gamma,\mu)=
O(-\alpha,-\beta,-\gamma,\mu)=\\
O(-\alpha,\beta,-\gamma,-\mu)=
O(\alpha,-\beta,\gamma,-\mu)=\\
O(\gamma,\mu,\alpha,\beta)=
O(-\gamma,-\mu,-\alpha,\beta)=\\
O(-\gamma,\mu,-\alpha,-\beta)=
O(\gamma,-\mu,\alpha,-\beta)
\end{array}}},$
$\left\langle \alpha\nabla_2+\nabla_3+\beta\nabla_4+\gamma\nabla_5 \right\rangle^{{\tiny 
\begin{array}{l}
O(\alpha,\beta,\gamma)=
O(-\alpha,\beta,\gamma)=
O(-\alpha,-\beta,\gamma)=
O(\alpha,-\beta,\gamma)=
\end{array}}},$ 
$\left\langle \alpha\nabla_2+\beta\nabla_3+\gamma\nabla_5+\nabla_6 \right\rangle_{\beta \neq0}^{{\tiny 
\begin{array}{l}
O(\alpha,\beta,\gamma)=
O(\alpha,-\beta,-\gamma)=
O(\frac{\gamma}{\beta},\frac{1}{\beta},\frac{\alpha}{\beta})=
O(\frac{\gamma}{\beta},-\frac{1}{\beta},-\frac{\alpha}{\beta})
 
\end{array}}},$

\end{center}
which gives the following new algebras:
\begin{longtable}{llllllllllllllllll}

${\mathbf{N}}_{288}^{\alpha, \beta, \gamma,\mu,\nu}$ & $:$ &$e_1e_1=e_4+\alpha e_5$ & $e_1e_2=e_3$ & $e_1e_3=\beta e_5$ & $e_2e_2=e_4+\gamma e_5$ \\ & & $e_2e_3=\mu e_5$   & $e_3e_3=e_4$ 
 & $e_3e_4=\nu e_5$ & $ e_4e_4=e_5$
\\

${\mathbf{N}}_{289}^{\alpha, \beta, \gamma,\mu}$ & $:$ &$e_1e_1=e_4+\alpha e_5$ & $e_1e_2=e_3$ & $e_1e_4=\beta e_5$ & $e_2e_2=e_4+\gamma e_5$\\ &  & $e_2e_4=\mu e_5$   & $e_3e_3=e_4$ 
 & $e_3e_4=e_5$
\\

${\mathbf{N}}_{290}^{\alpha, \beta}$ & $:$ &$e_1e_1=e_4$ & $e_1e_2=e_3$ & $e_1e_3=\alpha e_5$ & $e_1e_4=e_5$ \\ & & $e_2e_2=e_4+\beta e_5$    & $e_2e_3=\gamma e_5$ 
 & $e_3e_3=e_4$
\\

${\mathbf{N}}_{291}^{\alpha, \beta\neq0 ,\gamma} $ & $:$ &$e_1e_1=e_4$ & $e_1e_2=e_3$ & $e_1e_3=\alpha e_5$ & $e_1e_4=\beta e_5$ \\ & & $e_2e_3=\gamma e_5 $   & $e_2e_4=e_5$   & $e_3e_3=e_4$ \\

\end{longtable}

\section{Central extensions of nilpotent non-$\mathfrak{CCD}$-algebras.}

\subsection{$ 1 $-dimensional central extensions of $ {\mathbf N}_{01}^{4} $.} Here we will collect all information about $ {\mathbf N}_{01}^{4}: $

$$
\begin{array}{|l|l|l|l|}
\hline
\text{ }  & \text{ } & \text{Cohomology} & \text{Automorphisms}  \\
\hline
{\mathbf{N}}^{4}_{01} & 
\begin{array}{l}
e_1e_1=e_2 \\ 
e_1e_2=e_3 \\ 
e_2e_3=e_4
\end{array}
&\begin{array}{l}\mathrm{H}^2_{\mathfrak{C}}(\mathbf{N}^{4}_{01})=\Big \langle [\Delta_{ij}] \Big\rangle\\
 (i,j) \notin \{ (1,1),(1,2),(2,3)\}
\end{array} 
&

 \phi=\begin{pmatrix}
x&0&0&0\\
0&x^2&0&0\\
z&0&x^3&0\\
t&0&x^2z&x^5
\end{pmatrix}\\

\hline
\end{array}$$

Let us use the following notations:
\begin{longtable}{llll}
$\nabla_1=[\Delta_{13}], $&
$\nabla_2=[\Delta_{14}], $&
$\nabla_3=[\Delta_{22}], $&
$\nabla_4=[\Delta_{24}],$\\
$\nabla_5=[\Delta_{33}],$&
$\nabla_6=[\Delta_{34}],$&
$\nabla_7=[\Delta_{44}]. $
\end{longtable}
	
Take $ \theta=\sum\limits_{i=1}^{7}\alpha_i\nabla_i\in\mathrm{H}^2_{\mathfrak{C}}(\mathbf{N}^{4}_{01}) .$ Since
$$\phi^T\begin{pmatrix}
0&0&\alpha_1&\alpha_2\\
0&\alpha_3&0&\alpha_4\\
\alpha_1&0&\alpha_5&\alpha_6\\
\alpha_2&\alpha_4&\alpha_6&\alpha_7

\end{pmatrix}\phi=
\begin{pmatrix}
\alpha^*&\alpha^{**}&\alpha^{*}_1&\alpha^*_2\\
\alpha^{**}&\alpha^*_3&\alpha^{***}&\alpha^*_4\\
\alpha^{*}_1&\alpha^{***}&\alpha^*_5&\alpha^*_6\\
\alpha^*_2&\alpha^*_4&\alpha^*_6&\alpha^*_7
\end{pmatrix}$$	
we have	
\begin{longtable}{lll}
\multicolumn{3}{l}{$\alpha_1^*=\big((\alpha_1x+\alpha_5z+\alpha_6t)x+(\alpha_2x+\alpha_6z+\alpha_7t)z\big)x^2,$}\\ 
$\alpha_2^*=(\alpha_2x+\alpha_6z+\alpha_7t)x^5,$&
$\alpha_3^*=\alpha_3x^4,$&
$\alpha_4^*=\alpha_4x^7,$\\
$\alpha_5^*=(\alpha_5x^{2}+2\alpha_6xz+\alpha_7z^2)x^4,$&
$\alpha_6^*=(\alpha_6x+\alpha_7z)x^{7},$&
$\alpha_7^*=\alpha_7x^{10}.$
\end{longtable}	

We are interested in $ (\alpha_2,\alpha_4,\alpha_6,\alpha_7)\neq(0,0,0,0)$ and consider following cases:

\begin{enumerate}
\item  $ \alpha_7=\alpha_6=\alpha_4=0, $ then $ \alpha_2\neq0, $ and we have the following subcases:

\begin{enumerate}
	\item  if $ \alpha_5=-\alpha_2, $ then we have
	\begin{enumerate}
		\item\label{19-cs1a1} if $ \alpha_1=0, \alpha_3=0, $ then we have the representative $ \langle \nabla_2-\nabla_5 \rangle;$
		
		\item\label{19-cs1a2} if $ \alpha_1=0, \alpha_3\neq0, $ then by choosing $ x=\sqrt{{\alpha_3}{\alpha_2}^{-1}}, z=0, t=0, $ we have the representative $ \langle \nabla_2+\nabla_3-\nabla_5 \rangle; $
		
		\item  if $ \alpha_1\neq0, $ then by choosing $ x=\sqrt{{\alpha_1}{\alpha_2}^{-1}}, z=0, t=0, $ we have the family of representatives $ \langle \nabla_1+\nabla_2+\alpha\nabla_3-\nabla_5 \rangle; $	
	\end{enumerate}
	
	\item if $ \alpha_5\neq-\alpha_2, $ then by choosing $ z=-\frac{\alpha_1}{\alpha_5+\alpha_2}x, t=0, $ we have the family of representatives $ \langle \nabla_2+\alpha\nabla_5 \rangle_{\alpha\neq-1}$ and $ \langle \nabla_2+\nabla_3+\alpha\nabla_5 \rangle_{\alpha\neq-1} $ depending on whether $ \alpha_3=0 $ or not, which will be jointed with the cases (\ref{19-cs1a1}) and (\ref{19-cs1a2}).
\end{enumerate}

\item  $ \alpha_7=0, \alpha_6=0, \alpha_4\neq0, $ then we have the following subcases:

\begin{enumerate}
	
	\item if $ \alpha_5=-\alpha_2, \alpha_1=0,$ \begin{enumerate}
	\item\label{19-cs2a1} if $ \alpha_3=0,$ then we have the representatives  $ \langle \nabla_4 \rangle$ and $ \langle \nabla_2+\nabla_4-\nabla_5 \rangle$ depending on whether $ \alpha_2=0 $ or not;
	\item\label{19-cs2a2} if $ \alpha_3\neq0,$ then by choosing $ x=\sqrt[3]{{\alpha_3}{\alpha_4}^{-1} },$ we have the family of representatives $ \langle \alpha\nabla_2+\nabla_3+ \nabla_4-\alpha\nabla_5 \rangle;$
\end{enumerate}

	\item if $ \alpha_5=-\alpha_2, \alpha_1\neq0,$ then by choosing $ x=\sqrt[3]{{\alpha_1}{\alpha_4}^{-1} },$ we have the family of representatives  $ \langle \nabla_1+\alpha\nabla_2+\beta\nabla_3+ \nabla_4-\alpha\nabla_5 \rangle;$

	\item if $ \alpha_5\neq-\alpha_2, $ then we have
\begin{enumerate}
	\item if $ \alpha_3=0, \alpha_2=0,$ then by  choosing $ x=\frac{\alpha_5}{\alpha_4}, z=-\frac{\alpha_1\alpha_5}{\alpha_4(\alpha_2+\alpha_5)}, t=0, $ we have the representative $ \langle \nabla_4+\nabla_5 \rangle;$
	\item if $ \alpha_3=0, \alpha_2\neq0,$ then by choosing $ x=\frac{\alpha_2}{\alpha_4}, z=-\frac{\alpha_1\alpha_2}{\alpha_4(\alpha_2+\alpha_5)}, t=0,$ we have the family of representatives $ \langle \nabla_2+\nabla_4+\alpha\nabla_5 \rangle_{\alpha\neq-1},$ which will be jointed with a representative from the case (\ref{19-cs2a1});
	\item if $ \alpha_3\neq0,$ then by choosing $ x=\sqrt[3]{{\alpha_3}{\alpha_4}^{-1}}, z=-\frac{\alpha_1\sqrt[3]{\alpha_3}}{(\alpha_2+\alpha_5)\sqrt[3]{\alpha_4}}, t=0,$ we have the family of representatives $ \langle \alpha\nabla_2+\nabla_3+\nabla_4+\beta\nabla_5 \rangle_{\beta\neq-\alpha},$ which will be jointed with the family from the case (\ref{19-cs2a1}).	
\end{enumerate}

\end{enumerate}

\item $ \alpha_7=0, \alpha_6\neq0, $ then we consider the following subcases:
\begin{enumerate}
	\item if $  \alpha_3=0, \alpha_4=0,  $	 then  choosing $ z=-\frac{\alpha_2}{\alpha_6}x, t=-\frac{\alpha_1x+\alpha_5z}{\alpha_6}, $ 
	we have  representatives $ \langle \nabla_6 \rangle $ and $ \langle \nabla_5+\nabla_6 \rangle$ depending on whether $ \alpha_5=2\alpha_2 $ or not;
	
	\item if $ \alpha_3=0, \alpha_4\neq0, $ then by choosing $ x=\frac{\alpha_4}{\alpha_6}, z=-\frac{\alpha_2\alpha_4}{\alpha^2_6}, t=\frac{\alpha_4(\alpha_2\alpha_5-\alpha_1\alpha_6)}{\alpha^3_6}, $ we have the family of representatives $ \langle \nabla_4+\alpha\nabla_5+\nabla_6 \rangle;$
	
	\item if $ \alpha_3\neq0, $ then by choosing $ x=\sqrt[4]{\frac{\alpha_3}{\alpha_6}}, z=-\frac{\alpha_2\sqrt[4]{\alpha_3}}{\alpha_6\sqrt[4]{\alpha_6}}, t=\frac{(\alpha_2\alpha_5-\alpha_1\alpha_6)\sqrt[4]{\alpha_3}}{\alpha_6^2\sqrt[4]{\alpha_6}},$ we have the family of representatives $ \langle \nabla_3+\alpha\nabla_4+\beta\nabla_5+\nabla_6 \rangle.$
	
\end{enumerate}	
\item  $ \alpha_7\neq0, $ then by choosing $ z=-\frac{\alpha_6}{\alpha_7}x, t=\frac{\alpha_6^2+\alpha_2\alpha_7}{\alpha^2_7}x,$ we have $\alpha_2^*=0, \alpha_6^*=0.$ Thus, we can suppose that $\alpha_2=0, \alpha_6=0$ and now consider following subcases:

\begin{enumerate}
	\item $ \alpha_1=0, \alpha_3=0, \alpha_4=0, $ then we have  representatives $ \langle \nabla_7 \rangle $ and $ \langle \nabla_5+\nabla_7 \rangle$ depending on whether $ \alpha_5\alpha_7-\alpha^2_6=0 $ or not;
	
	\item $ \alpha_1=0, \alpha_3=0, \alpha_4\neq0, $ then by choosing $ x=\sqrt[3]{{\alpha_4}{\alpha_7}^{-1}}, $ we have  the family of representatives $ \langle \nabla_4+\alpha\nabla_5+\nabla_7 \rangle;$

	\item $ \alpha_1=0, \alpha_3\neq0, $ then by choosing $ x=\sqrt[6]{{\alpha_3}{\alpha_7}^{-1}}, $ we have  the family of representatives
$ \langle \nabla_3+\alpha\nabla_4+\beta\nabla_5+\nabla_7 \rangle;$

	\item $ \alpha_1\neq0, $ then by choosing $ x=\sqrt[6]{{\alpha_1}{\alpha_7^{-3}}}, $ we have  the family of representatives
	    $ \langle \nabla_1+\alpha\nabla_3+\beta\nabla_4+\gamma\nabla_5+\nabla_7 \rangle.$

\end{enumerate}

\end{enumerate}	

Summarizing all cases, we have the following distinct orbits:
\begin{center}

$\left\langle \nabla_1+ \alpha \nabla_2 + \beta \nabla_3 + \nabla_4 -\alpha \nabla_5 \right\rangle^
{O(\alpha, \beta)=O(-\eta_3\alpha, \eta_3 \beta)=O(\eta_3^2\alpha,-\eta_3^2\beta)},$ 
$\left\langle \nabla_1+  \nabla_2 + \alpha \nabla_3 - \nabla_5 \right\rangle, $
$ \left\langle \nabla_1+ \alpha \nabla_3 + \beta \nabla_4 + \gamma \nabla_5 + \nabla_7  \right\rangle^
{{\tiny \begin{array}{l}
O(\alpha, \beta, \gamma)=O(\alpha, \beta, -\eta_3\gamma)=
O(\alpha, -\beta, -\eta_3\gamma)=\\
O(\alpha, -\beta, \eta_3^2 \gamma)=
O(\alpha, \beta, \eta_3^2\gamma)=O(\alpha, -\beta, \gamma) \end{array} }},
$
$\left\langle  \alpha \nabla_2 + \nabla_3 + \nabla_4 +\beta \nabla_5 \right\rangle
^{O(\alpha, \beta)=O(-\eta_3\alpha, -\eta_3 \beta)=O(\eta_3^2\alpha, \eta_3^2\beta)},$
$\left\langle \nabla_2+ \nabla_3 + \alpha \nabla_5  \right\rangle,$
$\left\langle \nabla_2+ \nabla_4 + \alpha \nabla_5  \right\rangle,$ $\left\langle  \nabla_2 + \alpha \nabla_5 \right\rangle, $
$\left\langle \nabla_3+\alpha\nabla_4+\beta\nabla_5+\nabla_6 \right\rangle
^{{\tiny \begin{array}{l}
O(\alpha, \beta)=O(-i\alpha, -\beta)=\\
O(i\alpha, -\beta)=O(-\alpha, \beta) \end{array}}},$
$\left\langle  \nabla_3 + \alpha\nabla_4 + \beta \nabla_5 + \nabla_7 \right\rangle^
{{\tiny \begin{array}{l}
O(\alpha, \beta)=O(\alpha, -\eta_3 \beta)=O(-\alpha, -\eta_3\beta)=\\
O(-\alpha, \eta_3^2 \beta)=O(\alpha, \eta_3^2 \beta)=O(-\alpha,  \beta)
\end{array}}},$ 
$ \langle \nabla_4 \rangle,$
$\left\langle \nabla_4 + \nabla_5  \right\rangle,$ 
$ \langle \nabla_4+\alpha\nabla_5+\nabla_6 \rangle,$
$\left\langle \nabla_4 + \alpha \nabla_5 + \nabla_7 \right\rangle^{O(\alpha)=O(-\eta_3\alpha)=O(\eta_3^2\alpha)},$
$ \langle \nabla_5+\nabla_6 \rangle,$
$\left\langle \nabla_5 + \nabla_7 \right\rangle,$
$ \langle \nabla_6 \rangle,$
$\left\langle \nabla_7 \right\rangle.
$

\end{center}

Hence, we have the following new algebras:

\begin{longtable}{llllllllllllllllll}

${\mathbf{N}}_{292}^{\alpha, \beta}$ & $:$ &$e_1e_1=e_2$ & $e_1e_2=e_3$ & $e_1e_3=e_5$ & $e_1e_4=\alpha e_5$ \\ && $e_2e_2=\beta e_5$  & $e_2e_3=e_4$ 
   & $e_2e_4=e_5$ & $ e_3e_3=-\alpha e_5$
\\

${\mathbf{N}}_{293}^{\alpha}$ & $:$ &$e_1e_1=e_2$ & $e_1e_2=e_3$ & $e_1e_3=e_5$ & $e_1e_4=e_5$ \\ && $e_2e_2=\alpha e_5$  & $e_2e_3=e_4$ 
  & $e_3e_3=-e_5$
\\

${\mathbf{N}}_{294}^{\alpha, \beta ,\gamma}$ & $:$ &$e_1e_1=e_2$ & $e_1e_2=e_3$ & $e_1e_3=e_5$ & $e_2e_2=\alpha e_5$ \\ && $e_2e_3=e_4$  & $e_2e_4=\beta e_5$ 
  & $e_3e_3=\gamma e_5$ & $ e_4e_4=e_5$
\\

${\mathbf{N}}_{295}^{\alpha, \beta}$ & $:$ &$e_1e_1=e_2$ & $e_1e_2=e_3$ & $e_1e_4=\alpha e_5$ & $e_2e_2=e_5$ \\ && $e_2e_3=e_4$  & $e_2e_4=e_5$ 
   & $e_3e_3=\beta e_5$
\\

${\mathbf{N}}_{296}^{\alpha}$ & $:$ &$e_1e_1=e_2$ & $e_1e_2=e_3$ & $e_1e_4=e_5$ \\ && $e_2e_2=e_5$  & $e_2e_3=e_4$ & $e_3e_3=\alpha e_5$
\\

${\mathbf{N}}_{297}^{\alpha}$ & $:$ &$e_1e_1=e_2$ & $e_1e_2=e_3$ & $e_1e_4=e_5$ \\ && $e_2e_3=e_4$  & $e_2e_4=e_5$ & $e_3e_3=\alpha e_5$
\\

${\mathbf{N}}_{298}^{\alpha}$ & $:$ &$e_1e_1=e_2$ & $e_1e_2=e_3$ & $e_1e_4=e_5$  & $e_2e_3=e_4$ & $e_3e_3=\alpha e_5$
\\

${\mathbf{N}}_{299}^{\alpha, \beta}$ & $:$ &$e_1e_1=e_2$ & $e_1e_2=e_3$ & $e_2e_2=e_5$ & $e_2e_3=e_4$ \\ && $e_2e_4=\alpha e_4$  & $e_3e_3=\beta e_5$ 
    & $e_3e_4=e_5$
\\

${\mathbf{N}}_{300}^{\alpha, \beta}$ & $:$ &$e_1e_1=e_2$ & $e_1e_2=e_3$ & $e_2e_2=e_5$ & $e_2e_3=e_4$ \\ && $e_2e_4=\alpha e_4$  & $e_3e_3=\beta e_5$ 
  & $e_4e_4=e_5$
\\

${\mathbf{N}}_{301}$ & $:$ &$e_1e_1=e_2$ & $e_1e_2=e_3$ & $e_2e_3=e_4$ & $e_2e_4=e_5$ 
\\

${\mathbf{N}}_{302}$ & $:$ &$e_1e_1=e_2$ & $e_1e_2=e_3$ & $e_2e_3=e_4$ & $e_2e_4=e_5$ & $e_3e_3=e_5$ 
\\

${\mathbf{N}}_{303}^{\alpha}$ & $:$ &$e_1e_1=e_2$ & $e_1e_2=e_3$ & $e_2e_3=e_4$ \\ & & $e_2e_4=e_5$ & $e_3e_3=\alpha e_5$ & $ e_3e_4=e_5$
\\

${\mathbf{N}}_{304}^{\alpha}$ & $:$ &$e_1e_1=e_2$ & $e_1e_2=e_3$ & $e_2e_3=e_4$ \\ & & $e_2e_4=e_5$ & $e_3e_3=\alpha e_5$ & $ e_4e_4=e_5$
\\

${\mathbf{N}}_{305}$ & $:$ &$e_1e_1=e_2$ & $e_1e_2=e_3$ & $e_2e_3=e_4$ & $e_3e_3=e_5$ & $e_3e_4=e_5$ 
\\

${\mathbf{N}}_{306}$ & $:$ &$e_1e_1=e_2$ & $e_1e_2=e_3$ & $e_2e_3=e_4$ & $e_3e_3=e_5$ & $e_4e_4=e_5$ 
\\

${\mathbf{N}}_{307}$ & $:$ &$e_1e_1=e_2$ & $e_1e_2=e_3$ & $e_2e_3=e_4$  & $e_4e_4=e_5$ 
\\

${\mathbf{N}}_{308}$ & $:$ &$e_1e_1=e_2$ & $e_1e_2=e_3$ & $e_2e_3=e_4$  & $e_4e_4=e_5$ 
\\

\end{longtable}

\subsection{$ 1 $-dimensional central extensions of $ {\mathbf N}_{02}^{4} $.} Here we will collect all information about $ {\mathbf N}_{02}^{4}: $

$$
\begin{array}{|l|l|l|l|}
\hline
\text{ }  & \text{ } & \text{Cohomology} & \text{Automorphisms} \\
\hline
{\mathbf{N}}^{4}_{02} & 
\begin{array}{l}
e_1e_1=e_2 \\ 
e_1e_2=e_3 \\ 
e_1e_3=e_4 \\ 
e_2e_3=e_4
\end{array}
&
\begin{array}{l}
\mathrm{H}^2_{\mathfrak{C}}(\mathbf{N}^{4}_{02})=\Big \langle [\Delta_{ij}] \Big\rangle\\
(i,j) \notin \{ (1,1),(1,2),(1,3)\}
\end{array}

& 
 
 \phi=\begin{pmatrix}
1&0&0&0\\
0&1&0&0\\
z&0&1&0\\
t&2z&z&1
\end{pmatrix}\\
\hline
\end{array}$$

Let us use the following notations:
\begin{longtable}{llll}
$\nabla_1=[\Delta_{14}],$&
$\nabla_2=[\Delta_{22}],$&
$\nabla_3=[\Delta_{23}],$&
$\nabla_4=[\Delta_{24}],$\\
$\nabla_5=[\Delta_{33}],$&
$\nabla_6=[\Delta_{34}],$&
$\nabla_7=[\Delta_{44}]. $
\end{longtable}
Take $ \theta=\sum\limits_{i=1}^{7}\alpha_i\nabla_i\in\mathrm{H}^2_{\mathfrak{C}}(\mathbf{N}^{4}_{02}) .$
Since
$$\phi^T\begin{pmatrix}
0&0&0&\alpha_1\\
0&\alpha_2&\alpha_3&\alpha_4\\
0&\alpha_3&\alpha_5&\alpha_6\\
\alpha_1&\alpha_4&\alpha_6&\alpha_7

\end{pmatrix}\phi=
\begin{pmatrix}
\alpha^*&\alpha^{**}&\alpha^{***}&\alpha^*_1\\
\alpha^{**}&\alpha^*_2&\alpha^*_3+\alpha^{***}&\alpha^*_4\\
\alpha^{***}&\alpha^*_3+\alpha^{***}&\alpha^*_5&\alpha^*_6\\
\alpha^*_1&\alpha^*_4&\alpha^*_6&\alpha^*_7
\end{pmatrix}$$	
we have	

\begin{longtable}{lcl}
$\alpha_1^*$&$=$&$\alpha_1+\alpha_6z+\alpha_7t,$\\
$\alpha_2^*$&$=$&$\alpha_2+4\alpha_4z+4\alpha_7z^2,$\\
$\alpha_3^*$&$=$&$\alpha_3+2\alpha_6z+(\alpha_4+2\alpha_7z)z-(\alpha_5z+\alpha_6t)-(\alpha_1+\alpha_6z+\alpha_7t)z,$\\
$\alpha_4^*$&$=$&$\alpha_4+2\alpha_7z,$\\ 
$\alpha_5^*$&$=$&$\alpha_5+2\alpha_6z+\alpha_7z^2,$\\
$\alpha_6^*$&$=$&$\alpha_6+\alpha_7z,$\\
$\alpha_7^*$&$=$&$\alpha_7.$
\end{longtable}	

We are interested in $ (\alpha_1,\alpha_4,\alpha_6,\alpha_7)\neq(0,0,0,0) $ and consider following cases:

\begin{enumerate}
\item if $ \alpha_7=\alpha_6=\alpha_4=0, $ then $ \alpha_1\neq0, $ and we have
\begin{enumerate}
	\item if $ \alpha_5=-\alpha_1 ,$ then we have the family of representatives \begin{center}$ \langle \nabla_1+\alpha\nabla_2+\beta\nabla_3-\nabla_5 \rangle;$
	\end{center}
	\item if $ \alpha_5\neq-\alpha_1 ,$ then by choosing $ z=-\frac{\alpha_3}{\alpha_1+\alpha_5}, t=0, $ we have the family of representatives
	$ \langle \nabla_1+\alpha\nabla_2+\beta\nabla_5 \rangle_{\beta\neq-1};$
	\end{enumerate}

\item if $ \alpha_7=0, \alpha_6=0, \alpha_4\neq0, $ then by choosing $ z=-\frac{\alpha_2}{4\alpha_4}, t=0, $ we have the family of representatives 
$ \langle \alpha\nabla_1+\beta\nabla_3+\nabla_4+\gamma\nabla_5 \rangle;$

\item if $ \alpha_7=0, \alpha_6\neq0, $ then by choosing \begin{center} $ z=-{\alpha_1}{\alpha_6}^{-1}, t=({\alpha_3\alpha_6-\alpha_1(2\alpha_6+\alpha_4-\alpha_5)}){\alpha_6^{-1}},$\end{center} we have the family of  representatives $ \langle \alpha\nabla_2+\beta\nabla_4+\gamma\nabla_5+\nabla_6 \rangle;$

\item if $ \alpha_7\neq0, $ then by choosing $ z=-{\alpha_6}{\alpha_7}^{-1}, t=({\alpha^2_6-\alpha_1\alpha_7}){\alpha^{-2}_7},$ we have the family of representatives $ \langle \alpha\nabla_2+\beta\nabla_3+\gamma\nabla_4+\mu\nabla_5+\nabla_7 \rangle.$

\end{enumerate}		
	
Summarizing, we have the following distinct orbits:
\begin{center}

$\left\langle \nabla_1+ \alpha \nabla_2 + \beta \nabla_3 - \nabla_5 \right\rangle,$ 
$\left\langle \nabla_1+ \alpha \nabla_2 + \beta \nabla_5 \right\rangle_{\beta\neq -1},$ $\left\langle \alpha \nabla_1+ \beta \nabla_3 + \nabla_4 + \gamma\nabla_5 \right\rangle,$
$\left\langle \alpha \nabla_2 + \beta \nabla_3 + \gamma\nabla_4 + \mu \nabla_5 + \nabla_7\right\rangle,$ 
$\left\langle \alpha \nabla_2 + \beta \nabla_4 +\gamma\nabla_5+ \nabla_6\right\rangle,$

\end{center}

which gives the following new algebras:

\begin{longtable}{llllllllllllllllll}

${\mathbf{N}}_{309}^{\alpha, \beta}$ & $:$ &$e_1e_1=e_2$ & $e_1e_2=e_3$ & $e_1e_3=e_4$ & $e_1e_4=e_5$ \\ & & $e_2e_2=\alpha e_5$  & $e_2e_3=e_4+\beta e_5$ 
   & $e_3e_3=-e_5$ \\

${\mathbf{N}}_{310}^{\alpha, \beta\neq-1}  $ & $:$ &$e_1e_1=e_2$ & $e_1e_2=e_3$ & $e_1e_3=e_4$ & $e_1e_4=e_5$  \\ && $e_2e_2=\alpha e_5$  & $e_2e_3=e_4$ 
   & $e_3e_3=\beta e_5$ \\

${\mathbf{N}}_{311}^{\alpha, \beta,\gamma}$ & $:$ &$e_1e_1=e_2$ & $e_1e_2=e_3$ & $e_1e_3=e_4$ & $e_1e_4=\alpha e_5$ \\ & & $e_2e_3=e_4+\beta e_5$  & $e_2e_4=e_5$ 
   & $e_3e_3=\gamma e_5$ \\

${\mathbf{N}}_{312}^{\alpha, \beta,\gamma, \mu}$ & $:$ &$e_1e_1=e_2$ & $e_1e_2=e_3$ & $e_1e_3=e_4$ & $e_2e_2=\alpha e_5$  \\ && $e_2e_3=e_4+\beta e_5$  & $e_2e_4=\gamma e_5$ 
  & $e_3e_3=\mu e_5$ & $ e_4e_4=e_5$\\

${\mathbf{N}}_{313}^{\alpha, \beta,\gamma}$ & $:$ &$e_1e_1=e_2$ & $e_1e_2=e_3$ & $e_1e_3=e_4$ & $e_2e_2=\alpha e_5$ \\ & & $e_2e_3=e_4$  & $e_2e_4=\beta e_5$ 
 & $e_3e_3=\gamma e_5$ & $ e_3e_4=e_5$\\

\end{longtable}

\subsection{$ 1 $-dimensional central extensions of $ {\mathbf N}_{03}^{4} $.} Here we will collect all information about $ {\mathbf N}_{03}^{4}: $

$$
\begin{array}{|l|l|l|l|}
\hline
\text{ }  & \text{ } & \text{Cohomology} & \text{Automorphisms} \\
\hline
{\mathbf{N}}^{4}_{03} & 
\begin{array}{l}
e_1e_1=e_2 \\ 
e_1e_2=e_3 \\ 
e_3e_3=e_4
\end{array}
&\begin{array}{l}
\mathrm{H}^2_{\mathfrak{C}}(\mathbf{N}^{4}_{03})= 
\Big \langle [\Delta_{ij}] \Big\rangle\\
{(i,j) \notin \{ (1,1),(1,2),(3,3)}\}
\end{array}
&
 \phi=\begin{pmatrix}
x&0&0&0\\
0&x^2&0&0\\
0&0&x^3&0\\
t&0&0&x^6
\end{pmatrix}\\
\hline
\end{array}$$	

Let us use the following notations:
\begin{longtable}{llll}
$\nabla_1=[\Delta_{13}],$&
$\nabla_2=[\Delta_{14}],$&
$\nabla_3=[\Delta_{22}],$&
$\nabla_4=[\Delta_{23}],$\\
$\nabla_5=[\Delta_{24}],$&
$\nabla_6=[\Delta_{34}],$&
$\nabla_7=[\Delta_{44}]. $
\end{longtable}
	
Take $ \theta=\sum\limits_{i=1}^{7}\alpha_i\nabla_i\in\mathrm{H}^2_{\mathfrak{C}}(\mathbf{N}^{4}_{03}) .$
Since
$$\phi^T\begin{pmatrix}
0&0&\alpha_1&\alpha_2\\
0&\alpha_3&\alpha_4&\alpha_5\\
\alpha_1&\alpha_4&0&\alpha_6\\
\alpha_2&\alpha_5&\alpha_6&\alpha_7

\end{pmatrix}\phi=
\begin{pmatrix}
\alpha^*&\alpha^{**}&\alpha^{*}_1&\alpha^*_2\\
\alpha^{**}&\alpha^*_3&\alpha^{*}_4&\alpha^*_5\\
\alpha^{*}_1&\alpha^{*}_4&0&\alpha^*_6\\
\alpha^*_2&\alpha^*_5&\alpha^*_6&\alpha^*_7
\end{pmatrix}$$	
we have

\begin{longtable}{llll}
$\alpha_1^*=(\alpha_1x+\alpha_6t)x^3,$&
$\alpha_2^*=(\alpha_2x+\alpha_7t)x^6,$&
$\alpha_3^*=\alpha_3x^4,$&
$\alpha_4^*=\alpha_4x^5,$\\
$\alpha_5^*=\alpha_5x^8,$&
$\alpha_6^*=\alpha_6x^{9},$&
$\alpha_7^*=\alpha_7x^{12}.$
\end{longtable}	

We are interested in $ (\alpha_2,\alpha_5,\alpha_6,\alpha_7)\neq(0,0,0,0) $ and consider following cases:

\begin{enumerate}
\item  $ \alpha_7=\alpha_6=\alpha_5=0, $ then $ \alpha_2\neq0, $ and we have the following subcases:

    \begin{enumerate}
		\item if $ \alpha_1=0, \alpha_3=0, $ then we have the representatives $ \langle \nabla_2 \rangle $ and $ \langle \nabla_2+\nabla_4 \rangle $ depending on whether $ \alpha_4=0 $ or not;
		\item if $ \alpha_1=0, \alpha_3\neq0, $ then by choosing $ x=\sqrt[3]{{\alpha_3}{\alpha_2}^{-1}}, t=0, $ we have the family of representatives 
		$ \langle \nabla_2+\nabla_3+\alpha\nabla_4 \rangle; $
		\item if $ \alpha_1\neq0, $ then by choosing $ x=\sqrt[3]{{\alpha_1}{\alpha_2^{-1}}}, t=0, $ we have the family of representatives $ \langle \nabla_1+\nabla_2+\alpha\nabla_3+\beta\nabla_4 \rangle. $
		
	\end{enumerate}
	
\item  $ \alpha_7=0, \alpha_6=0, \alpha_5\neq0, $ then we have the following subcases:

    \begin{enumerate}
	\item if $ \alpha_1=0, \alpha_2=0, \alpha_3=0, $ then we have the representatives $ \langle \nabla_5 \rangle $ and $ \langle \nabla_4+\nabla_5 \rangle $ depending on whether $ \alpha_5=0 $ or not;
	\item if $ \alpha_1=0, \alpha_2=0, \alpha_3\neq0, $ then by choosing $ x=\sqrt[4]{{\alpha_3}{\alpha_5^{-1}}}, t=0, $ we have the family of  representatives $ \langle \nabla_3+\alpha\nabla_4+\nabla_5 \rangle;$
	\item if $ \alpha_1=0, \alpha_2\neq0, $ then by choosing $ x={\alpha_2}{\alpha_5^{-1}}, t=0, $ we have the family of representatives $ \langle \nabla_2+\alpha\nabla_3+\beta\nabla_4+\nabla_5 \rangle;$	
	
	\item if $ \alpha_1\neq0, $ then by choosing $ x=\sqrt[4]{{\alpha_1}{\alpha_5^{-1}}}, t=0, $ we have the family of representatives  $ \langle \nabla_1+\alpha\nabla_2+\beta\nabla_3+\gamma\nabla_4+\nabla_5 \rangle. $
    \end{enumerate}

\item  $ \alpha_7=0, \alpha_6\neq0, $ then we have the following subcases:
\begin{enumerate}
	\item if $  \alpha_2=0, \alpha_3=0, \alpha_4=0, $	 then we have  representatives $ \langle \nabla_6 \rangle $ and $ \langle \nabla_5+\nabla_6 \rangle$ depending on whether $ \alpha_5=0 $ or not;
	
	\item if $ \alpha_2=0, \alpha_3=0, \alpha_4\neq0, $ then by choosing $ x=\sqrt[4]{{\alpha_4}{\alpha_6}^{-1}}, t=-\alpha_1\sqrt[4]{\alpha_4  \alpha_6^{-5}} , $ we have the family of representatives $ \langle \nabla_4+\alpha\nabla_5+\nabla_6 \rangle;$
	
	\item if $ \alpha_2=0, \alpha_3\neq0, $ then by choosing $ x=\sqrt[5]{{\alpha_3}{\alpha_6^{-1}}}, t=- \alpha_1\sqrt[5]{\alpha_3 \alpha_6^{-6}}, $ we have the family of representatives $ \langle \nabla_3+\alpha\nabla_4+\beta\nabla_5+\nabla_6 \rangle;$
	
	\item if $ \alpha_2\neq0, $ then by choosing $ x={{\alpha_2}{\alpha_6^{-1}}}, t=-{\alpha_1\sqrt{\alpha_2 \alpha_6^{-3}}}, $ we have the family of representatives $ \langle \nabla_2+\alpha\nabla_3+\beta\nabla_4+\gamma\nabla_5+\nabla_6 \rangle.$
	
\end{enumerate}	
\item  $ \alpha_7\neq0, $ then we have the following subcases:

\begin{enumerate}
	\item $ \alpha_1\alpha_7-\alpha_2\alpha_6=0, \alpha_3=0, \alpha_4=0, \alpha_5=0,$ then we have  representatives $ \langle \nabla_7 \rangle $ and $ \langle \nabla_6+\nabla_7 \rangle$ depending on whether $ \alpha_6=0 $ or not;
	
	\item $ \alpha_1\alpha_7-\alpha_2\alpha_6=0, \alpha_3=0, \alpha_4=0, \alpha_5\neq0,$ then by choosing $ x=\sqrt[4]{{\alpha_5}{\alpha_7^{-1}}},$ $t=-{\alpha_2\sqrt[4]{\alpha_5 \alpha_7^{-5}}}, $ we have  the family of representatives $ \langle \nabla_5+\alpha\nabla_6+\nabla_7 \rangle;$
	
	\item $ \alpha_1\alpha_7-\alpha_2\alpha_6=0, \alpha_3=0, \alpha_4\neq0, $ then by choosing $ x=\sqrt[7]{{\alpha_4\alpha_7^{-1}}},$ $t=-{\alpha_2\sqrt[7]{\alpha_4 \alpha_7^{-8}}}, $ we have the family of  representatives $ \langle \nabla_4+\alpha\nabla_5+\beta\nabla_6+\nabla_7 \rangle;$
	
	\item $\alpha_1\alpha_7-\alpha_2\alpha_6=0, \alpha_3\neq0,$ then by choosing $ x=\sqrt[8]{{\alpha_3}{\alpha_7^{-1}}},$ $t=-{\alpha_2\sqrt[8]{\alpha_3 \alpha_7^{-9}}}, $ we have the family of representatives $ \langle \nabla_3+\alpha\nabla_4+\beta\nabla_5+\gamma\nabla_6+\nabla_7 \rangle;$	
	
	\item $\alpha_1\alpha_7-\alpha_2\alpha_6\neq0, $ then by choosing \begin{center}$ x=\sqrt[8]{{(\alpha_1\alpha_7-\alpha_2\alpha_6)}{\alpha^{-2}_7}}, t=-{\alpha_2\sqrt[8]{(\alpha_1\alpha_7-\alpha_2\alpha_6)\alpha_7^{-10}}}, $\end{center} we have the family of representatives  $ \langle \nabla_1+\alpha\nabla_3+\beta\nabla_4+\gamma\nabla_5+\mu\nabla_6+\nabla_7 \rangle.$		
	
\end{enumerate}

\end{enumerate}		
	
Summarizing, we have the following distinct orbits:
\begin{center}

$\left\langle \nabla_1+\nabla_2 + \alpha \nabla_3 + \beta \nabla_4 \right\rangle
^{O(\alpha, \beta)=O(\alpha, -\eta_3 \beta)=O(\alpha, \eta_3^2\beta)}, $ 
$\left\langle \nabla_1+ \alpha \nabla_2 + \beta \nabla_3 + \gamma \nabla_4 + \nabla_5\right\rangle
^{O(\alpha, \beta, \gamma)=O(-i\alpha, \beta, i\gamma)=O(i\alpha, \beta,-i \gamma)=O(-\alpha, \beta, -\gamma)},$
$\left\langle \nabla_1+\alpha \nabla_3+ \beta \nabla_4 + \gamma\nabla_5 + \mu\nabla_6 + \nabla_7\right\rangle^{
{\tiny\begin{array}{l}
O(\alpha,\beta,\gamma, \mu)=O(\alpha,\eta_4^3\beta,-\gamma, -\eta_4^3\mu)=\\
O(\alpha,-\eta_4^3\beta,-\gamma, \eta_4^3\mu)=
O(\alpha,\eta_4 \beta,-\gamma, -\eta_4\mu)=\\
O(\alpha,-\eta_4\beta,-\gamma, \eta_4\mu)= O(\alpha,i\beta,\gamma, i\mu)=\\
O(\alpha,-i\beta,\gamma, -i\mu)=O(\alpha,-\beta, \gamma, -\mu)
\end{array}} },$ 
$  \left\langle \nabla_2\right\rangle,$ 
$\left\langle \nabla_2+\nabla_3 +\alpha\nabla_4\right\rangle^{O(\alpha)=O(-\eta_3\alpha)=O(\eta_3^2\alpha)},$
$\left\langle \nabla_2+\alpha\nabla_3 +\beta\nabla_4 + \nabla_5\right\rangle,$
$ \left\langle \nabla_2+\alpha \nabla_3+ \beta \nabla_4 + \gamma\nabla_5 + \nabla_6\right\rangle^{O(\alpha,\beta,\gamma)=O(-\alpha,\beta,-\gamma)},$ 
$\left\langle \nabla_2+ \nabla_4\right\rangle,$ 
$ \left\langle \nabla_3+\alpha \nabla_4+\nabla_5\right\rangle^{
{\tiny \begin{array}{l}
O(\alpha)=O(-\alpha)=\\
O(i\alpha)=O(-i\alpha)
\end{array}}},$
$ \left\langle \nabla_3+\alpha \nabla_4+ \beta \nabla_5+ \nabla_6\right\rangle
^{{\tiny \begin{array}{l}
O(\alpha, \beta)=O(\eta_5^4\alpha, -\eta_5\beta)=
O(-\eta_5^3\alpha, \eta_5^2\beta)=\\
O(\eta_5^2\alpha, -\eta_5^3\beta)=O(-\eta_5\alpha, \eta_5^4\beta)
\end{array}}},$ 
$\left\langle \nabla_3+\alpha \nabla_4+ \beta \nabla_5 +\gamma\nabla_6 + \nabla_7\right\rangle^{
{\tiny\begin{array}{l}
O(\alpha,\beta,\gamma)=O(\eta_4^3\alpha,-\beta,-\eta_4^3\gamma)=
O(-\eta_4^3\alpha,-\beta,\eta_4^3\gamma)=\\
O(\eta_4\alpha,-\beta,-\eta_4\gamma)=
O(-\eta_4\alpha,-\beta,\eta_4\gamma)=\\ O(i\alpha,\beta,i\gamma)=O(-i\alpha,\beta,-i\gamma)=O(-\alpha,\beta,-\gamma)
\end{array}} },$  
$ \left\langle \nabla_4+ \nabla_5\right\rangle,$ 
$\left\langle \nabla_4+\alpha \nabla_5+ \nabla_6\right\rangle^{O(\alpha)=O(i\alpha)=O(-\alpha)=O(-i\alpha)}, $
$ \left\langle \nabla_4+\alpha \nabla_5+ \beta\nabla_6 + \nabla_7\right\rangle^{
{\tiny\begin{array}{l}
O(\alpha,\beta)=
O(\eta^4_7\alpha,-\eta^3_7\beta)=
O(-\eta_7\alpha,\eta_7^6\beta)=O(-\eta_7^5\alpha,\eta^2_7\beta)=\\
O(\eta^2_7\alpha,-\eta^5_7\beta)= O(\eta_7^6\alpha,-\eta_7\beta)=O(-\eta^3_7\alpha,\eta^4_7\beta)
\end{array}}},$
$\left\langle \nabla_5\right\rangle,$
$\left\langle \nabla_5 + \nabla_6\right\rangle,$
$ \left\langle \nabla_5+\alpha \nabla_6+ \nabla_7\right\rangle^{O(\alpha)=O(i\alpha)=O(-\alpha)=O(-i\alpha)},$
$\left\langle \nabla_6\right\rangle,$ 
$\left\langle \nabla_6 +\nabla_7\right\rangle,$ 
$\left\langle \nabla_7\right\rangle,$
\end{center}
which gives the following new algebras:

\begin{longtable}{llllllllllllllllll}

${\mathbf{N}}_{314}^{\alpha, \beta}$ & $:$ &$e_1e_1=e_2$ & $e_1e_2=e_3$ & $e_1e_3=e_5$ & $e_1e_4=e_5$  \\ & & $e_2e_2=\alpha e_5$  & $e_2e_3=\beta e_5$ 
  & $e_3e_3=e_4$ \\

${\mathbf{N}}_{315}^{\alpha, \beta ,\gamma}$ & $:$ &$e_1e_1=e_2$ & $e_1e_2=e_3$ & $e_1e_3=e_5$ & $e_1e_4=\alpha e_5$   \\ && $e_2e_2=\beta e_5$  & $e_2e_3=\gamma e_5$ 
   &  $e_2e_4=e_5 $ & $e_3e_3=e_4$ \\  
 
${\mathbf{N}}_{316}^{\alpha, \beta, \gamma, \mu}$ & $:$ &$e_1e_1=e_2$ & $e_1e_2=e_3$ & $e_1e_3=e_5$ & $e_2e_2=\alpha e_5$ & $e_2e_3=\beta e_5$   \\ & & $e_2e_4=\gamma e_5$ 
    &  $e_3e_3= e_4 $ & $e_3e_4=\mu e_5$ & $ e_4e_4=e_5$ \\ 

${\mathbf{N}}_{317}$ & $:$ &$e_1e_1=e_2$ & $e_1e_2=e_3$ & $e_1e_4=e_5$ & $e_3e_3=e_4$ \\

${\mathbf{N}}_{318}^{\alpha}$ & $:$ &$e_1e_1=e_2$ & $e_1e_2=e_3$ & $e_1e_4=e_5$ \\ && $e_2e_2=e_5$ & $e_2e_3=\alpha e_5$ & $e_3e_3=e_4$ \\  

${\mathbf{N}}_{319}^{\alpha, \beta}$ & $:$ &$e_1e_1=e_2$ & $e_1e_2=e_3$ & $e_1e_4=e_5$ & $e_2e_2=\alpha e_5$   \\ && $e_2e_3=\beta e_5$  & $e_2e_4=e_5$ 
   & $e_3e_3=e_4$ \\  

${\mathbf{N}}_{320}^{\alpha, \beta ,\gamma}$ & $:$ &$e_1e_1=e_2$ & $e_1e_2=e_3$ & $e_1e_4=e_5$ & $e_2e_2=\alpha e_5$   \\ && $e_2e_3=\beta e_5$  & $e_2e_4=\gamma e_5$ 
   &  $e_3e_3=e_4 $ & $e_3e_4=e_5$ \\

${\mathbf{N}}_{321}$ & $:$ &$e_1e_1=e_2$ & $e_1e_2=e_3$ & $e_1e_4=e_5$ & $e_2e_3=e_5$ & $e_3e_3=e_4$ \\  

${\mathbf{N}}_{322}^{\alpha}$ & $:$ &$e_1e_1=e_2$ & $e_1e_2=e_3$ & $e_2e_2=e_5$ \\ && $e_2e_3=\alpha e_5$ & $e_2e_4=e_5$  & $e_3e_3=e_4$ 
  \\  

${\mathbf{N}}_{323}^{\alpha, \beta}$ & $:$ &$e_1e_1=e_2$ & $e_1e_2=e_3$ & $e_2e_2=e_5$ & $e_2e_3=\alpha e_5$   \\ && $e_2e_4=\beta e_5$  & $e_3e_3=e_4$ 
  &  $e_3e_4=e_5$ \\  

${\mathbf{N}}_{324}^{\alpha, \beta ,\gamma}$ & $:$ &$e_1e_1=e_2$ & $e_1e_2=e_3$ & $e_2e_2=e_5$ & $e_2e_3=\alpha e_5$   \\ && $e_2e_4=\beta e_5$  & $e_3e_3=e_4$ 
  &  $e_3e_4=\gamma e_5 $ & $e_4e_4=e_5$ \\

${\mathbf{N}}_{325}$ & $:$ &$e_1e_1=e_2$ & $e_1e_2=e_3$ & $e_2e_3=e_5$ \\ && $e_2e_4=e_5$ & $e_3e_3=e_4$ \\  
 
${\mathbf{N}}_{326}^{\alpha}$ & $:$ &$e_1e_1=e_2$ & $e_1e_2=e_3$ & $e_2e_3=e_5$ \\ && $e_2e_4=\alpha e_5$ & $e_3e_3=e_4$  & $e_3e_4=e_5$ \\ 

${\mathbf{N}}_{327}^{\alpha, \beta}$ & $:$ &$e_1e_1=e_2$ & $e_1e_2=e_3$ & $e_2e_3=e_5$ & $e_2e_4=\alpha e_5$  \\ & & $e_3e_3=e_4$  & $e_3e_4=\beta e_5$ 
  &  $e_4e_4=e_5 $  \\

${\mathbf{N}}_{328}$ & $:$ &$e_1e_1=e_2$ & $e_1e_2=e_3$  & $e_2e_4=e_5$   & $e_3e_3=e_4$ \\  

${\mathbf{N}}_{329}$ & $:$ &$e_1e_1=e_2$ & $e_1e_2=e_3$  & $e_2e_4=e_5$   & $e_3e_3=e_4$ & $e_3e_4=e_5 $ \\

${\mathbf{N}}^\alpha_{330}$ & $:$ &$e_1e_1=e_2$ & $e_1e_2=e_3$  & $e_2e_4=e_5$  \\ & & $e_3e_3=e_4$ & $e_3e_4=\alpha e_5 $  & $ e_4e_4=e_5$\\

${\mathbf{N}}_{331}$ & $:$ &$e_1e_1=e_2$ & $e_1e_2=e_3$   & $e_3e_3=e_4$ & $e_3e_4=e_5 $ \\
 
${\mathbf{N}}_{332}$ & $:$ &$e_1e_1=e_2$ & $e_1e_2=e_3$  & $e_3e_3=e_4$   & $e_3e_4=e_5$ & $e_4e_4=e_5 $ \\ 

${\mathbf{N}}_{333}$ & $:$ &$e_1e_1=e_2$ & $e_1e_2=e_3$    & $e_3e_3=e_4$ & $e_4e_4=e_5 $ \\
\end{longtable}

\subsection{$ 1 $-dimensional central extensions of $ {\mathbf N}_{04}^{4} $.} Here we will collect all information about $ {\mathbf N}_{04}^{4}: $

$$
\begin{array}{|l|l|l|l|}
\hline
\text{ }  & \text{ } & \text{Cohomology}  & \text{Automorphisms} \\
\hline
{\mathbf{N}}^{4}_{04} & 
\begin{array}{l}
e_1e_1=e_2 \\ 
e_1e_2=e_3 \\ 
e_2e_2=e_4 \\ 
e_3e_3=e_4
\end{array}
&
\begin{array}{l}
\mathrm{H}^2_{\mathfrak{C}}(\mathbf{N}^{4}_{04})=
\Big \langle [\Delta_{ij}] \Big\rangle\\
(i,j) \notin \{(1,1),(1,2),(3,3)\}
\end{array}
 &

\phi_{\pm}=\begin{pmatrix}
\pm1&0&0&0\\
0&1&0&0\\
0&0&\pm1&0\\
t&0&0&1
\end{pmatrix}\\
\hline
\end{array}$$	

Let us use the following notations:
\begin{longtable}{llll}
$\nabla_1=[\Delta_{13}], $&$ \nabla_2=[\Delta_{14}], $&$ \nabla_3=[\Delta_{22}], $&$  \nabla_4=[\Delta_{23}], $\\$
\nabla_5=[\Delta_{24}],  $&$ 
\nabla_6=[\Delta_{34}],  $&$  \nabla_7=[\Delta_{44}]. $
\end{longtable}

Take $ \theta=\sum\limits_{i=1}^{7}\alpha_i\nabla_i\in\mathrm{H}^2_{\mathfrak{C}}(\mathbf{N}^{4}_{04}) .$
Since
$$\phi^T\begin{pmatrix}
0&0&\alpha_1&\alpha_2\\
0&\alpha_3&\alpha_4&\alpha_5\\
\alpha_1&\alpha_4&0&\alpha_6\\
\alpha_2&\alpha_5&\alpha_6&\alpha_7

\end{pmatrix}\phi=
\begin{pmatrix}
\alpha^*&\alpha^{**}&\alpha^{*}_1&\alpha^*_2\\
\alpha^{**}&\alpha^*_3&\alpha^{*}_4&\alpha^*_5\\
\alpha^{*}_1&\alpha^{*}_4&0&\alpha^*_6\\
\alpha^*_2&\alpha^*_5&\alpha^*_6&\alpha^*_7
\end{pmatrix}$$		
we have	

\begin{longtable}{llll}
$\alpha_1^*=\alpha_1\pm\alpha_6t,$&
$\alpha_2^*=\pm\alpha_2+\alpha_7t,$&
$\alpha_3^*=\alpha_3,$&
$\alpha_4^*=\pm\alpha_4,$\\
$\alpha_5^*=\alpha_5,$&
$\alpha_6^*=\pm \alpha_6,$&
$\alpha_7^*=\alpha_7.$
\end{longtable}	

We are interested in $ (\alpha_2,\alpha_5,\alpha_6,\alpha_7)\neq(0,0,0,0) $ and consider following cases:

\begin{enumerate}
\item if $ \alpha_7=\alpha_6=\alpha_5=0, $ then $ \alpha_2\neq0, $ and we have the family of representatives 
\begin{center}
    $ \langle \alpha\nabla_1+\nabla_2+\beta\nabla_3+\gamma\nabla_4 \rangle;$ 
	\end{center}

\item if $ \alpha_7=0, \alpha_6=0, \alpha_5\neq0, $ then  we have the family of representatives 
\begin{center}
    $ \langle \alpha\nabla_1+\beta\nabla_2+\gamma\nabla_3+\mu\nabla_4+\nabla_5 \rangle;$  
\end{center}

\item if $ \alpha_7=0, \alpha_6\neq0, $ then by choosing $\phi=\phi_+, t=-{\alpha_1}{\alpha_6^{-1}}, $ we have the family of  representatives
\begin{center}$ \langle \alpha\nabla_2+\beta\nabla_3+\gamma\nabla_4+\mu\nabla_5+\nabla_6 \rangle;$  
\end{center}

\item if $ \alpha_7\neq0, $ then by choosing $\phi=\phi_+, t=-{\alpha_2}{\alpha_7^{-1}}$ we have the family of representatives
\begin{center}
$\langle \alpha\nabla_1+\beta\nabla_3+\gamma\nabla_4+\mu\nabla_5+\nu\nabla_6+\nabla_7 \rangle.$
\end{center}

\end{enumerate}

Summarizing, we have the following distinct orbits:

\begin{center}
$ \langle \alpha\nabla_1+\nabla_2+\beta\nabla_3+\gamma\nabla_4 \rangle^{O(\alpha, \beta, \gamma)= O(-\alpha, -\beta, \gamma)},$  
$\langle \alpha\nabla_1+\beta\nabla_2+\gamma\nabla_3+\mu\nabla_4+\nabla_5 \rangle
^{O(\alpha, \beta, \gamma,\mu)= O(\alpha, -\beta, \gamma, -\mu)}$
 $\langle \alpha\nabla_1+\beta\nabla_3+\gamma\nabla_4+\mu\nabla_5+\nu\nabla_6+\nabla_7 \rangle^{O(\alpha, \beta, \gamma,\mu,\nu)=O(\alpha, \beta, -\gamma,\mu,-\nu)},$
$ \langle \alpha\nabla_2+\beta\nabla_3+\gamma\nabla_4+\mu\nabla_5+\nabla_6 \rangle ^{O(\alpha, \beta, \gamma, \mu)= O(\alpha, -\beta, \gamma, -\mu)},$

\end{center}
which gives the following new algebras:

\begin{longtable}{llllllllllllllllll}

${\mathbf{N}}_{334}^{\alpha, \beta ,\gamma}$ & $:$ &$e_1e_1=e_2$ & $e_1e_2=e_3$ & $e_1e_3=\alpha e_5$ & $e_1e_4=e_5$ \\ & & $e_2e_2=e_4+\beta e_5$    & $e_2e_3=\gamma e_5$ 
   & $e_3e_3=e_4$ \\

${\mathbf{N}}_{335}^{\alpha, \beta,\gamma, \mu}$ & $:$ &$e_1e_1=e_2$ & $e_1e_2=e_3$ & $e_1e_3=\alpha e_5$  & $e_1e_4=\beta e_5$   \\ && $e_2e_2=e_4+\gamma e_5$ & $e_2e_3=\mu e_5$  & $e_2e_4= e_5$
 & $e_3e_3=e_4$ \\

${\mathbf{N}}_{336}^{\alpha, \beta,\gamma, \mu, \nu}$ & $:$ &$e_1e_1=e_2$ & $e_1e_2=e_3$ & $e_1e_3=\alpha e_5$  \\ & & $e_2e_2=e_4+\beta e_5$  & $e_2e_3=\gamma e_5$   & $e_2e_4= \mu e_5$ \\ && $e_3e_3=e_4$ & $e_3e_4=\nu e_5$ & $e_4e_4= e_5$ \\

${\mathbf{N}}_{337}^{\alpha, \beta,\gamma, \mu}$ & $:$ &$e_1e_1=e_2$ & $e_1e_2=e_3$ & $e_1e_4=\alpha e_5$   & $e_2e_2=e_4+\beta e_5$  \\ && $e_2e_3=\gamma e_5$   & $e_2e_4= \mu e_5$ & $e_3e_3=e_4$ & $e_3e_4= e_5$ \\
\end{longtable}

\subsection{$ 1 $-dimensional central extensions of $ {\mathbf N}_{05}^{4} $.} Here we will collect all information about $ {\mathbf N}_{05}^{4}: $

$$
\begin{array}{|l|l|l|l|}
\hline
\text{ }  & \text{ } & \text{Cohomology} & \text{Automorphisms}\\
\hline
{\mathbf{N}}^{4}_{05} & 
\begin{array}{l}
e_1e_1=e_2 \\ 
e_1e_3=e_4 \\ 
e_2e_2=e_3
\end{array}
&

 \begin{array}{l}
\mathrm{H}^2_{\mathfrak{C}}(\mathbf{N}^{4}_{05})=
\Big \langle [\Delta_{ij}] \Big\rangle\\
{(i,j) \notin \{ (1,1),(1,3),(2,2)} \}
\end{array}
 
 & 
 
 \phi=\begin{pmatrix}
x&0&0&0\\
0&x^2&0&0\\
z&0&x^4&0\\
t&2xz&0&x^5
\end{pmatrix}\\

\hline
\end{array}$$

Let us use the following notations:
 \begin{longtable}{llll}
 $\nabla_1=[\Delta_{12}]$&$  \nabla_2=[\Delta_{14}] $&$  \nabla_3=[\Delta_{23}] $&$   \nabla_4=[\Delta_{24}] $\\
 $ 
  \nabla_5=[\Delta_{33}] $&$  \nabla_6=[\Delta_{34}]  $&$   \nabla_7=[\Delta_{44}]. $
  \end{longtable}
	
Take $ \theta=\sum\limits_{i=1}^{7}\alpha_i\nabla_i\in\mathrm{H}^2_{\mathfrak{C}}(\mathbf{N}^{4}_{05}) .$  Since
$$\phi^T\begin{pmatrix}
0&\alpha_1&0&\alpha_2\\
\alpha_1&0&\alpha_3&\alpha_4\\
0&\alpha_3&\alpha_5&\alpha_6\\
\alpha_2&\alpha_4&\alpha_6&\alpha_7

\end{pmatrix}\phi=
\begin{pmatrix}
\alpha^*&\alpha^*_1&\alpha^{***}&\alpha^*_2\\
\alpha^*_1&\alpha^{**}&\alpha^*_3&\alpha^*_4\\
\alpha^{***}&\alpha^*_3&\alpha^*_5&\alpha^*_6\\
\alpha^*_2&\alpha^*_4&\alpha^*_6&\alpha^*_7
\end{pmatrix}$$	

we have
	
\begin{longtable}{lll}
\multicolumn{3}{l}{$\alpha_1^*=(\alpha_1x+\alpha_3z+\alpha_4t)x^2+2(\alpha_2x+\alpha_6z+\alpha_7t)xz,$}\\

$\alpha_2^*=(\alpha_2x+\alpha_6z+\alpha_7t)x^5,$&
$\alpha_3^*=(\alpha_3x+2\alpha_6z)x^5,$&
$\alpha_4^*=(\alpha_4x+2\alpha_7z)x^6,$\\

$\alpha_5^*=\alpha_5x^8,$&
$\alpha_6^*=\alpha_6x^9,$&
$\alpha_7^*=\alpha_7x^{10}.$
\end{longtable}

We are interested in $ (\alpha_2,\alpha_4,\alpha_6,\alpha_7)\neq(0,0,0,0) $ and consider following cases:

\begin{enumerate}
	\item  $ \alpha_7=\alpha_6=\alpha_4=0, $ then $ \alpha_2\neq0 $ and we have the following subcases:
	
	\begin{enumerate}
		\item $ \alpha_3=-2\alpha_2,$
		\begin{enumerate}
			\item\label{05cs1a1} if $ \alpha_1=0, \alpha_5=0, $ then we have the representative $ \langle \nabla_2-2\nabla_3 \rangle;$
			\item\label{05cs1a2} if $ \alpha_1=0, \alpha_5\neq0, $ then by choosing $ x=\sqrt{{\alpha_2}{\alpha_5^{-1}}}, z=0, t=0, $ we have the representative $ \langle \nabla_2-2\nabla_3+\nabla_5 \rangle; $
			\item if $ \alpha_1\neq0, $ then by choosing $ x=\sqrt[3]{{\alpha_1}{\alpha_2^{-1}}}, z=0, t=0, $ we have the family of representatives $ \langle \nabla_1+\nabla_2-2\nabla_3+\alpha\nabla_5 \rangle.$
		\end{enumerate}
		\item $ \alpha_3\neq -2\alpha_2,$ 
		\begin{enumerate}
			\item if $ \alpha_5=0,$ then  choosing $x=1, z=-\frac{\alpha_1}{\alpha_3+2\alpha_2},$ we have the family of representatives $ \langle \nabla_2+\alpha\nabla_3 \rangle_{\alpha\neq-2},$ which will be jointed with the case (\ref{05cs1a1});
			\item if $ \alpha_5\neq0, $ then by choosing $ x=\sqrt{\frac{\alpha_2}{\alpha_5}}, z=-\frac{\alpha_1\sqrt{\alpha_2}}{(\alpha_3+2\alpha_2)\sqrt{\alpha_5}}, $ we have the family of representatives  $ \langle\nabla_2+\alpha\nabla_3+\nabla_5 \rangle_{\alpha\neq-2},$
			 which will be jointed with the case (\ref{05cs1a2}).
		\end{enumerate}
		
	\end{enumerate}
	\item  $ \alpha_7=\alpha_6=0, \alpha_4\neq0, $ then we have the following subcases:
	\begin{enumerate}
		\item if $ \alpha_2=0, \alpha_3=0,$ then we have representatives $ \langle \nabla_4 \rangle $ and $ \langle \nabla_4+\nabla_5 \rangle$ depending on whether $ \alpha_5=0 $ or not;
		\item if $ \alpha_2=0, \alpha_3\neq0 ,$ then by choosing $ x={\alpha_3}{\alpha_4^{-1}}, z=0, t=-{\alpha_1\alpha_3}{\alpha_4^{-2}}, $ we have the family of representatives $ \langle \nabla_3+ \nabla_4+\alpha\nabla_5 \rangle;$
		\item if $ \alpha_2\neq0 ,$ then by choosing $ x={\alpha_2}{\alpha_4^{-1}}, z=0, t=-{\alpha_1\alpha_2}{\alpha_4^{-2}}, $ we have the family of representatives $ \langle \nabla_2+\alpha\nabla_3+ \nabla_4+\beta\nabla_5 \rangle.$
	\end{enumerate}

	\item  $ \alpha_7=0, \alpha_6\neq0, $ then by choosing $ z=-{\alpha_2}x{\alpha_6^{-1}} ,$ we have $\alpha_2^*=0.$ Thus, we can suppose that $\alpha_2=0$ and consider following subcases:
	\begin{enumerate}
		\item $ \alpha_4=0,$ 
		\begin{enumerate}
			\item if $ \alpha_1=0, \alpha_3=0, $	 then we have  representatives $ \langle \nabla_6 \rangle $ and $ \langle \nabla_5+\nabla_6 \rangle$ depending on whether $ \alpha_5=0 $ or not;
			
			\item if $ \alpha_1=0, \alpha_3\neq0 $ then by choosing $ x=\sqrt[3]{{\alpha_3}{\alpha_6^{-1}}}, z=0, t=0, $ we have the family of  representatives $ \langle \nabla_3+\alpha\nabla_5+\nabla_6 \rangle;$
			
			\item if $ \alpha_1\neq0, $ then by choosing $ x=\sqrt[6]{{\alpha_1}{\alpha_6^{-1}}}, z=0, t=0, $ we have the family of representatives $ \langle \nabla_1+\alpha\nabla_3+\beta\nabla_5+\nabla_6 \rangle.$
			
		\end{enumerate}
		
		\item $ \alpha_4\neq0 ,$ then by choosing $ x=\sqrt{{\alpha_4}{\alpha_6^{-1}}}, t=-{\alpha_1\sqrt{\alpha_4 \alpha_6^{-3}}},  $ we have the family of  representatives $ \langle \alpha\nabla_3+\nabla_4+\beta\nabla_5+\nabla_6 \rangle.$
	\end{enumerate}	
	
	\item $ \alpha_7\neq0, $ 
	
	\begin{enumerate}
		\item if $ \alpha_1\alpha_7-\alpha_2\alpha_4=0, \alpha_3=0, \alpha_6=0, $ then 
		then by choosing $z=-\frac{\alpha_4}{2\alpha_7}x, t=\frac{\alpha_4\alpha_6-2\alpha_2\alpha_7}{2\alpha^2_7}x, $ 
		we have representatives $ \langle \nabla_7 \rangle $ and $ \langle \nabla_5+\nabla_7 \rangle$ depending on whether $ \alpha_5=0 $ or not;
		
		\item if $\alpha_1\alpha_7-\alpha_2\alpha_4=0, \alpha_3\alpha_7-\alpha_4\alpha_6=0, \alpha_6\neq0, $ then by choosing 
		\begin{center}$ x=\frac{\alpha_6}{\alpha_7}, z=-\frac{\alpha_4\alpha_6}{2\alpha_7^2}, t=\frac{\alpha_6(\alpha_4\alpha_6-2\alpha_2\alpha_7)}{2\alpha^3_7},$ 
		\end{center} we have the family of representatives $ \langle \alpha\nabla_5+\nabla_6+\nabla_7 \rangle;$
		
		\item if $ 2\alpha_1\alpha_7^2-\alpha_3\alpha_4\alpha_7+\alpha_4^2\alpha_6-2\alpha_2\alpha_4\alpha_7=0, \alpha_3\alpha_7-\alpha_4\alpha_6\neq0, $ then by choosing 
		\begin{center}$ x=\sqrt[4]{\frac{\alpha_3\alpha_7-\alpha_4\alpha_7}{\alpha^2_7}}, z=-\frac{\alpha_4\sqrt[4]{\alpha_3\alpha_7-\alpha_4\alpha_7}}{2\alpha_7\sqrt[4]{2\alpha_7^2}}, t=\frac{(\alpha_4\alpha_6-2\alpha_2\alpha_7)\sqrt[4]{\alpha_3\alpha_7-\alpha_4\alpha_6}}{2\alpha^2_7\sqrt[4]{2\alpha_7^2}}, $
		\end{center} we have the family of representatives $ \langle \nabla_3+\alpha\nabla_5+\beta\nabla_6+\nabla_7 \rangle;$
		
		\item if $ 2\alpha_1\alpha_7^2-\alpha_3\alpha_4\alpha_7+\alpha_4^2\alpha_6-2\alpha_2\alpha_4\alpha_7\neq0, $ then by choosing
		\begin{center}$ x=\sqrt[7]{\frac{2\alpha_1\alpha_7^2-\alpha_3\alpha_4\alpha_7+\alpha_4^2\alpha_6-2\alpha_2\alpha_4\alpha_7}{2\alpha^3_7}},$ $z=-\frac{\alpha_4\sqrt[7]{2\alpha_1\alpha_7^2-\alpha_3\alpha_4\alpha_7+\alpha_4^2\alpha_6-2\alpha_2\alpha_4\alpha_7}}{2\alpha_7\sqrt[7]{2\alpha_7^3}},$ 
		$t=\frac{(\alpha_4\alpha_6-2\alpha_2\alpha_7)\sqrt[7]{2\alpha_1\alpha_7^2-\alpha_3\alpha_4\alpha_7+\alpha_4^2\alpha_6-2\alpha_2\alpha_4\alpha_7}}{2\alpha^3_7\sqrt[7]{2\alpha_7^3}}, $
		\end{center} we have the family of representatives $ \langle \nabla_1+\alpha\nabla_3+\beta\nabla_5+\gamma\nabla_6+\nabla_7 \rangle.$	 	 	 	
		
	\end{enumerate}
	
\end{enumerate}	
Summarizing, we have the following distinct orbits:

\begin{center}
$ \langle \nabla_1+\nabla_2-2\nabla_3+\alpha\nabla_5 \rangle^{O(\alpha)=O(-\eta_3\alpha)=O(\eta^2_3\alpha)},$ 
$\langle \nabla_1+\alpha\nabla_3+\beta\nabla_5+\nabla_6 \rangle^{\tiny{\begin{array}{l}
O(\alpha,\beta)=O(\alpha,-\eta_3\beta)=O(-\alpha,\eta_3\beta)=\\
O(-\alpha,-\eta_3^2\beta)=O(\alpha,\eta_3^2\beta)=O(-\alpha,-\beta)
\end{array}}},$
$\langle \nabla_1+\alpha\nabla_3+\beta\nabla_5+\gamma\nabla_6+\nabla_7 \rangle^{\tiny{\begin{array}{l}
O(\alpha,\beta,\gamma)=O(\eta_7^4\alpha,\eta^2_7\beta,-\eta_7\gamma)=
O(-\eta_7\alpha,\eta^4_7\beta,\eta^2_7\gamma)=\\
O(-\eta_7^5\alpha,\eta^6_{7}\beta,-\eta^3_7\gamma)=
O(\eta^2_7\alpha,-\eta_7\beta,\eta^4_7\gamma)=\\
O(\eta_7^6\alpha,-\eta^3_7\beta,-\eta^5_7\gamma)=
O(-\eta^3_7\alpha,-\eta^5_7\beta,\eta^6_7\gamma)
\end{array}}},$
$ \langle \nabla_2+\alpha\nabla_3 \rangle,$
$\langle \nabla_2+\alpha\nabla_3+ \nabla_4+\beta\nabla_5 \rangle,$ $\langle\nabla_2+\alpha\nabla_3+\nabla_5 \rangle,$
$\langle \nabla_3+ \nabla_4+\alpha\nabla_5 \rangle,$

$ \langle \alpha\nabla_3+\nabla_4+\beta\nabla_5+\nabla_6 \rangle^{O(\alpha,\beta)=O(-\alpha,-\beta)},$ 
$\langle \nabla_3+\alpha\nabla_5+\nabla_6 \rangle^{O(\alpha)=O(-\eta_3\alpha)=O(\eta^2_3\alpha)},$
$ \langle \nabla_3+\alpha\nabla_5+\beta\nabla_6+\nabla_7 \rangle^{O(\alpha,\beta)=O(-\alpha,-i\beta)=
O(-\alpha,i \beta)=O(\alpha,-\beta)},$
$\langle \nabla_4 \rangle,$ 
$\langle \nabla_4+\nabla_5 \rangle,$ 
$\langle \nabla_5+\nabla_6 \rangle,$
$\langle \alpha\nabla_5+\nabla_6+\nabla_7 \rangle,$ 
$\langle \nabla_5+\nabla_7 \rangle,$ 
$\langle\nabla_6 \rangle,$ 
$\langle\nabla_7 \rangle,$
\end{center}

which gives the following new algebras:

\begin{longtable}{llllllllllllllllll}

${\mathbf{N}}_{338}^{\alpha}$ & $:$ &$e_1e_1=e_2$ & $e_1e_2=e_5$ & $e_1e_3=e_4$ & $e_1e_4=e_5$ \\ & & $e_2e_2=e_3$  & $e_2e_3=-2e_5$ 
   & $e_3e_3=\alpha e_5$ \\

${\mathbf{N}}_{339}^{\alpha, \beta}$ & $:$ &$e_1e_1=e_2$ & $e_1e_2=e_5$ & $e_1e_3=e_4$ & $e_2e_2=e_3$  \\ && $e_2e_3=\alpha e_5$  & $e_3e_3=\beta e_5$ 
  & $e_3e_4=e_5$ \\  

${\mathbf{N}}_{340}^{\alpha, \beta ,\gamma}$ & $:$ &$e_1e_1=e_2$ & $e_1e_2=e_5$ & $e_1e_3=e_4$ & $e_2e_2=e_3$ \\ & & $e_2e_3=\alpha e_5$  & $e_3e_3=\beta e_5$ 
   & $e_3e_4=\gamma e_5$ & $ e_4e_4=e_5$ \\ 

${\mathbf{N}}_{341}^{\alpha}$ & $:$ &$e_1e_1=e_2$ & $e_1e_3=e_4$ & $e_1e_4=e_5$ & $e_2e_2=e_3$ & $e_2e_3=\alpha e_5$   \\

${\mathbf{N}}_{342}^{\alpha, \beta}$ & $:$ &$e_1e_1=e_2$ & $e_1e_3=e_4$ & $e_1e_4=e_5$ & $e_2e_2=e_3$  \\ && $e_2e_3=\alpha e_5$ & $ e_2e_4=e_5$    &
$ e_3e_3=\beta e_5$
\\ 

${\mathbf{N}}_{343}^{\alpha}$ & $:$ &$e_1e_1=e_2$ & $e_1e_3=e_4$ & $e_1e_4=e_5$ \\ && $e_2e_2=e_3$ & $e_2e_3=\alpha e_5$ &
$ e_3e_3=e_5$
\\ 

${\mathbf{N}}_{344}^{\alpha}$ & $:$ &$e_1e_1=e_2$ & $e_1e_3=e_4$ & $e_2e_2=e_3$ \\ && $ e_2e_3=e_5$ & $e_2e_4=e_5$ & $e_3e_3=\alpha e_5$ 
\\ 

${\mathbf{N}}_{345}^{\alpha, \beta}$ & $:$ &$e_1e_1=e_2$ & $e_1e_3=e_4$ & $e_2e_2=e_3$ & $ e_2e_3=\alpha e_5$  \\ && $e_2e_4=e_5$ & $e_3e_3=\beta e_5$  & $ e_3e_4=e_5$
\\

${\mathbf{N}}_{346}^{\alpha}$ & $:$ &$e_1e_1=e_2$ & $e_1e_3=e_4$ & $e_2e_2=e_3$ \\ && $ e_2e_3=e_5$ & $e_3e_3=\alpha e_5$  &
$ e_3e_4=e_5$
\\

${\mathbf{N}}_{347}^{\alpha, \beta}$ & $:$ &$e_1e_1=e_2$ & $e_1e_3=e_4$ & $e_2e_2=e_3$ & $ e_2e_3=e_5$  \\ && $e_3e_3=\alpha e_5$ & $e_3e_4=\beta e_5$   &
$ e_4e_4=e_5$
\\

${\mathbf{N}}_{348}$ & $:$ &$e_1e_1=e_2$ & $e_1e_3=e_4$ & $e_2e_2=e_3$ & $ e_2e_4=e_5$ 
\\

${\mathbf{N}}_{349}$ & $:$ &$e_1e_1=e_2$ & $e_1e_3=e_4$ & $e_2e_2=e_3$ & $ e_2e_4=e_5$ & $ e_3e_3=e_5$ 
\\

${\mathbf{N}}_{350}$ & $:$ &$e_1e_1=e_2$ & $e_1e_3=e_4$ & $e_2e_2=e_3$ & $ e_3e_3=e_5$ & $ e_3e_4=e_5$ 
\\

${\mathbf{N}}_{351}^{\alpha}$ & $:$ &$e_1e_1=e_2$ & $e_1e_3=e_4$ & $e_2e_2=e_3$ \\ && $ e_3e_3=\alpha e_5$ & $ e_3e_4=e_5$ & $ e_4e_4=e_5$ 
\\

${\mathbf{N}}_{352}$ & $:$ &$e_1e_1=e_2$ & $e_1e_3=e_4$ & $e_2e_2=e_3$ & $ e_3e_3=e_5$ & $ e_4e_4=e_5$ 
\\

${\mathbf{N}}_{353}$ & $:$ &$e_1e_1=e_2$ & $e_1e_3=e_4$ & $e_2e_2=e_3$  & $ e_3e_4=e_5$ 
\\

${\mathbf{N}}_{354}$ & $:$ &$e_1e_1=e_2$ & $e_1e_3=e_4$ & $e_2e_2=e_3$  & $ e_4e_4=e_5$ 
\\

\end{longtable}

\subsection{$ 1 $-dimensional central extensions of $ {\mathbf N}_{06}^{4} $.} Here we will collect all information about $ {\mathbf N}_{06}^{4}: $

$$
\begin{array}{|l|l|l|l|}
\hline
\text{ }  & \text{ } & \text{Cohomology} & \text{Automorphisms} \\
\hline
{\mathbf{N}}^{4}_{06} & 
\begin{array}{l}
e_1e_1=e_2 \\ 
e_1e_2=e_4 \\ 
e_1e_3=e_4 \\ 
e_2e_2=e_3
\end{array}
&\begin{array}{l}
\mathrm{H}^2_{\mathfrak{C}}(\mathbf{N}^{4}_{06})=
\Big \langle [\Delta_{ij}] \Big\rangle\\
(i,j) \notin \{(1,1),(1,2),(2,2)\}
\end{array}
&
 \phi_{\pm}= \begin{pmatrix}
\pm1&0&0&0\\
0&1&0&0\\
z&0&1&0\\
t&\pm2z&0&\pm1
\end{pmatrix}\\
\hline
\end{array}$$
	
	Let us use the following notations:
\begin{longtable}{llll}
$\nabla_1=[\Delta_{13}],$&
$\nabla_2=[\Delta_{14}], $&
$\nabla_3=[\Delta_{23}], $&
$\nabla_4=[\Delta_{24}], $\\
$\nabla_5=[\Delta_{33}], $&
$\nabla_6=[\Delta_{34}], $&
$\nabla_7=[\Delta_{44}]. $
\end{longtable}
Take $ \theta=\sum\limits_{i=1}^{7}\alpha_i\nabla_i\in\mathrm{H}^2_{\mathfrak{C}}(\mathbf{N}^{4}_{06}).$ Since
	$$\phi_{\pm}^T\begin{pmatrix}
	0&0&\alpha_1&\alpha_2\\
	0&0&\alpha_3&\alpha_4\\
	\alpha_1&\alpha_3&\alpha_5&\alpha_6\\
	\alpha_2&\alpha_4&\alpha_6&\alpha_7
	
\end{pmatrix}\phi_{\pm}=
\begin{pmatrix}
\alpha^*&\alpha^{**}&\alpha^{*}_1+\alpha^{**}&\alpha^*_2\\
\alpha^{**}&\alpha^{***}&\alpha^*_3&\alpha^*_4\\
\alpha^{*}_1+\alpha^{**}&\alpha^*_3&\alpha^*_5&\alpha^*_6\\
\alpha^*_2&\alpha^*_4&\alpha^*_6&\alpha^*_7
\end{pmatrix}$$	
we have	

\begin{longtable}{lll}
\multicolumn{3}{l}{
$\alpha_1^*=\pm \alpha_1- \alpha_3z- \alpha_4t+ \alpha_5z+ \alpha_6t-2    ( \alpha_2\pm \alpha_6z\pm \alpha_7t)z,$}\\
$\alpha_2^*=\alpha_2\pm\alpha_6z \pm\alpha_7t,$&
$\alpha_3^*=\alpha_3 \pm 2\alpha_6z,$&
$\alpha_4^*=2\alpha_7z \pm \alpha_4,$\\
$\alpha_5^*=\alpha_5,$&
$\alpha_6^*=\pm \alpha_6,$&
$\alpha_7^*=\alpha_7.$
\end{longtable}	

Since $ (\alpha_2,\alpha_4,\alpha_6,\alpha_7)\neq(0,0,0,0)$
and for $\phi=\phi_+,$ we have the following cases:

\begin{enumerate}
	\item if $ \alpha_7=\alpha_6=\alpha_4=0, $ then $ \alpha_2\neq0, $ and we have the following subcase:
	
	\begin{enumerate}
		\item if $ \alpha_5=\alpha_3+2\alpha_2, $ then we have the  family of representatives
		\begin{center}$ \langle \alpha\nabla_1+\nabla_2+\beta\nabla_3+(\beta+2)\nabla_5 \rangle;$
		\end{center}
		\item if $ \alpha_5\neq\alpha_3+2\alpha_2, $ then by choosing $ z=0, t=\frac{\alpha_1}{\alpha_3+2\alpha_2-\alpha_5} ,$ we have the family of representative $ \langle \nabla_2+\alpha\nabla_3+\beta\nabla_5 \rangle_{\beta\neq\alpha+2};$
	\end{enumerate}
		\item if $ \alpha_7=0, \alpha_6=0, \alpha_4\neq0, $ then by choosing $ z=0,  t=\frac{\alpha_1}{\alpha_4}, $ we have the family of representatives $\langle \alpha\nabla_2+\beta\nabla_3+\nabla_4+\gamma\nabla_5 \rangle;$

	\item if $ \alpha_7=0, \alpha_6\neq0, $ then we have the following subcases:

\begin{enumerate}
	\item if $ \alpha_6=\alpha_4, $ then by choosing $z=-{\alpha_2}{\alpha_6^{-1}}, t=0,$ we have the family of representatives 	$\langle \alpha\nabla_1+\beta\nabla_3+\nabla_4+\gamma\nabla_5+\nabla_6 \rangle;$
	\item if $ \alpha_6\neq\alpha_4, $ then by choosing $ z=-\frac{\alpha_2}{\alpha_6}, t=\frac{\alpha_1\alpha_6-\alpha_2\alpha_5+\alpha_2\alpha_3}{\alpha_6(\alpha_4-\alpha_6)},$ we have the family of representatives 
	$ \langle \alpha\nabla_3+\beta\nabla_4+\gamma\nabla_5+\nabla_6 \rangle_{\beta\neq1};$ 
\end{enumerate}

	\item if $ \alpha_7\neq0, $ then by choosing $ z=-\frac{\alpha_4}{2\alpha_7}, t=\frac{\alpha_4\alpha_6-2\alpha_2\alpha_7}{2\alpha^2_7}, $ we have the family of representatives 
	\begin{center}$ \langle \alpha\nabla_1+\beta\nabla_3+\gamma\nabla_5+\mu\nabla_6+\nabla_7 \rangle.$
	\end{center} 
\end{enumerate}	
	
Summarizing, we have the following distinct orbits:

\begin{center}
 $ \langle \alpha\nabla_1+\nabla_2+\beta\nabla_3+(\beta+2)\nabla_5 \rangle_{\alpha \neq 0}^{O(\alpha, \beta)= O(-\alpha, \beta)},$  
 $\langle \alpha\nabla_1+\beta\nabla_3+\nabla_4+\gamma\nabla_5+\nabla_6 \rangle^{O(\alpha, \beta, \gamma)= O(\alpha, -\beta,-\gamma)}_{\alpha\neq 0},$
  $ \langle \alpha\nabla_1+\beta\nabla_3+\gamma\nabla_5+\mu\nabla_6+\nabla_7 \rangle^{O(\alpha, \beta, \gamma, \mu)= O(-\alpha, \beta, \gamma, -\mu)},$ $\langle \nabla_2+\alpha\nabla_3+\beta\nabla_5 \rangle,$
  $ \langle \alpha\nabla_2+\beta\nabla_3+\nabla_4+\gamma\nabla_5 \rangle^{O(\alpha, \beta, \gamma)= O(-\alpha, -\beta,- \gamma)},$ 
  $\langle \alpha \nabla_3+\beta \nabla_4+\gamma\nabla_5+\nabla_6 \rangle^{O(\alpha, \beta, \gamma)= O(-\alpha, \beta, -\gamma)},$
\end{center}
which gives the following new algebras:

\begin{longtable}{llllllllllllllllll}

${\mathbf{N}}_{355}^{\alpha\neq0, \beta} $ & $:$ &$e_1e_1=e_2$ & $e_1e_3=e_4+\alpha e_5$ & $e_1e_4=e_5$  \\ &&   $e_2e_2=e_3$  & $e_2e_3=\beta e_5$  & \multicolumn{2}{l}{$e_3e_3=(\beta+2)e_5$}  \\
 
${\mathbf{N}}_{356}^{\alpha\neq0, \beta,\gamma}$ & $:$ &$e_1e_1=e_2$ & $e_1e_3=e_4+\alpha e_5$ & $e_2e_2=e_3$ & $e_2e_3=\beta e_5$  \\ & &  $e_2e_4=e_5$  & $e_3e_3=\gamma e_5$ & $e_3e_4=e_5$& \\ 
 
${\mathbf{N}}_{357}^{\alpha, \beta,\gamma,\mu}$ & $:$ &$e_1e_1=e_2$ & $e_1e_3=e_4+\alpha e_5$ & $e_2e_2=e_3$ & $e_2e_3=\beta e_5$  \\ & &  $e_3e_3=\gamma e_5$  & $e_3e_4=\mu e_5$   & $e_4e_4=e_5$& \\ 

${\mathbf{N}}_{358}^{\alpha,\beta}$ & $:$ &$e_1e_1=e_2$ & $e_1e_3=e_4$ & $e_1e_4=e_5$   \\ && $e_2e_2=e_3$ & $e_2e_3=\alpha e_5$  & $e_3e_3=\beta e_5$  \\

${\mathbf{N}}_{359}^{\alpha, \beta,\gamma}$ & $:$ &$e_1e_1=e_2$ & $e_1e_3=e_4$ & $e_1e_4=\alpha e_5$ & $e_2e_2=e_3$ \\ & &  $e_2e_3=\beta e_5$  & $e_2e_4=e_5$  & $e_3e_3=\gamma e_5$& \\

${\mathbf{N}}_{360}^{\alpha,\beta,\gamma}$ & $:$ &$e_1e_1=e_2$ & $e_1e_3=e_4$ & $e_2e_2=e_3$ & $e_2e_3=\alpha e_5$  \\ & &  $e_2e_4=\beta e_5$  & $e_3e_3=\gamma e_5$  & $e_3e_4=e_5$& \\

\end{longtable}

\subsection{$ 1 $-dimensional central extensions of $ {\mathbf N}_{07}^{4} $.} Here we will collect all information about $ {\mathbf N}_{07}^{4}: $

$$
\begin{array}{|l|l|l|l|}
\hline
\text{ }  & \text{ } & \text{Cohomology} & \text{Automorphisms} \\
\hline
{\mathbf{N}}^{4}_{07} & 
\begin{array}{l}
e_1e_1=e_2 \\ 
e_2e_2=e_3 \\ 
e_2e_3=e_4
\end{array}
&\begin{array}{l}
\mathrm{H}^2_{\mathfrak{C}}(\mathbf{N}^{4}_{07})=
\Big \langle [\Delta_{ij}] \Big\rangle\\
(i,j) \notin \{ (1,1),(2,2),(2,3) \} 
\end{array}

& 
\phi=\begin{pmatrix}
x&0&0&0\\
0&x^2&0&0\\
0&0&x^4&0\\
t&0&0&x^6
\end{pmatrix}\\
\hline
\end{array}$$

	Let us use the following notations:
\begin{longtable}{llll}	
$\nabla_1=[\Delta_{12}],$&
$\nabla_2=[\Delta_{13}],$&
$\nabla_3=[\Delta_{14}],$&
$\nabla_4=[\Delta_{24}],$\\
$\nabla_5=[\Delta_{33}],$&
$\nabla_6=[\Delta_{34}],$&
$\nabla_7=[\Delta_{44}].$
\end{longtable}
	Take $ \theta=\sum\limits_{i=1}^{7}\alpha_i\nabla_i\in\mathrm{H}^2_{\mathfrak{C}}(\mathbf{N}^{4}_{07}) .$	Since
	$$\phi^T\begin{pmatrix}
	0&\alpha_1&\alpha_2&\alpha_3\\
	\alpha_1&0&0&\alpha_4\\
	\alpha_2&0&\alpha_5&\alpha_6\\
	\alpha_3&\alpha_4&\alpha_6&\alpha_7
	
\end{pmatrix}\phi=
\begin{pmatrix}
\alpha^*&\alpha^*_1&\alpha^{*}_2&\alpha^*_3\\
\alpha^*_1&0&0&\alpha^*_4\\
\alpha^{*}_2&0&\alpha^*_5&\alpha^*_6\\
\alpha^*_3&\alpha^*_4&\alpha^*_6&\alpha^*_7
\end{pmatrix}$$	
we have	

\begin{longtable}{llll}
$\alpha_1^*=(\alpha_1x+\alpha_4t)x^2,$&
$\alpha_2^*=(\alpha_2x+\alpha_6t)x^4,$&
$\alpha_3^*=(\alpha_3x+\alpha_7t)x^6,$&
$\alpha_4^*=\alpha_4x^8,$\\
$\alpha_5^*=\alpha_5x^8,$&
$\alpha_6^*=\alpha_6x^{10},$&
$\alpha_7^*=\alpha_7x^{12}.$
\end{longtable}	

We are interested in $ (\alpha_3,\alpha_4,\alpha_6,\alpha_7)\neq(0,0,0,0) $ and consider following cases:

\begin{enumerate}
\item $ \alpha_7=\alpha_6=\alpha_4=0, $ then $ \alpha_3\neq0, $ and we have the following subcases:

	\begin{enumerate}
		\item if $ \alpha_1=0, \alpha_2=0, $ then we have  representatives $ \langle \nabla_3 \rangle $ and $ \langle \nabla_3+\nabla_5 \rangle $ depending on whether $ \alpha_5=0 $ or not;
		\item if $ \alpha_1=0, \alpha_2\neq0, $ then by choosing $ x=\sqrt{{\alpha_2}{\alpha_3^{-1}}}, t=0, $ we have the family of representatives $ \langle \nabla_2+\nabla_3+\alpha\nabla_5 \rangle;$
		\item if $ \alpha_1\neq0, $ then by choosing $ x=\sqrt[4]{{\alpha_1}{\alpha_3^{-1}}}, t=0, $ we have the family of representatives $ \langle \nabla_1+\alpha\nabla_2+\nabla_3+\beta\nabla_5 \rangle.$
	\end{enumerate}

\item  $ \alpha_7=0, \alpha_6=0, \alpha_4\neq0, $ then we have the following subcases:
\begin{enumerate}
	\item if $ \alpha_2=0, \alpha_3=0 ,$ then we have the familty of representative $ \langle \nabla_4+\alpha\nabla_5 \rangle;$
	\item if $ \alpha_2=0, \alpha_3\neq0 ,$ then by choosing $ x={\alpha_3}{\alpha_4^{-1}}, t=-{\alpha_1\alpha_3}{\alpha_4^{-2}}, $ we have the family of representatives $ \langle \nabla_3+ \nabla_4+\alpha\nabla_5 \rangle;$
	\item if $ \alpha_2\neq0,$ then by choosing $ x=\sqrt[3]{{\alpha_2}{\alpha_4^{-1}}}, t=-{\alpha_1\sqrt[3]{\alpha_2 \alpha_4^{-4}}}, $ we have the family of representatives $ \langle \nabla_2+\alpha\nabla_3+ \nabla_4+\beta\nabla_5 \rangle.$
\end{enumerate}

\item $ \alpha_7=0, \alpha_6\neq0, $ then we have the following subcases:
\begin{enumerate}
		\item if $ \alpha_1=0, \alpha_3=0, \alpha_4=0, $ then we have  representatives $ \langle \nabla_6 \rangle $ and $ \langle \nabla_5+\nabla_6 \rangle$ depending on whether $ \alpha_5=0 $ or not;
		
		\item if $ \alpha_1\alpha_6=\alpha_2\alpha_4, \alpha_3=0, \alpha_4\neq0, $ then  choosing $ x=\sqrt{{\alpha_4}{\alpha_6^{-1}}}, t=-{\alpha_2\sqrt{\alpha_4} \alpha_6^{-3}}, $ we have the family of representatives $ \langle \nabla_4+\alpha\nabla_5+\nabla_6 \rangle;$
		
		\item if $ \alpha_1\alpha_6=\alpha_2\alpha_4, \alpha_3\neq0, $ then by choosing $ x=\sqrt[3]{\frac{\alpha_3}{\alpha_6}},  t=-\frac{\alpha_2\sqrt[3]{\alpha_3}}{\alpha_6\sqrt[3]{\alpha_6}},$ we have the family of representatives $ \langle \nabla_3+\alpha\nabla_4+\beta\nabla_5+\nabla_6 \rangle;$

	\item if $ \alpha_1\alpha_6\neq\alpha_2\alpha_4  ,$ then  choosing $ x=\sqrt[7]{\frac{\alpha_1\alpha_6-\alpha_2\alpha_4}{\alpha^2_6}},  t=-\frac{\alpha_2\sqrt[7]{\alpha_1\alpha_6-\alpha_2\alpha_4}}{\alpha_6\sqrt[7]{\alpha^2_6}},$ we have the family of representatives $ \langle \nabla_1+\alpha\nabla_3+\beta\nabla_4+\gamma\nabla_5+\nabla_6 \rangle.$
\end{enumerate}	
\item  $ \alpha_7\neq0, $ then we have the following subcases:

\begin{enumerate}
	\item $ \alpha_1=0, \alpha_2\alpha_7=\alpha_3\alpha_6, \alpha_4=0, \alpha_5=0, $ then we have  representatives $ \langle \nabla_7 \rangle $ and $ \langle \nabla_6+\nabla_7 \rangle$ depending on whether $ \alpha_6=0 $ or not;
	
	\item $ \alpha_1=0, \alpha_2\alpha_7=\alpha_3\alpha_6, \alpha_4=0, \alpha_5\neq0, $ then by choosing $ x=\sqrt[4]{{\alpha_5}{\alpha_7^{-1}}}, t=-{\alpha_3\sqrt[4]{\alpha_5 \alpha_7^{-5}}}, $ we have  family of representatives $ \langle \nabla_5+\alpha\nabla_6+\nabla_7 \rangle;$
	
	\item $ \alpha_1\alpha_7=\alpha_3\alpha_4, \alpha_2\alpha_7=\alpha_3\alpha_6, \alpha_4\neq0, $ then by choosing $ x=\sqrt[4]{{\alpha_4}{\alpha_7^{-1}}},
	t=-{\alpha_3\sqrt[4]{\alpha_4 \alpha_7^{-5}}}, $ we have family of representatives $ \langle \nabla_4+\alpha\nabla_5+\beta\nabla_6+\nabla_7 \rangle;$
	
\item $ \alpha_1\alpha_7=\alpha_3\alpha_4, \alpha_2\alpha_7\neq\alpha_3\alpha_6, $ then by choosing $ x=\sqrt[7]{(\alpha_2\alpha_7-\alpha_3\alpha_6)\alpha^{-2}_7},$ $t=-{\alpha_3\sqrt[7]{(\alpha_2\alpha_7-\alpha_3\alpha_6)\alpha_7^{-9}}}, $ we have the family of representatives \begin{center}$ \langle \nabla_2+\alpha\nabla_4+\beta\nabla_5+\gamma\nabla_6+\nabla_7 \rangle;$\end{center}	

\item $ \alpha_1\alpha_7\neq \alpha_3\alpha_4, $ then by choosing 
\begin{center}$ x=\sqrt[9]{(\alpha_1\alpha_7-\alpha_3\alpha_4)\alpha^{-2}_7}, t=-\alpha_3\sqrt[9]{(\alpha_1\alpha_7-\alpha_3\alpha_4)\alpha_7^{-11} }, $ 
\end{center} we have  family of representatives 
$ \langle \nabla_1+\alpha\nabla_2+\beta\nabla_4+\gamma\nabla_5+\mu\nabla_6+\nabla_7 \rangle.$		 	 	
	
\end{enumerate}

\end{enumerate}

Summarizing, we have the following distinct orbits:

\begin{center}
  $ \langle \nabla_1+\alpha\nabla_2+\nabla_3+\beta\nabla_5 \rangle
  ^{O(\alpha,\beta)=O(-\alpha,i\beta)=
  O(-\alpha,-i\beta)=O(\alpha,-\beta)},$
$ \langle \nabla_1+\alpha\nabla_2+\beta\nabla_4+\gamma\nabla_5+\mu\nabla_6+\nabla_7 \rangle
^{\tiny{\begin{array}{l}
O(\alpha,\beta,\gamma,\mu)=O(-\eta_9^7\alpha,\eta^4_9\beta,\eta^4_9\gamma,\eta^2_9\mu)=\\
O(-\eta^5_9\alpha,\eta_9^8\beta,\eta_9^8\gamma,\eta^4_9\mu)=
O(-\eta_3\alpha,-\eta_3\beta,-\eta_3\gamma,\eta^2_3\mu)=\\
O(-\eta_9\alpha,-\eta^7_9\beta,-\eta^7_9\gamma,\eta^8_9\mu)=
O(\eta_9^8\alpha,\eta^2_9\beta,\eta^2_9\gamma,-\eta_9\mu)=\\
O(\eta^2_3\alpha,\eta^2_3\beta,\eta^2_3\gamma,-\eta_3\mu)=
O(\eta^4_9\alpha,-\eta_9\beta,-\eta_9\gamma,-\eta^5_9\mu)=\\
O(\eta^2_9\alpha,-\eta^5_9\beta,-\eta^5_9\gamma,-\eta^7_9\mu)
\end{array}}},$
   $ \langle \nabla_1+\alpha\nabla_3+\beta\nabla_4+\gamma\nabla_5+\nabla_6 \rangle
 ^{\tiny{\begin{array}{l}
 O(\alpha,\beta,\gamma)=O(-\eta_7^3\alpha,\eta^2_7\beta,\eta^2_7\gamma)=\\
 O(\eta_7^6\alpha,\eta^4_7\beta,\eta^4_7\gamma)=
 O(\eta^2_7\alpha,\eta^6_{7}\beta,\eta^6_7\gamma)=\\
O(-\eta^5_7\alpha,-\eta_7\beta,-\eta_7\gamma)=
O(-\eta_7\alpha,-\eta^3_7\beta,-\eta^3_7\gamma)=\\
O(\eta^4_7\alpha,-\eta^5_7\beta,-\eta^5_7\gamma)
\end{array}}}, $
$ \langle \nabla_2+\alpha\nabla_3+ \nabla_4+\beta\nabla_5 \rangle^{O(\alpha,\beta)=O(-\eta_3\alpha,\beta)=O(\eta^2_3\alpha,\beta)},$ 
$\langle \nabla_2+\nabla_3+\alpha\nabla_5 \rangle^{O(\alpha)=O(-\alpha)}, $ 
  $ \langle \nabla_2+\alpha\nabla_4+\beta\nabla_5+\gamma\nabla_6+\nabla_7 \rangle^{\tiny{\begin{array}{l}
 O(\alpha,\beta,\gamma)=O(\eta_7^4\alpha,\eta^4_7\beta,\eta^2_7\gamma)=
 O(-\eta_7\alpha,-\eta_7\beta,\eta^4_7\gamma)=\\
 O(-\eta^5_7\alpha,-\eta^5_{7}\beta,\eta^6_7\gamma)=
O(\eta^2_7\alpha,\eta^2_7\beta,-\eta_7\gamma)=\\
O(\eta_7^6\alpha,\eta_7^6\beta,-\eta^3_7\gamma)=
O(-\eta^3_7\alpha,-\eta^3_7\beta,-\eta^5_7\gamma)
\end{array}}}, $
$ \langle \nabla_3 \rangle,$ 
$\langle \nabla_3+ \nabla_4+\alpha\nabla_5 \rangle,$ 
$\langle \nabla_3+\alpha\nabla_4+\beta\nabla_5+\nabla_6 \rangle^{O(\alpha,\beta)=O(-\eta_3\alpha,-\eta_3\beta)=O(\eta^2_3\alpha,\eta^2_3\beta)},$
 $ \langle \nabla_3+\nabla_5 \rangle,$ 
 $\langle \nabla_4+\alpha\nabla_5 \rangle,$ 
 $\langle \nabla_4+\alpha\nabla_5+\nabla_6 \rangle,$ 
 $\langle \nabla_4+\alpha\nabla_5+\beta\nabla_6+\nabla_7 \rangle^{O(\alpha,\beta)=O(\alpha,-\beta)},$
   $ \langle \nabla_5+\nabla_6 \rangle,$ 
   $\langle \nabla_5+\alpha\nabla_6+\nabla_7 \rangle^{O(\alpha)= O(-\alpha) },$
   $\langle \nabla_6 \rangle,$ 
   $\langle \nabla_6 +\nabla_7 \rangle,$ 
   $\langle \nabla_7 \rangle,$
 \end{center}
which gives the following new algebras:

\begin{longtable}{llllllllllllllllll}

${\mathbf{N}}_{361}^{\alpha, \beta}$ & $:$ &$e_1e_1=e_2$ & $e_1e_2=e_5$ & $e_1e_3=\alpha e_5$ & $e_1e_4=e_5$  \\ & & $e_2e_2=e_3$  & $e_2e_3=e_4$   & $e_3e_3=\beta e_5$ \\

${\mathbf{N}}_{362}^{\alpha, \beta,\gamma,\mu}$ & $:$ &$e_1e_1=e_2$ & $e_1e_2=e_5$ & $e_1e_3=\alpha e_5$  & $e_2e_2=e_3$  & $e_2e_3=e_4$   \\ & & $e_2e_4=\beta e_5$  & $ e_3e_3=\gamma e_5$ & $e_3e_4=\mu e_5 $ &$ e_4e_4=e_5$\\

${\mathbf{N}}_{363}^{\alpha, \beta,\gamma}$ & $:$ &$e_1e_1=e_2$ & $e_1e_2=e_5$ & $e_1e_4=\alpha e_5$  & $e_2e_2=e_3$  \\ &  & $e_2e_3=e_4$  & $e_2e_4=\beta e_5$   & $ e_3e_3=\gamma e_5$ & $e_3e_4=e_5 $ \\

${\mathbf{N}}_{364}^{\alpha, \beta}$ & $:$ &$e_1e_1=e_2$ & $e_1e_3=e_5$ & $e_1e_4=\alpha e_5$  & $e_2e_2=e_3$   \\ & & $e_2e_3=e_4$ & $e_2e_4=e_5$   & $e_3e_3=\beta e_5$ \\

${\mathbf{N}}_{365}^{\alpha}$ & $:$ &$e_1e_1=e_2$ & $e_1e_3=e_5$ & $e_1e_4= e_5$  \\ && $e_2e_2=e_3$  & $e_2e_3=e_4$  & $e_3e_3=\alpha e_5$ \\

${\mathbf{N}}_{366}^{\alpha, \beta,\gamma}$ & $:$ &$e_1e_1=e_2$ & $e_1e_3=e_5$  & $e_2e_2=e_3$  & $e_2e_3=e_4$   \\ & & $e_1e_4=\alpha e_5$ & $ e_3e_3=\beta e_5$  & $e_3e_4=\gamma e_5 $ & $e_4e_4=e_5$ \\

${\mathbf{N}}_{367}$ & $:$ &$e_1e_1=e_2$  & $e_1e_4=e_5$  & $e_2e_2=e_3$  & $e_2e_3=e_4$  \\

${\mathbf{N}}_{368}^{\alpha}$ & $:$ &$e_1e_1=e_2$ & $e_1e_4=e_5$  & $e_2e_2=e_3$  \\ && $e_2e_3=e_4$ & $e_2e_4=e_5$ & $e_3e_3=\alpha e_5$ \\

${\mathbf{N}}_{369}^{\alpha,\beta}$ & $:$ &$e_1e_1=e_2$ & $e_1e_4=e_5$  & $e_2e_2=e_3$  & $e_2e_3=e_4$  \\ & & $e_2e_4=\alpha e_5$ & $e_3e_3=\beta e_5$  & $e_3e_4=e_5$& \\

${\mathbf{N}}_{370}$ & $:$ &$e_1e_1=e_2$ & $e_1e_4=e_5$  & $e_2e_2=e_3$  & $e_2e_3=e_4$ & $e_3e_3=e_5$ \\

${\mathbf{N}}_{371}^{\alpha}$ & $:$ &$e_1e_1=e_2$  & $e_2e_2=e_3$  & $e_2e_3=e_4$ & $e_2e_4=e_5 $ & $e_3e_3=\alpha e_5$ \\

${\mathbf{N}}_{372}^{\alpha}$ & $:$ &$e_1e_1=e_2$  & $e_2e_2=e_3$  & $e_2e_3=e_4$ \\ && $e_2e_4=e_5 $ & $e_3e_3=\alpha e_5$ & $ e_3e_4=e_5$ \\

${\mathbf{N}}_{373}^{\alpha, \beta}$ & $:$ &$e_1e_1=e_2$  & $e_2e_2=e_3$  & $e_2e_3=e_4$ & $e_2e_4=e_5 $  \\ & & $e_3e_3=\alpha e_5$ & $ e_3e_4=\beta e_5$  & $e_4e_4=e_5$\\

${\mathbf{N}}_{374}$ & $:$ &$e_1e_1=e_2$   & $e_2e_2=e_3$  & $e_2e_3=e_4$ & $e_3e_3=e_5$ & $e_3e_4=e_5$\\

${\mathbf{N}}_{375}^{\alpha}$ & $:$ &$e_1e_1=e_2$   & $e_2e_2=e_3$  & $e_2e_3=e_4$\\ & & $e_3e_3=e_5$ & $e_3e_4=\alpha e_5$& $e_4e_4=e_5$\\

${\mathbf{N}}_{376}$ & $:$ &$e_1e_1=e_2$   & $e_2e_2=e_3$  & $e_2e_3=e_4$  & $e_3e_4=e_5$\\

${\mathbf{N}}_{377}$ & $:$ &$e_1e_1=e_2$   & $e_2e_2=e_3$  & $e_2e_3=e_4$ & $e_3e_4=e_5$ & $e_4e_4=e_5$\\

${\mathbf{N}}_{378}$ & $:$ &$e_1e_1=e_2$   & $e_2e_2=e_3$  & $e_2e_3=e_4$  & $e_4e_4=e_5$\\
\end{longtable}

\subsection{$ 1 $-dimensional central extensions of $ {\mathbf N}_{08}^{4} $.} Here we will collect all information about $ {\mathbf N}_{08}^{4}: $

$$
\begin{array}{|l|l|l|l|}
\hline
\text{ }  & \text{ } & \text{Cohomology} & \text{Automorphisms}\\
\hline
{\mathbf{N}}^{4}_{08} & 
\begin{array}{l}
e_1e_1=e_2 \\ 
e_1e_3=e_4 \\ 
e_2e_2=e_3 \\ 
e_2e_3=e_4
\end{array}
&
\begin{array}{l}
\mathrm{H}^2_{\mathfrak{C}}(\mathbf{N}^{4}_{08})=
\Big \langle [\Delta_{ij}] \Big\rangle\\
{(i,j) \notin \{(1,1),(1,3),(2,2)}\}
\end{array}

& 
  \phi=\begin{pmatrix}
1&0&0&0\\
0&1&0&0\\
0&0&1&0\\
t&0&0&1
\end{pmatrix}\\
\hline
\end{array}$$

Let us use the following notations:
\begin{longtable}{llll}
$\nabla_1=[\Delta_{12}],$& 
$\nabla_2=[\Delta_{14}],$& 
$\nabla_3=[\Delta_{23}],$&
$\nabla_4=[\Delta_{24}],$\\
$\nabla_5=[\Delta_{33}],$&
$\nabla_6=[\Delta_{34}],$&
$\nabla_7=[\Delta_{44}]. $
\end{longtable}
Take $ \theta=\sum\limits_{i=1}^{7}\alpha_i\nabla_i\in\mathrm{H}^2_{\mathfrak{C}}(\mathbf{N}^{4}_{08}) .$ Since
$$\phi^T\begin{pmatrix}
0&\alpha_1&0&\alpha_2\\
\alpha_1&0&\alpha_3&\alpha_4\\
0&\alpha_3&\alpha_5&\alpha_6\\
\alpha_2&\alpha_4&\alpha_6&\alpha_7

\end{pmatrix}\phi=
\begin{pmatrix}
\alpha^*&\alpha^{*}_1&\alpha^{**}&\alpha^*_2\\
\alpha^{*}_1&0&\alpha^*_3+\alpha^{**}&\alpha^*_4\\
\alpha^{**}&\alpha^*_3+\alpha^{**}&\alpha^*_5&\alpha^*_6\\
\alpha^*_2&\alpha^*_4&\alpha^*_6&\alpha^*_7
\end{pmatrix}$$	
we have	

\begin{longtable}{llll}
$\alpha_1^*=\alpha_1+\alpha_4t,$&
$\alpha_2^*=\alpha_2+\alpha_7t,$&
$\alpha_3^*=\alpha_3-\alpha_6t,$&
$\alpha_4^*=\alpha_4,$\\
$\alpha_5^*=\alpha_5,$&
$\alpha_6^*=\alpha_6,$&
$\alpha_7^*=\alpha_7.$
\end{longtable}	

Since $ (\alpha_2,\alpha_4,\alpha_6,\alpha_7)\neq(0,0,0,0),$ we have the following cases:

\begin{enumerate}
\item if $ \alpha_7=\alpha_6=\alpha_4=0, $ then $ \alpha_2\neq0, $ and we have the family of representatives 
\begin{center}$ \langle \alpha\nabla_1+\nabla_2+\beta\nabla_3+\gamma\nabla_5 \rangle;$
\end{center}

\item if $ \alpha_7=0, \alpha_6=0, \alpha_4\neq0, $ then by choosing $ t=-{\alpha_1}{\alpha_4^{-1}},$ we have the family of representatives 
\begin{center}$ \langle \alpha\nabla_2+\beta\nabla_3+\nabla_4+\gamma\nabla_5 \rangle;$
\end{center}

\item if $ \alpha_7=0, \alpha_6\neq0, $ then by choosing $ t={\alpha_3}{\alpha_6^{-1}},$ we have the family of representatives 
\begin{center}$ \langle \alpha\nabla_1+\beta\nabla_2+\gamma\nabla_4+\mu\nabla_5+\nabla_6 \rangle;$\end{center}

\item if $ \alpha_7\neq0, $ then by choosing $ t=-{\alpha_2}{\alpha_7^{-1}},$ we have the family of  representatives 
\begin{center}$ \langle \alpha\nabla_1+\beta\nabla_3+\gamma\nabla_4+\mu\nabla_5+\nu\nabla_6+\nabla_7 \rangle.$\end{center}

\end{enumerate}		
	
Summarizing, we have the following distinct orbits:

\begin{center}
$ \langle \alpha\nabla_1+\nabla_2+\beta\nabla_3+\gamma\nabla_5 \rangle,$ 
$\langle \alpha\nabla_1+\beta\nabla_2+\gamma\nabla_4+\mu\nabla_5+\nabla_6 \rangle,$
$ \langle \alpha\nabla_1+\beta\nabla_3+\gamma\nabla_4+\mu\nabla_5+\nu\nabla_6+\nabla_7 \rangle,$  
$\langle \alpha\nabla_2+\beta\nabla_3+\nabla_4+\gamma\nabla_5 \rangle,$
\end{center}

which gives the following new algebras:

\begin{longtable}{llllllllllllllllll}

${\mathbf{N}}_{379}^{\alpha, \beta,\gamma}$ & $:$ &$e_1e_1=e_2$ & $e_1e_2=\alpha e_5$ & $e_1e_3=e_4$ & $e_1e_4=e_5$  \\ & & $e_2e_2=e_3$    & $e_2e_3=e_4+\beta e_5$  & $e_3e_3=\gamma e_5$ \\

${\mathbf{N}}_{380}^{\alpha, \beta,\gamma,\mu}$ & $:$ &$e_1e_1=e_2$ & $e_1e_2=\alpha e_5$ & $e_1e_3=e_4$ \\ & & $e_1e_4=\beta e_5$& $e_2e_2=e_3$    & $e_2e_3=e_4$ \\ & & $ e_2e_4=\gamma e_5$  & $e_3e_3=\mu e_5$ & $e_3e_4= e_5$\\

${\mathbf{N}}_{381}^{\alpha, \beta,\gamma,\mu, \nu}$ & $:$ &$e_1e_1=e_2$ & $e_1e_2=\alpha e_5$ & $e_1e_3=e_4$  \\ && $e_2e_2=e_3$  & $e_2e_3=e_4+\beta e_5$  & $ e_2e_4=\gamma e_5$  \\ && $e_3e_3=\mu e_5$ & $e_3e_4=\nu e_5 $ & $ e_4e_4=e_5$\\

${\mathbf{N}}_{382}^{\alpha, \beta,\gamma}$ & $:$ &$e_1e_1=e_2$  & $e_1e_3=e_4$  & $e_1e_4=\alpha e_5$& $e_2e_2=e_3$   \\ & & $e_2e_3=e_4+\beta e_5$   & $ e_2e_4=e_5$  & $e_3e_3=\gamma e_5$ \\

\end{longtable}

\subsection{$ 1 $-dimensional central extensions of $ {\mathbf N}_{09}^{4} $.} Here we will collect all information about $ {\mathbf N}_{09}^{4}: $

$$
\begin{array}{|l|l|l|l|}
\hline
\text{ }  & \text{ } & \text{Cohomology} & \text{Automorphisms} \\
\hline
{\mathbf{N}}^{4}_{09} &
\begin{array}{l}
e_1e_1=e_2 \\ 
e_2e_2=e_3 \\ 
e_3e_3=e_4
\end{array}
&\begin{array}{l}
\mathrm{H}^2_{\mathfrak{C}}(\mathbf{N}^{4}_{09})=
\Big \langle [\Delta_{ij}] \Big\rangle\\
(i,j) \notin\{ (1,1),(2,2),(3,3)\}
\end{array}
&
 \phi=\begin{pmatrix}
x&0&0&0\\
0&x^2&0&0\\
0&0&x^4&0\\
t&0&0&x^8
\end{pmatrix}\\
\hline
\end{array}$$

Let us use the following notations:
\begin{longtable}{llll} $
\nabla_1=[\Delta_{12}],$ & $  \nabla_2=[\Delta_{13}],$ & $ \nabla_3=[\Delta_{14}],$ & $  \nabla_4=[\Delta_{23}],$ \\ $
\nabla_5=[\Delta_{24}],$ & $  \nabla_6=[\Delta_{34}],$ & $  \nabla_7=[\Delta_{44}].$ 
\end{longtable}

Take $ \theta=\sum\limits_{i=1}^{7}\alpha_i\nabla_i\in\mathrm{H}^2_{\mathfrak{C}}(\mathbf{N}^{4}_{09}) .$ Since
$$\phi^T\begin{pmatrix}
0&\alpha_1&\alpha_2&\alpha_3\\
\alpha_1&0&\alpha_4&\alpha_5\\
\alpha_2&\alpha_4&0&\alpha_6\\
\alpha_3&\alpha_5&\alpha_6&\alpha_7

\end{pmatrix}\phi=
\begin{pmatrix}
\alpha^*&\alpha^*_1&\alpha^{*}_2&\alpha^*_3\\
\alpha^*_1&0&\alpha^*_4&\alpha^*_5\\
\alpha^{*}_2&\alpha^*_4&0&\alpha^*_6\\
\alpha^*_3&\alpha^*_5&\alpha^*_6&\alpha^*_7
\end{pmatrix}$$	
we have	

\begin{longtable}{llll}
$\alpha_1^*=(\alpha_1x+\alpha_5t)x^2,$&
$\alpha_2^*=(\alpha_2x+\alpha_6t)x^4,$&
$\alpha_3^*=(\alpha_3x+\alpha_7t)x^8,$&
$\alpha_4^*=\alpha_4x^6,$\\
$\alpha_5^*=\alpha_5x^{10},$&
$\alpha_6^*=\alpha_6x^{12},$&
$\alpha_7^*=\alpha_7x^{16}.$
\end{longtable}	

Since $ (\alpha_3,\alpha_5,\alpha_6,\alpha_7)\neq(0,0,0,0),$ we have the following cases:

\begin{enumerate}
\item $ \alpha_7=\alpha_6=\alpha_5=0, $ then $ \alpha_3\neq0, $ and we have the following subcases:

\begin{enumerate}
	\item if $ \alpha_1=0, \alpha_2=0, $ then we have representatives $ \langle \nabla_3 \rangle $ and $ \langle \nabla_3+\nabla_4 \rangle $ depending on whether $ \alpha_4=0 $ or not;
	\item if $ \alpha_1=0, \alpha_2\neq0, $ then by choosing $ x=\sqrt[4]{{\alpha_2}{\alpha_3^{-1}}}, t=0, $ we have the family of representatives $ \langle \nabla_2+\nabla_3+\alpha\nabla_4 \rangle;$
	\item if $ \alpha_1\neq0, $ then by choosing $ x=\sqrt[6]{{\alpha_1}{\alpha_3^{-1}}}, t=0, $ we have the family of  representatives 
$ \langle \nabla_1+\alpha\nabla_2+\nabla_3+\beta\nabla_4 \rangle.$
	
\end{enumerate}

\item  $ \alpha_7=0, \alpha_6=0, \alpha_5\neq0, $ then we have the following cases:
\begin{enumerate}
	\item if $ \alpha_2=0, \alpha_3=0,$ then we have representatives  $ \langle \nabla_5 \rangle$ and $ \langle \nabla_4+\nabla_5 \rangle$ depending on whether $ \alpha_4=0 $ or not;
	\item if $ \alpha_2=0, \alpha_3\neq0,$ then  choosing $ x={\alpha_3}{\alpha_5^{-1}}, t=-{\alpha_1\alpha_3}{\alpha_5^{-2}}, $ we have the
	family of representatives $ \langle \nabla_3+ \alpha\nabla_4+\nabla_5 \rangle;$
	\item if $ \alpha_2\neq0,$ then by choosing $ x=\sqrt[5]{{\alpha_2}{\alpha_5^{-1}}}, t=-{\alpha_1\sqrt[5]{\alpha_2 \alpha_5^{-6} }}, $ we have the family of representatives $ \langle \nabla_2+\alpha\nabla_3+ \beta\nabla_4+\nabla_5 \rangle.$
\end{enumerate}

\item  $ \alpha_7=0, \alpha_6\neq0, $ then we have the following cases:
\begin{enumerate}
	\item if $  \alpha_1\alpha_6=\alpha_2\alpha_5, \alpha_3=0, \alpha_4=0,$ then we have  representatives $ \langle \nabla_6 \rangle $ and $ \langle \nabla_5+\nabla_6 \rangle$ depending on whether $ \alpha_5=0 $ or not;
	
	\item if $ \alpha_1\alpha_6=\alpha_2\alpha_5, \alpha_3=0, \alpha_4\neq0, $ then by choosing $ x=\sqrt[6]{{\alpha_4}{\alpha_6^{-1}}}, t=-{\alpha_2\sqrt[6]{\alpha_4 \alpha_6^{-7} }}, $ we have the family of representatives 
	$ \langle \nabla_4+\alpha\nabla_5+\nabla_6 \rangle;$
	
	\item if $ \alpha_1\alpha_6=\alpha_2\alpha_5, \alpha_3\neq0, $ then by choosing $ x=\sqrt[3]{{\alpha_3}{\alpha_6^{-1}}}, t=-{\alpha_2\sqrt[3]{\alpha_3 \alpha_6^{-4}}}, $ we have the family of representatives $ \langle \nabla_3+\alpha\nabla_4+\beta\nabla_5+\nabla_6 \rangle;$

	\item if $ \alpha_1\alpha_6\neq \alpha_2\alpha_5 ,$ then by choosing 
	\begin{center}$ x=\sqrt[9]{(\alpha_1\alpha_6-\alpha_2\alpha_5)\alpha^{-2}_6},$ $t=-{\alpha_2\sqrt[9]{(\alpha_1\alpha_6-\alpha_2\alpha_5)\alpha_6^{-11}}},$ 
	\end{center} we have the family of representatives 
\begin{center}	$ \langle \nabla_1+\alpha\nabla_3+\beta\nabla_4+\gamma\nabla_5+\nabla_6 \rangle.$
\end{center}
\end{enumerate}	

\item $ \alpha_7\neq0, $ then we have the following cases:
\begin{enumerate}
	\item if $ \alpha_1=0, \alpha_2\alpha_7=\alpha_3\alpha_6, \alpha_4=0, \alpha_5=0, $ then  choosing $x=1, t=-\frac{\alpha_3}{\alpha_7},$ we have  representatives $ \langle \nabla_7 \rangle $ and $ \langle \nabla_6+\nabla_7 \rangle$ depending on whether $ \alpha_6=0 $ or not;
	
	\item if $ \alpha_1\alpha_7=\alpha_3\alpha_5, \alpha_2\alpha_7=\alpha_3\alpha_6, \alpha_4=0, \alpha_5\neq0, $ then by choosing $ x=\sqrt[6]{{\alpha_5}{\alpha_7^{-1}}}, t=-{\alpha_3\sqrt[6]{\alpha_5 \alpha_7^{-7} }}, $ we have the family of  representatives $ \langle \nabla_5+\alpha\nabla_6+\nabla_7 \rangle;$
	
	\item if $ \alpha_1\alpha_7=\alpha_3\alpha_5, \alpha_2\alpha_7=\alpha_3\alpha_6, \alpha_4\neq0, $ 
	then by choosing \begin{center}$ x=\sqrt[10]{{\alpha_4}{\alpha_7^{-1}}},$ $t=-{\alpha_3\sqrt[10]{\alpha_4 \alpha_7^{-11}}},$ 
	\end{center} we have the family of  representatives $ \langle \nabla_4+\alpha\nabla_5+\beta\nabla_6+\nabla_7 \rangle;$
	
	\item $ \alpha_1\alpha_7=\alpha_3\alpha_5, \alpha_2\alpha_7\neq\alpha_3\alpha_6, $ then by choosing 
	\begin{center}$ x=\sqrt[11]{(\alpha_2\alpha_7-\alpha_3\alpha_6)\alpha^{-2}_7
	},$ 
	$t=-{\alpha_3\sqrt[11]{(\alpha_2\alpha_7-\alpha_3\alpha_6) \alpha_7^{-13} }}, $
	\end{center} we have the family of  representatives 
\begin{center}	$\langle \nabla_2+\alpha\nabla_4+\beta\nabla_5+\gamma\nabla_6+\nabla_7 \rangle;$	
\end{center}
	
	\item $ \alpha_1\alpha_7\neq\alpha_3\alpha_5, $ then by choosing 
	\begin{center}$ x=\sqrt[13]{({\alpha_1\alpha_7-\alpha_3\alpha_5}){\alpha^{-2}_7}},$ $t=-{\alpha_3\sqrt[13]{(\alpha_1\alpha_7-\alpha_3\alpha_5)\alpha_7^{-15} }}, $
	\end{center} we have  the family of representatives 
	\begin{center}
	    $ \langle \nabla_1+\alpha\nabla_2+\beta\nabla_4+\gamma\nabla_5+\mu\nabla_6+\nabla_7 \rangle.$		 	 	
	
	\end{center}
\end{enumerate}

\end{enumerate}		
	
Summarizing, we have the following distinct orbits:

\begin{center}
$ \langle \nabla_1+\alpha\nabla_2+\nabla_3+\beta\nabla_4 \rangle
^{O(\alpha,\beta)=O(-\eta_3\alpha,\beta)=
O(-\eta_3\alpha,-\beta)=O(\eta_3^2\alpha,-\beta)=
O(\eta^2_3\alpha,\beta)=O(\alpha,-\beta)},$
$\langle \nabla_1+\alpha\nabla_2+\beta\nabla_4+\gamma\nabla_5+\mu\nabla_6+\nabla_7 \rangle^{\tiny{\begin{array}{l}
O(\alpha,\beta,\gamma,\mu)=
O(-\eta_{13}^{11}\alpha,\eta^{10}_{13}\beta,\eta^6_{13}\gamma,\eta^4_{13}\mu)=\\
O(-\eta^9_{13}\alpha,-\eta^7_{13}\beta,\eta^{12}_{13}\gamma,\eta^8_{13}\mu)=
O(-\eta^7_{13}\alpha,\eta^4_{13}\beta,-\eta^5_{13}\gamma,\eta^{12}_{13}\mu)=\\
O(-\eta^5_{13}\alpha,-\eta^{1}_{13}\beta,-\eta^{11}_{13}\gamma,-\eta^{3}_{13}\mu)=
O(-\eta^{3}_{13}\alpha,-\eta^{11}_{13}\beta,\eta^4_{13}\gamma,-\eta^7_{13}\mu)= \\ O(-\eta^{1}_{13}\alpha,\eta^8_{13}\beta,\eta^{10}_{13}\gamma,-\eta^{11}_{13}\mu)=
O(\eta^{12}_{13}\alpha,-\eta^5_{13}\beta,-\eta^{3}_{13}\gamma,\eta^{2}_{13}\mu)=\\
O(\eta^{10}_{13}\alpha,\eta^{2}_{13}\beta,-\eta^{9}_{13}\gamma,\eta^6_{13}\mu)=
O(\eta^8_{13}\alpha,\eta^{12}_{13}\beta,\eta^{2}_{13}\gamma,\eta^{10}_{13}\mu)=\\
O(\eta^6_{13}\alpha,-\eta^{9}_{13}\beta,\eta^{8}_{13}\gamma,-\eta^{1}_{13}\mu)=
O(\eta^4_{13}\alpha,\eta^{6}_{13}\beta,-\eta^{1}_{13}\gamma,-\eta^5_{13}\mu)= \\ O(\eta^{2}_{13}\alpha,-\eta^{3}_{13}\beta,-\eta^{7}_{13}\gamma,-\eta^9_{13}\mu)
\end{array}}}, $
$ \langle \nabla_1+\alpha\nabla_3+\beta\nabla_4+\gamma\nabla_5+\nabla_6 \rangle^{\tiny{\begin{array}{l}
O(\alpha,\beta,\gamma,\mu)=
O(-\eta_3\alpha,\eta^2_3\beta,\eta^2_9\gamma)=
O(\eta^2_3\alpha,-\eta_3\beta,\eta^4_9\gamma)=\\
O(\alpha,\beta,\eta^2_3\gamma)=
O(-\eta_3\alpha,\eta^2_3\beta,\eta^8_9\gamma)=
O(\eta^2_3\alpha,-\eta_3\beta,-\eta_9\gamma)=\\
O(\alpha,\beta,-\eta_3\gamma)=
O(-\eta_3\alpha,\eta^2_3\beta,-\eta^5_9\gamma)=
O(\eta^2_3\alpha,-\eta_3\beta,-\eta^7_9\gamma)
\end{array}}},$
$\langle \nabla_2+\nabla_3+\alpha\nabla_4 \rangle^{O(\alpha)=O(i\alpha)=O(-\alpha)=(-i\alpha)}, $
$\langle \nabla_2+\alpha\nabla_3+ \beta\nabla_4+\nabla_5 \rangle^{
O(\alpha,\beta)=
O(-\eta_5\alpha,\eta^4_5\beta)=
O(\eta^2_5\alpha,-\eta^3_5\beta)=
O(-\eta^3_5\alpha,\eta^2_5\beta)=
O(\eta_5^4\alpha,-\eta_5\beta)},$
$ \langle \nabla_2+\alpha\nabla_4+\beta\nabla_5+\gamma\nabla_6+\nabla_7 \rangle^{\tiny{\begin{array}{l}
O(\alpha,\beta,\gamma)=
O(\eta_{11}^{10}\alpha,\eta^6_{11}\beta,\eta^4_{11}\gamma=\\
O(-\eta^9_{11}\alpha,-\eta_{11}\beta,\eta^38_{11}\gamma)=
O(\eta^8_{11}\alpha,-\eta^7_{11}\beta,-\eta_{11}\gamma)=\\
O(-\eta^7_{11}\alpha,\eta^2_{11}\beta,-\eta^5_{11}\gamma)= O(\eta^{6}_{11}\alpha,\eta^8_{11}\beta,-\eta^9_{11}\gamma)=\\  O(-\eta^{5}_{11}\alpha,-\eta^3_{11}\beta,\eta^2_{11}\gamma)=
O(\eta^4_{11}\alpha,-\eta^9_{13}\beta,\eta^{6}_{11}\gamma)=\\
O(-\eta^3_{11}\alpha,\eta^{4}_{11}\beta,\eta^{10}_{11}\gamma)=
O(\eta^2_{11}\alpha,\eta^{10}_{11}\beta,-\eta^{3}_{11}\gamma)=\\
O(-\eta_{11}\alpha,-\eta^{5}_{11}\beta,-\eta^{7}_{11}\gamma)
\end{array}}},$
$\langle \nabla_3 \rangle,$ 
$\langle \nabla_3+\nabla_4 \rangle,$ 
$\langle \nabla_3+ \alpha\nabla_4+\nabla_5 \rangle,$
$ \langle \nabla_3+\alpha\nabla_4+\beta\nabla_5+\nabla_6 \rangle^{O(\alpha,\beta)=O(\alpha,-\eta_3\beta)=O(\alpha,\eta^2_3\beta)},$ 
$\langle \nabla_4+\nabla_5 \rangle $
$ \langle \nabla_4+\alpha\nabla_5+\nabla_6 \rangle^{{\tiny 
\begin{array}{l}O(\alpha)=O(-\eta_3\alpha)=\\O(\eta^2_3\alpha)
\end{array}}}, $
$\langle \nabla_4+\alpha\nabla_5+\beta\nabla_6+\nabla_7 \rangle^{ 
{\tiny 
\begin{array}{l}
O(\alpha,\beta)=O(-\eta_5\alpha,\eta^4_5\beta)=O(\eta^2_5\alpha,-\eta^3_5\beta)=\\
O(-\eta^3_5\alpha,\eta^2_5\beta)=O(\eta^4_5\alpha,-\eta_5 \beta)
\end{array}}},$ 
$ \langle \nabla_5 \rangle,$
$\langle \nabla_5+\nabla_6 \rangle,$ 
$\langle \nabla_5+\alpha\nabla_6+\nabla_7 \rangle^{O(\alpha)=O(-\eta_3\alpha)=O(\eta^2_3\alpha)},$
$\langle \nabla_6 \rangle,$ 
$\langle \nabla_6+\nabla_7 \rangle,$ 
$\langle \nabla_7 \rangle,$ 

\end{center}

which gives the following new algebras:

\begin{longtable}{llllllllllllllllll}

${\mathbf{N}}_{383}^{\alpha, \beta}$ & $:$ &$e_1e_1=e_2$ & $e_1e_2=e_5$ & $e_1e_3=\alpha e_5$ & $e_1e_4=e_5$  \\ && $e_2e_2=e_3$   & $e_2e_3=\beta e_5$  & $e_3e_3=e_4$ \\

${\mathbf{N}}_{384}^{\alpha, \beta,\gamma,\mu}$ & $:$ &$e_1e_1=e_2$ & $e_1e_2=e_5$ & $e_1e_3=\alpha e_5$ & $e_2e_2=e_3$ & $e_2e_3=\beta e_5$   \\ & & $e_2e_4=\gamma e_5$  & $e_3e_3=e_4$& $e_3e_4=\mu e_5$& $e_4e_4=e_5$ \\

${\mathbf{N}}_{385}^{\alpha, \beta,\gamma}$ & $:$ &$e_1e_1=e_2$ & $e_1e_2=e_5$ & $e_1e_4=\alpha e_5$ & $e_2e_2=e_3$  \\ && $e_2e_3=\beta e_5$   & $e_2e_4=\gamma e_5$   & $e_3e_3=e_4$& $e_3e_4=e_5$ \\

${\mathbf{N}}_{386}^{\alpha}$ & $:$ &$e_1e_1=e_2$ & $e_1e_3=e_5$ & $e_1e_4=e_5$ \\ && $e_2e_2=e_3$ & $e_2e_3=\alpha e_5$& $e_3e_3=e_4$ \\

${\mathbf{N}}_{387}^{\alpha, \beta}$ & $:$ &$e_1e_1=e_2$ & $e_1e_3=e_5$ & $e_1e_4=\alpha e_5$ & $e_2e_2=e_3$  \\ && $e_2e_3=\beta e_5$   & $e_2e_4=e_5$   & $e_3e_3=e_4$ \\

${\mathbf{N}}_{388}^{\alpha, \beta,\gamma}$ & $:$ &$e_1e_1=e_2$ & $e_1e_3=e_5$ & $e_2e_2=e_3$ & $e_2e_3=\alpha e_5$  \\ && $e_2e_4=\beta e_5$& $e_3e_3=e_4$ & $ e_3e_4=\gamma e_5$& $e_4e_4=e_5$ \\

${\mathbf{N}}_{389}$ & $:$ &$e_1e_1=e_2$ & $e_1e_4=e_5$ & $e_2e_2=e_3$ & $e_3e_3=e_4$ \\

${\mathbf{N}}_{390}$ & $:$ &$e_1e_1=e_2$ &  $e_1e_4=e_5$ & $e_2e_2=e_3$& $ e_2e_3=e_5$ & $e_3e_3=e_4$ \\

${\mathbf{N}}_{391}^{\alpha}$ & $:$ &$e_1e_1=e_2$ & $e_1e_4=e_5$ & $e_2e_2=e_3$\\ & & $e_2e_3=\alpha e_5$& $ e_2e_4=e_5$ & $e_3e_3=e_4$ \\

${\mathbf{N}}_{392}^{\alpha, \beta}$ & $:$ &$e_1e_1=e_2$ & $e_1e_4=e_5$ & $e_2e_2=e_3$ & $e_2e_3=\alpha e_5$ \\ && $ e_2e_4=\beta e_5$ & $e_3e_3=e_4$  &  $ e_3e_4=e_5$\\

${\mathbf{N}}_{393}$ & $:$ &$e_1e_1=e_2$ &  $e_2e_2=e_3$ & $e_2e_3=e_5$& $ e_2e_4=e_5$ & $e_3e_3=e_4$ \\

${\mathbf{N}}_{394}^{\alpha}$ & $:$ &$e_1e_1=e_2$ & $e_2e_2=e_3$ & $e_2e_3=e_5$ \\ && $e_2e_4=\alpha e_5$ & $e_3e_3=e_4$ & $ e_3e_4=e_5$  \\

${\mathbf{N}}_{395}^{\alpha, \beta}$ & $:$ &$e_1e_1=e_2$ & $e_2e_2=e_3$ & $e_2e_3=e_5$ & $e_2e_4=\alpha e_5$ \\ & & $e_3e_3=e_4$ & $ e_3e_4=\beta e_5$     & $ e_4e_4=e_5$  \\

${\mathbf{N}}_{396}$ & $:$ &$e_1e_1=e_2$ &  $e_2e_2=e_3$ & $ e_2e_4=e_5$ & $e_3e_3=e_4$ \\

${\mathbf{N}}_{397}$ & $:$ &$e_1e_1=e_2$ &  $e_2e_2=e_3$ & $ e_2e_4=e_5$ & $e_3e_3=e_4$ & $ e_3e_4=e_5$ \\

${\mathbf{N}}_{398}^{\alpha}$ & $:$ &$e_1e_1=e_2$ &  $e_2e_2=e_3$ & $ e_2e_4=e_5$ \\ && $e_3e_3=e_4$ & $ e_3e_4=\alpha e_5$ & $ e_4e_4=e_5$\\

${\mathbf{N}}_{399}$ & $:$ &$e_1e_1=e_2$ &  $e_2e_2=e_3$  & $e_3e_3=e_4$ & $ e_3e_4=e_5$ \\

${\mathbf{N}}_{400}$ & $:$ &$e_1e_1=e_2$ &  $e_2e_2=e_3$ & $ e_3e_3=e_4$ & $e_3e_4=e_5$ & $ e_4e_4=e_5$ \\

${\mathbf{N}}_{401}$ & $:$ &$e_1e_1=e_2$ &  $e_2e_2=e_3$ & $ e_3e_3=e_4$  & $ e_4e_4=e_5$ \\

\end{longtable}

\subsection{$ 1 $-dimensional central extensions of $ {\mathbf N}_{10}^{4} $.} Here we will collect all information about $ {\mathbf N}_{10}^{4}: $

$$
\begin{array}{|l|l|l|l|}
\hline
\text{ }  & \text{ } & \text{Cohomology} & \text{Automorphisms} \\
\hline
{\mathbf{N}}^{4}_{10} & 
\begin{array}{l}
e_1e_1=e_2 \\ 
e_1e_2=e_4 \\ 
e_2e_2=e_3 \\ 
e_3e_3=e_4
\end{array}
&
\begin{array}{l}
\mathrm{H}^2_{\mathfrak{C}}(\mathbf{N}^{4}_{10})=
\Big \langle [\Delta_{ij}] \Big\rangle\\

(i,j) \notin \{ (1,1),(1,2),(2,2)\}
\end{array}

& 
  \begin{array}{l}
\phi_k= \begin{pmatrix}
\eta^k&0&0&0\\
0& \eta^{2k}&0&0\\
0&0& \eta^{4k}&0\\
t&0&0& \eta^{8k}
\end{pmatrix} \\
\multicolumn{1}{c}{\eta=-\eta_5, \ k=0,1,2,3,4}
\end{array}\\
\hline
\end{array}$$

Let us use the following notations:
\begin{longtable}{llll}
$\nabla_1=[\Delta_{13}], $&$ 
\nabla_2=[\Delta_{14}], $&$ 
\nabla_3=[\Delta_{23}], $&$  
\nabla_4=[\Delta_{24}], $\\$
\nabla_5=[\Delta_{33}], $&$ 
\nabla_6=[\Delta_{34}], $&$  
\nabla_7=[\Delta_{44}]. $
\end{longtable}
Take $ \theta=\sum\limits_{i=1}^{7}\alpha_i\nabla_i\in\mathrm{H}^2_{\mathfrak{C}}(\mathbf{N}^{4}_{10}) .$ Since
$$\phi_k^T\begin{pmatrix}
0&0&\alpha_1&\alpha_2\\
0&0&\alpha_3&\alpha_4\\
\alpha_1&\alpha_3&\alpha_5&\alpha_6\\
\alpha_2&\alpha_4&\alpha_6&\alpha_7

\end{pmatrix}\phi_k=
\begin{pmatrix}
\alpha^*&\alpha^{**}&\alpha^{*}_1&\alpha^*_2\\
\alpha^{**}&0&\alpha^*_3&\alpha^*_4\\
\alpha^{*}_1&\alpha^*_3&\alpha^*_5+\alpha^{**}&\alpha^*_6\\
\alpha^*_2&\alpha^*_4&\alpha^*_6&\alpha^*_7
\end{pmatrix},$$	
we have	

\begin{longtable}{llll}
$\alpha_1^*=\eta^{4 k} (\eta^k \alpha_1+t \alpha_6),$&
$\alpha_2^*=\eta^{8 k} (\eta^k \alpha_2+t \alpha_7),$&
$\alpha_3^*=\eta^{6 k} \alpha_3,$&
$\alpha_4^*=\eta^{10 k} \alpha_4,$\\
$\alpha_5^*=-t \eta^{2 k} \alpha_4+\eta^{8 k} \alpha_5,$&
$\alpha_6^*=\eta^{12 k} \alpha_6,$&
$\alpha_7^*=\eta^{16 k} \alpha_7.$
\end{longtable}	

Since $ (\alpha_2,\alpha_4,\alpha_6,\alpha_7)\neq(0,0,0,0),$ we have the following cases:

\begin{enumerate}
\item if $ \alpha_7=0, \alpha_6=0, \alpha_4=0, $ then $ \alpha_2\neq0, $ and we have the family of representatives 
\begin{center}$ \langle \alpha\nabla_1+\nabla_2+\beta\nabla_3+\gamma\nabla_5\rangle;$
\end{center}

\item if $ \alpha_7=0, \alpha_6=0, \alpha_4\neq0, $ then  we have the family of representatives 
\begin{center}$ \langle \alpha\nabla_1+\beta\nabla_2+\gamma\nabla_3+\nabla_4 \rangle;$ 
\end{center}

\item if $ \alpha_7=0, \alpha_6\neq0, $ then  we have the family of representatives 
\begin{center}$ \langle \alpha\nabla_2+\beta\nabla_3+\gamma\nabla_4+\mu\nabla_5+\nabla_6 \rangle;$
\end{center}

\item if $ \alpha_7\neq0, $ then  we have the family of representatives 
\begin{center}$ \langle \alpha\nabla_1+\beta\nabla_3+\gamma\nabla_4+\mu\nabla_5+\nu\nabla_6+\nabla_7 \rangle.$
\end{center}
\end{enumerate}

Summarizing, we have the following distinct orbits:

\begin{center}
    
    $ \langle \alpha\nabla_1+\beta\nabla_2+\gamma\nabla_3+\nabla_4 \rangle^{
   \tiny{\begin{array}{l} O(\alpha, \beta, \gamma)=
    O(\alpha, \eta^4_5\beta, -\eta_5 \gamma)=
    O(\alpha, -\eta^3_5\beta, \eta^2_5 \gamma)=\\
    O(\alpha, \eta^2_5\beta, -\eta^3_5 \gamma)=
    O(\alpha, -\eta_5\beta, \eta^4_5 \gamma)\end{array}}},$ 
    
    $ \langle \alpha\nabla_1+\nabla_2+\beta\nabla_3+\gamma\nabla_5 \rangle^{
  \tiny{\begin{array}{l}     
  O(\alpha, \beta, \gamma)=
  O(-\eta_5\alpha, \eta^2_5\beta, \eta^4_5 \gamma)=
  O(\eta^2_5\alpha, \eta^4_5\beta, -\eta^3_5 \gamma)=\\
  O(-\eta^3_5\alpha, -\eta_5\beta, \eta^2_5 \gamma)=
  O(\eta^4_5\alpha, -\eta^3_5\beta, -\eta_5 \gamma) \end{array}}},$ 
    
  $ \langle \alpha\nabla_1+\beta\nabla_3+\gamma\nabla_4+\mu\nabla_5+\nu\nabla_6+\nabla_7 \rangle^{\tiny{\begin{array}{l}
  O(\alpha, \beta, \gamma, \mu, \nu)=
  O(\eta^4_5\alpha, \beta, \eta^4_5 \gamma, \eta^2_5\mu, -\eta_5\nu)=\\
  O(-\eta^3_5\alpha, \beta, -\eta^3_5 \gamma, \eta^4_5\mu, \eta^2_5\nu)=
  O(\eta^2_5\alpha, \beta, \eta^2_5 \gamma,- \eta_5\mu, -\eta^3_5\nu)=\\
  O(-\eta_5\alpha, \beta, -\eta_5 \gamma, -\eta^3_5\mu, \eta^4_5\nu)
\end{array}}},$

   $ \langle \alpha\nabla_2+\beta\nabla_3+\gamma\nabla_4+\mu\nabla_5+\nabla_6 \rangle^{\tiny{\begin{array}{l}
   O(\alpha, \beta, \gamma, \mu)=
   O(\eta_5^2\alpha, \eta_5^4\beta, -\eta_5^3 \gamma, -\eta_5\mu)=
   O(\eta_5^4\alpha, -\eta_5^3\beta, -\eta_5\gamma, \eta^2_5\mu)=\\
   O(-\eta_5\alpha, \eta_5^2\beta, \eta_5^4 \gamma, -\eta^3_5\mu)=
   O(-\eta_5^3\alpha, -\eta_5\beta, \eta_5^2 \gamma, \eta^4_5\mu)
   \end{array}}},$
    
    \end{center}

which gives the following new algebras:
    
\begin{longtable}{llllllllllllllllll}

${\mathbf{N}}_{402}^{\alpha, \beta,\gamma}$ & $:$ &$e_1e_1=e_2$ & $e_1e_2=e_4$ & $e_1e_3=\alpha e_5$ & $e_1e_4=\beta e_5$ \\ & & $e_2e_2=e_3$   & $e_2e_3=\gamma e_5$  & $ e_2e_4=e_5$ & $e_3e_3=e_4$ \\

${\mathbf{N}}_{403}^{\alpha, \beta,\gamma}$ & $:$ &$e_1e_1=e_2$ & $e_1e_2=e_4$ & $e_1e_3=\alpha e_5$ & $e_1e_4=e_5$\\ &  & $e_2e_2=e_3$   & $e_2e_3=\beta e_5$    & $e_3e_3=e_4+\gamma e_5$ \\

${\mathbf{N}}_{404}^{\alpha, \beta,\gamma,\mu,\nu}$ & $:$ &$e_1e_1=e_2$ & $e_1e_2=e_4$ & $e_1e_3=\alpha e_5$  \\ && $e_2e_2=e_3$   & $e_2e_3=\beta e_5$    & $e_2e_4=\gamma e_5$  \\ & & $e_3e_3=e_4+\mu e_5 $  & $ e_3e_4=\nu e_5$ & $ e_4e_4=e_5 $\\

${\mathbf{N}}_{405}^{\alpha, \beta,\gamma,\mu}$ & $:$ &$e_1e_1=e_2$ & $e_1e_2=e_4$ & $e_1e_4=\alpha e_5$  & $e_2e_2=e_3$   \\ & & $e_2e_3=\beta e_5$  & $e_2e_4=\gamma e_5$  & $e_3e_3=e_4+\mu e_5 $  & $ e_3e_4=e_5$\\

\end{longtable}

\subsection{$ 1 $-dimensional central extensions of $ {\mathbf N}_{11}^{4}(\lambda) $.} Here we will collect all information about $ {\mathbf N}_{11}^{4}(\lambda): $

$$
\begin{array}{|l|l|l|l|}
\hline
\text{ }  & \text{ } & \text{Cohomology} & \text{Automorphisms} \\
\hline
{\mathbf{N}}^{4}_{11}(\lambda) & 
\begin{array}{l}
e_1e_1=e_2 \\ 
e_1e_2=\lambda e_4 \\ 
e_2e_2=e_3\\
e_2e_3=e_4 \\ 
e_3e_3=e_4
\end{array}

&
\begin{array}{l}
\mathrm{H}^2_{\mathfrak{C}}(\mathbf{N}^{4}_{11}(\lambda))=
\Big \langle [\Delta_{ij}] \Big\rangle\\
(i,j) \notin \{ (1,1),(2,2),(3,3) \}
\end{array}

&
  \phi=\begin{pmatrix}
1&0&0&0\\
0&1&0&0\\
0&0&1&0\\
t&0&0&1
\end{pmatrix}\\
\hline
\end{array}$$
Let us use the following notations:
\begin{longtable}{lll l}
$\nabla_1=[\Delta_{12}], $&$
\nabla_2=[\Delta_{13}], $&$ 
\nabla_3=[\Delta_{14}], $&$  
\nabla_4=[\Delta_{23}], $\\$
\nabla_5=[\Delta_{24}], $&$ 
\nabla_6=[\Delta_{34}], $&$  
\nabla_7=[\Delta_{44}]. $
\end{longtable}
Take $ \theta=\sum\limits_{i=1}^{7}\alpha_i\nabla_i\in\mathrm{H}^2_{\mathfrak{C}}(\mathbf{N}^{4}_{11}(\lambda)) .$ Since
$$\phi^T\begin{pmatrix}
0&\alpha_1&\alpha_2&\alpha_3\\
\alpha_1&0&\alpha_4&\alpha_5\\
\alpha_2&\alpha_4&0&\alpha_6\\
\alpha_3&\alpha_5&\alpha_6&\alpha_7

\end{pmatrix}\phi=
\begin{pmatrix}
\alpha^*&\alpha_1^{*}&\alpha^{*}_2&\alpha^*_3\\
\alpha_1^{*}&0&\alpha^*_4&\alpha^*_5\\
\alpha^{*}_2&\alpha^*_4&0&\alpha^*_6\\
\alpha^*_3&\alpha^*_5&\alpha^*_6&\alpha^*_7
\end{pmatrix}$$	
we have	

\begin{longtable}{llll}
$\alpha_1^*=\alpha_1+\alpha_5t,$&
$\alpha_2^*=\alpha_2+\alpha_6t,$&
$\alpha_3^*=\alpha_3+\alpha_7t,$&
$\alpha_4^*=\alpha_4,$\\
$\alpha_5^*=\alpha_5,$&
$\alpha_6^*=\alpha_6,$&
$\alpha_7^*=\alpha_7.$
\end{longtable}	

Since $ (\alpha_3,\alpha_5,\alpha_6,\alpha_7)\neq(0,0,0,0),$ we have the following cases:

\begin{enumerate}
	\item if $ \alpha_7=0, \alpha_6=0, \alpha_5=0 ,$ then $ \alpha_3\neq0 $ and we have the family of representatives 
	\begin{center}$ \left\langle \alpha\nabla_1+\beta\nabla_2+\nabla_3+\gamma\nabla_4 \right\rangle; $
	\end{center}
	
	\item if $ \alpha_7=0, \alpha_6=0, \alpha_5\neq0 $ then by choosing $ t=-{\alpha_1}{\alpha_5^{-1}},$ we have the family of representatives 
	\begin{center}$ \left\langle \alpha\nabla_2+\beta\nabla_3+\gamma\nabla_4+\nabla_5 \right\rangle;$
	\end{center}
	
	\item if $ \alpha_7=0, \alpha_6\neq0 $ then by choosing $ t=-{\alpha_2}{\alpha_6^{-1}},$ we have the family of representatives 
	\begin{center}$ \left\langle \alpha\nabla_1+\beta\nabla_3+\gamma\nabla_4+\mu\nabla_5+\nabla_6 \right\rangle;$	\end{center}
	
	\item if $ \alpha_7\neq0 $ then by choosing $ t=-{\alpha_3}{\alpha_7^{-1}},$ we have the family of representatives 
	\begin{center}$ \left\langle \alpha\nabla_1+\beta\nabla_2+\gamma\nabla_4+\mu\nabla_5+\nu\nabla_6+\nabla_7 \right\rangle. $	\end{center}
		
\end{enumerate}

Summarizing, we have the following distinct orbits:
\begin{center}
    $  \left\langle \alpha\nabla_1+\beta\nabla_2+\nabla_3+\gamma\nabla_4 \right\rangle,$ 
    $\left\langle \alpha\nabla_1+\beta\nabla_2+\gamma\nabla_4+\mu\nabla_5+\nu\nabla_6+\nabla_7 \right\rangle,$    
    $ \left\langle \alpha\nabla_1+\beta\nabla_3+\gamma\nabla_4+\mu\nabla_5+\nabla_6 \right\rangle,$ 
    $\left\langle \alpha\nabla_2+\beta\nabla_3+\gamma\nabla_4+\nabla_5 \right\rangle,$
    
\end{center}

which gives the following new algebras:
\begin{longtable}{llllllllllllllllll}

${\mathbf{N}}_{406}^{\lambda,\alpha, \beta,\gamma}$ & $:$ &$e_1e_1=e_2$ & $e_1e_2=\lambda e_4+\alpha e_5$ & $e_1e_3=\beta e_5$ & $e_1e_4=e_5$  \\ && $e_2e_2=e_3$   & $e_2e_3=e_4+\gamma e_5$  & $e_3e_3=e_4$ \\

${\mathbf{N}}_{407}^{\lambda,\alpha, \beta,\gamma,\mu,\nu}$ & $:$ &$e_1e_1=e_2$ & $e_1e_2=\lambda e_4+\alpha e_5$ & $e_1e_3=\beta e_5$ \\ && $e_2e_2=e_3$ & $e_2e_3=e_4+\gamma e_5$   & $e_2e_4=\mu e_5$  \\ && $e_3e_3=e_4$ & $e_3e_4=\nu e_5$ & $ e_4e_4=e_5 $\\

${\mathbf{N}}_{408}^{\lambda,\alpha, \beta,\gamma}$ & $:$ &$e_1e_1=e_2$ & $e_1e_2=\lambda e_4+\alpha e_5$ & $ e_1e_4=\beta e_5 $ & $e_2e_2=e_3$ \\ & & $e_2e_3=e_4+\gamma e_5$    & $e_2e_4=\mu e_5$  & $e_3e_3=e_4$ & $e_3e_4=e_5$ \\

${\mathbf{N}}_{409}^{\lambda,\alpha, \beta,\gamma}$ & $:$ &$e_1e_1=e_2$ & $e_1e_2=\lambda e_4$ & $e_1e_3=\alpha e_5$ & $e_1e_4=\beta e_5$  \\ && $e_2e_2=e_3$   & $e_2e_3=e_4+\gamma e_5$  & $e_2e_4=e_5$ & $e_3e_3=e_4$

\end{longtable}

\begin{remark}
Note that the algebras $\mathbf{N}^{4}_{11}(\lambda)$ and $\mathbf{N}^{4}_{11}(-\lambda)$ are isomorphic.
Hence, there are some additional isomorphism relations for algebras from the present subsection
$\mathbf{N}^{\lambda, \Xi} \cong \mathbf{N}^{-\lambda, \Xi} .$

\end{remark}

\section{Classification theorem.}

\begin{theorem}
Let $\mathbf N$ be a complex $5$-dimensional nilpotent commutative algebra. 
Then we have one of the following situations.
\begin{enumerate}
    \item If $\mathbf N$ is associative, then $\mathbf N$ is isomorphic to one algebra listed in \cite{kpv19}.
      \item If $\mathbf N$ is a non-associative Jordan algebra, then $\mathbf N$ is isomorphic to one algebra listed in \cite{ha16}.
        \item If $\mathbf N$ is a non-Jordan $\mathfrak{CD}$-algebra, then $\mathbf N$ is isomorphic to one algebra listed in \cite{jkk19}.
          \item If $\mathbf N$ is a non-$\mathfrak{CD}$-algebra, then $\mathbf N$ is isomorphic to one algebra listed in the following list.

\end{enumerate}

	\end{theorem}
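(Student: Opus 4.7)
The proof is essentially an assembly of the systematic computations carried out in Sections 2, 3, and 4, together with reference to the three cited classifications. My plan is as follows. The cases (1)--(3) are immediate by invoking the classifications in \cite{kpv19}, \cite{ha16}, and \cite{jkk19} respectively. So the real content is case (4): to exhibit, up to isomorphism, every complex $5$-dimensional nilpotent commutative algebra which is not a $\mathfrak{CCD}$-algebra. The first observation is that every such algebra $\mathbf{N}$ has a nonzero annihilator (nilpotent algebras always do), so by the first Lemma of the excerpt, $\mathbf{N} \cong \mathbf{A}'_{\theta}$ for a unique commutative algebra $\mathbf{A}'$ of dimension $5-s$ (where $s = \dim \operatorname{Ann}(\mathbf{N})$) and a cocycle $\theta \in \mathrm{Z}^2(\mathbf{A}', \mathbb{V})$ with $\operatorname{Ann}(\mathbf{A}')\cap \operatorname{Ann}(\theta) = 0$. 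Passing to non-split extensions, Lemma~2 reduces the isomorphism problem to computing $\operatorname{Aut}(\mathbf{A}')$-orbits on ${\bf T}_s(\mathbf{A}')$.

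Next, I would use the dichotomy developed in Section~1.2: if $\mathbf{A}'$ itself is non-$\mathfrak{CCD}$, then any central extension is non-$\mathfrak{CCD}$, so the full orbit set ${\bf T}_s(\mathbf{A}')$ is relevant; if $\mathbf{A}'$ is $\mathfrak{CCD}$, then a central extension fails to be $\mathfrak{CCD}$ precisely when the representing subspace lies in ${\bf U}_s(\mathbf{A}') = {\bf T}_s(\mathbf{A}') \setminus G_s(\mathrm{H}^2_{\mathfrak{D}}(\mathbf{A}', \mathbb{C}))$. Thus the enumeration splits into: (a)~all $1$-dimensional central extensions of each non-$\mathfrak{CCD}$ algebra $\mathbf{N}^4_{01}, \dots, \mathbf{N}^4_{11}(\lambda)$ listed in Section~1.4; (b)~extensions along ${\bf U}_s$ of each $\mathfrak{CCD}$-algebra of dimension~$4$ whose cohomology distinguishes $\mathrm{H}^2_{\mathfrak{C}} \neq \mathrm{H}^2_{\mathfrak{D}}$ (these are tabulated in Section~1.4); (c)~$2$-dimensional extensions of those $3$-dimensional $\mathfrak{CCD}$-algebras with $\mathrm{H}^2_{\mathfrak{C}} \neq \mathrm{H}^2_{\mathfrak{D}}$, namely $\mathbf{N}^{3*}_{02}$ and $\mathbf{N}^{3*}_{04}$. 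Algebras of dimension $\leq 2$ contribute no non-$\mathfrak{CCD}$ extensions of dimension~$5$ since every commutative algebra of dimension $\leq 4$ that could yield a non-$\mathfrak{CCD}$ quotient has already been treated.

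The bulk of the work, carried out in Sections~2, 3, and~4, is the explicit execution of the three-step procedure of Section~1.1 for each of the starting algebras above: compute $\mathrm{H}^2_{\mathfrak{C}}$ (and the distinguished subspace $\mathrm{H}^2_{\mathfrak{D}}$ when needed), determine $\operatorname{Aut}(\mathbf{A}')$ in matrix form, and then classify $\operatorname{Aut}(\mathbf{A}')$-orbits on the Grassmannian of subspaces of the relevant codimension, discarding split extensions and those that remain $\mathfrak{CCD}$. Collecting one representative per orbit and the resulting algebra from the central-extension construction produces the list $\mathbf{N}_{12}, \mathbf{N}_{13}^{\alpha}, \dots, \mathbf{N}_{409}^{\lambda,\alpha,\beta,\gamma}$ of the excerpt.

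The principal technical obstacle is not the existence of representatives but their mutual non-isomorphism. Two sources of redundancy must be eliminated: (i)~a single non-split extension of $\mathbf{A}'$ may appear as a non-split extension of another algebra $\mathbf{A}''$ of the same dimension (this is handled by fixing the stratification by $\dim \operatorname{Ann}$ and by $\mathbf{A}/\operatorname{Ann}(\mathbf{A})$, whose isomorphism class is canonical by Lemma~1); and (ii)~within a parametric family $\nabla(\Xi)$ of representatives, different values of $\Xi$ may be identified by a residual action of the stabiliser. The latter is precisely what the notation $O(\Xi_1)=O(\Xi_2)$ records in Sections~2--4, and its computation requires solving, in each family, the polynomial system expressing that a chosen automorphism carries one cocycle tuple to another. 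With these identifications made, the lists produced by the three-step procedure form disjoint, complete, and non-redundant sets of orbit representatives, and their union is the desired classification in case (4), completing the proof.
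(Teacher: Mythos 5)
Your proposal is correct and follows essentially the same route as the paper: cases (1)--(3) by citation, and case (4) by the Skjelbred--Sund central-extension machinery of Section~1, stratified over the non-$\mathfrak{CCD}$ four-dimensional quotients (all orbits), the $\mathfrak{CCD}$ quotients with $\mathrm{H}^2_{\mathfrak{C}}\neq\mathrm{H}^2_{\mathfrak{D}}$ (orbits in ${\bf U}_s$), and the two three-dimensional algebras $\mathbf{N}^{3*}_{02},\mathbf{N}^{3*}_{04}$, with redundancy controlled by $\dim\operatorname{Ann}$, the canonical quotient, and the recorded $O(\Xi_1)=O(\Xi_2)$ identifications. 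The only point worth tightening is your dismissal of quotients of dimension $\le 2$: the correct reason is that for such small nilpotent algebras all triple products vanish, so $\mathrm{Z}^2_{\mathfrak{D}}=\mathrm{Z}^2_{\mathfrak{C}}$ and ${\bf U}_s=\emptyset$, not that these cases were "already treated."
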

 

 
\newpage

\end{document}